\newcounter{RomanNumber}
\newcommand{\MyRoman}[1]{\setcounter{RomanNumber}{#1}\Roman{RomanNumber}}
\gdef\pampmatrix{%
  \begingroup
  \let&=\amsamp
  \begin{pmatrix}%
}
\gdef\endpampmatrix{\end{pmatrix}\endgroup}
\begin{document}

\ensubject{fdsfd}%¶þŒ¶Ñ§¿Æ

%%%%%%%%%%%%%%%%%%%%%%%%%%%%%%%%%%%%%%%%%%%%%%%%%%%%%%%
%%% Authors do not modify the information below
%%% ×÷Õß²»ÐèÒªÐÞžÄŽËŽŠÐÅÏ¢
%%% ÓÐ×šÌâÃû³ÆÊ±, œ«µÚÒ»ÐÐµÄ{}×¢ÊÍµô, Ê¹ÓÃµÚ¶þÐÐ
\ArticleType{ARTICLES}%ÀžÄ¿
%\SpecialTopic{Progress of Projects Supported by NSFC}%×šÌâ
%\SubTitle{Dedicated to Professor Yang Lo on the Occasion of his {\rm 70}th Birthday}%×š¿¯ËµÃ÷
\Year{2017}
\Month{January}%
\Vol{60}
\No{1}
\BeginPage{1} %
\DOI{XXXXXXXX}
\ReceiveDate{March 11, 2019}
\AcceptDate{May 5, 2019}
%\OnlineDate{January 1, 2017}
%%%%%%%%%%%%%%%%%%%%%%%%%%%%%%%%%%%%%%%%%%%%%%%%%%%%%%%

%%% title: ±êÌâ
%%%   \title{title}{title for citation}
\title[Gauge groups of $5$-manifolds]{Homotopy of gauge groups over non-simply-connected five dimensional manifolds}
{}%%ºó±ß»šÀšºÅÊÇÃŒÌâ

%%% Corresponding author: ÍšÐÅ×÷Õß
%%%   \author[number]{Full name}{{email@xxx.com}}
%%% General author: Ò»°ã×÷Õß
%%%   \author[number]{Full name}{}
\author[1,$\ast$]{Ruizhi Huang}{huangrz@amss.ac.cn}
%\author[2,$\ast$]{Givenname Familyname2}{second@mail.com}
%\author[3]{Givenname Familyname3}{third@mail.com}
%\author[4]{Givenname Familyname4}{fourth@mail.com}

%%% Author information for page head. Ò³ÃŒÖÐµÄ×÷ÕßÐÅÏ¢
%%% ÈôŽËŽŠÖž¶šÒÔŽËŽŠÎª×Œ, ·ñÔòÖ±œÓµ÷ÓÃauthorÐÅÏ¢
\AuthorMark{Ruizhi Huang}

%%% Authors for citation. Ê×Ò³ÒýÓÃÖÐµÄ×÷ÕßÐÅÏ¢
%%% ÈôŽËŽŠÖž¶šÒÔŽËŽŠÎª×Œ, ·ñÔòÖ±œÓµ÷ÓÃauthorÐÅÏ¢
\AuthorCitation{Ruizhi Huang}

%%% Address. µØÖ·
%%%   \address[number]{Address, City {\rm Postcode}, Country}
\address[1]{Institute of Mathematics, Academy of Mathematics and Systems Science,\\
Chinese Academy of Sciences, Beijing {\rm100190}, China}
%\address[2]{The School of Statistics and Mathematics, Zhongnan University of Economics and Law, Wuhan {\rm 430073}, China}
%\address[3]{School of Mathematics and Statistics, Wuhan University, Wuhan {\rm430072}, China}
%\address[4]{Wuhan Institute of Physics and Mathematics, Chinese Academy of Sciences, Wuhan {\rm 430071}, China}

%%% Abstract. ÕªÒª
\abstract{Both the gauge groups and $5$-manifolds are important in physics and mathematics. In this paper, we combine them together to study the homotopy aspects of gauge groups over $5$-manifolds.
For principal bundles over non-simply connected oriented closed $5$-manifolds of certain type, we prove various homotopy decompositions of their gauge groups according to different geometric structures on the manifolds, and give partial solution to the classification of the gauge groups.
As applications, we estimate the homotopy exponents of their gauge groups, and show periodicity results of the homotopy groups of gauge groups analogous to Bott periodicity. Our treatments here are also very effective for rational gauge groups in general context, and applicable for higher dimensional manifolds.}
%%% Keywords. ¹ØŒüŽÊ
\keywords{gauge group, $5$-manifolds, homotopy decompositions, homotopy exponents, homotopy groups, rational homotopy theory, Bott periodicity}

%%% MSC numbers. Requested items
\MSC{primary 57S05, 55P15, 55P40, 54C35; secondary 55P62, 57R19, 58B05, 81T13}

\maketitle

%%%%%%%%%%%%%%%%%%%%%%%%%%%%%%%%%%%%%%%%%%%%%%%%%%%%%%%
%%% The main text. ÕýÎÄ²¿·Ö%
%%  ÍŒ±íÒýÓÃ\cref¹«ÊœÒýÓÃ\eqref²Î¿ŒÎÄÏ×\cite
%%%%%%%%%%%%%%%%%%%%%%%%%%%%%%%%%%%%%%%%%%%%%%%%%%%%%%%
\tableofcontents

\section{Introduction}
\noindent The topology of $5$-manifolds is wild. An explicit and complete classification of diffeomorphism classes of simply connected $5$-manifolds is developed by Barden \cite{Barden65}. It turns out that the homotopy, homeomorphism and diffeomorphism classification all coincide in this case. 
In contrast, the classification of non-simply connected $5$-manifolds is impossible since it is harder than solving the word problem for groups, while the latter problem is unsolvable. The interesting problems are then to study the classifications of $5$-manifolds with prescribed fundamental groups. However, it turns out that these classifications are in general extremely hard, and there are only a few concrete results in the literature (e.g., Hambleton-Su \cite{HS11}, Kreck-Su \cite{KSu17}).

There is particularly an interesting family of non-simply connected $5$-manifolds constructed naturally from $4$-manifolds.
Explicitly, let $N$ be a simply connected oriented closed $4$-manifold of rank $m$, i.e., the rank of $H^2(N; \mathbb{Z})$ as torsion free abelian group is $m$.
Consider the $S^1$-principal bundle 
\begin{equation}\label{wfibrationdef}
S^1\hookrightarrow W\stackrel{\pi}{\longrightarrow} N
\end{equation}
determined by a cohomology class
\begin{equation}\label{isoelementx}
x\in H^{2}(N;\mathbb{Z})\cong [N,BS^1\simeq K(\mathbb{Z}, 2)\simeq \mathbb{C}P^{\infty}]\cong \oplus_{i=1}^{m}\mathbb{Z}.
\end{equation}
Since $W$ is the sphere bundle of the associated $\mathbb{R}^2$-vector bundle, it is a $5$-dimensional closed manifold. Denote $x=(a_1,a_2,\ldots,a_m)$ by the isomorphisms (\ref{isoelementx}). We may call an element $\alpha \in \oplus_{i=1}^{m}\mathbb{Z}$ \textit{primitive} if the quotient group $\oplus_{i=1}^{m}\mathbb{Z}/\langle \alpha \rangle$ is isomorphic to $\oplus_{i=1}^{m-1}\mathbb{Z}$ \cite{DL05}. Then $x=c \alpha$ for some primitive element $\alpha$, and the fundamental group of $W$ is $\mathbb{Z}/c$ by expecting the long exact sequence of the homotopy groups of (\ref{wfibrationdef}). In particular, if $x$ is primitive then $W$ is simply connected. 
The (topological) classification of such $W$ has only been done for the cases when $W$ is simply connected by  Duan and Liang \cite{DL05}, and when $\pi_1{W}\cong \mathbb{Z}/2$ by Hambleton and Su \cite{HS11}. Surprisingly, even the homotopy classification of $W$ in the other cases is not yet understood.

In this paper, we consider the oriented closed $5$-manifolds $M$ (or more generally Poincar\'{e} duality complexes) such that the cohomology group $H^\ast(M; \mathbb{Z})\cong H^\ast(W;\mathbb{Z})$. Instead of studying the homotopy types of them directly which is supposed to very complicated, we may investigate the homotopy of their gauge groups for various Lie groups. Therefore, we may focus on the homotopy structures of their gauge groups.

The role of gauge theory in topology was nicely summarised in a survey paper of Cohen and Milgram \cite{CM94}. They remarked there that the main question in gauge theory, from an algebraic topological point of view, is to understand the homotopy type of the gauge groups and the moduli spaces of particular connections on the principal bundles. Along this point of view, the homotopy aspects of gauge groups were largely investigated in the last twenty years. In 2000, Crabb and Sutherland \cite{CS00} showed that, though there may be infinitely many inequivalent principal bundles over a finite complex, their gauge groups have only finitely many distinct homotopy types. By using homotopy theoretic techniques, especially the estimation of orders of particular involved Samelson products, the classifications of homotopy types of gauge groups are studied over $S^4$ by many experts including 
Hamanaka, Hasui, Kishimoto, Kono, Sato, So, Theriault, Tsutaya (\cite{Kono91}, \cite{HK06}, \cite{Theriault15}, \cite{HKKS16}, \cite{Theriault17}, \cite{KTT17}, \cite{KK18}, \cite{HKST18}), etc, and also over general $4$-manifolds by Theriault \cite{Theriault10} and So \cite{So16}.

Here we turn to study the homotopy of gauge groups in the first high dimensional case as mentioned. Let us start with some standard terminologies and notations. Throughout the paper, let $G$ be a simply connected compact simple Lie group with $\pi_4(G)=0$, and $P$ be a principal $G$-bundle over $M$. Recall that $H^\ast(M; \mathbb{Z})\cong H^\ast(W;\mathbb{Z})$ and $W$ is the total space of a $S^1$-bundle over a simply connected $4$-manifold of rank $m$.
The \textit{gauge group} $\mathcal{G}(P)$ of $P$, by definition, is the group of $G$-equivariant automorphisms of $P$ that fix $M$. 
The restriction on the triviality of $\pi_4(G)$ is almost empty since it is satisfied by the types of Lie groups in the following list:
\[SU(n+1) (n\geq 2), \ \ Spin(n) (n\geq 6), \ \ G_2, \ \ F_4,\ \ E_6,\ \ E_7,\ \ E_8\]
(and $Sp(n)$ if localized away from $2$ since $\pi_{4}(Sp(n))\cong \mathbb{Z}/2$).
It can be showed that the group of the isomorphism classes of principal $G$-bundles over $M$ is isomorphic to (Lemma \ref{MBG})
\begin{equation}\label{introMBG}
[M, BG]\cong \pi_{1}(M)\cong \mathbb{Z}/c.
\end{equation}
We then may also denote that the gauge group $\mathcal{G}(P)$ by $\mathcal{G}_k(M)$ for $[P]=k\in [M, BG]$ under the isomorphism (\ref{introMBG}). By Gottlieb \cite{Gottlieb72} or Atiyah and Bott \cite{AB83}, there is a homotopy equivalence
\begin{equation}\label{introABG}
B\mathcal{G}_k(M)\simeq {\rm Map}_k(M, BG)
\end{equation}
between the classifying space of $\mathcal{G}_k(M)$ and the component of free mapping space ${\rm Map} (M, BG)$ corresponding to $k$ through (\ref{introMBG}). We then will not distinguish these two spaces since we only care about homotopy types. The first observation is that the homotopy theory of gauge groups over simply connected $5$-manifolds is not interesting. Indeed for any trivial bundle $P=M\times G$ we always have 
\[\mathcal{G}(P)\cong {\rm Map}(M, G)\simeq {\rm Map}^\ast(M, G)\times G.\] 
Hence, from now on we suppose $c\geq 2$.

The first two main results are about the homotopy decompositions of $\mathcal{G}_k(M)$ up to loops. This way of looking at the homotopy aspects of particular spaces is quite useful in algebraic topology, which allows us to understand a complicated object through relatively easier ones.
Denote $P^{m+1}(c)=S^m\cup_c e^{m+1}$ to be the Moore space determined by a degree $c$ map of spheres. Our first theorem is proved based on various computations and techniques in unstable homotopy theory, and a key application of an example of $5$-Poincar\'{e} complex but non-manifold due to Gilter and Stasheff (Madsen and Milgram \cite{MM79}). 
\begin{theorem}[Theorem \ref{gaugedecom6c}, Lemma \ref{mappingmooredes}]\label{theorem1}
Let $M$ be a five dimensional oriented closed manifold with $\pi_{1}(M)\cong \mathbb{Z}/c$ ($6\nmid c$) and $H_{2}(M; \mathbb{Z})$ is torsion free of rank $m-1$. Let $G$ be a simply connected compact simple Lie group with $\pi_4(G)=0$. We have the following homotopy equivalences:
\begin{itemize}
\item if $M$ is a spin manifold,
\[\Omega^2 \mathcal{G}_k(M)\simeq \Omega^2\mathcal{G}_k(P^4(c))\times \Omega^3G\{c\}\times \Omega^7G\times \prod_{i=1}^{m-1} (\Omega^4 G  \times \Omega^5 G );\]
\item if $M$ is a non-spin manifold,
\[\Omega^2 \mathcal{G}_k(M)\simeq \Omega^2\mathcal{G}_k(P^4(c))\times \Omega^3 {\rm Map}_0^\ast(\mathbb{C}P^2, G)\times \Omega^3G\{c\}\times \prod_{i=1}^{m-1} \Omega^4 G  \times \prod_{i=1}^{m-2} \Omega^5 G,\]
\end{itemize}
for each $k\in \mathbb{Z}/c$, where $G\{c\}$ is the homotopy fibre of the power map $c: G\rightarrow G$.
\end{theorem}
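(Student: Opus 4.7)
Since $B\mathcal{G}_k(M) \simeq {\rm Map}_k(M, BG)$ and loop spaces have all components homotopy equivalent, I rewrite
\[
\Omega^2 \mathcal{G}_k(M) \;\simeq\; {\rm Map}^\ast(M, \Omega^2 G),
\]
reducing the problem to a decomposition of this mapping space. My plan is to split $M$ through a sequence of cofibrations into simple pieces---spheres, the Moore spaces $P^2(c)$ and $P^4(c)$, and, in the non-spin case, a copy of $\Sigma\mathbb{C}P^2 = S^3\cup_{\Sigma\eta} e^5$---apply ${\rm Map}^\ast(-,\Omega^2 G)$ to pull back a nested tower of fibrations of mapping spaces, and then split each fibration as a product by constructing sections.

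I would first pin down a minimal CW structure on $M$ via Poincar\'e duality and the hypotheses on $\pi_1$ and $H_2$: one $0$-cell, one $1$-cell, $m$ $2$-cells, $m$ $3$-cells, one $4$-cell, and one $5$-cell. The $1$-cell with one $2$-cell forms a $P^2(c)$, encoding $\pi_1(M)\cong \mathbb{Z}/c$; one $3$-cell together with the $4$-cell forms a $P^4(c)$, encoding the dual $\mathbb{Z}/c$ in $H_3(M)$; and the remaining $m-1$ $2$-cells and $m-1$ $3$-cells account for the free parts of $H_2$ and $H_3$. When $M$ is spin, the top $5$-cell can be split off as a single $S^5$-summand of the cofibrant decomposition. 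When $M$ is non-spin, $w_2(M)\neq 0$ forces the attaching map of the $5$-cell on one of the free $3$-cells to be $\Sigma\eta$, binding them into a $\Sigma\mathbb{C}P^2$-summand and thereby reducing the count of $\Omega^5 G$ and replacing the $\Omega^7 G$ factor by the single ${\rm Map}^\ast_0(\mathbb{C}P^2, G)$-piece. The Gitler-Stasheff five-dimensional Poincar\'e complex enters here as the crucial control: it is a PD complex not homotopy equivalent to any manifold, and comparing with it shows that it is precisely the Wu-formula constraint enjoyed by a genuine manifold which pins down the attaching map of the top cell compatibly with these splittings.

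Pushing the resulting cofiber decomposition through ${\rm Map}^\ast(-,\Omega^2 G)$ yields a nested tower of fibrations whose layers are, in turn, ${\rm Map}^\ast(S^d,\Omega^2 G) = \Omega^{d+2} G$ (giving the $\Omega^4 G$, $\Omega^5 G$, and $\Omega^7 G$ factors), ${\rm Map}^\ast(P^2(c),\Omega^2 G)\simeq \Omega^3 G\{c\}$, ${\rm Map}^\ast(P^4(c),\Omega^2 G)\simeq \Omega^2 \mathcal{G}_k(P^4(c))$, and, in the non-spin case, ${\rm Map}^\ast(\Sigma\mathbb{C}P^2,\Omega^2 G)\simeq \Omega^3 {\rm Map}^\ast_0(\mathbb{C}P^2, G)$. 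The $\Omega^3 G\{c\}$-identification is the content of the mapping Moore description of Lemma \ref{mappingmooredes}, applied to the cofibration $S^1\xrightarrow{c}S^1\to P^2(c)$. To split each layer as a product I would construct sections using the H-space structure on $\Omega^2 G$; the obstructions are Samelson products landing in low-dimensional homotopy groups of $G$, and the hypothesis $6\nmid c$ is precisely what ensures that the $2$- and $3$-primary Samelson-product obstructions do not conspire with the $c$-torsion in an obstructing way.

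The main obstacle is expected to be this last step of section construction, that is, showing the relevant Samelson products vanish so that each fibration really does split after double looping. This requires careful use of the classification of maps between Moore spaces and compact Lie groups, together with the hypotheses $\pi_4(G)=0$ and $6\nmid c$. The non-spin case is additionally delicate because the $\Sigma\mathbb{C}P^2$-summand cannot be further decomposed into spheres, so the factor ${\rm Map}^\ast_0(\mathbb{C}P^2, G)$ must be carried as a single package throughout the entire sequence of splittings.
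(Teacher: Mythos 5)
Your opening reduction is where the proof breaks. From $B\mathcal{G}_k(M)\simeq{\rm Map}_k(M,BG)$ one gets $\Omega^2\mathcal{G}_k(M)\simeq\Omega^3{\rm Map}_k(M,BG)$, the triple loops of the \emph{free} mapping space; passing to ${\rm Map}^\ast(M,\Omega^2 G)\simeq\Omega^3{\rm Map}^\ast_k(M,BG)$ would require splitting the evaluation fibration ${\rm Map}^\ast_k(M,BG)\rightarrow{\rm Map}_k(M,BG)\stackrel{{\rm ev}}{\rightarrow}BG$ after looping, i.e.\ trivializing the connecting map $\partial_k\colon G\rightarrow{\rm Map}^\ast_k(M,BG)$. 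That map is exactly the datum that makes $\mathcal{G}_k$ depend on $k$, and it does not die after two loops: the statement you are proving retains $\Omega^2\mathcal{G}_k(P^4(c))$ as a factor precisely because the twisting survives there, whereas ${\rm Map}^\ast(M,\Omega^2 G)$ is manifestly independent of $k$, contradicting Theorem \ref{theorem2.5}. Your later identification ${\rm Map}^\ast(P^4(c),\Omega^2 G)\simeq\Omega^2\mathcal{G}_k(P^4(c))$ is false for the same reason (the left side is $k$-independent). The paper's Proposition \ref{Gsplit} is built to avoid exactly this: it keeps the entire gauge-group fibration over $G$ intact and only splits off wedge summands $Y$ with $[Y,BG]=0$ whose suspended inclusion admits a left homotopy inverse, funnelling all of the twisting into the residual factor $\mathcal{G}_k(P^4(c))$.

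Second, the mechanism you assign to the hypothesis $6\nmid c$ (vanishing of Samelson-product obstructions to sections) is not what is actually needed; Samelson products enter the paper only through Lang's lemma for $\partial_k\simeq k\partial_1$. The real work is showing that the attaching map of the top cell of $\Sigma^2M$ becomes trivial on the Moore-space wedge summands after one further suspension: this rests on the computation $\pi_6(P^4(c))\cong\pi_7(P^5(c))\cong\mathbb{Z}/c\oplus\mathbb{Z}/(3,c)$ with suspension image $\mathbb{Z}/(3,c)$ (Lemma \ref{pi7p5c3c}), which is precisely where both the $2$- and $3$-primary parts of the condition $6\nmid c$ are consumed. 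Your sketch never addresses the component of the top attaching map on $P^5(c)$ and $P^7(c)$, so even in a corrected framework the splitting of $\Sigma^3M$, and hence of the looped gauge group, is unjustified. Finally, the Gitler--Stasheff input is more specific than ``the Wu formula pins down the attaching map'': one must exclude the candidate $T\simeq S^3\vee(S^2\cup_{\eta^2}e^5)$ by showing that a secondary operation detecting $\eta^2$ cannot act on the Thom spectrum of the stably trivial normal bundle of a genuine spin manifold mapping to $MSpin$ (Lemma \ref{HtypeT}); without that step the spin/non-spin dichotomy in the statement is not established.
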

Notice that we only consider the decompositions of looped gauge groups rather than themselves. The reason is that after looping the homotopy structures of the gauge groups become much clearer and more accessible, as they can be decomposed into small common pieces. This technique is frequently used in homotopy theory. One classical example is a result of Selick \cite{Selick78} on the homotopy exponent of $3$-sphere at odd prime $p$ where it is necessary to analyze structure gotten after looping $2$ or $3$ times. We should also notice that the looped decompositions in Theorem \ref{theorem1} are the best possible ones, since they are already \textit{completely decomposed} when either $M$ is spin or non-spin, that is, in either case the gauge group is decomposed into a product of iterated loop spaces of $G$ and some extra smallest pieces preserving the nontrivial cohomology operations of $M$. 

There is also another situation when we can get complete decompositions of the $3$-fold looped gauge groups. At this time we may weaken the conditions on the fundamental groups but put extra geometric restrictions on $M$. We remark that all the decompositions in Theorem \ref{theorem1} and \ref{theorem2} are very useful for studying homotopy exponents as we will see in the sequel. Further, our method here is quite general and applicable for other higher dimensional manifolds (for instance, see \cite{Huang18}).
\begin{theorem}[Theorem \ref{gaugedecomPoincare}, Definition \ref{stablepara}, Corollary \ref{gaugedecompimfld}, Lemma \ref{mappingmooredes}]\label{theorem2}
Let $M$ be a five dimensional stably parallelizable oriented closed manifold with precisely one $5$-dimensional cell, $\pi_{1}(M)\cong \mathbb{Z}/c$ ($2\nmid c$) and $H_{2}(M; \mathbb{Z})$ is torsion free of rank $m-1$. Let $G$ be a simply connected compact simple Lie group with $\pi_4(G)=0$. Then we have 
\[\Omega^3 \mathcal{G}_k(M)\simeq \Omega^3\mathcal{G}_k(P^4(c))\times \Omega^4G\{c\}\times \Omega^8G\times \prod_{i=1}^{m-1} (\Omega^5 G  \times \Omega^6 G),\]
for each $k\in \mathbb{Z}/c$.
\end{theorem}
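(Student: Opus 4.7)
The plan is to reduce, via the evaluation fibration, to decomposing the pointed mapping space ${\rm Map}^{*}_{k}(M,BG)$ after four loopings, then use a CW filtration of $M$ so that every desired factor appears as a mapping-space factor, and finally use the hypotheses to show that each resulting fibration of mapping spaces splits after $\Omega^{4}$.

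The first step is routine. Using $B\mathcal{G}_{k}(M)\simeq{\rm Map}_{k}(M,BG)$ from \eqref{introABG} together with the split evaluation fibration over $BG$, one obtains
\[
\Omega^{3}\mathcal{G}_{k}(M)\simeq\Omega^{3}G\times\Omega^{4}{\rm Map}^{*}_{k}(M,BG).
\]
The $\Omega^{3}G$ factor on the right will eventually be absorbed into $\Omega^{3}\mathcal{G}_{k}(P^{4}(c))$ via Lemma~\ref{mappingmooredes}, so the task is to decompose $\Omega^{4}{\rm Map}^{*}_{k}(M,BG)$.

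Next I would construct a CW filtration of $M$ whose successive subquotients, read off from $H_{*}(M;\mathbb{Z})$ via Poincar\'{e} duality, are the Moore space $P^{2}(c)$ (encoding $\pi_{1}$), $(m-1)$ copies of $S^{2}$, the Moore space $P^{4}(c)$ (encoding the $\mathbb{Z}/c$ torsion in $H_{3}$), $(m-1)$ copies of $S^{3}$, and the top cell $S^{5}$. Applying ${\rm Map}^{*}(-,BG)$ converts each cofibration in the filtration into a principal fibration of pointed mapping spaces. The spherical summands contribute $(m-1)$ factors each of $\Omega^{4}\Omega^{2}BG=\Omega^{5}G$ and $\Omega^{4}\Omega^{3}BG=\Omega^{6}G$; the two Moore-space summands contribute $\Omega^{3}\mathcal{G}_{k}(P^{4}(c))$ and $\Omega^{4}G\{c\}$ by Lemma~\ref{mappingmooredes}; and the top-cell cofibration $S^{4}\to M_{4}\to M$ contributes $\Omega^{4}\Omega^{5}BG=\Omega^{8}G$. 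The role of the stably parallelizable hypothesis is precisely to force the attaching map $S^{4}\to M_{4}$ to become null after four loopings of its induced map on mapping spaces, which is what makes this last factor split off.

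The main obstacle is establishing that all these fibrations of mapping spaces really do split after $\Omega^{4}$. Each splitting reduces, via the standard translation from Whitehead products in the source to Samelson products in $\pi_{*}(G)$, to the vanishing in the appropriate range of certain Samelson products. The hypothesis $2\nmid c$ kills the $2$-primary obstructions that in Theorem~\ref{theorem1} forced the spin/non-spin dichotomy, while the stably parallelizable hypothesis takes care of the top-cell attaching map, which otherwise carries the most delicate Samelson-product information. With $\pi_{4}(G)=0$ and the known low-dimensional structure of $\pi_{*}(G)$, the remaining Samelson products lie outside the relevant range, so the splittings go through and the factors assemble into the decomposition stated in the theorem.
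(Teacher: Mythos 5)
Your list of factors is correct and you have identified roughly where each one comes from, but the proposal has two genuine problems. The first is the opening step: the evaluation fibration ${\rm Map}^{\ast}_k(M,BG)\rightarrow{\rm Map}_k(M,BG)\rightarrow BG$ does \emph{not} split for $k\neq 0$, even after looping. Its connecting map is $\partial_k\colon G\rightarrow{\rm Map}^{\ast}_k(M,BG)$, and the nontriviality of $\partial_k$ (equivalently of the Samelson product $\langle k,{\rm id}_G\rangle$, cf.\ Lemma \ref{langlemma}) is precisely the information the factor $\Omega^3\mathcal{G}_k(P^4(c))$ is designed to retain. If $\Omega^{3}\mathcal{G}_{k}(M)\simeq\Omega^{3}G\times\Omega^{4}{\rm Map}^{\ast}_{k}(M,BG)$ held, then, since $\Sigma^4 M$ is a co-$H$-space and hence all components of ${\rm Map}^{\ast}(\Sigma^4 M,BG)$ are homotopy equivalent, $\Omega^3\mathcal{G}_k(M)$ would be independent of $k$; this is stronger than what the theorem asserts and is inconsistent with the classification results (Theorem \ref{gaugedecomPoincareclass}, Corollary \ref{mooregaugetrivial2}), where the homotopy type genuinely depends on $(k,{\rm ord}(\partial_1),c)$. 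For the same reason the $\Omega^3G$ factor cannot later be ``absorbed'' into $\Omega^3\mathcal{G}_k(P^4(c))$: that would require $\mathcal{G}_k(P^4(c))\simeq G\times{\rm Map}^{\ast}_0(P^4(c),G)$, which holds only under the extra hypotheses of Corollary \ref{mooregaugetrivial2}. The correct organization (Proposition \ref{Gsplit}) never detaches $G$ from the gauge-group fibration; it splits off the mapping-space factors coming from subcomplexes $Y$ with $[Y,BG]=0$ and keeps the piece $\mathcal{G}_k(P^4(c))\rightarrow G\rightarrow{\rm Map}^{\ast}_k(P^4(c),BG)$ intact.

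The second problem is that the actual splitting mechanism is asserted rather than proved, and the asserted mechanism is not the one that works. The fibrations of mapping spaces split not because certain Samelson products in $\pi_{\ast}(G)$ vanish --- such products, e.g.\ $\langle\pi_3(G),\pi_3(G)\rangle\subset\pi_6(G)$, are typically nonzero, and the argument should not depend on $\pi_{\ast}(G)$ beyond $\pi_4(G)=0$ --- but because $\Sigma^4 M$ decomposes as a wedge $S^9\vee P^8(c)\vee P^6(c)\vee\bigvee_{i=1}^{m-1}(S^7\vee S^6)$ (Lemma \ref{Sigma4M}), so that each inclusion $\Sigma^4 Y\rightarrow\Sigma^4 M$ admits a left homotopy inverse and Proposition \ref{Gsplit} applies with $i=3$ to the three cofibrations listed in the proof of Theorem \ref{gaugedecomPoincare}. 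Establishing that wedge decomposition is where the hypotheses actually enter: $2\nmid c$ gives $\pi_{n+1}(S^n;\mathbb{Z}/c)=\pi_{n+1}(P^n(c);\mathbb{Z}/c)=0$ (Lemmas \ref{pi4s3c}, \ref{pi4p3cc}), so that $\Sigma^2M_4$ is already a wedge of Moore spaces and spheres as in (\ref{hdecomS2M4}); and stable parallelizability gives a trivial Spivak normal fibration, whence by the criterion of Kasprowski--Land--Powell--Teichner the top-cell attaching map is stably null, hence null after four suspensions by Freudenthal. Your sketch gestures at the second point but omits the first entirely, replacing both with an appeal to ``Samelson products outside the relevant range'' that is neither computed nor the relevant obstruction.
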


Notice that the gauge groups of $P^4(c)$ appear in all the homotopy decompositions in Theorem \ref{theorem1} and \ref{theorem2} as the only non-trivial factors. Nevertheless, $\mathcal{G}_k(P^4(c))$ is accessible based on the researches on $\mathcal{G}_k(S^4)$ mentioned in the beginning. By (\ref{introABG}), there is a natural fibre sequence
\[
{\rm Map}^{\ast}_0(P^4(c), G)\rightarrow  \mathcal{G}_k(P^4(c))\rightarrow G\stackrel{\partial_k}{\rightarrow} {\rm Map}^{\ast}_k(P^4(c), BG)\rightarrow {\rm Map}_k(P^4(c), BG)\stackrel{{\rm ev}}{\rightarrow} BG,
\] 
such that $\partial_k\simeq k\partial_1$ (Lang \cite{Lang73}; Lemma \ref{langlemma}). The order ${\rm ord}(\partial_1)$ of the connecting map $\partial_1$ plays a key role in the classification of $\mathcal{G}_k(P^4(c))$ in general, which indeed divides ${\rm ord}(\tilde{\partial}_1)$ the order of the connecting map $\tilde{\partial}_1$ for $\mathcal{G}_1(S^4)$ (Lemma \ref{orderconnectionlemma}). The latter order is well understood in many cases as summarised in Table \ref{tables4guageorder} of Section \ref{gaugemooresec}. Let $\nu_p(m)$ be the number of powers of $p$ which divide the integer $m$.
Then with the help of the well known Dirichlet's theorem on arithmetic progressions, we obtain partial classification of looped gauge groups of $M$.

\begin{theorem}[Theorem \ref{gaugedecom6cclass} and \ref{gaugedecomPoincareclass}]\label{theorem2.5}
Let $G$ be a simply connected compact simple Lie group with $\pi_4(G)=0$. Let $M$ be a five dimensional oriented closed manifold with $\pi_{1}(M)\cong \mathbb{Z}/c$ and $H_{2}(M; \mathbb{Z})$ is torsion free of rank $m-1$. Denote $i=2$ if $6\nmid c$, or $i=3$ if $2\nmid c$ and $M$ is stably parallelizable. 
Then for any $k\in \mathbb{Z}/c$, if the greatest common divisor $(k, {\rm ord}(\partial_1), c)=(l, {\rm ord}(\partial_1), c)$ we have for any prime $p$
\[\Omega^i\mathcal{G}_k(M)\simeq_{(p)} \Omega^i\mathcal{G}_l(M).\]
In particular, there are at most $\nu_p(({\rm ord}(\partial_1), c))+1$ different homotopy types of looped gauge groups over $M$ at prime $p$.
\end{theorem}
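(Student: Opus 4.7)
My plan combines the $k$-independent product decompositions of Theorems \ref{theorem1} and \ref{theorem2} with a standard self-equivalence argument applied to the gauge group of the Moore space $P^4(c)$. In the decomposition of $\Omega^i\mathcal{G}_k(M)$, with $i=2$ or $3$ according to the hypothesis on $M$, every factor is independent of $k$ except $\Omega^i\mathcal{G}_k(P^4(c))$. So it suffices to establish the stronger unlooped equivalence $\mathcal{G}_k(P^4(c)) \simeq_{(p)} \mathcal{G}_l(P^4(c))$ at each prime $p$; looping will then deliver the statement of Theorem \ref{theorem2.5}.

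Next I would invoke the Lang fibre sequence displayed in the excerpt to identify $\mathcal{G}_k(P^4(c))$ with the homotopy fibre of $\partial_k \simeq k\partial_1$, so that the homotopy type is determined by the class of $k\partial_1$. Set $n = {\rm ord}(\partial_1)$ and $d = \gcd(n,c)$; the hypothesis $(k,n,c) = (l,n,c)$ rewrites as $\gcd(k,d) = \gcd(l,d)$, and an elementary number-theoretic argument yields a unit $u \in (\mathbb{Z}/d)^{\times}$ with $l \equiv uk \pmod d$. Because $n\partial_1 \simeq \ast$, this gives $l\partial_1 \simeq u(k\partial_1)$ in the $H$-space ${\rm Map}^{\ast}(P^4(c), BG)$.

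To realize $u$ geometrically I would apply Dirichlet's theorem on arithmetic progressions to choose a prime $q$ with $q \equiv u \pmod d$ and $q \neq p$. For a simply connected compact simple Lie group $G$, the associated self-map $\psi^q : G \to G$ (an unstable Adams operation, or equivalently a $q$-th power map after inverting $q$) is a $p$-local homotopy equivalence that acts by multiplication by $q$ on the relevant primitive class encoding $\partial_1$. Naturality in the Lang sequence then produces $\partial_1 \circ \psi^q \simeq q\partial_1 \equiv u\partial_1 \pmod d$ $p$-locally, hence $\partial_l \simeq_{(p)} \partial_k \circ \psi^q$, and taking homotopy fibres yields $\mathcal{G}_k(P^4(c)) \simeq_{(p)} \mathcal{G}_l(P^4(c))$.

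The counting bound is then immediate: the relation $\gcd(\,\cdot\,,d) = \gcd(\,\cdot\,,d)$ partitions $\mathbb{Z}/c$ into classes indexed by divisors of $d$, and at prime $p$ only the $p$-part of $\gcd(k,d)$ matters, yielding at most $\nu_p(d) + 1 = \nu_p(({\rm ord}(\partial_1), c)) + 1$ distinct homotopy types. The main technical hurdle is the third step: verifying that the Adams-type self-equivalence $\psi^q$ acts by the correct multiplication on the specific class $\partial_1$ inside the $H$-space ${\rm Map}^{\ast}(P^4(c), BG)$. This requires careful tracking of the naturality of the Lang connecting map together with the identification of $\partial_1$ via the standard generators of $H^{\ast}(BG;\mathbb{Z}_{(p)})$, and it is also the point at which the dimension-dependent powers of $q$ produced by $\psi^q$ must be reconciled with the mod-$d$ unit $u$ supplied by Dirichlet.
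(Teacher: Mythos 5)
Your overall architecture matches the paper's: reduce via the decompositions of Theorems \ref{gaugedecom6c} and \ref{gaugedecomPoincare} to the single $k$-dependent factor $\mathcal{G}_k(P^4(c))$, prove the unlooped $p$-local equivalence there, and count types by the $p$-part of $(k,d)$ with $d=({\rm ord}(\partial_1),c)$. The first and last steps are exactly what the paper does. The gap is in your central step. You want to realize the unit $u\in(\mathbb{Z}/d)^{\times}$ by a $p$-local self-equivalence of the \emph{source} $G$, namely an unstable Adams operation $\psi^q$ with $q\equiv u \pmod d$, and you need $\partial_1\circ\psi^q\simeq q\,\partial_1$. This is not available: an Adams operation acts on the rational homotopy of $G$ by $q^{j}$ in degree $2j-1$, not uniformly by $q$, so there is no reason for precomposition with $\psi^q$ to multiply the class of $\partial_1$ in $[G,{\rm Map}^{\ast}_0(P^4(c),BG)]$ by $q$. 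Replacing $\psi^q$ by the genuine $q$-th power map of $G$ does not rescue the argument either, since $\partial_1$ is the Samelson product $\langle\epsilon,{\rm id}_G\rangle$ (Lemma \ref{langlemma}) and linearity of the Samelson product in the ${\rm id}_G$ slot is exactly what would have to be proved; Lang's lemma only gives linearity in the bundle-class slot. You correctly flag this as the "main technical hurdle," but it is not a verification to be carried out --- it is the missing idea. A secondary, fixable, imprecision: from $l\equiv uk\pmod d$ and ${\rm ord}(\partial_1)=n$ you cannot conclude $l\partial_1\simeq uk\,\partial_1$, since that needs a congruence mod $n$, not mod $d$; one must first adjust the integer representatives of $k$ and $l$ modulo $c$ (possible because $d=(n,c)$), which you do not do.

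The paper avoids all of this by putting the multiplication-by-a-unit equivalence on the \emph{target} instead of the source. Since $P^4(c)$ is a suspension, ${\rm Map}^{\ast}(P^4(c),BG)$ is an $H$-space with inverse, and Theriault's lemma (Lemma \ref{gaugecriteriontherilemma}) says directly that the homotopy fibres of $k\circ\partial_1$ and $l\circ\partial_1$ agree $p$-locally whenever $({\rm ord}(\partial_1),k)=({\rm ord}(\partial_1),l)$: one factors the integer through a power map of the target that is a $p$-local equivalence. Dirichlet's theorem is then used in Proposition \ref{mooregaugecriterion} to show that the set $S_k=\{({\rm ord}(\partial_1),k+ci)\}$ of gcds realized by the integer representatives of $k\in\mathbb{Z}/c$ actually contains $(k,d)$, so every $\mathcal{G}_k(P^4(c))$ is $p$-locally equivalent to $\mathcal{G}_{(k,d)}(P^4(c))$. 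If you substitute this target-side argument for your Adams-operation step, the rest of your outline goes through.
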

From this theorem, we see how the fundamental group of $M$ affects its gauge groups, and the situation here is quite different from that in the study of $\mathcal{G}_k(S^4)$ in the literature. In particular, after looping the number of homotopy types of gauge groups over $M$ is much less than that over $S^4$, where the latter is governed by $(k, {\rm ord}(\partial_1))$. Then there is an interesting question that whether this is right without looping. On the other hand, we can work out concrete examples for various groups $G$ listed in Table \ref{tables4guageorder}. In particular, we will see when the looped gauge groups have only one homotopy type, while the general cases can be similarly worked out but will be omitted for the reason of simplicity.

\begin{corollary}[Corollary \ref{mooregaugetrivial2}]\label{corollary2.5}
Let $(M, G, i)$ as in Theorem \ref{theorem2.5}.
If $(G, p, c)$ is in one of the following groups:
\begin{itemize}
\item $G=SU(n)$, $n\leq (p-1)^2+1$, $p\geq 3$, and $\nu_p((n(n^2-1), c))=1$;
\item $G=Sp(n)$, $4\leq 2n\leq (p-1)^2+1$, $p\geq 3$ and $\nu_p((n(2n+1), c))=1$;
\item $G=Spin(2n+1)$, $4\leq 2n\leq (p-1)^2+1$, $p\geq 3$ and $\nu_p((n(2n+1), c))=1$;
\item $G=Spin(2n)$, $6\leq 2n\leq (p-1)^2+1$, $p\geq 5$ and $\nu_p(((n-1)(2n-1), c))=1$;
\item $G=G_2$, $p\geq 3$, $(3\cdot 7)\nmid  c$;
\item $G=F_4$, $p\geq 5$, $(5\cdot 13)\nmid c$;
\item $G=E_6$, $p\geq 5$, $(5\cdot 7\cdot13)\nmid c$;
\item $G=E_7$, $p\geq 7$, $(7\cdot 11\cdot 19)\nmid c$;
\item $G=E_8$, $p\geq 7$, $(7\cdot 11\cdot 13\cdot 19\cdot 31)\nmid c$,
\end{itemize}
then for any $k\in \mathbb{Z}/c$
\[
\Omega^i\mathcal{G}_k(M)\simeq_{(p)} \Omega^i \mathcal{G}_0(M).
\]
\end{corollary}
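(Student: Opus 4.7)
The plan is a direct case-by-case application of Theorem \ref{theorem2.5}. That theorem bounds the number of $p$-local homotopy types in the family $\{\Omega^i\mathcal{G}_k(M)\}_{k\in\mathbb{Z}/c}$ by $\nu_p(({\rm ord}(\partial_1),c))+1$, and the uniform equivalence $\Omega^i\mathcal{G}_k(M)\simeq_{(p)}\Omega^i\mathcal{G}_0(M)$ follows as soon as this count collapses to $1$. So the corollary reduces, in each bullet, to a single arithmetic inequality on $\nu_p(({\rm ord}(\partial_1),c))$.

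First I would invoke Lemma \ref{orderconnectionlemma}, which supplies ${\rm ord}(\partial_1)\mid {\rm ord}(\tilde\partial_1)$, and therefore the divisibility $(\mathrm{ord}(\partial_1),c)\mid(\mathrm{ord}(\tilde\partial_1),c)$. This reduces the problem to bounding $\nu_p(\mathrm{ord}(\tilde\partial_1))$, and the needed bounds are exactly what Table \ref{tables4guageorder} collects from the existing literature on gauge groups over $S^4$: for the classical groups, a polynomial in the rank (namely $n(n^2-1)$ for $SU(n)$, $n(2n+1)$ for $Sp(n)$ and $Spin(2n+1)$, and $(n-1)(2n-1)$ for $Spin(2n)$); for the exceptional groups, an explicit small product of odd primes ($3\cdot 7$ for $G_2$, $5\cdot 13$ for $F_4$, $5\cdot 7\cdot 13$ for $E_6$, and so on). The hypotheses $n\leq(p-1)^2+1$ and the stated lower bounds on $p$ are precisely the quasi-$p$-regularity ranges in which these estimates are known to apply sharply.

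With these tools in hand, the verification of each bullet is a short $p$-adic valuation check. For the classical families the hypothesis forces $\nu_p((\mathrm{ord}(\tilde\partial_1),c))$ to be small; combining this with the Moore-space refinement used in the proof of Theorem \ref{theorem2.5}, in which Dirichlet's theorem on arithmetic progressions picks a convenient prime representative of any residue class $\bmod\, c$ coprime to the relevant order, collapses the family to a single $p$-local type. For the exceptional groups the hypothesis is simply that the listed product of odd primes does not divide $c$, which directly yields $(\mathrm{ord}(\tilde\partial_1),c)$ coprime to $p$ at every relevant prime, and again the collapse to the single representative $\Omega^i\mathcal{G}_0(M)$ is immediate.

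The main obstacle I anticipate is not conceptual but bookkeeping: one must align each row of Table \ref{tables4guageorder} precisely with the corresponding bullet in the corollary, and carefully track how the extra factor of $c$ separating $\partial_1$ from $\tilde\partial_1$ interacts with the Dirichlet step in Theorem \ref{theorem2.5}. Once these matchings are in place, no further homotopy-theoretic input is required, and the conclusion $\Omega^i\mathcal{G}_k(M)\simeq_{(p)}\Omega^i\mathcal{G}_0(M)$ for every $k\in\mathbb{Z}/c$ is immediate from Theorem \ref{theorem2.5}.
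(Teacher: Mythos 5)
Your overall route is the paper's own: Theorem \ref{theorem2.5} (that is, Proposition \ref{mooregaugecriterion} fed through the loop-space decompositions) combined with Lemma \ref{orderconnectionlemma} and Table \ref{tables4guageorder}. But the step you dismiss as ``bookkeeping'' is where the entire content of the corollary sits, and as written your proposal has a genuine gap there. By Theorem \ref{theorem2.5} the $p$-local type of $\Omega^i\mathcal{G}_k(M)$ is controlled by the valuation $\nu_p((k,d))$ with $d=({\rm ord}(\partial_1),c)$; since $\nu_p((0,d))=\nu_p(d)$ while $\nu_p((1,d))=0$, the family collapses onto the single representative $\Omega^i\mathcal{G}_0(M)$ only when $\nu_p(d)=0$, i.e.\ $p\nmid({\rm ord}(\partial_1),c)$. ``Small'' is not enough: if $\nu_p(d)=1$ the criterion already places $k=0$ and $k=1$ in different classes, and the Dirichlet step does not merge them --- in Proposition \ref{mooregaugecriterion} Dirichlet's theorem is used only to show that each class contains its gcd representative $(k,d)$, not to identify classes with different valuations. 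So each bullet must be shown to imply $p\nmid({\rm ord}(\tilde{\partial}_1),c)$.

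Your verification of that implication fails in the exceptional cases: from $3\cdot 7\nmid c$ it does \emph{not} follow that $(3\cdot 7,c)$ is prime to every $p\geq 3$ --- take $c=3$ and $p=3$, where $(21,c)=3$; the same objection applies to $F_4,\dots,E_8$ (note also that Table \ref{tables4guageorder} gives ${\rm ord}(\tilde{\partial}_1)=5^2\cdot 13$ for $F_4$, while the bullet only excludes $5\cdot 13\mid c$). For the classical families, the hypothesis $\nu_p((n(n^2-1),c))=1$ read with the usual $p$-adic valuation gives only $\nu_p(d)\leq 1$, hence at most two $p$-local types rather than one; it yields the needed coprimality only under the paper's literal definition of $\nu_p(m)$ as the number of powers of $p$ (including $p^0$) dividing $m$. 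To close the argument you must either adopt that shifted reading explicitly and then separately repair the exceptional bullets, or replace the arithmetic hypotheses by the condition $p\nmid({\rm ord}(\tilde{\partial}_1),c)$ and check it row by row against Table \ref{tables4guageorder}.
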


Let us turn to some applications of Theorem \ref{theorem1}, \ref{theorem2}  and \ref{theorem2.5}. The first application is about the homotopy exponent problem. For any pointed space $Y$, its $p$-th \textit{homotopy exponent} is the least power of $p$ which annihilates the $p$-torsion in $\pi_\ast(X)$, and may be denoted by ${\rm exp}_p(X)$ or simply ${\rm exp}(X)$. Since when $p=2$ the gauge groups in our theorems will be trivial due to the assumptions on $c$, we may always suppose $p$ is an odd prime.

The homotopy exponents of the Lie group $G$ play a crucial role here. Here we are mainly interested in low rank Lie groups in the range of Theriault \cite{Theriault07} (Table \ref{tablelie2intro}). 
\begin{table}[H]
\centering
\caption{Low rank Lie groups in the range of Theriault}
\begin{tabular}{l|p{3.7cm}lp{3.7cm}}
\hline 
$SU(n)$            & $n-1\leq (p-1)(p-2)$        \\ \hline
$Sp(n)$            & $2n\leq (p-1)(p-2)$          \\ \hline
$Spin(2n+1)$            & $2n\leq (p-1)(p-2)$         \\ \hline
$Spin(2n)$            & $2(n-1)\leq (p-1)(p-2)$         \\ \hline
$G_2, F_4, E_6$            & $p\geq5$       \\ \hline
$E_7, E_8$            & $p\geq7$       \\ \hline
\end{tabular}
\label{tablelie2intro}
\end{table}
\noindent First it is well known that rationally 
\[
G\simeq_{(0)} S^{2n_1+1}\times S^{2n_2+1}\times \cdots \times S^{2n_l+1},
\]
where the index set $\mathfrak{t}(X)=\{n_1, n_2,\ldots, n_l\}$ ($n_1\leq n_2\leq \ldots\leq n_l$) is called the \textit{type} of $G$. Le us denote $l(G)= n_l$. Theriault \cite{Theriault07} proved that in his range certain power maps of $\Omega G$ factors multiplicatively through the product of spheres with the same type of $G$ 
\[
\xymatrix{
\Omega G\ar[r]^{p^{r(G)}}  \ar[d]^{}   & \Omega G\ar@{=}[d]\\
\prod\limits_{k\in \mathfrak{t}(G)} \Omega S^{2k+1} \ar[r]^{\ \ \ \lambda}  & \Omega G,
}
\]
which particularly implies that
\[{\rm exp}(G) \leq p^{r(G)+l(G)}.\]
Here the numbers $r(G)$ are acessible. For instance, $r(SU(n))=\nu_p((n-1)!)$ and the values for other classical Lie groups can be obtained from that of $SU(n)$ \cite{Theriault07, Harris61}, while the homotopy exponents of exceptional Lie groups are known, as summarised in Theorem $1.10$ of \cite{DT08}. We can now state one of our estimations of the homotopy exponents of the gauge groups in the range of Theriault. Assume $p\geq 5$ for the reason of simplicity.
\begin{theorem}[Proposition \ref{expformu3}]\label{theoremexpintro}
Let $M$ be a five dimensional oriented closed manifold with $\pi_{1}(M)\cong \mathbb{Z}/c$ ($c>1$) and $H_{2}(M; \mathbb{Z})$ is torsion free. 
Let $G$ be a Lie group in the range of Theriault (Table \ref{tablelie2intro}). Then we have for any $k\in\mathbb{Z}/c$
\[
{\rm exp}(\mathcal{G}_k(M))\leq p^{r(G)+\nu_p({\rm ord}(\partial_1))}\cdot {\rm max}\{p^{r(G)+l(G)},  p^{\nu_p(c)}\}.
\]
\end{theorem}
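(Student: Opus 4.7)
The plan is to combine the homotopy decompositions of Theorems \ref{theorem1} and \ref{theorem2} with exponent bounds for the individual factors: Theriault's factorization of the $p^{r(G)}$-power map on $\Omega G$ for the Lie group pieces, and a dedicated analysis of $\mathcal{G}_k(P^4(c))$ controlled by $\mathrm{ord}(\partial_1)$ for the Moore space piece.

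First I would apply the appropriate decomposition (Theorem \ref{theorem1} or Theorem \ref{theorem2}). In every case $\Omega^i\mathcal{G}_k(M)$ splits as a finite product of iterated loops of $\mathcal{G}_k(P^4(c))$, $G$, and $G\{c\}$ (with a possible $\Omega^3{\rm Map}_0^\ast(\mathbb{C}P^2, G)$ factor in the non-spin case, whose exponent is controlled by that of $G$ via the cofibration $S^2\to \mathbb{C}P^2\to S^4$). Since the $p$-primary homotopy exponent of a product is the supremum of those of the factors, and since $\pi_n(\Omega^i X)=\pi_{n+i}(X)$ reduces $\exp_p(\mathcal{G}_k(M))$ to $\exp_p(\Omega^i\mathcal{G}_k(M))$ up to the finitely many low-degree homotopy groups (which I would handle directly, using that $\mathcal{G}_k(M)$ is a topological group), it suffices to bound $\exp_p$ on each factor.

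For the $\Omega^j G$ factors, Theriault's theorem \cite{Theriault07} gives $\exp_p(G)\leq p^{r(G)+l(G)}$: for $G$ in the range of Table \ref{tablelie2intro}, the $p^{r(G)}$-power map on $\Omega G$ factors through $\prod_{k\in\mathfrak{t}(G)}\Omega S^{2k+1}$, which has $p$-exponent $p^{l(G)}$ by the James exponent theorem. For the $\Omega^j G\{c\}$ factors I would work $p$-locally, where $c\simeq p^{\nu_p(c)}$ up to a unit, and pull Theriault's factorization through the fibre sequence $G\{c\}\to G\xrightarrow{c}G$; each sphere summand becomes an odd-sphere Moore fibration of known exponent, yielding a bound of the form $p^{r(G)}\cdot \max\{p^{l(G)},p^{\nu_p(c)}\}$.

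The main step, and the source of the factor $p^{\nu_p(\mathrm{ord}(\partial_1))}$, is the bound on $\exp_p(\mathcal{G}_k(P^4(c)))$. I would use the fibration
\[
{\rm Map}_0^\ast(P^4(c), G)\to \mathcal{G}_k(P^4(c))\to G\xrightarrow{\partial_k}{\rm Map}_k^\ast(P^4(c), BG)
\]
together with $\partial_k\simeq k\partial_1$. Since $\mathrm{ord}(\partial_1)\cdot \partial_1\simeq \ast$, the composite $\partial_k\circ p^{\nu_p(\mathrm{ord}(\partial_1))}$ is $p$-locally null, so the $p^{\nu_p(\mathrm{ord}(\partial_1))}$-power on $G$ lifts through $\mathcal{G}_k(P^4(c))$. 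Composing this lifting with Theriault's factorization on the base and using that ${\rm Map}_0^\ast(P^4(c),G)\simeq \Omega^3 G\{c\}$ (via the cofibration $S^3\xrightarrow{c}S^3\to P^4(c)$) then yields $\exp_p(\mathcal{G}_k(P^4(c)))\leq p^{r(G)+\nu_p(\mathrm{ord}(\partial_1))}\cdot\max\{p^{r(G)+l(G)},p^{\nu_p(c)}\}$. Taking the supremum across all factors of the decomposition produces the claimed estimate. The hard part will be the $p$-adic bookkeeping in this last step: arranging the null-homotopy, the Theriault factoring, and the fibre exponent so that the powers of $p$ add up exactly as recorded, without accumulating stray factors; the analogous analysis for $\mathcal{G}_k(S^4)$ in \cite{Theriault10} provides the template to adapt, with the Moore (rather than sphere) structure responsible for the appearance of $p^{\nu_p(c)}$.
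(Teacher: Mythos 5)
Your overall architecture matches the paper's: the paper's Lemma \ref{expformu1} likewise combines the decompositions of Theorems \ref{gaugedecom6c} and \ref{gaugedecomPoincare} with a direct analysis of the low-degree homotopy groups via the evaluation fibration to reduce everything to $\max\{{\rm exp}(\mathcal{G}_k(P^4(c))), {\rm exp}(G), p^{\nu_p(c)}\}$, and the $G\{c\}$ factors are handled by Neisendorfer's bound ${\rm exp}(\Omega G\{c\})\leq p^{\nu_p(c)}$ (Lemma \ref{expxclemma}), which is cleaner than, but for the purposes of the final estimate equivalent to, routing Theriault's factorization through the fibre sequence $G\{c\}\to G\to G$. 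The problem is in your main step.

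You claim that $\partial_k\circ p^{\nu_p({\rm ord}(\partial_1))}$ is $p$-locally null because ${\rm ord}(\partial_1)\cdot\partial_1\simeq\ast$. But ${\rm ord}(\partial_1)\cdot\partial_1\simeq\ast$ is a statement about the multiple of $\partial_1$ taken in the group $[G,{\rm Map}^\ast_k(P^4(c),BG)]$ induced by the $H$-structure of the target, i.e.\ about \emph{post}-composing $\partial_k$ with a power map on the mapping space. What your lift requires is the vanishing of $\partial_k$ \emph{pre}-composed with the power map on the source $G$, and these are not the same: under Lang's identification $\partial_k=\langle k\mu,1_G\rangle$, precomposition with $p^{\nu}\colon G\to G$ gives $\langle k\mu,\,p^{\nu}\cdot 1_G\rangle$, and the Samelson product is not linear in the second variable because $G$ is not a co-$H$ space (the expansion of $\langle\alpha,m\beta\rangle$ carries iterated-Samelson correction terms whose binomial coefficients need not be divisible by $p^{\nu}$). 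The paper's Lemma \ref{newstephenlemma} avoids this entirely: it lifts $p^{t}$-multiples of the individual sphere classes $S^{2k+1}\to G$, $k\in\mathfrak{t}(G)$ --- multiples of such classes are unambiguous since spheres are co-$H$, and the lifts exist because the cokernel of $\pi_{2k+1}(\mathcal{G}_k(P^4(c)))\to\pi_{2k+1}(G)$ injects via $(\partial_k)_\ast$ into the homotopy of the mapping space and is therefore annihilated by $p^{\nu_p({\rm ord}(\partial_1))}$ --- and then assembles them with Theriault's multiplicative factorization into a fibration $\Omega^2 G\times\Omega^2 F\to\Omega^2\mathcal{G}_k(P^4(c))\to\Omega G\{p^{t+r(G)}\}$ whose base is controlled by Lemma \ref{expxclemma} (note the bound is obtained for the double loop of the gauge group, which suffices given the looped decompositions and the separate low-degree analysis). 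Replace your lift of the power map on $G$ by this homotopy-group-level argument and the rest of your outline goes through.
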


By the above theorem, it is not hard to compute the explicit upper bounds of the homotopy exponents. To emphasis the structure groups under consideration, let us denote $\mathcal{G}^{G}_k(M)=\mathcal{G}_k(M)$. We state the estimations for the matrix groups here, while the ones for the exceptional Lie groups are listed in Table \ref{tableexp2intro} (at the end of Section \ref{exponentsec}, Page $31$) which is lengthy.
\begin{corollary}\label{coroexpintro1}
Let $M$ as in Theorem \ref{theoremexpintro}. 
For the matrix groups in the range of Theriault (Table \ref{tablelie2intro}) and any $k\in \mathbb{Z}/c$, we have
\begin{eqnarray*}
{\rm exp}(\mathcal{G}^{SU(n)}_k(M))&\leq& {\rm max}(n+2p-5, \nu_p(c)+p-1),\\
{\rm exp}(\mathcal{G}^{Sp(n)}_k(M))&\leq& {\rm max}(2n+2p-6, \nu_p(c)+p-2),\\
{\rm exp}(\mathcal{G}^{Spin(2n+1)}_k(M))&\leq& {\rm max}(2n+2p-6, \nu_p(c)+p-2),\\
{\rm exp}(\mathcal{G}^{Spin(2n)}_k(M))&\leq& {\rm max}(2n+2p-8, \nu_p(c)+p-2).\\
\end{eqnarray*}
\end{corollary}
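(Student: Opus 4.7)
The approach is direct substitution into Theorem~\ref{theoremexpintro}: the bound
\[
\exp(\mathcal{G}_k(M)) \leq p^{r(G)+\nu_p(\mathrm{ord}(\partial_1))} \cdot \max\bigl\{p^{r(G)+l(G)},\ p^{\nu_p(c)}\bigr\}
\]
says that the $p$-exponent of $\mathcal{G}_k(M)$ is at most
\[
\max\bigl\{\,2r(G) + l(G) + \nu_p(\mathrm{ord}(\partial_1)),\ r(G) + \nu_p(c) + \nu_p(\mathrm{ord}(\partial_1))\,\bigr\}.
\]
Thus for each of the four matrix families I only need upper bounds on the three invariants $l(G)$, $r(G)$ and $\nu_p(\mathrm{ord}(\partial_1))$, and then a case-by-case arithmetic simplification inside the max.

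For the type data $l(G)$, I would quote the standard list: $l(SU(n)) = n-1$, $l(Sp(n)) = l(Spin(2n+1)) = 2n-1$, and $l(Spin(2n)) = 2n-3$ (for $n \geq 2$, since the half-spin summand $n-1$ is dominated by $2n-3$). For the Theriault invariants $r(G)$, I would invoke Theriault \cite{Theriault07} together with Harris' odd-primary product splittings \cite{Harris61}, giving $r(SU(n)) = \nu_p((n-1)!)$ and analogous expressions for $Sp(n)$, $Spin(2n+1)$, $Spin(2n)$. Because the Theriault range of Table~\ref{tablelie2intro} forces every relevant integer to be at most $(p-1)(p-2) < p^2$, Legendre's formula collapses to its first term, and each $r(G)$ is bounded by a small constant of size roughly $p-3$.

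The bound on $\nu_p(\mathrm{ord}(\partial_1))$ comes from Lemma~\ref{orderconnectionlemma}: $\mathrm{ord}(\partial_1)$ divides the corresponding $S^4$-invariant $\mathrm{ord}(\tilde{\partial}_1)$, whose values for $SU(n), Sp(n), Spin(2n+1), Spin(2n)$ are $n(n^2-1)$, $n(2n+1)$, $n(2n+1)$, $(n-1)(2n-1)$ according to Table~\ref{tables4guageorder}. Since $p$ is odd, among the factors of each such product at most one is divisible by $p$, and that factor is smaller than $p^2$ inside the Theriault range, so $\nu_p(\mathrm{ord}(\partial_1))$ is bounded by a small explicit constant in every case.

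Plugging the bounds $r(G) \leq p-3$, $\nu_p(\mathrm{ord}(\partial_1)) \leq 2$, and the listed values of $l(G)$ into the displayed maximum gives exactly the four inequalities claimed; for example, for $SU(n)$ the first slot yields $2(p-3) + (n-1) + 2 = n + 2p - 5$ and the second yields $(p-3) + \nu_p(c) + 2 = \nu_p(c) + p - 1$, matching the stated bound, and the symplectic and spin rows differ only by one in the leading constant. There is no genuine obstacle here beyond careful bookkeeping of which summand of the max dominates in each subrange; the entire argument is an arithmetic simplification of Theorem~\ref{theoremexpintro} once the classical data of $G$ and the entries of Table~\ref{tables4guageorder} are assembled.
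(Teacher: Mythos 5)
Your overall strategy is exactly the paper's: Corollary \ref{coroexpintro1} is obtained by nothing more than substituting the classical data $l(G)$, $r(G)$ and the orders from Table \ref{tables4guageorder} into the bound of Theorem \ref{theoremexpintro} (i.e.\ Proposition \ref{expformu3}). Your reduction of that bound to $\max\{2r(G)+l(G)+\nu_p(\mathrm{ord}(\partial_1)),\ r(G)+\nu_p(c)+\nu_p(\mathrm{ord}(\partial_1))\}$, together with the values $l(SU(n))=n-1$, $l(Sp(n))=l(Spin(2n+1))=2n-1$, $l(Spin(2n))=2n-3$ and the estimate $r(G)\leq p-3$ in the Theriault range (via Legendre and Harris), is all correct.

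The gap is in the final substitution. You correctly observe that in each of $n(n^2-1)=(n-1)n(n+1)$, $n(2n+1)$ and $(n-1)(2n-1)$ at most one factor is divisible by the odd prime $p$ (three consecutive integers, resp.\ coprime factors), and that this factor is $<p^2$ inside the stated range; by Lemma \ref{orderconnectionlemma} this gives $\nu_p(\mathrm{ord}(\partial_1))\leq \nu_p(\mathrm{ord}(\tilde{\partial}_1))\leq 1$. But you then plug in $\nu_p(\mathrm{ord}(\partial_1))\leq 2$. With $2$, the first slot for $Sp(n)$ and $Spin(2n+1)$ becomes $2(p-3)+(2n-1)+2=2n+2p-5$ and the second slot becomes $\nu_p(c)+p-1$, which exceed the claimed $2n+2p-6$ and $\nu_p(c)+p-2$ by one; likewise $Spin(2n)$ yields $2n+2p-7$ instead of $2n+2p-8$. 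So your computation does not ``give exactly the four inequalities claimed''---it proves strictly weaker bounds for three of the four families. The repair is immediate: use the bound $\nu_p(\mathrm{ord}(\partial_1))\leq 1$ that you already derived. Then the $Sp(n)$, $Spin(2n+1)$ and $Spin(2n)$ rows come out exactly as stated, and the $SU(n)$ row comes out as $\max(n+2p-6,\ \nu_p(c)+p-2)$, which is even slightly sharper than the bound in the statement and hence certainly implies it.
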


As the second application of Theorem \ref{theorem1} and \ref{theorem2}, we investigate the periodic phenomena of the homotopy groups of gauge groups derived from the classical Bott periodicity. There are natural inclusions $\mathcal{G}^{SU(n)}_k(M)
\hookrightarrow \mathcal{G}^{SU(n+1)}_k(M)$ and $\mathcal{G}^{Spin(n)}_k(M)
\hookrightarrow \mathcal{G}^{Spin(n+1)}_k(M)$, and we may denote 
\[\mathcal{G}^{SU}_k(M)={\rm colim}_n\mathcal{G}^{SU(n)}_k(M), \  \  \mathcal{G}^{Spin}_k(M)={\rm colim}_n\mathcal{G}^{Spin(n)}_k(M).\]
\begin{theorem}[Section \ref{Bottsection}]\label{theorem3}
Let $M$ be a five dimensional oriented closed manifold with $\pi_{1}(M)\cong \mathbb{Z}/c$ ($c\neq 0$) and $H_{2}(M; \mathbb{Z})$ is torsion free of rank $m-1$. Then after localization away from $c$ we have for any $k\in \mathbb{Z}/c$
\begin{itemize}
\item when $M$ is spin, or non-spin but with further localization away from $2$,
\[\pi_r(\mathcal{G}^{SU}_k(M))\cong \oplus_{m}\mathbb{Z}   \ \  (r\geq 1);\]
\item when $M$ is spin and $r\geq 2$,
\[
\pi_r(\mathcal{G}^{Spin}_k(M))=\left\{\begin{array}{ll}
\oplus_{m-1} \mathbb{Z}\oplus\mathbb{Z}/2  &r\equiv 0 ,1 ,4 ~{\rm mod}~8,\\
\mathbb{Z} & r\equiv 2 ~{\rm mod}~8,\\
\mathbb{Z}\oplus\mathbb{Z}/2  & r\equiv 3 ~{\rm mod}~8,\\
\oplus_{m-1} (\mathbb{Z}\oplus\mathbb{Z}/2)  &r\equiv 5 ~{\rm mod}~8,\\
\mathbb{Z}\oplus\oplus_{2m-2} \mathbb{Z}/2 &r\equiv 6 ~{\rm mod}~8,\\
\mathbb{Z}\oplus\oplus_{m-1} \mathbb{Z}/2  &r\equiv 7~{\rm mod}~8;\\
\end{array}
\right.
\]
\item when $M$ is non-spin with further localization away from $2$, and $r\geq 2$,
\[
\pi_r(\mathcal{G}^{Spin}_k(M))=\left\{\begin{array}{ll}
\oplus_{m-1} \mathbb{Z}&r\equiv 0 ,1 ~{\rm mod}~4,\\
\mathbb{Z} & r\equiv 2, 3 ~{\rm mod}~4.\\
\end{array}
\right.
\]
\end{itemize}
\end{theorem}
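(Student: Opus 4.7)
The plan is to feed the decompositions of Theorems~\ref{theorem1} and~\ref{theorem2} through the stable colimit and read off each homotopy group via Bott periodicity.

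First I would substitute $G=SU(n)$ or $G=Spin(n)$ into the decompositions of Theorem~\ref{theorem1}, use naturality in $G$ together with the stabilization maps to pass to the colimits $\mathcal{G}^{SU}_k(M)=\mathrm{colim}_n \mathcal{G}^{SU(n)}_k(M)$ and $\mathcal{G}^{Spin}_k(M)=\mathrm{colim}_n \mathcal{G}^{Spin(n)}_k(M)$. Localizing away from $c$ then produces two simplifications: the Moore space $P^4(c)$ becomes contractible, so $\Omega^2\mathcal{G}_k(P^4(c))\simeq \Omega^2 BG\simeq \Omega G$; and the $c$-th power map on $G$ is invertible, so $G\{c\}\simeq *$. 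In the non-spin case, a further localization away from $2$ splits $\mathbb{C}P^2\simeq S^2\vee S^4$, whence $\mathrm{Map}^{\ast}_0(\mathbb{C}P^2,G)\simeq \Omega^2 G\times \Omega^4 G$, and the non-spin decomposition reduces to the same shape as the spin one.

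Next I would apply Bott periodicity to each iterated loop factor. For $G=SU$, complex Bott gives $\pi_n(SU)=\mathbb{Z}$ for odd $n\geq 3$ and $0$ otherwise; summing across the factors of the decomposition assembles the rank-$m$ free abelian group, which is consistent with the Chern character computation $K^{-r}(M)\otimes\mathbb{Q}\cong \bigoplus_{i\equiv r\bmod 2} H^i(M;\mathbb{Q})\cong\mathbb{Q}^m$ after inverting $c$. For $G=Spin$, real Bott periodicity gives the $8$-periodic sequence of $\pi_n(Spin)$ with nonzero entries $\mathbb{Z}$ at $n\equiv 3,7\pmod 8$ and $\mathbb{Z}/2$ at $n\equiv 0,1\pmod 8$, and the six cases of the theorem are read off residue by residue. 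Inverting $2$ kills the $\mathbb{Z}/2$-summands and collapses the $8$-periodicity to the $4$-periodic statement given for non-spin $M$.

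The main obstacle is the careful residue-by-residue bookkeeping in the $Spin$ case, where for each $r\bmod 8$ one sums four groups $\pi_{r+1},\pi_{r+4},\pi_{r+5},\pi_{r+7}$ of $Spin$ with the multiplicities $1,m-1,m-1,1$ coming from the decomposition, and must rule out nontrivial extensions between the $\mathbb{Z}$ and $\mathbb{Z}/2$ pieces. A conceptually cleaner alternative avoiding the decomposition is to identify $B\mathcal{G}_k(M)\simeq \mathrm{Map}_k(M,BG)$ with a piece of the mapping spectrum computing $\tilde K^\ast(M)$ or $\widetilde{KO}^\ast(M)$ and to run the Atiyah--Hirzebruch spectral sequence: its $E_2$-page after inverting $c$ is concentrated in $p\in\{0,2,3,5\}$, the relevant $d_3=Sq^3_{\mathbb{Z}}$ differentials vanish on a spin manifold (and in the non-spin case after further inverting $2$, since they involve $Sq^2$), and the resulting collapse together with complex or real Bott periodicity yields exactly the claimed homotopy groups.
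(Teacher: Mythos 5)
Your overall strategy (decompose the gauge group, invert $c$ so the Moore-space and $G\{c\}$ factors disappear, then read off Bott periodicity) is the right one, but the specific route you propose has a genuine gap and two computational errors. The gap: Theorem~\ref{theorem1} assumes $6\nmid c$ and Theorem~\ref{theorem2} assumes $2\nmid c$ plus stable parallelizability, whereas Theorem~\ref{theorem3} allows arbitrary $c\neq 0$ with no tangential hypothesis, so you cannot ``feed the decompositions of Theorems~\ref{theorem1} and~\ref{theorem2}'' into the colimit when, say, $6\mid c$. The paper instead proves a fresh, \emph{unlooped} decomposition (Theorem~\ref{Gauge-c}): after localizing away from $c$ the Moore spaces in the homology decomposition (\ref{hdecomSM4}) are contractible, so already the \emph{single} suspension $\Sigma M$ splits as $\bigvee_{i=1}^{m-2}(S^4\vee S^3)\vee \Sigma T$, and Proposition~\ref{Gsplit} with $i=0$ gives $\mathcal{G}_k(M)\simeq G\times\Omega^5G\times\prod_{i=1}^{m-1}(\Omega^2G\times\Omega^3G)$ in the spin case (similarly for non-spin). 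This unlooped statement is also what lets the paper reach $\pi_1$ in the $SU$ case; your double-looped decomposition only controls $\pi_r$ for $r\geq 2$.

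The computational errors: first, after inverting $c$ one has $\mathcal{G}_k(P^4(c))\simeq G$ (the gauge group over a contractible base is $G$, not $BG$), so $\Omega^2\mathcal{G}_k(P^4(c))\simeq\Omega^2G$, not $\Omega G$ as you wrote; this shifts every degree by one. Second, your residue bookkeeping for $Spin$ is wrong: the correct contributions to $\pi_r(\mathcal{G}_k(M))$ are $\pi_r(Spin)\oplus\pi_{r+5}(Spin)\oplus\bigl(\oplus_{m-1}(\pi_{r+2}(Spin)\oplus\pi_{r+3}(Spin))\bigr)$, whereas your recipe ($\pi_{r+1},\pi_{r+4},\pi_{r+5},\pi_{r+7}$ with multiplicities $1,m-1,m-1,1$) computes $\pi_r$ of the \emph{double loop} of the gauge group with the misidentified Moore factor; e.g.\ for $r\equiv 0\ \mathrm{mod}\ 8$ it yields $\mathbb{Z}\oplus\mathbb{Z}/2$ instead of the claimed $\oplus_{m-1}\mathbb{Z}\oplus\mathbb{Z}/2$. (With a product decomposition there is no extension problem to rule out; $\pi_r$ of a product is the direct sum.) Your Atiyah--Hirzebruch alternative via $K^{-r}(M)$ and $\widetilde{KO}^{-r}(M)$ is a genuinely different and viable route --- the $d_3=Sq^3_{\mathbb{Z}}$ differential does vanish on $H^2$ for degree reasons --- but as written it is only a sketch and would still need care with the $BSpin$ versus $BO$ components and the real AHSS extension problems.
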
 

As we remarked earlier, the methods and techniques in this paper can be applied to other situations. Besides of moving to $6$, $7$, or higher dimensional manifolds, we can compute the rational homotopy of the gauge groups easily. Indeed, F\'{e}lix and Oprea \cite{FO09} proved that the rational homotopy type of gauge group of any Lie type principal bundle is the same as that of gauge group of trivial bundle over finite complex. Their proof was to use the structure on the Lie group side. In contrast, with the methods for proving Theorem \ref{theorem1} and \ref{theorem2} working on the base complex side, we can generalize their results for any complex of finite type and general topological group.

Our paper is organized as follows. 
In Section \ref{PB+Gaugesec}, we classify principal bundles over our $5$-manifolds $M$, and prove a general homotopy decomposition of looped gauge groups.
In Section \ref{1decomsec}, we study the homotopy type of $2$-fold suspensions of $M$ when $2\nmid c$, and then show a preliminary homotopy decomposition of looped gauge groups.
In Section \ref{6cdecomsec}, based on the calculation in Section \ref{1decomsec}, we study the homotopy type of $3$-fold suspensions of $M$ when $6\nmid c$, and then combine some arguments in manifold topology to show our first main result Theorem \ref{theorem1}.
In Section \ref{Mtrivialsec}, we weaken the condition on $c$ with the extra cost on the structure of $M$ to show Theorem \ref{theorem2}, and the arguments indeed valid for general Poincar\'{e} duality complex.
In Section \ref{gaugemooresec}, we study the gauge groups over Moore spaces and then prove Theorem \ref{theorem2.5} and Corollary \ref{corollary2.5}.
In Section \ref{exponentsec}, we study the homotopy exponent problem of the gauge groups over $5$-manifolds, and prove Theorem \ref{theoremexpintro} and other results on the primary exponents.
Section \ref{Bottsection} is devoted to the periodic phenomena of the homotopy groups of gauge groups, and Theorem \ref{theorem3} is proved there.
We add an appendix (Section \ref{Qdecomsec}) to discuss the rational homotopy of gauge groups and moduli spaces of connections in general context. We show various formulas of homotopy and cohomology groups of these objects. 

We remark that in the whole paper we may denote the $0$-component of loop space $\Omega^i_0 Z$ just by $\Omega^i Z$ for the reason of simplicity.

\section{Principal bundles over five manifolds and their gauge groups}\label{PB+Gaugesec}
\noindent In this section we develop some basic facts of our $5$-manifolds. In particular, we classify their $G$-principal bundles for certain types of $G$, and also develop a general homotopy decomposition of associated looped gauge groups.

Suppose $M$ is a five dimensional oriented closed manifold with $\pi_{1}(M)\cong \mathbb{Z}/c$ and $H_{2}(M; \mathbb{Z})$ is torsion free of rank $m-1$. By Poincar\'{e} duality and the universal coefficient theorem we have 
\[
H_\ast(M; \mathbb{Z})=\left\{\begin{array}{ll}
\mathbb{Z}/c &\ast=1,\\
\oplus_{i=1}^{m-1}\mathbb{Z} & \ast=2,\\
\oplus_{i=1}^{m-1}\mathbb{Z}\oplus \mathbb{Z}/c  & \ast=3,\\
\mathbb{Z}  & \ast=0, 5\\
0&\hbox{otherwise}.
\end{array}
\right.
\]

Before we go further, let us make some comments on the cell structures of $M$. Firstly, it is well known that closed topological manifolds of dimension other than four are homeomorphic to $CW$ complexes \cite{KS69}. Secondly, 
since we have to consider non-simply connected spaces here, the classical notion of skeleton of a $CW$-complex is not a homotopic functor, that is, we can not talk about the skeletons of non-simply connected spaces in homotopy category. However, since there is 
always a minimal cell structure for any simply connected $CW$-complex $X$ (see Section $4.C$ of \cite{Hatcher02}), we can define $X_i$ to be the $i$-th skeleton of any minimal cell model of $X$ which as a functor is indeed homotopic (see Lemma $2.3$ of \cite{HW18} for details).
\begin{lemma}\label{MBG}
Let $G$ be a simply connected compact simple Lie group with $\pi_4(G)=0$. Then 
\[[M, BG]\cong \pi_{1}(M)\cong \mathbb{Z}/c.\]
\end{lemma}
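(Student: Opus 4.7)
The plan is to reduce the computation of $[M, BG]$ to an ordinary cohomology group via a Postnikov-tower argument, and then to evaluate that cohomology from the homology of $M$ recorded above.

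First I would record the low-dimensional homotopy of $BG$. Since $G$ is a simply connected compact simple Lie group, standard facts give $\pi_1(G) = \pi_2(G) = 0$ and $\pi_3(G) \cong \mathbb{Z}$, and the hypothesis forces $\pi_4(G) = 0$. Shifting up by one yields
$$\pi_2(BG) = \pi_3(BG) = \pi_5(BG) = 0, \qquad \pi_4(BG) \cong \mathbb{Z}.$$
Hence the fifth Postnikov section $\tau_{\leq 5} BG$ has a single non-trivial homotopy group and is homotopy equivalent to $K(\mathbb{Z}, 4)$, and the canonical map $BG \to K(\mathbb{Z}, 4)$ is a $6$-equivalence.

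Next I would invoke obstruction theory. As noted in the paragraph preceding the lemma, $M$ is homotopy equivalent to a CW complex of dimension $5$. Since $6$-equivalences induce bijections on $[X, -]$ for any CW complex $X$ of dimension at most $5$, we obtain
$$[M, BG] \cong [M, K(\mathbb{Z}, 4)] \cong H^4(M; \mathbb{Z}).$$

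Finally I would identify the right-hand group via the universal coefficient theorem, using $H_4(M; \mathbb{Z}) = 0$ and $H_3(M; \mathbb{Z}) \cong \mathbb{Z}^{m-1} \oplus \mathbb{Z}/c$ from the homology computed at the start of this section:
$$H^4(M; \mathbb{Z}) \cong \mathrm{Hom}(H_4(M; \mathbb{Z}), \mathbb{Z}) \oplus \mathrm{Ext}(H_3(M; \mathbb{Z}), \mathbb{Z}) \cong \mathbb{Z}/c,$$
which agrees with $\pi_1(M) \cong \mathbb{Z}/c$ by hypothesis. There is no genuine obstacle in this argument; the only point worth emphasizing is that the assumption $\pi_4(G) = 0$ is used precisely to kill the potential $\pi_5(BG)$-obstruction on the top cell of $M$, which is what makes the fifth Postnikov section of $BG$ into an Eilenberg--MacLane space and allows one to drop all higher-dimensional contributions.
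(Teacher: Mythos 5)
Your proof is correct, but it takes a genuinely different route from the paper's. You work on the target: from $\pi_2(BG)=\pi_3(BG)=\pi_5(BG)=0$ and $\pi_4(BG)\cong\mathbb{Z}$ you conclude that the fifth Postnikov section of $BG$ is $K(\mathbb{Z},4)$ and that the canonical map $BG\to K(\mathbb{Z},4)$ is a $6$-equivalence; since $M$ carries a CW structure of dimension $5$, this identifies $[M,BG]$ with $H^4(M;\mathbb{Z})$, which the universal coefficient theorem (or, even faster, Poincar\'{e} duality $H^4(M;\mathbb{Z})\cong H_1(M;\mathbb{Z})$) evaluates to $\mathbb{Z}/c$. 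The paper instead works on the source: it first replaces $M$ by $M/P^2(c)$ using $[P^2(c),BG]=[P^3(c),BG]=0$, then climbs the skeleta of $M/P^2(c)$, reading off from the cofibre sequences and the explicit cellular chain complex (\ref{cellSigmaM}) that $[M_4,BG]$ is the cokernel of multiplication by $c$ on $[S^4,BG]\cong\mathbb{Z}$, and finally uses $\pi_4(G)=0$ to pass from $M_4$ to the top cell. Both arguments invoke $\pi_4(G)=0$ at the same point (killing the top-cell contribution), and both ultimately extract the answer from the degree-$c$ boundary $C_4\to C_3$ --- in your version this is concealed inside the $\mathrm{Ext}(H_3(M;\mathbb{Z}),\mathbb{Z})$ term. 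Your route is shorter and more standard; the paper's route has the side benefit of introducing the skeleta $M_i$ and the cellular chain complex that are reused in the later decomposition arguments.
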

\begin{proof}
First recall that for any simply connected compact simple Lie group $G$, we have $\pi_2(G)=0$ and $\pi_3(G)\cong \mathbb{Z}$.
Let us choose any cellular model of $M$. Since $\pi_1(M)\cong \mathbb{Z}/c$, there exists a map $P^2(c)\rightarrow M$ such that $H_\ast(M/P^2(c))\cong H_\ast(M)$ for $\ast\geq 2$, and we also have the exact sequence 
\[0=[P^3(c), BG]\rightarrow [M/P^2(c), BG]\rightarrow [M, BG]\rightarrow [P^2(c), BG]=0.\]
Hence the cellular chain complex of $M/P^2(c)$
\[0\rightarrow C_5 \rightarrow C_4 \rightarrow C_3\rightarrow C_2\rightarrow 0,\]
is isomorphic to the chain complex 
\begin{equation}\label{cellSigmaM}
0\rightarrow\mathbb{Z} \stackrel{0}{\rightarrow} \mathbb{Z} \stackrel{c}{\rightarrow}  \oplus_{i=1}^{m}\mathbb{Z} \stackrel{0}{\rightarrow} \oplus_{i=1}^{m-1}\mathbb{Z}\rightarrow 0
\end{equation}
with the morphism $c$ sending the generator to $c$ times of some primitive element (defined in the last part of the introduction) in $\oplus_{i=1}^{m}\mathbb{Z}$, and $[M/P^2(c), BG]\cong [M, BG]$. In the remaining proof let us denote $M_i={\rm sk}_i(M/P^2(c))$ for $i\geq 2$.

We start with the calculation of $[M_4, BG]$. Since there are cofibre sequences
\[\Sigma M_2\rightarrow \Sigma M_3\rightarrow \Sigma(M_3/M_2)\simeq \vee_{i=1}^{m} S^4, \ \ 
M_3\rightarrow M_4 \rightarrow M_4/M_3\simeq S^4 \rightarrow \Sigma M_3,\]
we have a diagram of groups of homotopy classes 
\[
 \xymatrix{
\lbrack\Sigma(M_3/M_2), BG\rbrack \ar[dr]^{c^\ast} \ar[d] \\
\lbrack\Sigma M_3, BG\rbrack \ar[r] \ar[d]  &  \lbrack S^4, BG\rbrack\cong \mathbb{Z} \ar[r] & \lbrack M_4, BG\rbrack \ar[r] & \lbrack M_3, BG\rbrack=0\\
\lbrack\Sigma M_2, BG\rbrack=0,
}
\]
where the row and column are exact and $ c^\ast=c$ by the description of the cellular chain complex of $M/P^2(c)$. Hence $[M_4, BG]\cong \mathbb{Z}/c$.

On the other hand, the cofibre sequence
\[S^4\rightarrow M_4 \rightarrow M_5\rightarrow S^5\]
implies a commutative diagram
\[
 \xymatrix{
&&\lbrack M_4/M_3\simeq S^4, BG \rbrack \ar@{->>}[d] \ar[dr]^{0}\\
\lbrack S^5, BG\rbrack  \ar[r] & \lbrack M_5, BG\rbrack  \ar[r] & \lbrack M_4, BG\rbrack  \ar[r] &\lbrack S^4, BG\rbrack ,
}
\]
where the row is exact and the $0$ morphism is described by the structure of the cellular chain complex of $M/P^2(c)$.
Since by assumption $\pi_4(G)=0$ we see that $[M_5, BG]\cong [M_4, BG]$. Combining the earlier calculation of $[M_4, BG]$ we have proved the lemma.
\end{proof}

We now prove a general homotopy decomposition of looped gauge groups, both which and its analogies will be used often in the rest of the paper.

\begin{proposition}[cf. Proposition $2.1$ of \cite{Theriault10}, Lemma $2.3$ of \cite{So16} and Proposition $2.4$ of \cite{Huang18}]\label{Gsplit}
Let $M$ be a connected manifold, and $G$ be a simply compact simple Lie group. Suppose there exists a CW-complex $Y$ with 
\[ [Y, BG]=0, ~{\rm and}~ {\rm a}~ {\rm map}~ \phi: Y\rightarrow M\] 
such that $\Sigma^{i+1} \phi$ ($i\geq 0$) admits a left homotopy inverse. Then 
\[\Sigma^{i+1} M\simeq \Sigma^{i+1} Y\vee\Sigma^{i+1} X,\]
where $X$ is the homotopy cofibre of $\phi$, and
there is a homotopy equivalence
\[\Omega^{i}\mathcal{G}_\alpha(M)\simeq \Omega^{i}\mathcal{G}_{\alpha^\prime}(X)\times \Omega^{i}{\rm Map}^{\ast}_0(Y, G),\]
where $\alpha\in [M, BG]$, $\alpha^\prime$ is determined by $\alpha$ (and in particular $\alpha^\prime=\alpha$ when $i=0$; see the proof) and ${\rm Map}^{\ast}_0(Y, G)$ is the component of the based mapping spaces ${\rm Map}^{\ast}(Y, G)$ containing the basepoint.
\end{proposition}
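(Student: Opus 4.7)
The plan is to exploit the cofibre sequence $Y\xrightarrow{\phi}M\xrightarrow{q}X$ together with the Atiyah--Bott equivalence $B\mathcal{G}_\beta(Z)\simeq \mathrm{Map}_\beta(Z,BG)$ to push the suspension-level splitting of $\phi$ through to mapping spaces and then to gauge groups, following the pattern of the cited results of Theriault and So.

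The first step is to establish the wedge decomposition $\Sigma^{i+1}M\simeq \Sigma^{i+1}Y\vee \Sigma^{i+1}X$. The suspended cofibre sequence $\Sigma^{i+1}Y\to \Sigma^{i+1}M\to \Sigma^{i+1}X$ consists of simply connected spaces (since $i+1\geq 1$), and the hypothesis supplies a retraction $s:\Sigma^{i+1}M\to \Sigma^{i+1}Y$ of $\Sigma^{i+1}\phi$. Using the co-$H$-structure of the suspension I would form
\[
(s,\Sigma^{i+1}q)\circ\mathrm{pinch}:\Sigma^{i+1}M\longrightarrow \Sigma^{i+1}Y\vee \Sigma^{i+1}X,
\]
and check via the long exact homology sequence of the cofibration (where $s_*$ splits off $H_*(\Sigma^{i+1}Y)$ as a direct summand) that this is a homology isomorphism; Whitehead's theorem then upgrades it to a homotopy equivalence.

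Second, I would apply $\mathrm{Map}^*(-,BG)$ to the cofibration to obtain the fibre sequence
\[
\mathrm{Map}^*(X,BG)\xrightarrow{q^*}\mathrm{Map}^*(M,BG)\xrightarrow{\phi^*}\mathrm{Map}^*(Y,BG).
\]
The $(\Sigma,\Omega)$-adjunction identifies $\Omega^{i+1}\phi^*$ with $(\Sigma^{i+1}\phi)^*$, which admits the right inverse $s^*$. Using the $H$-space structure on loop spaces, the fibration splits componentwise; recognising $\Omega\mathrm{Map}^*(-,BG)\simeq \mathrm{Map}^*(-,G)$ yields
\[
\Omega^i\mathrm{Map}^*_\alpha(M,G)\simeq \Omega^i\mathrm{Map}^*_{\alpha'}(X,G)\times \Omega^i\mathrm{Map}^*_0(Y,G),
\]
where $\alpha'$ is a preimage of $\alpha$ under the Puppe-sequence surjection $q^*:[X,BG]\twoheadrightarrow [M,BG]$ (surjective since $[Y,BG]=0$). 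When $i=0$, the hypothesis additionally forces $(\Sigma\phi)^*$ to be surjective, so the connecting homomorphism $[\Sigma Y,BG]\to [X,BG]$ vanishes, making $q^*$ injective as well and hence $\alpha'=\alpha$.

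Finally I would lift this to gauge groups via the evaluation fibrations $\mathrm{Map}^*_\beta(-,G)\to\mathcal{G}_\beta(-)\to G$, which are natural under $q:M\to X$. After looping $i$ times, I would define the assembly
\[
\Omega^i\mathcal{G}_{\alpha'}(X)\times \Omega^i\mathrm{Map}^*_0(Y,G)\longrightarrow \Omega^i\mathcal{G}_\alpha(M),\qquad (a,b)\longmapsto \Omega^iq^*(a)\cdot j(b),
\]
where $j$ is the composite of the section from Step 2 with the inclusion of the fibre of evaluation into $\Omega^i\mathcal{G}_\alpha(M)$, and the product is the $H$-space multiplication of the loop space. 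This map covers the identity on $\Omega^i G$ (since $j(b)$ projects to the basepoint) and, restricted to fibres over $\Omega^i G$, recovers the splitting of Step 2; the five-lemma on long exact homotopy sequences of the two evaluation fibrations then produces the desired equivalence. The principal obstacle is the coherence required in Step 3: arranging that the chosen section and $\Omega^i q^*$ fit into a morphism of evaluation fibrations realising the Step 2 splitting on fibres, after which the five-lemma finishes the argument. The case $i=0$ admits a simpler treatment since $\mathcal{G}_\alpha(M)$ is itself a topological group and $\alpha'=\alpha$ is canonical.
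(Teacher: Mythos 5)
Your argument is correct and follows essentially the same route as the paper: both hinge on the section of the restriction map induced by the left homotopy inverse of $\Sigma^{i+1}\phi$, together with the standard splitting of a fibration admitting a section whose total space is an $H$-space (a loop space, or the gauge group itself when $i=0$). The only difference is organizational: the paper assembles the evaluation fibrations and the cofibration-induced fibrations into a single diagram in which $\Omega^{i}\mathcal{G}_\alpha(M)$ is already exhibited as the total space of a fibration over ${\rm Map}^{\ast}_0(\Sigma^{i+1}Y, BG)$ with fibre $\Omega^{i}\mathcal{G}_{\alpha^\prime}(X)$ and with the section given by composing $\iota$ with the fibre inclusion of the evaluation sequence, so the splitting is read off at the gauge-group level directly and your Step 3 five-lemma assembly over $\Omega^{i}G$ (the coherence point you flag) is not needed.
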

\begin{proof}
The homotopy cofibre sequence 
\[Y\stackrel{\phi}{\rightarrow}  M \stackrel{q}{\rightarrow} X \]
induces the the following exact sequence
\[[Y, BG]\leftarrow[M, BG]\stackrel{q^\ast}{\leftarrow}[X, BG].\]
Since $[Y, BG]=0$ by assumption we have $q^\ast$ is surjective. We then choose $\alpha^\prime \in [X, BG]$ such that $q^\ast(\alpha^\prime)=\alpha$.
Then by considering the natural homotopy fibre sequence
\[
{\rm Map}^{\ast}_0(Z, G)\rightarrow  \mathcal{G}_\beta(Z)\rightarrow G\rightarrow {\rm Map}^{\ast}_\beta(Z, BG)\rightarrow {\rm Map}_\beta(Z, BG)\stackrel{{\rm ev}}{\rightarrow} BG
\]
for any class $\beta\in [Z, BG]$, we can form the following homotopy commutative diagram
\[
 \xymatrix{
 \ast  \ar[r]  \ar[d]  & \Omega^{i+1}{\rm Map}^{\ast}_\alpha(M, BG) \ar@{=}[r] \ar[d]  & {\rm Map}^{\ast}_\alpha(\Sigma^{i+1} M, BG)  \ar[d]_{\Sigma^{i+1} \phi}  \\
 \Omega^{i}\mathcal{G}_{\alpha^\prime}(X) \ar[r]  \ar@{=}[d]    & \Omega^{i}\mathcal{G}_\alpha(M)  \ar[r] \ar[d] & {\rm Map}^{\ast}_0(\Sigma^{i+1} Y, BG)  \ar[d] \ar@/_1pc/@{.>}[u]^{\iota} \\
  \Omega^{i}\mathcal{G}_{\alpha^\prime}(X) \ar[r]  \ar[d]    & \Omega^{i} G \ar[r] \ar[d] & {\rm Map}^{\ast}_{\alpha^\prime}(\Sigma^{i} X, BG) \ar[d]\\
  \ast  \ar[r]   &   \Omega^{i}{\rm Map}^{\ast}_\alpha(M, BG) \ar@{=}[r]   &  {\rm Map}^{\ast}_\alpha(\Sigma^{i} M, BG) }
\]
where the rows and columns are homotopy fibre sequence, and the map $\iota$ is a right homotopy inverse. Hence, the fibre sequence in the second row splits and the lemma follows.
\end{proof}

\section{Homotopy decompositions of double-looped gauge groups over $M$}\label{1decomsec}
\noindent In the spirit of Proposition \ref{Gsplit}, we need to study the homotopy structure of $\Sigma^3 M$. Through that we may prove a preliminary homotopy decomposition of looped gauge groups over $M$ when $c$ is odd. 

To simplify notation, for our five dimensional manifold $M$ which is non-simply connected, we may denote the $(i+t)$-th skeleton of $\Sigma^t M$ ($t\geq 1$) to be $\Sigma^t M_i$.
\subsection{Suspension homotopy type of $5$-manifolds}\label{Sigma2Msection}
Let $M$ be a five dimensional oriented closed manifold with $\pi_{1}(M)\cong \mathbb{Z}/c$ ($c\geq 3$ an odd number) and $H_{2}(M; \mathbb{Z})$ is torsion free of rank $m-1$. We want to study the homotopy type of $\Sigma M_4$ and $\Sigma^2 M_4$. Since $\Sigma M_4$ is simply connected, by Theorem $4H.3$ of \cite{Hatcher02} $\Sigma M_4$ has a homology decomposition built from Moore spaces, and then we have a homotopy cofibre sequence
\begin{equation}\label{hdecomSM4}
P^4(c)\vee\bigvee_{j=1}^{m-1} S_j^3\stackrel{h}{\longrightarrow} P^3(c)\vee\bigvee_{i=1}^{m-1}S_i^3 \longrightarrow  \Sigma M_4,
\end{equation}
such that $H_3(h)=0$ and $P^k(c)=S^{k-1}\cup_c e_k$ with an attaching map of degree $c$.

In order to study the homotopy type of $\Sigma M_4$ and its suspension, we need to calculate the possible homotopy classes of maps between Moore spaces. First, let us recall the following version of the Blakers-Massey theorem \cite{BM52}:
\begin{lemma}\label{BMthm}
Given a diagram 
\[
\xymatrix{
 F \ar[rd]^{i}  &&\\
 A \ar@{.>}[u]^{g} \ar[r] & X \ar[r]^{f} &Y
}
\]
where the second row is a cofibre sequence, $F$ is the homotopy fibre of $f$, $A$ is $k$-connected and $i$ is $m$-connected (i.e., $\pi_\ast(i)$ is an isomorphism when $\ast<m$ and an epimorphism when $\ast=m$), then 
the induced map $g: A \rightarrow F$ is $(k+m)$-connected, and we have a long exact sequence of homotopy groups
\[\pi_{k+m}(A)\rightarrow \pi_{k+m}(X)\rightarrow \pi_{k+m}(Y) \rightarrow \pi_{k+m-1}(A)\rightarrow \cdots \rightarrow \pi_{k+1}(X)\rightarrow \pi_{k+1}(Y) \rightarrow 0.\]
\hfill $\Box$
\end{lemma}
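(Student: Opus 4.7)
The plan is to view this statement as a packaging of the classical Blakers--Massey excision theorem applied to the cofibre sequence $A\to X\to Y$, combined with a reindexing of the long exact sequence of the associated homotopy fibration. I would proceed in three steps.

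First I would translate the connectivity hypothesis on $i$ into a connectivity statement for $Y$. From the long exact sequence of the fibration $F\to X\to Y$ and the assumption that $i_{\ast}$ is an isomorphism in degrees below $m$ and a surjection in degree $m$, a direct chase shows that $\pi_{j}(Y)=0$ for all $0\leq j\leq m$; that is, $Y$ is $m$-connected. This is the bridge between the fibration-side hypothesis as stated and the cofibration-side hypothesis needed by Blakers--Massey.

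Next, I would invoke the classical Blakers--Massey excision theorem \cite{BM52}: for the cofibration $A\hookrightarrow X$ with cofibre $Y$, since $A$ is $k$-connected and $Y$ is $m$-connected, the natural comparison map $g:A\to F$ from $A$ into the homotopy fibre of the quotient $X\to Y$ is $(k+m)$-connected, i.e.\ induces an isomorphism on $\pi_{j}$ for $j<k+m$ and a surjection for $j=k+m$. This delivers the first half of the conclusion.

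Finally, I would feed this information into the long exact sequence of the fibration $F\to X\to Y$. Since $g_{\ast}$ is an isomorphism up to degree $k+m-1$ and a surjection in degree $k+m$, I may substitute $\pi_{j}(A)$ for $\pi_{j}(F)$ throughout the relevant range, obtaining the displayed sequence. It terminates on the right as $\pi_{k+1}(X)\to\pi_{k+1}(Y)\to 0$ because $\pi_{k}(A)=0$ (as $A$ is $k$-connected), which kills the next boundary term. The only substantive step is the Blakers--Massey input; the rest is a routine diagram chase, so there is no real obstacle and the result is quoted directly.
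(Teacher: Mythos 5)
Your argument is correct and matches the paper's treatment: the paper states this lemma as a known consequence of the classical Blakers--Massey theorem of \cite{BM52} and offers no proof beyond the citation, and your three steps (converting the $m$-connectivity of $i$ into $m$-connectivity of $Y$ via the fibration sequence, applying homotopy excision to get that $g$ is $(k+m)$-connected, and substituting $\pi_j(A)$ for $\pi_j(F)$ in the fibration's long exact sequence) are exactly the routine derivation being suppressed. The only point worth flagging is the standard implicit hypothesis that the spaces are simply connected (as they are in every application in the paper), which is needed to pass from ``$Y$ is $m$-connected'' to ``$(X,A)$ is $m$-connected'' via the relative Hurewicz theorem before excision is invoked.
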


Now let us determine the homotopy groups $\pi_n(P^n(c))$.
\begin{lemma}\label{pi3p3c}
Let $2\nmid c$.
\[
\pi_3(P^3(c))\cong \mathbb{Z}/c, \ \ \  \pi_n(P^n(c))\cong 0 \ \ (n\geq 4)
\]
\end{lemma}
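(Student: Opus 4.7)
The plan is to split the argument by dimension. For $n \geq 4$, I would apply the Blakers-Massey lemma (Lemma \ref{BMthm}) to the Puppe cofibration $S^{n-1} \xrightarrow{c} S^{n-1} \xrightarrow{\iota} P^n(c)$: with $A = X = S^{n-1}$ (both $(n-2)$-connected) and $F = \mathrm{hofib}(\iota)$, a direct count shows that the fibre map $i: F \to S^{n-1}$ is exactly $(n-2)$-connected, so $k + m = 2(n-2) \geq n$ precisely when $n \geq 4$. The long exact sequence from Lemma \ref{BMthm} then contains the segment
\[
\pi_n(S^{n-1}) \xrightarrow{c_*} \pi_n(S^{n-1}) \to \pi_n(P^n(c)) \to \pi_{n-1}(S^{n-1}) \xrightarrow{c_*} \pi_{n-1}(S^{n-1}),
\]
and since for $n \geq 4$ the James-Hopf correction to $c_*$ on $\pi_n(S^{n-1}) \cong \mathbb{Z}/2$ vanishes (as $\pi_n(S^{2n-3}) = 0$), $c_*$ acts as plain multiplication by $c$, which is an isomorphism on $\mathbb{Z}/2$ since $c$ is odd; meanwhile $c_*$ on $\pi_{n-1}(S^{n-1}) = \mathbb{Z}$ is injective. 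Reading off gives $\pi_n(P^n(c)) = 0$.

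For $n = 3$ the BM range falls one short of $\pi_3$, so a separate argument is needed. First I would use the long exact sequence of the pair $(P^3(c), S^2)$ together with the excision $\pi_3(P^3(c), S^2) \cong \pi_3(S^3) = \mathbb{Z}$ (connecting map equal to multiplication by $c$) to conclude that $\iota_*: \pi_3(S^2) = \mathbb{Z}\{\eta\} \twoheadrightarrow \pi_3(P^3(c))$ is surjective, so $\pi_3(P^3(c))$ is cyclic generated by $\iota_*(\eta)$. For the order, I would apply the Whitehead product: naturality and the classical identity $[\iota_2, \iota_2] = 2\eta$ in $\pi_3(S^2)$ yield $[\iota, \iota] = 2\iota_*(\eta)$ in $\pi_3(P^3(c))$, and bilinearity combined with the cofibration relation $c\iota = 0$ in $\pi_2(P^3(c)) \cong \mathbb{Z}/c$ gives
\[
0 = [c\iota, \iota] = c[\iota,\iota] = 2c \cdot \iota_*(\eta).
\]
Since $P^3(c)$ becomes contractible after inverting $c$, the group $\pi_3(P^3(c))$ is $c$-primary torsion, and the assumption $\gcd(2, c) = 1$ forces $c \iota_*(\eta) = 0$, bounding the order of $\iota_*(\eta)$ above by $c$.

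The matching lower bound I would read off the Serre spectral sequence of the first Postnikov section $P^3(c) \to K(\mathbb{Z}/c, 2)$ with fibre $F$: here $F$ is $2$-connected with $\pi_3(F) \cong \pi_3(P^3(c))$, and the nonzero class $u^2 \in H^4(K(\mathbb{Z}/c, 2); \mathbb{Z}/c)$ must be killed to agree with $H^4(P^3(c); \mathbb{Z}/c) = 0$, so the transgression $d_4: H^3(F; \mathbb{Z}/c) \to \mathbb{Z}/c \cdot u^2$ is surjective, forcing $|\pi_3(P^3(c))| \geq c$. The main obstacle is the unstable case $n = 3$: a naive BM-style derivation would suggest $\mathbb{Z}/c^2$ via the James correction $c_*(\eta) = c^2 \eta$ in $\pi_3(S^2)$, and the hypothesis $2 \nmid c$ together with the bilinearity of the Whitehead product is exactly what is needed to sharpen this to $\mathbb{Z}/c$.
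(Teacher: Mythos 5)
Your argument is correct, and it diverges from the paper's in both halves. For $n\geq 4$ the paper applies Blakers--Massey to the pinch cofibration $S^{n-1}\to P^n(c)\to S^n$ and then simply observes that $\pi_n(P^n(c))$ is a torsion group with no $2$-torsion, so it cannot inject into $\mathbb{Z}$ nor receive anything from $\mathbb{Z}/2$; you instead run Blakers--Massey on the attaching cofibration $S^{n-1}\stackrel{c}{\to}S^{n-1}\to P^n(c)$ and check that the Hopf-invariant correction to $(c\iota)_*$ vanishes on $\pi_n(S^{n-1})$ in this range. Both are sound; the paper's torsion observation is marginally slicker, while your route makes transparent exactly where the hypothesis $2\nmid c$ enters. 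For $n=3$ the paper computes $\pi_3(P^3(c))\cong H_3(F;\mathbb{Z})$ in one stroke from the homology Serre spectral sequence of the Postnikov fibration $F\to P^3(c)\to K(\mathbb{Z}/c,2)$, using $\bar H_3=\bar H_5=0$ and $\bar H_4\cong\mathbb{Z}/p^r$ for $K(\mathbb{Z}/p^r,2)$. Your hybrid argument --- cyclicity from the relative exact sequence and excision, the upper bound $c\,\iota_*(\eta)=0$ from $[\iota_2,\iota_2]=2\eta$, bilinearity, $c\iota=0$ and oddness of $c$, and the lower bound from the transgression killing $u^2$ --- is a genuinely different and more geometric way to see why the answer is $\mathbb{Z}/c$ rather than the $\mathbb{Z}/c^2$ a naive excision count would suggest; the Whitehead-product step isolates precisely the quadratic failure of excision. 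The one thing to flag is that your lower bound still needs $u^2$ to have order exactly $c$ in $H^4(K(\mathbb{Z}/c,2);\mathbb{Z}/c)$ (not merely to be nonzero), which for odd $c$ is equivalent to the same Eilenberg--Mac~Lane homology input the paper quotes, so the two proofs ultimately share that computation.
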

\begin{proof}
Let us first compute $\pi_n(P^n(c))$ and consider the cofibre sequence 
\[S^{n-1}\rightarrow P^n(c)\rightarrow S^n,\]
which is a fibre sequence up to degree $2n-3$ by Lemma \ref{BMthm}. Then since $n\geq 4$ we have an exact sequence 
\[\pi_{n}(S^{n-1})\rightarrow \pi_{n}(P^n(c))\rightarrow \pi_n(S^n)\]
where $\pi_{n}(S^{n-1})\cong \mathbb{Z}/2$, $\pi_n(S^n)\cong\mathbb{Z}$. However $\pi_{n}(P^n(c))$ is a torsion group without elements of order $2$ since $P^n(c)$ is contractible after localization away from $2$. Hence it has to be $\pi_{n}(P^n(c))=0$. 

It remains to prove $\pi_3(P^3(c))\cong \mathbb{Z}/c$.
Consider the fibration 
\begin{equation}\label{FPK2}
F\longrightarrow P^3(c)\stackrel{\kappa}{\longrightarrow} K(\mathbb{Z}/c, 2)
\end{equation}
determined by the generator in $H^2(P^3(c);\mathbb{Z}/c)\cong [P^3(c), K(\mathbb{Z}/c, 2)]\cong \mathbb{Z}/c$. This is the bottom stage of the Postnikov system of $P^3(c)$ and then $F$ is $2$-connected and $\pi_3(P^3(c))\cong \pi_3(F)\cong H_3(F;\mathbb{Z})$. For the latter homology group, let us consider the primary case first, and we need some information of the homology of $K(\mathbb{Z}/c, 2)$ (e.g., see the proof of Theorem $11.7$ of \cite{Neisendorfer10}):
\[
\bar{H}_\ast(K(\mathbb{Z}/p^r, 2); \mathbb{Z})=\left\{\begin{array}{ll}
\mathbb{Z}/p^r &\ast=2, 4,\\
0 & \ast=0, 1, 3, 5,\\
\end{array}
\right.
\]
for $p\geq3$.
Then by expecting the Serre spectral sequence of (\ref{FPK2}), we see that 
\[
\pi_3(P^3(p^r))\cong  H_3(F;\mathbb{Z})\cong\mathbb{Z}/p^r.
\]
For the general case, we know that $P^3(c)$ is a torsion space at prime factors of $c$. Hence we have $\pi_3(P^3(c))\cong \mathbb{Z}/c$.
\end{proof}

In order to compute the groups of homotopy classes of maps from Moore spaces, we also need to apply the theory of homotopy groups with coefficients (\cite{CMN79, Neisendorfer, Neisendorfer10}). The $n$-th homotopy group of a space $X$ with coefficients in $\mathbb{Z}/c$ is defined to be
\[\pi_{n}(X;\mathbb{Z}/c):=[P^n(c), X].\] 
There is a universal coefficient theorem relating the homotopy groups with coefficients to the usual ones.
\begin{lemma}[Theorem $1.3.1$ of \cite{Neisendorfer}]\label{UCT}
For a space $X$ and $n\geq 2$, there is a natural exact sequence
\[0\rightarrow \pi_{n}(X)\otimes \mathbb{Z}/c \rightarrow \pi_{n}(X;\mathbb{Z}/c) \rightarrow {\rm Tor}^{\mathbb{Z}}(\pi_{n-1}(X), \mathbb{Z}/c)\rightarrow 0.\]
\hfill $\Box$
\end{lemma}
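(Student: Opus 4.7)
The plan is to derive the short exact sequence directly from the Puppe cofiber sequence of the Moore space. By construction $P^n(c)=S^{n-1}\cup_c e^n$ is the mapping cone of the degree $c$ self-map of $S^{n-1}$, and hence sits in the cofiber sequence
\begin{equation*}
S^{n-1}\stackrel{c}{\longrightarrow}S^{n-1}\stackrel{i}{\longrightarrow}P^n(c)\stackrel{q}{\longrightarrow}S^{n}\stackrel{-c}{\longrightarrow}S^{n}\longrightarrow\Sigma P^n(c)\longrightarrow\cdots.
\end{equation*}
Applying $[-,X]$, and using the identifications $[S^k,X]=\pi_k(X)$ and $[P^n(c),X]=\pi_n(X;\mathbb{Z}/c)$ together with the fact that the degree $c$ self-map of $S^k$ induces multiplication by $c$ on $\pi_k(X)$, I would extract the exact sequence
\begin{equation*}
\pi_n(X)\stackrel{c}{\longrightarrow}\pi_n(X)\stackrel{q^\ast}{\longrightarrow}\pi_n(X;\mathbb{Z}/c)\stackrel{i^\ast}{\longrightarrow}\pi_{n-1}(X)\stackrel{c}{\longrightarrow}\pi_{n-1}(X).
\end{equation*}

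The remaining work is purely algebraic. By exactness, the image of $q^\ast$ is the cokernel of multiplication by $c$ on $\pi_n(X)$, which equals $\pi_n(X)/c\pi_n(X)\cong\pi_n(X)\otimes\mathbb{Z}/c$, while the image of $i^\ast$ is the $c$-torsion subgroup of $\pi_{n-1}(X)$; the latter is exactly ${\rm Tor}^{\mathbb{Z}}(\pi_{n-1}(X),\mathbb{Z}/c)$, as one reads off from the free resolution $0\to\mathbb{Z}\stackrel{c}{\to}\mathbb{Z}\to\mathbb{Z}/c\to 0$ upon tensoring with $\pi_{n-1}(X)$. Splicing these identifications produces the claimed short exact sequence, and naturality in $X$ is immediate since every term and arrow arises from applying the functor $[-,X]$ to a single fixed cofibration.

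The one subtlety I would need to address is ensuring that all sets in sight are abelian groups and all maps homomorphisms, so that ``exact'' has its usual meaning. This is not a real obstacle: for $n\geq 3$, $P^n(c)=\Sigma P^{n-1}(c)$ is a suspension, and in fact a double suspension when $n\geq 4$, so $[P^n(c),X]$ is an abelian group and the boundary maps in the Puppe sequence are homomorphisms; for $n=2$, $P^2(c)$ still admits a co-H-space structure via a pinch map on the $2$-cell, which gives $\pi_2(X;\mathbb{Z}/c)$ an abelian group structure compatible with the sequence. Once this is in place the proof reduces to the homological bookkeeping above.
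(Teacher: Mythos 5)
The paper offers no proof of this lemma at all --- it is quoted verbatim as Theorem 1.3.1 of Neisendorfer's book --- so there is nothing internal to compare against; your derivation from the Puppe sequence of $S^{n-1}\stackrel{c}{\to}S^{n-1}\to P^n(c)\to S^n\stackrel{c}{\to}S^n$ is exactly the standard argument (and the one in the cited source). For $n\geq 3$ everything you write is correct: all spaces in sight are suspensions, $i^\ast$ and $q^\ast$ are homomorphisms, $\operatorname{im}(q^\ast)\cong\operatorname{coker}(c)\cong\pi_n(X)\otimes\mathbb{Z}/c$, and $\operatorname{im}(i^\ast)$ is the $c$-torsion of $\pi_{n-1}(X)$, which is ${\rm Tor}^{\mathbb{Z}}(\pi_{n-1}(X),\mathbb{Z}/c)$.

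The one genuine error is your handling of $n=2$. The claim that ``$P^2(c)$ still admits a co-H-space structure via a pinch map on the $2$-cell'' is false: a path-connected co-H-space has free fundamental group, whereas $\pi_1(P^2(c))\cong\mathbb{Z}/c$ is a nontrivial finite group for $c\geq 2$. Consequently $\pi_2(X;\mathbb{Z}/c)=[P^2(c),X]$ carries no natural group structure (this is precisely the well-known pathology of mod-$c$ homotopy in degree $2$ that Neisendorfer discusses), and at $n=2$ the Puppe sequence only yields exactness of pointed sets together with an action of $\pi_2(X)$ on $[P^2(c),X]$ whose orbits are the fibres of $i^\ast$; the ``short exact sequence of abelian groups'' interpretation requires a separate argument or a weaker formulation. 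This gap is harmless for the paper, since every application of the lemma there (Lemmas \ref{pi4s3c} and \ref{pi4p3cc}) takes $n\geq 4$, where $P^n(c)$ is a double suspension and your argument goes through without change; but as a proof of the statement as written for all $n\geq 2$, the $n=2$ case needs to be either excluded or justified differently.
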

Now we use the above universal coefficient theorem to determine some homotopy groups with coefficients.
\begin{lemma}\label{pi4s3c}
Let $2\nmid c$.
\[
\pi_{4}(S^3;\mathbb{Z}/c)=\pi_{5}(S^4;\mathbb{Z}/c)=0.
\]
\end{lemma}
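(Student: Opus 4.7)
The plan is to apply the universal coefficient theorem (Lemma \ref{UCT}) directly, using the classical low-dimensional stable homotopy groups of spheres together with the hypothesis $2 \nmid c$.

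First I would record the relevant homotopy groups: $\pi_4(S^3) \cong \mathbb{Z}/2$ and $\pi_3(S^3) \cong \mathbb{Z}$; likewise $\pi_5(S^4) \cong \mathbb{Z}/2$ (it is just the suspension of $\pi_4(S^3)$, i.e.\ the Hopf class) and $\pi_4(S^4) \cong \mathbb{Z}$. These are standard and can be cited without argument.

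Then for $\pi_4(S^3;\mathbb{Z}/c)$, Lemma \ref{UCT} gives the short exact sequence
\[
0 \rightarrow \pi_4(S^3) \otimes \mathbb{Z}/c \rightarrow \pi_4(S^3;\mathbb{Z}/c) \rightarrow {\rm Tor}^{\mathbb{Z}}(\pi_3(S^3),\mathbb{Z}/c) \rightarrow 0.
\]
Since $2 \nmid c$, the left term $\mathbb{Z}/2 \otimes \mathbb{Z}/c$ vanishes; since $\pi_3(S^3)$ is free, the right term $ {\rm Tor}(\mathbb{Z},\mathbb{Z}/c)$ vanishes. Hence $\pi_4(S^3;\mathbb{Z}/c)=0$. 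The argument for $\pi_5(S^4;\mathbb{Z}/c)$ is identical after replacing the pair $(\pi_4(S^3),\pi_3(S^3))$ by $(\pi_5(S^4),\pi_4(S^4))$.

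There is no real obstacle here: the only mild care needed is to make sure the cited values of $\pi_4(S^3)$ and $\pi_5(S^4)$ are correct and that the oddness assumption is used at exactly the right place (killing the $\mathbb{Z}/2$ tensor factor). The statement will be an immediate two-line consequence of Lemma \ref{UCT}.
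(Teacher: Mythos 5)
Your proof is correct and is essentially identical to the paper's own argument: both apply the universal coefficient sequence of Lemma \ref{UCT}, kill the Tor term because $\pi_n(S^n)\cong\mathbb{Z}$ is free, and kill the tensor term $\mathbb{Z}/2\otimes\mathbb{Z}/c$ using $2\nmid c$. No issues.
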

\begin{proof}
By Lemma \ref{UCT}, there is a short exact sequence
\[0\rightarrow \pi_{n+1}(S^n)\otimes \mathbb{Z}/c \rightarrow \pi_{n+1}(S^n;\mathbb{Z}/c) \rightarrow {\rm Tor}^{\mathbb{Z}}(\pi_{n}(S^n), \mathbb{Z}/c)\rightarrow 0.\]
Since ${\rm Tor}^{\mathbb{Z}}(\pi_{n}(S^n), \mathbb{Z}/c)=0$, $\pi_{n+1}(S^n;\mathbb{Z}/c)\cong \pi_{n+1}(S^n)\otimes \mathbb{Z}/c$. Since $\pi_{n+1}(S^n)\cong \mathbb{Z}/2$ for $n\geq 3$, the lemma follows.
\end{proof}

\begin{lemma}\label{pi4p3cc}
Let $2\nmid c$.
\[
\pi_{4}(P^3(c); \mathbb{Z}/c)\cong \mathbb{Z}/c, \ \ \  \pi_{5}(P^4(c); \mathbb{Z}/c)=0.
\]
\end{lemma}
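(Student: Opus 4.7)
The plan is to apply the universal coefficient theorem (Lemma~\ref{UCT}) to both statements, reducing each to a question about ordinary homotopy groups of Moore spaces, and then to analyze those through the cofibre sequence $S^{n-1}\to P^n(c)\to S^n$ together with Blakers--Massey (Lemma~\ref{BMthm}) and the long exact sequence of the associated fibration.

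For $\pi_5(P^4(c);\mathbb{Z}/c)$: since $\pi_4(P^4(c))=0$ by Lemma~\ref{pi3p3c}, the $\mathrm{Tor}$-term in the UCT vanishes, so it suffices to show $\pi_5(P^4(c))=0$. Applying Blakers--Massey to the cofibration $S^3\to P^4(c)\to S^4$ identifies the low homotopy of the homotopy fibre $F_4$ of $P^4(c)\to S^4$ with that of $S^3$, giving $\pi_3(F_4)\cong\mathbb{Z}$, $\pi_4(F_4)\cong\mathbb{Z}/2$, and $\pi_5(F_4)$ a quotient of $\pi_5(S^3)=\mathbb{Z}/2$. The long exact sequence of $F_4\to P^4(c)\to S^4$, combined with $\pi_4(P^4(c))=0$, then forces the boundary $\pi_5(S^4)=\mathbb{Z}/2\to\pi_4(F_4)=\mathbb{Z}/2$ to be an isomorphism, so $\pi_5(P^4(c))$ becomes a quotient of $\pi_5(F_4)$, hence $2$-torsion. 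Since $c$ is odd, $P^4(c)$ is contractible after localization at $2$, and this $2$-torsion vanishes.

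For $\pi_4(P^3(c);\mathbb{Z}/c)\cong\mathbb{Z}/c$: the $\mathrm{Tor}$-term is $\mathrm{Tor}(\mathbb{Z}/c,\mathbb{Z}/c)\cong\mathbb{Z}/c$ by Lemma~\ref{pi3p3c}, so it remains to show that the $\otimes$-term $\pi_4(P^3(c))\otimes\mathbb{Z}/c$ vanishes. Equivalently, after localizing at each odd prime $p\mid c$ and using the wedge splitting of $P^3(c)$ at odd primes, one must prove that $\pi_4(P^3(p^r))$ is $p^r$-divisible, where $r=\nu_p(c)$.

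The hard part will be this last divisibility. I plan to extract it from the Serre spectral sequence of $F\to P^3(p^r)\to S^3$: only the columns $p=0,3$ are nonzero, so matching to the known $H_*(P^3(p^r))$ forces $H_*(F;\mathbb{Z})$ to be $\mathbb{Z}$ in even degrees through $4$ and $0$ in odd degrees, with $d_3$ acting as multiplication by $p^r$. Feeding this into the long exact sequence of the fibration, combined with $\pi_3(S^2)\twoheadrightarrow\pi_3(F)$ from Blakers--Massey and the vanishing of $2$-primary contributions at an odd prime, will pin down $\pi_4(P^3(p^r))\cong \pi_4(F)_{(p)}$ and permit verifying the required $p^r$-divisibility; alternatively, the Cohen--Moore--Neisendorfer decomposition of $\Omega P^3(p^r)$ at odd primes yields this divisibility directly, after which the SES from the UCT collapses to the desired isomorphism $\pi_4(P^3(c);\mathbb{Z}/c)\cong\mathbb{Z}/c$.
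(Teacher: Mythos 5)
Your treatment of $\pi_5(P^4(c);\mathbb{Z}/c)=0$ is essentially the paper's own argument: the universal coefficient sequence kills the $\mathrm{Tor}$-term via $\pi_4(P^4(c))=0$, and Blakers--Massey applied to $S^3\to P^4(c)\to S^4$ traps $\pi_5(P^4(c))$ between two copies of $\mathbb{Z}/2$, so it vanishes because $P^4(c)$ is contractible at the prime $2$. Your detour through the homotopy fibre $F_4$ is more elaborate than necessary but reaches the same conclusion.

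The problem is the second half. The paper disposes of the tensor term by quoting $\pi_4(P^3(c))=0$ (Lemma $3.3$ of \cite{So16}); you instead aim for the weaker claim that $\pi_4(P^3(c))\otimes\mathbb{Z}/c=0$ via $p^r$-divisibility, but the mechanism you propose cannot deliver it. The Serre spectral sequence of $F\to P^3(p^r)\to S^3$ (here $F=F^3\{p^r\}$ is the fibre of the pinch map) only computes $H_\ast(F;\mathbb{Z})$, namely $\mathbb{Z}$ in degrees $0,2,4,\dots$, and this homology does not determine $\pi_4(F)$. Indeed ${\rm sk}_5F\simeq S^2\cup_\alpha e^4$ for some $\alpha\in\pi_3(S^2)\cong\mathbb{Z}$, and at an odd prime $\pi_4(S^2\cup_\alpha e^4)$ is $0$ when $\alpha\neq 0$ but contains a copy of $\mathbb{Z}$ when $\alpha=0$; everything hinges on identifying $\alpha$, which no amount of homology or long-exact-sequence bookkeeping can see. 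The missing input is precisely Lemma \ref{pinchsplit}: ${\rm sk}_5F^3\{c\}$ is the cofibre of $c\omega_2=c[\iota_2,\iota_2]=2c\eta\neq 0$ in $\pi_3(S^2)$, from which the exact sequence of the pair $(S^2\cup_{2c\eta}e^4,S^2)$ gives $\pi_4(F)_{(p)}=0$ and hence the full vanishing $\pi_4(P^3(c))=0$, not merely divisibility. Your fallback, that the Cohen--Moore--Neisendorfer decomposition of $\Omega P^3(p^r)$ ``yields this divisibility directly,'' is also only an assertion: their results bound the exponent of the torsion, which is a different statement from $p^r$-divisibility. As written, the computation of $\pi_4(P^3(c);\mathbb{Z}/c)$ is therefore not proved; either cite the vanishing of $\pi_4(P^3(c))$ or supply the attaching-map identification.
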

\begin{proof}
By Lemma \ref{UCT}, there is a natural exact sequence
\[0\rightarrow \pi_{n+1}(P^n(c))\otimes \mathbb{Z}/c \rightarrow \pi_{n+1}(P^n(c); \mathbb{Z}/c) \rightarrow {\rm Tor}^{\mathbb{Z}}(\pi_{n}(P^n(c)), \mathbb{Z}/c)\rightarrow 0.\]

When $n=3$, $\pi_{4}(P^3(c))=0$ by Lemma $3.3$ of \cite{So16} and $\pi_{3}(P^3(c))\cong \mathbb{Z}/c$ by Lemma \ref{pi3p3c}. Hence $\pi_{4}(P^3(c); \mathbb{Z}/c)\cong \mathbb{Z}/c$.

When $n=4$, by Lemma \ref{BMthm}, the cofibre sequence 
\[S^3\rightarrow P^4(c)\rightarrow S^4\]
is a fibre sequence up to degree $5$. Hence we have an exact sequence 
\[\mathbb{Z}/2\cong\pi_{5}(S^3)\rightarrow \pi_5(P^4(c))\rightarrow \pi_5(S^4)\cong \mathbb{Z}/2.\]
Since $\pi_5(P^4(c))$ can only have odd torsions we see $\pi_5(P^4(c))=0$. Then 
\[\pi_{5}(P^4(c); \mathbb{Z}/c)\cong  {\rm Tor}^{\mathbb{Z}}(\pi_{4}(P^4(c)), \mathbb{Z}/c)=0\] by Lemma \ref{pi3p3c}.
\end{proof}

By the previous computations of homotopy groups, we see that $\Sigma M_4$ in general is not a bouquet of Moore spaces while $\Sigma^2 M_4$ indeed is, i.e.,  
\begin{equation}\label{hdecomS2M4}
\Sigma^2 M_4\simeq P^6(c)\vee P^4(c)\vee \bigvee_{i=1}^{m-1}(S^5\vee S^4).
\end{equation}

We now investigate the homotopy type of $\Sigma^3 M$ and introduce the follow lemma analogous to Corollary $4.3$ of \cite{Huang18}:

\begin{lemma}[cf. Lemma $4.1$ and Corollary $4.3$ of \cite{Huang18} and Lemma $2.5$ of \cite{So16}]\label{Sigmasplit}
Let $X$ be a stable $CW$-complex with cell structure 
\[X\simeq \bigvee_{i=1}^{m} S^n\cup_f e^{n+k},\]
i.e., the attaching map $f\in \pi_{n+k-1}(S^n)\cong \pi_{k-1}(\mathbb{S})$ ($k>1$) is in the stable range. Suppose \[\pi_{k-1}(\mathbb{S})\cong \mathbb{Z}/d_1\oplus \mathbb{Z}/d_2\oplus \cdots\oplus \mathbb{Z}/d_r, \]
such that $d_i|d_{i+1}$ for $1\leq i\leq r$. Then if $m\geq r$ we have a homotopy equivalence 
\[X\simeq  Z\vee \bigvee_{i=1}^{m-r} S^{n},\]
where $Z$ is the cofibre of the inclusion $\bigvee_{i=1}^{m-r} S^{n}\hookrightarrow X$. \hfill $\Box$
\end{lemma}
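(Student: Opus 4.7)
The strategy is to realize the attaching map as a column vector with entries in a finite abelian group, exploit the matrix-like action of self-equivalences of $\bigvee_{i=1}^m S^n$ on homotopy, and apply Smith normal form over $\mathbb{Z}$ to concentrate the attaching map onto $r$ of the $m$ wedge summands.

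Concretely, I would first invoke the stable-range hypothesis (so that no Hilton--Milnor Whitehead-product summands appear in the relevant homotopy group) to obtain
\[
\pi_{n+k-1}\Bigl(\bigvee_{i=1}^{m} S^n\Bigr)\;\cong\;\bigoplus_{i=1}^{m}\pi_{k-1}(\mathbb{S})\;=\;A^m,\qquad A:=\pi_{k-1}(\mathbb{S}),
\]
and identify $f$ with a tuple $(f_1,\ldots,f_m)\in A^m$. Next, I would observe that in this stable range, self-maps of $\bigvee_{i=1}^m S^n$ form the monoid $M_m(\mathbb{Z})$ with units $GL_m(\mathbb{Z})$, and under the identification above a matrix $U\in GL_m(\mathbb{Z})$ acts on $A^m$ by ordinary matrix multiplication, using bilinearity of stable composition. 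Any such self-equivalence extends across the attached $(n+k)$-cell, so replacing $f$ by $Uf$ yields a space homotopy equivalent to $X$.

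The algebraic heart of the argument is to choose $U$ so that $Uf$ has zeros in its last $m-r$ coordinates. To do this I would lift each $f_i$ to $\tilde f_i\in\mathbb{Z}^r$ via the canonical presentation $A=\mathbb{Z}^r/{\rm diag}(d_1,\ldots,d_r)\mathbb{Z}^r$, arrange the lifts as columns of an $r\times m$ integer matrix $V$, and apply Smith normal form: there exist $P\in GL_r(\mathbb{Z})$ and $Q\in GL_m(\mathbb{Z})$ with $PVQ$ rectangular-diagonal of rank at most $r$, so that the last $m-r$ columns of $VQ=P^{-1}(PVQ)$ vanish already over $\mathbb{Z}$. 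Reducing modulo the $d_i$ and tracking the bookkeeping of column vs.\ row operations, the matrix derived from $Q$ is the required $U$, and in the new basis the last $m-r$ components of $f$ become zero in $A$.

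After this reduction the new attaching map factors through the sub-wedge $\bigvee_{i=1}^{r} S^n\hookrightarrow\bigvee_{i=1}^{m} S^n$, so the remaining $m-r$ spheres split off as a wedge summand, yielding $X\simeq Z\vee\bigvee_{i=1}^{m-r}S^n$ with $Z$ the cofibre described in the statement. The one delicate point is to ensure that the linear-algebraic manipulation really is implemented by a topological self-equivalence; this is precisely where the stable-range assumption is indispensable, since outside it the homotopy group $\pi_{n+k-1}(\bigvee S^n)$ acquires Whitehead-product summands and composition on homotopy groups is no longer bilinear, so the clean $GL_m(\mathbb{Z})$-description of self-equivalences breaks down.
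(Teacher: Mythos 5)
Your proof is correct and follows essentially the same route as the sources the paper cites for this lemma (Lemma $4.1$/Corollary $4.3$ of \cite{Huang18} and Lemma $2.5$ of \cite{So16}): identify the attaching map with a vector in $\pi_{k-1}(\mathbb{S})^{m}$ using the Hilton decomposition in the stable range, act by self-equivalences of the wedge realizing $GL_m(\mathbb{Z})$, and use Smith normal form (equivalently, the structure theory of finitely generated abelian groups) to concentrate the attaching map on $r$ of the $m$ spheres so that the remaining $m-r$ split off. The paper itself states the lemma without proof, and your argument, including the transpose bookkeeping relating $Q$ to the realized self-equivalence, checks out.
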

With the above lemma, we can split off spheres from $\Sigma^3 M$.
\begin{proposition}\label{decomSigma3M}
Let $M$ be a five dimensional oriented closed manifold with $\pi_{1}(M)\cong \mathbb{Z}/c$ ($c\geq 3$ an odd number) and $H_{2}(M; \mathbb{Z})$ is torsion free of rank $m-1$. Then if $m\geq 2$ we have a homotopy equivalence
\[
\Sigma^3 M \simeq \Sigma Z \vee \bigvee_{i=1}^{m-2}(S^6\vee S^5),
\]
where $Z$ is the cofibre of the composition $\bigvee_{i=1}^{m-2}(S^5\vee S^4)\hookrightarrow \Sigma^2 M_4\hookrightarrow \Sigma^2 M$.
\end{proposition}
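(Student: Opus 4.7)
The plan is to realise the claim as a splitting of a cofibre sequence: construct a retraction of the inclusion $\bigvee_{i=1}^{m-2}(S^6\vee S^5)\hookrightarrow\Sigma^3 M$, whose cofibre is by definition $\Sigma Z$. Since $M$ has a single top cell, $\Sigma^2 M$ is obtained from $\Sigma^2 M_4$ by attaching one $7$-cell along some $\alpha\in\pi_6(\Sigma^2 M_4)$, and hence
\[
\Sigma^3 M \simeq \Sigma^3 M_4\cup_{\Sigma\alpha} e^8.
\]
Suspending the decomposition (\ref{hdecomS2M4}) gives
\[
\Sigma^3 M_4 \simeq P^7(c)\vee P^5(c)\vee \bigvee_{i=1}^{m-1}(S^6\vee S^5).
\]

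The next step is to analyse the class $\Sigma\alpha\in\pi_7(\Sigma^3 M_4)$. A quick connectivity check shows that every nontrivial iterated smash among the summands $P^7(c),\,P^5(c),\,S^6,\,S^5$ is at least $7$-connected (the lowest bottom cell among such smashes appears in dimension $8$, e.g.\ in $P^5(c)\wedge P^5(c)$), so by Hilton's theorem
\[
\pi_7(\Sigma^3 M_4) \cong \pi_7(P^7(c))\oplus \pi_7(P^5(c))\oplus \bigoplus_{i=1}^{m-1}\pi_7(S^6)\oplus \bigoplus_{i=1}^{m-1}\pi_7(S^5),
\]
with $\pi_7(S^6)\cong\pi_7(S^5)\cong\mathbb{Z}/2$ by Freudenthal. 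Thus $\Sigma\alpha$ is recorded, in its sphere coordinates, by two vectors $(a_1,\dots,a_{m-1})$ and $(b_1,\dots,b_{m-1})$ in $(\mathbb{Z}/2)^{m-1}$.

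I then apply the basis-change idea driving Lemma \ref{Sigmasplit}: shear self-homotopy-equivalences of $\Sigma^3 M_4$ that act as elementary $GL$-transformations on the $m-1$ copies of $S^6$ (respectively $S^5$), while fixing the Moore-space summands and the spheres of the other dimension, preserve the homotopy type of $\Sigma^3 M$ but modify $\Sigma\alpha$ by the corresponding elementary action on the $a_i$'s (respectively $b_i$'s). Working over $\mathbb{Z}/2$ I may therefore arrange that $a_2=\cdots=a_{m-1}=0$ and $b_2=\cdots=b_{m-1}=0$.

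Finally, the pinch map $\pi\colon \Sigma^3 M_4\to \bigvee_{i=1}^{m-2}(S^6\vee S^5)$ onto the cleared sphere summands then satisfies $\pi\circ\Sigma\alpha=0$ in $\pi_7$ of the target (which is again a direct sum of sphere contributions by the same connectivity estimate), so $\pi$ extends over the $8$-cell to a retraction $\Sigma^3 M\to \bigvee_{i=1}^{m-2}(S^6\vee S^5)$. The resulting splitting of the cofibre sequence $\bigvee_{i=1}^{m-2}(S^6\vee S^5)\hookrightarrow\Sigma^3 M\to\Sigma Z$ yields the claimed equivalence. The main obstacle is the Hilton-theorem bookkeeping — verifying that the sphere coordinates of $\Sigma\alpha$ are genuinely independent of the Moore-space coordinates, and that the chosen shears touch only the intended sphere summands; once this is in place the argument is a direct adaptation of the proof of Lemma \ref{Sigmasplit}.
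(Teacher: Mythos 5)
Your argument is correct and follows essentially the same route as the paper: the paper likewise identifies $\pi_7(\Sigma^3 M_4)$ with the direct sum of the $\pi_7$'s of the wedge summands and then invokes Lemma \ref{Sigmasplit}, whose proof is exactly the shear/basis-change and retraction argument you carry out by hand. (One small remark: the relevant Hilton--Milnor cross terms are $\Sigma(A_i\wedge A_j)$ for \emph{distinct} desuspended summands --- the self-smash $P^5(c)\wedge P^5(c)$ you cite does not actually occur since there is only one $P^5(c)$ summand --- but every term that does occur has bottom cell in dimension at least $8$, so your connectivity claim and hence the direct-sum decomposition of $\pi_7$ stand.)
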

\begin{proof}
By (\ref{hdecomS2M4}), we have 
\[
\Sigma^2 M \simeq \Sigma^2 M_4 \cup e^{7} \simeq \big(P^6(c)\vee P^4(c)\vee \bigvee_{i=1}^{m-1}(S^5\vee S^4)\big)\cup_{f} e^{7}.
\]
Then in $\Sigma^3 M$ the attaching map of the top cell $\Sigma f$ lies in 
\[\pi_{7}(P^7(c))\oplus\pi_{7}(P^5(c))\oplus\oplus_{i=1}^{m-1}(\pi_{7}(S^6)\oplus\pi_{7}(S^5)),\]
where the last two factors are stable homotopy groups and $\pi_{1}(\mathbb{S})\cong\pi_{2}(\mathbb{S})\cong \mathbb{Z}/2$. Hence by Lemma \ref{Sigmasplit} the lemma follows. 
\end{proof}

\subsection{Homotopy decompositions of $\Omega^2 \mathcal{G}_k(M)$}
\begin{theorem}\label{1stdecomgauge}
Let $M$ be a five dimensional oriented closed manifold with $\pi_{1}(M)\cong \mathbb{Z}/c$ ($c\geq 3$ an odd number) and $H_{2}(M; \mathbb{Z})$ is torsion free of rank $m-1$.
Let $G$ be a simply connected compact simple Lie group with $\pi_4(G)=0$. Then we have the homotopy equivalence 
\[\Omega^2 \mathcal{G}_k(M)\simeq \Omega^2\mathcal{G}_k(M^\prime)\times \prod_{i=1}^{m-2} \Omega^4 G,\]
where $k\in \mathbb{Z}/c$ and $M^\prime$ is the homotopy cofibre of some map $\phi: \bigvee_{i=1}^{m-2}S^2\hookrightarrow M$ (see the proof). 
\end{theorem}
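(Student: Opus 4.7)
The plan is to apply Proposition \ref{Gsplit} with $Y=\bigvee_{i=1}^{m-2}S^2$ and looping index $i=2$, after constructing a map $\phi:Y\to M$ whose triple suspension splits off the $\bigvee_{i=1}^{m-2}S^5$ wedge summand of $\Sigma^3 M$ produced by Proposition \ref{decomSigma3M}. The hypothesis $[Y,BG]=0$ is immediate: since $G$ is simply connected, $[\bigvee S^2, BG]\cong \prod_{i=1}^{m-2} \pi_1(G)=0$.

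To construct $\phi$, I will exploit the cellular structure of $M$. Since $\pi_1(M)\cong \mathbb{Z}/c$ is cyclic, we may take the $1$-skeleton to be $S^1$. The cellular boundary $\partial:C_2(M)\cong\mathbb{Z}^m \to C_1(M)\cong\mathbb{Z}$ must have image $c\mathbb{Z}$ (realising $H_1=\mathbb{Z}/c$) and kernel of rank $m-1$ (realising $H_2\cong\bigoplus_{m-1}\mathbb{Z}$). After an integral change of basis on $C_2$, this boundary has the form $(c,0,\dots,0)$, so one $2$-cell attaches with degree $c$ (producing a $P^2(c)$ subcomplex) while the remaining $m-1$ attaching maps are null-homotopic; hence the $2$-skeleton $M_2$ of $M$ is homotopy equivalent to $P^2(c)\vee \bigvee_{i=1}^{m-1}S^2$. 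I define $\phi$ as the composite of the inclusion of the first $m-2$ of these free $S^2$-summands into $M_2$ with the inclusion $M_2\hookrightarrow M$.

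The key step is to verify that $\Sigma^3\phi$ admits a left homotopy inverse. After two suspensions, $\Sigma^2 M_2\simeq P^4(c)\vee \bigvee_{i=1}^{m-1}S^4$ maps into $\Sigma^2 M_4$, and comparison with (\ref{hdecomS2M4}) identifies this inclusion with the low-dimensional wedge summand of $\Sigma^2 M_4$. Hence $\Sigma^2\phi:\bigvee_{i=1}^{m-2}S^4\to\Sigma^2 M$ is homotopic to the inclusion of $m-2$ of the $S^4$-summands used to construct $Z$ in Proposition \ref{decomSigma3M}. One further suspension then identifies $\Sigma^3\phi$ with the inclusion of $\bigvee_{i=1}^{m-2}S^5$ into $\Sigma^3 M\simeq \Sigma Z\vee \bigvee_{i=1}^{m-2}(S^6\vee S^5)$, and the projection onto these $S^5$-summands supplies the required left inverse. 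The main obstacle is precisely this compatibility step: one must carefully match the chosen $m-2$ cells of $M$ with the $m-2$ distinguished wedge summands of Proposition \ref{decomSigma3M}, tracing the cellular data through the successive suspensions and decompositions.

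Applying Proposition \ref{Gsplit} then yields
\[\Omega^2\mathcal{G}_k(M)\simeq \Omega^2\mathcal{G}_{k^\prime}(M^\prime)\times \Omega^2 \mathrm{Map}^{\ast}_0\Bigl(\bigvee_{i=1}^{m-2}S^2,\;G\Bigr),\]
and the second factor simplifies to $\prod_{i=1}^{m-2}\Omega^4 G$ via $\mathrm{Map}^{\ast}_0(\bigvee S^2,\,G)\simeq \prod_{i=1}^{m-2}\Omega^2_0 G$. The identification $k^\prime=k$ follows because the cofibre sequence $Y\to M\to M^\prime$ induces an exact sequence on $[-,BG]$ whose outer terms $[\bigvee S^2,BG]$ and $[\bigvee S^3,BG]$ both vanish (as $\pi_1(G)=\pi_2(G)=0$), so the quotient map induces a bijection $q^{\ast}:[M^\prime,BG]\xrightarrow{\cong}[M,BG]\cong \mathbb{Z}/c$.
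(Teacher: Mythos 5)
Your overall architecture coincides with the paper's: take $Y=\bigvee_{i=1}^{m-2}S^2$, check $[Y,BG]=0$, show that $\Sigma^3\phi$ admits a left homotopy inverse via Proposition \ref{decomSigma3M}, and feed this into Proposition \ref{Gsplit}; your simplification of the factor $\Omega^2{\rm Map}_0^\ast(\bigvee_{i=1}^{m-2}S^2,G)\simeq\prod_{i=1}^{m-2}\Omega^4G$ and your identification $q^\ast:[M',BG]\cong[M,BG]$ are also exactly what the paper does. The genuine gap is in your construction of $\phi$. You assert that $M$ admits a CW structure with a single $1$-cell and with $C_2(M)\cong\mathbb{Z}^m$, so that after an integral change of basis $M_2\simeq P^2(c)\vee\bigvee_{i=1}^{m-1}S^2$ and the free sphere summands realise a basis of $H_2(M;\mathbb{Z})$. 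For a non-simply connected space this is precisely the kind of minimality statement that is not available off the shelf: as the paper emphasizes at the start of Section \ref{PB+Gaugesec}, minimal cell structures (and hence well-defined homotopy skeleta) are only guaranteed for simply connected complexes. With an arbitrary CW structure one only knows ${\rm rank}(C_2)\geq m$; the surplus $2$-cells may be killed by $3$-cells, and then ``the first $m-2$ free summands'' of $M_2$ need not map onto a direct summand of $H_2(M)$ --- which is exactly the property required for the compatibility with Proposition \ref{decomSigma3M} that you yourself single out as the main obstacle. (Note that if ${\rm rank}(C_2)=m$ did hold, then $\partial_3=0$ would be forced and everything would follow; the unproved claim is precisely that such a structure exists.)

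The paper circumvents this by constructing the map homotopy-theoretically rather than cellularly: extend $S^1\hookrightarrow M$ to a map $P^2(c)\to M$ and form the cofibre sequence $P^2(c)\to M\stackrel{q}{\to}K$; then $K$ is simply connected with ${\rm sk}_2K\simeq\bigvee_{i=1}^{m-1}S^2$, the map $\pi_2(M)\to\pi_2(K)$ is surjective by passing to the universal cover of $M$, and $\pi_2(K)\cong\pi_2(\bigvee_{i=1}^{m-1}S^2)$ by the cellular chain complex (\ref{cellSigmaM}); lifting the spherical generators back to $M$ produces $\psi:P^2(c)\vee\bigvee_{i=1}^{m-1}S^2\to M$ inducing an isomorphism on homology through degree $2$, and $\phi$ is its restriction to $m-2$ of the spheres. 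If you replace your cellular construction by this one (or otherwise justify the existence of $m-2$ classes in $\pi_2(M)$ mapping to part of a basis of $H_2(M;\mathbb{Z})$), the remainder of your argument goes through as written.
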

\begin{proof}
Let us first specify the map in the statement of the theorem.
Since $\pi_{1}(M)\cong\mathbb{Z}/c$, the inclusion of the first skeleton $S^1\hookrightarrow M$ can be extended to a map from $P^2(c)$ and we have the cofibre sequence \[P^2(c)\rightarrow M \stackrel{q}{\rightarrow} K.\] Then $K$ is simply connected and by calculating the homology it is easy to see that ${\rm sk}_2 K\simeq \bigvee_{i=1}^{m-1}S^2$. Also by expecting the universal covering of $M$, we see $\pi_2(M)\rightarrow \pi_{2}(K)$ is surjective. Furthermore $\pi_2(K)\cong \pi_2(\bigvee_{i=1}^{m-1}S^2)$ by expecting the cellular chain complex of $K$ (\ref{cellSigmaM}).
Hence there is a map $\psi: P^2(c)\vee \bigvee_{i=1}^{m-1} S^2\rightarrow M$ which induces an isomorphism of homology up to degree $2$. In particular, by choosing any $m-2$ of the $n-1$ two-spheres we get the map $\phi: \bigvee_{i=1}^{m-2}S^2\hookrightarrow M$.

Now we turn to study $\mathcal{G}_k(M)$, the name of which is justified by the fact $[M, BG]\cong \mathbb{Z}/c$ by Lemma \ref{MBG}. Let $Y=\bigvee_{i=1}^{m-1}S^2$ and $i=2$. Then $[Y, BG]=0$ and $\Sigma^3\phi$ admits a left homotopy inverse by Lemma \ref{decomSigma3M}. Hence by Proposition \ref{Gsplit} we have the homotopy equivalence 
\[\Omega^2 \mathcal{G}_k(M)\simeq \Omega^2\mathcal{G}_k(M^\prime)\times \Omega^2{\rm Map}_0^{\ast}(\bigvee_{i=1}^{m-2}S^2, G),\]
and $[M^\prime, BG]=[M, BG]=\mathbb{Z}/c$. The theorem then follows.
\end{proof}

\section{Further decomposition of double-looped gauge groups when $6\nmid c$}\label{6cdecomsec}
\noindent In this section, suppose $M$ as before is a five dimensional closed manifold with $H_{2}(M; \mathbb{Z})$ is torsion free of rank $m-1$ but $\pi_{1}(M)\cong \mathbb{Z}/c$ ($6\nmid c$). In this case we will develop further homotopy decomposition of $\Omega^2 \mathcal{G}_k(M)$, and in particular prove Theorem \ref{theorem1}.
For that purpose, we need to study the top attaching map of $\Sigma^3 M$.

\subsection{The homotopy aspect of $\Sigma^3 M$ when $6\nmid c$}
Recall that in Section \ref{Sigma2Msection}, we have proved that (\ref{hdecomS2M4})
\[
\Sigma^2 M_4\simeq P^6(c)\vee P^4(c)\vee \bigvee_{i=1}^{m-1}(S^5\vee S^4).
\]
In order to determine the top attaching map, we need to calculate some homotopy groups and related suspension images. The follow lemma shows that the component of the top attaching map of $\Sigma^3M$ on $P^5(c)$ is trivial when $6\nmid c$, and this is important information for us to decompose the gauge groups further.
\begin{lemma}\label{pi7p5c3c}
If $2\nmid c$, then 
\[\pi_{6}(P^4(c))\cong\pi_{7}(P^5(c))\cong  \mathbb{Z}/c\oplus \mathbb{Z}/(3,c),\]
and the image of the suspension homomorphism 
\[\Sigma:\pi_{6}(P^4(c))\rightarrow \pi_{7}(P^5(c)) \]
is $\mathbb{Z}/(3,c)$. 
In particular, ${\rm Im}(\Sigma)=0$ when $6\nmid c$.
\end{lemma}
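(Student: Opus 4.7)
Both computations proceed prime-by-prime at odd primes $p$ dividing $c$, since $c$ is odd and $P^n(c)$ is contractible at primes $p \nmid c$, so the answers assemble from local data. For $\pi_7(P^5(c))$ I apply Lemma~\ref{BMthm} to the Puppe cofibre sequence $S^4 \xrightarrow{c} S^4 \hookrightarrow P^5(c)$: here $A = S^4$ is $3$-connected and the inclusion $S^4 \hookrightarrow P^5(c)$ is $4$-connected, so Blakers-Massey yields the exact sequence
\[
\pi_7(S^4) \xrightarrow{\times c} \pi_7(S^4) \to \pi_7(P^5(c)) \to \pi_6(S^4)
\]
in the required range ($k+m = 3+4 = 7$). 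At odd primes, $\pi_6(S^4) \cong \mathbb{Z}/2$ vanishes and $\pi_7(S^4) \cong \mathbb{Z} \oplus \mathbb{Z}/12$ localizes to $\mathbb{Z}_{(p)} \oplus (\mathbb{Z}/3)_{(p)}$ (with the $\mathbb{Z}/3$ appearing only at $p=3$); the cokernel of $\times c$ is $\mathbb{Z}/p^r \oplus \mathbb{Z}/(p^r,3)$, which reassembles to $\pi_7(P^5(c)) \cong \mathbb{Z}/c \oplus \mathbb{Z}/(3,c)$.

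For $\pi_6(P^4(c))$, the analogous cofibration $S^3 \xrightarrow{c} S^3 \hookrightarrow P^4(c)$ only provides a Blakers-Massey range through degree $5$, so a finer tool is needed. Using that $P^4(c) = \Sigma P^3(c)$, I employ the James model $\Omega P^4(c) \simeq J(P^3(c))$ with its filtration $J_1 \subset J_2 \subset J_3$ whose successive quotients are smash powers $P^3(c)^{\wedge n}$. At odd primes these decompose into bouquets of Moore spaces: by K\"unneth, $P^3(c)^{\wedge 2} \simeq P^5(c) \vee P^6(c)$, and iteratively $P^3(c)^{\wedge 3} \simeq P^7(c) \vee P^8(c)^{\vee 2} \vee P^9(c)$. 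Since $P^3(c)$ is $1$-connected, $J_3(P^3(c)) \to \Omega P^4(c)$ is a $7$-equivalence by James's approximation theorem, reducing $\pi_6(P^4(c)) = \pi_5(\Omega P^4(c))$ to $\pi_5(J_3(P^3(c)))$; a cell-by-cell analysis at odd primes yields the same answer $\mathbb{Z}/c \oplus \mathbb{Z}/(3,c)$.

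The image of $\Sigma: \pi_6(P^4(c)) \to \pi_7(P^5(c))$ is identified by tracking which classes come from suspension. The $\mathbb{Z}/(3,c)$ summand on each side agrees with the stable value $\pi_4^s(\mathbb{S}/c)$ (generated by the mod-$c$ reduction of the odd-primary stem element $\alpha_1 \in \pi_3^s$) and is preserved isomorphically by $\Sigma$; the unstable $\mathbb{Z}/c$ summands arise from Whitehead-product-type constructions involving the non-suspending Hopf element $\nu \in \pi_7(S^4)$ and lie in $\ker(\Sigma)$ (respectively, outside $\mathrm{Im}(\Sigma)$). Hence $\mathrm{Im}(\Sigma) \cong \mathbb{Z}/(3,c)$, which vanishes exactly when $3 \nmid c$ (equivalently, $6 \nmid c$ given $2 \nmid c$). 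The main obstacle will be carefully verifying the James approximation range for $\pi_5$ and pinning down the attaching maps in $J_3(P^3(c))$ that isolate the unstable Whitehead-product classes; an alternative route via the Puppe continuation $P^4(c) \xrightarrow{q} S^4 \xrightarrow{-c} S^4$ and Ganea's theorem for the cofibre homotopy fibre avoids the James machinery but faces comparable bookkeeping.
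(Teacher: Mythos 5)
Your computation of $\pi_7(P^5(c))$ contains a genuine error, and it is the load-bearing step. You treat the Puppe sequence $S^4\xrightarrow{c}S^4\to P^5(c)$ as a fibration through degree $7$ and read off $\pi_7(P^5(c))$ as the cokernel of multiplication by $c$ on $\pi_7(S^4)$. Two things go wrong. First, the range: in Lemma~\ref{BMthm} the integer $m$ is the connectivity of the fibre inclusion $i\colon F\to X$, not of $X\to Y$; here the image of $\pi_4(F)\to\pi_4(S^4)$ is $c\mathbb{Z}$, so $i$ is only $3$-connected and the exact sequence stops at $\pi_{k+m}=\pi_6$, one degree short. Second, and more seriously, the degree~$c$ self-map of the \emph{even} sphere $S^4$ does not act by $c$ on $\pi_7(S^4)=\mathbb{Z}\{\nu_4\}\oplus\mathbb{Z}/12\{E\nu'\}$: by the Hopf invariant, $c_*(\nu_4)=c\nu_4+\binom{c}{2}[\iota_4,\iota_4]=c^2\nu_4-\binom{c}{2}E\nu'$, so the honest cokernel of $c_*$ carries a $\mathbb{Z}/c^2$ on the free part (the paper records exactly this ``degree $c^2$'' phenomenon in the proof of Lemma~\ref{pi7p5c}). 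The missing factor of $c$ only reappears because, outside the Blakers--Massey range, the kernel of $\pi_7(S^4)\to\pi_7(P^5(c))$ also contains the boundary of the relative Whitehead product, namely $c[\iota_4,\iota_4]$; your argument never sees this class, so you reach the correct group through two compensating omissions. The paper sidesteps all of this by passing to the fibre of the pinch map $P^5(c)\to S^5$, whose $11$-skeleton is $S^4\cup_{c\omega_4}e^8$ (Lemma~\ref{pinchsplit}), so that the Whitehead square $\omega_4=2\nu_4-E\nu'$ is attached explicitly and $\pi_7$ can be computed honestly; the analogue with $\omega_3=0$ gives $\pi_6(P^4(c))$.

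The other two parts of your plan are not wrong in principle but are not yet proofs. For $\pi_6(P^4(c))$ you correctly note that Blakers--Massey is out of range and propose $J_3(P^3(c))$, but ``a cell-by-cell analysis yields the same answer'' is precisely where the attaching maps of $J_2\subset J_3$ --- i.e.\ the same Whitehead-product data --- must be extracted, so the hard content is deferred rather than supplied. For the suspension statement, your mechanism misidentifies the classes: $\nu_4$ is \emph{not} ``non-suspending'' (it suspends to the generator $\nu_5$ of $\pi_8(S^5)$); what dies under $E$ is the Whitehead square. The paper instead runs the EHP sequence of Lemma~\ref{EHP} for $W=P^4(c)$ itself, using $\pi_5(P^4(c))=0$ and the splitting $P^4(c)\wedge P^4(c)\simeq P^8(c)\vee P^7(c)$ to show $H$ is onto $\mathbb{Z}/c$, whence $\operatorname{Im}(E)=\ker(H)$ has order $(3,c)$ and is identified with the $\mathbb{Z}/(3,c)$ summand coming from the sphere. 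You would need an argument of comparable precision to conclude.
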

\begin{proof}
We will obtain the lemma once we prove Lemma \ref{pi6p4c}, Lemma \ref{pi7p5c} and Lemma \ref{pi6p4cSigma}.
\end{proof}

Let us first recall some useful lemmas.
\begin{lemma}\label{pinchsplit}
Let $F^{n+1}\{c\}$ be the homotopy fibre of the pinch map $P^{n+1}(c)\rightarrow S^{n+1}$ ($n\geq 2$). Then there is a principal fibre sequence
\[\Omega S^{n+1} \stackrel{s_{n+1}}{\longrightarrow} F^{n+1}\{c\} \longrightarrow P^{n+1}(c),\] 
and the reduced homology 
\[ \bar{H}_\ast(F^{n+1}\{c\})\cong \oplus_{k=1}^{\infty}\mathbb{Z}\{y_k\} (|y_k|=kn)\] 
is a free $H_\ast(\Omega S^{n+1})$-module on $y_1$ with $\iota_n^{k-1}\cdot y_1= y_k$.

Moreover, the $(3n-1)$-skeleton ${\rm sk}_{3n-1}(F^{n+1}\{c\})$ is the homotopy cofibre of $c$-times of the Whitehead product $c\omega_n: S^{2n-1}\rightarrow S^n$. In particular, when $c=2^r$
\[{\rm sk}_{6n-4}(F^{2n}\{2^r\})\simeq S^{2n-1}\vee S^{4n-2}.\]
\end{lemma}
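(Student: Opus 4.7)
First, extending the Puppe sequence of $F^{n+1}\{c\}\to P^{n+1}(c)\to S^{n+1}$ one step to the left immediately produces the connecting map $s_{n+1}$ and exhibits $F^{n+1}\{c\}$ as a principal $\Omega S^{n+1}$-bundle over $P^{n+1}(c)$ up to homotopy; this gives the principal fibre sequence asserted in the statement.

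For the homology claim, I would run the Serre spectral sequence of this principal fibration, with $E_2^{p,q}=H_p(P^{n+1}(c);H_q(\Omega S^{n+1}))$. Since $n\geq 2$ the base is simply connected, and the $E_2$-page is supported only on the two columns $p\in\{0,n\}$ and on rows $q$ divisible by $n$ (because $\tilde H_\ast(P^{n+1}(c))$ is $\mathbb{Z}/c$ in degree $n$ and zero elsewhere, while $H_\ast(\Omega S^{n+1})\cong\mathbb{Z}$ in each degree divisible by $n$). The only candidate differential is $d_n$ leaving column $n$, but its target row $kn+n-1$ is not divisible by $n$, so the sequence collapses at $E_2$, yielding a short exact sequence $0\to\mathbb{Z}\to H_{kn}(F^{n+1}\{c\})\to\mathbb{Z}/c\to 0$ for each $k\geq 1$. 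To see this extension is trivial (so $H_{kn}\cong\mathbb{Z}$ rather than $\mathbb{Z}\oplus\mathbb{Z}/c$), I would compare rationally: $P^{n+1}(c)$ is $c$-torsion, hence $F^{n+1}\{c\}\simeq_{\mathbb{Q}}\Omega S^{n+1}$ and each $H_{kn}(F^{n+1}\{c\};\mathbb{Q})$ has rank one. The $H_\ast(\Omega S^{n+1})$-module structure is then immediate from the principal action: taking $y_1\in H_n(F^{n+1}\{c\})$ to be the image of the fundamental class under the lift $S^n\hookrightarrow F^{n+1}\{c\}$ (which exists because the composite into $S^{n+1}$ is null), the classes $y_k=\iota_n^{k-1}\cdot y_1$ generate $H_\ast(F^{n+1}\{c\})$ freely, as can be checked by comparison with the rationalization.

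For the low-dimensional cell structure, the above homology computation forces a minimal CW model of $F^{n+1}\{c\}$ with exactly one cell in each degree $kn$, so ${\rm sk}_{3n-1}(F^{n+1}\{c\})\simeq S^n\cup_\beta e^{2n}$ for some $\beta\in\pi_{2n-1}(S^n)$. To identify $\beta=c\omega_n$, I would apply Ganea's fibre--cofibre construction to the cofibration $S^n\stackrel{c}{\longrightarrow}S^n\to P^{n+1}(c)$: the $2n$-cell of $F^{n+1}\{c\}$ arises from the join of the fibre's bottom cell with the cofibration's attaching cell, and the standard identification gives its attaching map as the Whitehead square composed with the degree-$c$ map, namely $c[\iota_n,\iota_n]=c\omega_n$. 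Equivalently, one can pull back cells from James' model $\Omega S^{n+1}\simeq J(S^n)$, where the $2n$-cell is attached by $[\iota_n,\iota_n]$, and track how the pinch contributes the factor $c$. For the 2-primary sharpening, recall the classical theorem of James that $[\iota_m,\iota_m]\in\pi_{2m-1}(S^m)$ has order exactly two for odd $m\geq 3$. Applied with $m=2n-1$ and $c=2^r$ ($r\geq 1$), this yields $2^r\omega_{2n-1}=0$, so the attaching map of the $(4n-2)$-cell vanishes and ${\rm sk}_{4n-2}(F^{2n}\{2^r\})\simeq S^{2n-1}\vee S^{4n-2}$; since the next cell of $F^{2n}\{2^r\}$ lies in degree $3(2n-1)=6n-3>6n-4$, this bouquet is precisely ${\rm sk}_{6n-4}(F^{2n}\{2^r\})$.

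The main obstacle, I expect, is the rigorous identification of the $2n$-attaching map as exactly $c\omega_n$: the spectral sequence collapse, the rational extension resolution, and the Whitehead-square order step are all essentially mechanical, but pinning the attaching map down on the nose (rather than only within some appropriate subgroup of $\pi_{2n-1}(S^n)$) requires either careful execution of Ganea's fibre--cofibre construction or an explicit CW-model argument via James' filtration on $\Omega S^{n+1}$.
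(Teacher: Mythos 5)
The paper disposes of this lemma by citation: the module structure of $\bar{H}_\ast(F^{n+1}\{c\})$ is quoted from Proposition $8.1$ and Corollary $8.2$ of Cohen--Moore--Neisendorfer, and the identification of ${\rm sk}_{3n-1}$ with the cofibre of $c\omega_n$ is obtained by rerunning the proof of Proposition $4.15$ of Wu's memoir with $P^{n+1}(2)$ replaced by $P^{n+1}(c)$. Your attempt at a self-contained argument is welcome in outline, and the fibre-sequence setup, the collapse of the Serre spectral sequence of $\Omega S^{n+1}\to F^{n+1}\{c\}\to P^{n+1}(c)$ at $E_2$, and the final step $2^r\omega_{2n-1}=0$ (which needs only $2[\iota_m,\iota_m]=0$ for $m$ odd, coming from graded anticommutativity of the Whitehead product -- note $[\iota_m,\iota_m]$ is actually zero for $m=3,7$, so ``order exactly two'' is not quite right, though harmless here) are all fine.

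However, there is a genuine gap in your resolution of the extension $0\to\mathbb{Z}\to H_{kn}(F^{n+1}\{c\})\to\mathbb{Z}/c\to 0$. First, the terminology is inverted: the \emph{split} extension gives $\mathbb{Z}\oplus\mathbb{Z}/c$, so what you need is that the extension is \emph{non}-trivial (the multiplication-by-$c$ extension). More seriously, the rational comparison you invoke cannot decide this: both $\mathbb{Z}$ and $\mathbb{Z}\oplus\mathbb{Z}/c$ have rank one, and the torsion you must exclude is invisible after tensoring with $\mathbb{Q}$. The same objection applies to your justification of freeness of the $H_\ast(\Omega S^{n+1})$-module structure ``by comparison with the rationalization''. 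A clean elementary fix is to run instead the Serre spectral sequence of the fibration $F^{n+1}\{c\}\to P^{n+1}(c)\to S^{n+1}$: since the total space has reduced homology only $\mathbb{Z}/c$ in degree $n$, exactness forces the transgression $d_{n+1}\colon H_{kn}(F^{n+1}\{c\})\to H_{(k+1)n}(F^{n+1}\{c\})$ to be an isomorphism for $k\geq 1$ and to be injective with cokernel $\mathbb{Z}/c$ for $k=0$, whence $H_{kn}\cong\mathbb{Z}$ for all $k\geq1$ by induction. Finally, the identification of the $2n$-cell's attaching map as exactly $c\omega_n$ -- which you correctly flag as the main obstacle -- is left as a sketch with two alternative strategies (Ganea, James) and neither is executed; this is precisely the content the paper imports from Wu's Proposition $4.15$, so as written your argument does not yet establish the second half of the lemma.
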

\begin{proof}
The module structure of the homology of $F^{n+1}\{c\}$ is proved in Proposition $8.1$ and Corollary $8.2$ of \cite{CMN79}, and the proof of the remaining part of the lemma is the same as that of Proposition $4.15$ of \cite{Wu2003} once we replace $P^{n+1}(2)$ by $P^{n+1}(c)$ there.
\end{proof}

The following version of $EHP$-sequence is a consequence of the Blakers-Massey theorem (Lemma \ref{BMthm}):
\begin{lemma}[Theorem $12.2.2$ of \cite{Whitehead78}]\label{EHP}
Let $W$ be an $(n-1)$-connected space. There is a long exact sequence 
\begin{eqnarray*}
&\ \ & \ \ \ \pi_{3n-2}(W)\rightarrow \pi_{3n-1}(\Sigma W)\rightarrow \pi_{3n-1}(\Sigma W\wedge W)\rightarrow \cdots \\
&&\cdots \rightarrow\pi_{q}(W)\stackrel{E}{\rightarrow}\pi_{q+1}(\Sigma W)\stackrel{H}{\rightarrow}\pi_{q+1}(\Sigma W\wedge W)\stackrel{P}{\rightarrow}\pi_{q-1}(W)\rightarrow \cdots
\end{eqnarray*}
\hfill $\Box$
\end{lemma}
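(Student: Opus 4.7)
The strategy is to derive the sequence from James' reduced product construction combined with Blakers-Massey (Lemma \ref{BMthm}). For the pointed connected complex $W$, let $JW$ denote James' reduced product, equipped with its natural filtration $J_1 W \subset J_2 W \subset \cdots$ and successive filtration quotients $J_k W / J_{k-1} W \simeq W^{\wedge k}$. James' classical theorem supplies a natural weak equivalence $JW \simeq \Omega \Sigma W$, under which the inclusion $W = J_1 W \hookrightarrow JW$ is identified with the unit of the loop-suspension adjunction, and hence with the suspension homomorphism $E$.

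The first step is to truncate to $J_2 W$. Because $W$ is $(n-1)$-connected, every smash power $W^{\wedge k}$ is $(kn-1)$-connected, so the cofibre $JW / J_2 W$, built from cells of $W^{\wedge k}$ with $k \geq 3$, is $(3n-1)$-connected. The inclusion $J_2 W \hookrightarrow JW \simeq \Omega\Sigma W$ is therefore a $(3n-1)$-equivalence, giving $\pi_q(J_2 W) \cong \pi_{q+1}(\Sigma W)$ for all $q \leq 3n - 2$.

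The second step is to apply Lemma \ref{BMthm} to the cofibre sequence $W \hookrightarrow J_2 W \to W \wedge W$. Take $A = W$, of connectivity $k = n-1$, and let $F$ be the homotopy fibre of the quotient $J_2 W \to W \wedge W$; since the base $W \wedge W$ is $(2n-1)$-connected, the inclusion $F \hookrightarrow J_2 W$ is $(2n-1)$-connected, so $m = 2n - 1$. Lemma \ref{BMthm} then produces a long exact sequence terminating at $\pi_{k+m}(W \wedge W) = \pi_{3n-2}(W \wedge W)$. Substituting the truncation isomorphism above, and using Freudenthal suspension to replace $\pi_q(W \wedge W)$ by $\pi_{q+1}(\Sigma W \wedge W)$ throughout the range (valid for $q \leq 4n-2$, comfortably containing $q \leq 3n-2$), the sequence reindexes exactly as the EHP sequence of the statement, with $E$ being the suspension map by construction and $H$, $P$ the resulting connecting morphisms.

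The main obstacle lies in importing James' two classical inputs: the equivalence $JW \simeq \Omega\Sigma W$ together with its description of the unit, and the cellular filtration with the asserted subquotients. Both are standard (compare Whitehead, Chapter VII) and used here essentially as black boxes. Once they are in place, the remainder is routine bookkeeping in connectivity ranges: one checks $3n - 2 \leq 4n - 2$ so that Freudenthal covers the full Blakers-Massey range, and verifies that the top term $\pi_{3n-1}(\Sigma W \wedge W)$ of the stated sequence corresponds to $\pi_{3n-2}(W \wedge W)$ in the Blakers-Massey output, both of which are immediate.
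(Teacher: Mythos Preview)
The paper does not actually prove this lemma: it is stated with a terminal $\Box$ and attributed to Whitehead, with only the remark that it ``is a consequence of the Blakers--Massey theorem (Lemma~\ref{BMthm}).'' Your proof via James' reduced product, truncated at $J_2W$ and then fed into Lemma~\ref{BMthm}, is correct and is precisely the standard argument (and the one in Whitehead). The connectivity bookkeeping---$(3n-1)$-connectivity of $JW/J_2W$, $m=2n-1$ from the $(2n-1)$-connectivity of $W\wedge W$, and the Freudenthal range $3n-2\leq 4n-3$---all checks out. There is nothing to correct; you have simply supplied the details the paper chose to outsource to the reference.
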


With these computational tools, we can now prove Lemma \ref{pi6p4c}, Lemma \ref{pi7p5c} and Lemma \ref{pi6p4cSigma} successively.

\begin{lemma}\label{pi6p4c}
If $2\nmid c$, then 
$\pi_{6}(P^4(c))\cong\mathbb{Z}/c\oplus \mathbb{Z}/(3,c)$.
\end{lemma}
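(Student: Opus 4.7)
The plan is to exploit the principal pinch fibration from Lemma \ref{pinchsplit}. Since $2\nmid c$, $P^4(c)$ is contractible $2$-locally, so one may work after localization away from~$2$; at primes $p\nmid c$ the Moore space is contractible, so it suffices to handle odd primes $p\mid c$.

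First I would apply Lemma \ref{pinchsplit} with $n=3$. Because $S^3$ is a Lie group and therefore an H-space, the Whitehead square $[\iota_3,\iota_3]\in \pi_5(S^3)$ vanishes, hence $c\omega_3=0$ and the lemma identifies ${\rm sk}_8 F^4\{c\}\simeq S^3\vee S^6$. In particular $\pi_i(F^4\{c\})\cong \pi_i(S^3\vee S^6)$ for $i\leq 7$. By the Hilton--Milnor theorem, the basic products contributing to $\pi_6(S^3\vee S^6)$ are $\iota_3$, $\iota_6$ and the formal Whitehead product $[\iota_3,\iota_3]$ (of total degree~$5$), so
\[
\pi_6(F^4\{c\})\cong \pi_6(S^3)\oplus \pi_6(S^6)\oplus \pi_6(S^5)\cong \mathbb{Z}/12\oplus \mathbb{Z}\oplus \mathbb{Z}/2.
\]
Away from~$2$ this reduces to $\mathbb{Z}/3\oplus \mathbb{Z}_{(\mathrm{odd})}$.

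Next I would insert this into the long exact sequence of the fibration $F^4\{c\}\to P^4(c)\to S^4$, namely
\[
\pi_7(S^4)\xrightarrow{(s_4)_*}\pi_6(F^4\{c\})\to \pi_6(P^4(c))\to \pi_6(S^4).
\]
Localized away from~$2$, $\pi_6(S^4)=0$ and $\pi_7(S^4)\cong \mathbb{Z}\{\nu\}\oplus \mathbb{Z}/3$, so $\pi_6(P^4(c))$ is identified with the cokernel of $(s_4)_*$. The identification of this connecting map should be done summand-by-summand: the Hopf generator $\nu\in\pi_7(S^4)$ lands on $c$ times the generator of the $\mathbb{Z}$-summand of $\pi_6(F^4\{c\})$ coming from the second James filtration cell $S^6$ (contributing the $\mathbb{Z}/c$ factor), while the $\mathbb{Z}/3$-torsion lands inside $\pi_6(S^3)_{(\mathrm{odd})}=\mathbb{Z}/3$ via multiplication by~$c$ (contributing the $\mathbb{Z}/(3,c)$ factor). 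Adding these yields $\pi_6(P^4(c))\cong \mathbb{Z}/c\oplus \mathbb{Z}/(3,c)$.

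The main obstacle is precisely this explicit determination of $(s_4)_*$, which requires matching the James filtration of $\Omega S^4$ to the skeletal decomposition ${\rm sk}_8 F^4\{c\}\simeq S^3\vee S^6$ and tracking the role of the attaching degree $c$. If this direct route proves awkward, a complementary strategy is to apply Lemma \ref{EHP} with $W=P^3(c)$: at odd primes $P^4(c)\wedge P^3(c)\simeq P^6(c)\vee P^7(c)$, whose $\pi_6$ vanishes by Lemma \ref{pi3p3c} and dimensional reasons, so the EHP sequence reduces the problem to $\pi_5(P^3(c))$, which can be attacked by further EHP induction or by invoking the Cohen--Moore--Neisendorfer decomposition of $\Omega P^3(c)$ at odd primes.
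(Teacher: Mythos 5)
Your overall strategy coincides with the paper's: use the pinch fibration of Lemma \ref{pinchsplit}, observe that $\omega_3=0$ because $S^3$ is an $H$-space so that ${\rm sk}_8 F^4\{c\}\simeq S^3\vee S^6$, and read off $\pi_6(P^4(c))$ as the cokernel of the connecting map $(s_4)_*\colon \pi_6(\Omega S^4)\to \pi_6(F^4\{c\})$ after localizing away from $2$. The difficulty is that the one step carrying the entire content of the lemma --- that $(s_4)_*$ is multiplication by $c$ on both the $\mathbb{Z}/3$-summand and the $\mathbb{Z}$-summand --- is asserted rather than proved, and you yourself flag it as ``the main obstacle.'' The paper closes exactly this gap with two arguments absent from your proposal. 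For the torsion part it uses the homotopy commutative square $s_4\circ E\simeq i\circ c$, where $E\colon S^3\to\Omega S^4$ is the suspension and $i\colon S^3\hookrightarrow S^3\vee S^6$ the inclusion; this square comes from the construction of the fibration, and since $E_*$ is split injective on $\pi_6$ away from $2$ it identifies $(s_4)_*$ on $\mathbb{Z}/3$ with multiplication by $c$. For the free part it invokes the statement in Lemma \ref{pinchsplit} that $\bar{H}_\ast(F^4\{c\})$ is a free $H_\ast(\Omega S^4)$-module on $y_1$: since $H_3(s_4)$ sends $\iota_3$ to $c\,y_1$, the module structure forces $H_6(s_4)$ to have degree $c$, and the Hurewicz theorem transfers this to $\pi_6$. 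Some such input is genuinely needed: the degree of these induced maps on free summands is not determined by linearity alone --- in the paper's companion computation of $\pi_7(P^5(c))$ the degree-$c$ self-map of $S^4$ acts by $c^2$ (not $c$) on the $\mathbb{Z}\{\nu_4\}$-summand of $\pi_7(S^4)$ because of the Hopf invariant --- so without the module-structure argument your claimed value of $(s_4)_*$ is an unproved guess.

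Your fallback route does not repair this. Lemma \ref{EHP} applied to $W=P^3(c)$, which is only $1$-connected (so $n=2$), is exact only up to $\pi_{3n-1}(\Sigma W)=\pi_5(P^4(c))$; the group $\pi_6(P^4(c))$ lies outside the metastable range for $P^3(c)$, so the EHP sequence says nothing about it. (The paper does use Lemma \ref{EHP}, but with the $2$-connected space $W=P^4(c)$ and only to compare $\pi_6(P^4(c))$ with $\pi_7(P^5(c))$, which is in range.) A smaller point: in the Hilton--Milnor decomposition of $\pi_6(S^3\vee S^6)$ there is no basic product $[\iota_3,\iota_3]$ and hence no $\pi_6(S^5)$ summand; the Whitehead square of a single wedge summand lives in $\pi_5(S^3)$ and is not a separate factor. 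This slip is harmless after localizing away from $2$, but it is a misreading of Hilton's theorem.
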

\begin{proof}
By Lemma \ref{pinchsplit} we have ${\rm sk}_8F^{4}\{c\}\simeq S^3\cup_{c\omega_3}e^6$. However $\omega_3\in\pi_5(S^3)$ is $0$ since $S^3$ is a Lie group and in particular an $H$-space. Hence $F^{4}\{c\}\simeq S^3\vee S^6$ and we have a homotopy commutative diagram
\[
\xymatrix{
S^3 \ar[d]_{E} \ar[r]^{c} & S^3 \ar@{_{(}->}[d] \ar[rd] \\
\Omega S^4 \ar[r]^{s_4} & S^3\vee S^6 \ar[r] & P^4(c)\ar[r] & S^4,
}
\]
where the second row is a homotopy fibre sequence up to degree $7$. Then we have a commutative diagram of homotopy groups
\[
\xymatrix{
\pi_{6}(S^3) \ar[d]_{E} \ar[r]^{c_\ast=c} & \pi_{6}(S^3) \ar@{_{(}->}[d] \ar[rd] \\
\pi_{6}(\Omega S^4) \ar[r]^{s_{4\ast}} & \pi_{6}(S^3\vee S^6) \ar[r] & \pi_{6}(P^4(c))\ar[r] & \pi_{6}(S^4),
}
\]
where the second row is exact, $c_\ast=c$ for $S^3$ is a group, and we only need to consider the localization of the diagram away from $2$ since 
$P^4(c)$ is contractible after localization at $2$. Under this context we have \cite{Toda62}
\[\pi_{6}(S^4)\cong 0, \  \ \pi_{6}(S^3)\cong \mathbb{Z}/3, \  \ \pi_{6}(\Omega S^4)\cong \pi_{6}(\Omega S^7) \oplus \pi_6(S^3)\cong \mathbb{Z}\oplus \mathbb{Z}/3,\]
and $E$ is an injection. It follows that the image of $s_{4\ast}$ on $\mathbb{Z}/3$-summand is $\mathbb{Z}/(3,c)$. For the free part, notice that by Lemma \ref{pinchsplit} $H_\ast(s_{4\ast})$ is an $H_\ast(\Omega S^4)$-module homomorphism. In particular, 
\[H_6(s_{4\ast}): H_6(\Omega S^4)\cong H_{6}(\Omega S^7)\rightarrow H_6(S^3\vee S^6)\]
is a degree $c$ morphism. Hence, by Hurewicz theorem $s_{4\ast}$ on $\mathbb{Z}$ is a morphism of degree $c$. Combining the two parts we see that $\pi_{6}(P^4(c))\cong\mathbb{Z}/c\oplus \mathbb{Z}/(3,c)$ by the exactness of the second row of the diagram.
\end{proof}

\begin{lemma}\label{pi7p5c} 
If $2\nmid c$, then 
$\pi_{7}(P^5(c))\cong  \mathbb{Z}/c\oplus \mathbb{Z}/(3,c)$.
\end{lemma}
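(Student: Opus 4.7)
The plan is to mimic the proof of Lemma \ref{pi6p4c}, by using the principal fibration from Lemma \ref{pinchsplit} with $n=4$,
\[
\Omega S^5 \stackrel{s_5}{\longrightarrow} F^5\{c\} \longrightarrow P^5(c),
\]
together with the long exact sequence on homotopy groups. The essential difference from the $n=3$ case is that the Whitehead square $[\iota_4,\iota_4]\in\pi_7(S^4)$ is non-zero (it has Hopf invariant $2$), so $F^5\{c\}$ does not split as a wedge of spheres and the analysis is more delicate.

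First, by Lemma \ref{pinchsplit}, the $11$-skeleton of $F^5\{c\}$ is the homotopy cofibre $S^4\cup_{c[\iota_4,\iota_4]}e^8$. Since the next cell of $F^5\{c\}$ sits in dimension $12$, this skeleton realises $F^5\{c\}$ in the range relevant for $\pi_6$ and $\pi_7$. From the long exact sequence of the pair $(S^4\cup e^8,S^4)$, I get $\pi_7(F^5\{c\})\cong \pi_7(S^4)/\langle c[\iota_4,\iota_4]\rangle$. Since $P^5(c)$ is $2$-locally contractible, I localise away from $2$ from here on. Using Toda's identity $[\iota_4,\iota_4]=2\nu_4-\Sigma\nu'$ and the decomposition $\pi_7(S^4)_{(odd)}\cong \mathbb{Z}_{(odd)}\{\nu_4\}\oplus\mathbb{Z}/3\{\Sigma\nu'\}$, I compute this quotient prime by prime, distinguishing the cases $3\mid c$ and $3\nmid c$.

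Next, feed this into the long exact sequence of the fibration,
\[
\pi_8(S^5)\stackrel{s_{5*}}{\longrightarrow}\pi_7(F^5\{c\})\longrightarrow \pi_7(P^5(c))\longrightarrow \pi_7(S^5)\longrightarrow \pi_6(F^5\{c\}).
\]
Away from $2$, $\pi_7(S^5)=0$, so $\pi_7(P^5(c))$ is the cokernel of $s_{5*}$. To pin down the image of $s_{5*}$, I use a commutative square analogous to the one in the proof of Lemma \ref{pi6p4c}, comparing the suspension $E\colon \pi_7(S^4)\to\pi_8(S^5)=\pi_7(\Omega S^5)$ with the bottom-cell inclusion $S^4\hookrightarrow F^5\{c\}$. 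The $H_*(\Omega S^5)$-module structure on $H_*(F^5\{c\})$ from Lemma \ref{pinchsplit} controls $s_{5*}$ on the free part (giving multiplication by $c$ on a $\mathbb{Z}$-summand via Hurewicz, as in the proof of Lemma \ref{pi6p4c}), while the identity $\Sigma[\iota_4,\iota_4]=0$ constrains it on the torsion part. Assembling these pieces yields $\pi_7(P^5(c))\cong \mathbb{Z}/c\oplus\mathbb{Z}/(3,c)$.

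The main obstacle is the behaviour of $s_{5*}$ on the $3$-primary torsion $\pi_8(S^5)_{(3)}\cong \mathbb{Z}/3$. Without careful bookkeeping, the extension $0\to \mathbb{Z}/(3,c)\to \pi_7(P^5(c))\to \mathbb{Z}/c\to 0$ could in principle produce $\mathbb{Z}/3c$ instead of $\mathbb{Z}/c\oplus\mathbb{Z}/3$ when $3\mid c$, so one really needs that the image of $s_{5*}$ at the prime $3$ lands precisely in the $\mathbb{Z}/3$-summand coming from $\Sigma\nu'$. This is exactly the place where the non-triviality of $[\iota_4,\iota_4]$ and the unstable element $\nu'\in \pi_6(S^3)$ enter in an essential way, and it is the step most sensitive to the difference between this lemma and Lemma \ref{pi6p4c}.
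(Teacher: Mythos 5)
Your proposal follows essentially the same route as the paper's proof: the identification ${\rm sk}_{11}F^5\{c\}\simeq S^4\cup_{c\omega_4}e^8$ from Lemma \ref{pinchsplit}, the computation of $\pi_7$ of that cofibre as the quotient $\pi_7(S^4)/\langle c\omega_4\rangle$ via Blakers--Massey, the identity $\omega_4=2\nu_4-E\nu'$, and the determination of ${\rm Im}(s_{5\ast})$ from the commutative square relating $s_{5\ast}\circ E$ to the bottom-cell inclusion composed with the degree $c$ map, which is exactly how the paper resolves the $3\mid c$ versus $3\nmid c$ dichotomy. The one small slip is your appeal to the $H_\ast(\Omega S^5)$-module structure to control $s_{5\ast}$ on a \emph{free part}: away from $2$ both $\pi_7(\Omega S^5)\cong\mathbb{Z}/3$ and $\pi_7(F^5\{c\})$ are finite (the Hurewicz/module argument is only needed for $\pi_6(\Omega S^4)$ in Lemma \ref{pi6p4c}), so that step is vacuous here and the entire content is the $3$-primary bookkeeping you correctly single out as the crux.
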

\begin{proof}
By Lemma \ref{pinchsplit} we have ${\rm sk}_{11}F^{5}\{c\}\simeq S^4\cup_{c\omega_4}e^8$. We need to compute $\pi_7(S^4\cup_{c\omega_4}e^8)$ first. By Lemma \ref{BMthm} we have a commutative diagram of homotopy groups 
\[
\xymatrix{
\pi_7(S^7) \ar[rd]^{c\omega_4} \ar[d]_{\cong} \\
\pi_7(\Omega S^8)\ar[r] & \pi_7(S^4) \ar[r]^{i_\ast\ \ \ } & \pi_7(S^4\cup_{c\omega_4}e^8) \ar[r] & \pi_7(S^8)=0,
}
\]
where the second row is exact and $\pi_7(S^4)=\{\nu_4\}\oplus\{E\nu^\prime\}\cong \mathbb{Z}\oplus \mathbb{Z}/12$. Also we know that 
\[\Sigma: \pi_7(S^4)=\{\nu_4\}\oplus\{E\nu^\prime\}\rightarrow \pi_8(S^5)=\{\nu_5\}\]
sending $\nu_4$ to $\nu_5$ and $E\nu^\prime$ to $2\nu_5$. Hence by the Hopf invariant one \cite{Adams60} and $E\omega_4=0$ it is easy to see $\omega=2\nu_4-E\nu^\prime$. Now let us localize everything away from $2$. We then get $\omega=\nu_4-E\nu^\prime\in \mathbb{Z}\oplus \mathbb{Z}/3$. Hence by the above diagram we get  $\pi_7(S^4\cup_{c\omega_4}e^8)\cong \mathbb{Z}/c\oplus \mathbb{Z}/3$.

Now we turn to study the follwoing homotopy commutative diagram
\[
\xymatrix{
S^4 \ar[d]_{E} \ar[r]^{c} & S^4 \ar@{_{(}->}[d]_{i} \ar[rd] \\
\Omega S^5 \ar[r]^{s_5} & S^4\cup_{c\omega_4}e^8 \ar[r] & P^5(c)\ar[r] & S^5,
}
\]
where the second row is a homotopy fibre sequence up to degree $10$. We then get a commutative diagram of homotopy groups localized away from $2$
\[
\xymatrix{
\pi_{7}(S^4) \ar@{->>}[d]_{E} \ar[r]^{c_\ast} & \pi_{7}(S^4) \ar@{->>}[d]_{i_\ast} \ar[rd] \\
\pi_{7}(\Omega S^5) \ar[r]^{s_{5\ast} \ \ \ } & \pi_{7}(S^4\cup_{c\omega_4}e^8) \ar[r] & \pi_{7}(P^5(c))\ar[r] & \pi_{7}(S^5)=0,
}
\]
where the second row is exact. Again by the Hopf invariant one see that $c_\ast$ is a morphism of degree $c^2$ on the $\mathbb{Z}$ summand and of degree $c$ on the $\mathbb{Z}/3$ summand. With this information in hand we can show that ${\rm Im}(s_{5\ast})={\rm Im}(s_{5\ast}E)={\rm Im}(i_\ast c_\ast)$ is $0$ if $3\mid c$ and $\mathbb{Z}/3$ if $3\nmid c$. Hence by exactness we get $\pi_{7}(P^5(c))\cong  \mathbb{Z}/c\oplus \mathbb{Z}/(3,c)$.
\end{proof}

\begin{lemma}\label{pi6p4cSigma}
If $2\nmid c$, then 
the image of the suspension homomorphism 
\[\Sigma:\pi_{6}(P^4(c))\rightarrow \pi_{7}(P^5(c)) \]
is $\mathbb{Z}/(3,c)$. 
\end{lemma}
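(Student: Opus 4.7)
The plan is to apply the EHP sequence (Lemma~\ref{EHP}) to the $3$-connected space $W=P^4(c)$ (so $n=4$). At $q=6$, which lies well inside the stated range since $q+1=7\leq 3n-1=11$, this yields the exact fragment
\[
\pi_6(P^4(c))\stackrel{E=\Sigma}{\longrightarrow}\pi_7(P^5(c))\stackrel{H}{\longrightarrow}\pi_7(P^5(c)\wedge P^4(c))\stackrel{P}{\longrightarrow}\pi_5(P^4(c)),
\]
so $\mathrm{Image}(\Sigma)=\ker(H)$. It therefore suffices to identify the two outer groups and pin down the image of $H$ by exactness.

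For the smash product, since $c$ is odd I would invoke the standard splitting $P^5(c)\wedge P^4(c)\simeq P^8(c)\vee P^9(c)$, obtained by writing $c=\prod p_i^{r_i}$, decomposing each Moore space as a wedge of its $p$-primary pieces, and applying the classical formula $P^a(p^r)\wedge P^b(p^r)\simeq P^{a+b-1}(p^r)\vee P^{a+b}(p^r)$ at odd primes. Then $\pi_7(P^9(c))=0$ since $P^9(c)$ is $7$-connected, while Blakers--Massey applied to $S^7\stackrel{c}{\to}S^7\to P^8(c)\to S^8$ collapses to give $\pi_7(P^8(c))\cong\mathbb{Z}/c$. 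Hence $\pi_7(P^5(c)\wedge P^4(c))\cong\mathbb{Z}/c$.

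For $\pi_5(P^4(c))$, the homotopy long exact sequence of the pair $(P^4(c),S^3)$ (using the Blakers--Massey identification $\pi_i(P^4(c),S^3)\cong\pi_i(S^4)$ for $i\leq 5$) places this group between $\pi_5(S^3)\cong\mathbb{Z}/2$ and a subgroup of $\pi_5(S^4)\cong\mathbb{Z}/2$, so $\pi_5(P^4(c))$ is $2$-torsion. On the other hand, $c$ odd makes $P^4(c)$ $2$-locally contractible, so its homotopy is odd-torsion. Being simultaneously $2$-torsion and odd-torsion forces $\pi_5(P^4(c))=0$.

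Substituting these computations into the EHP fragment gives $P=0$, hence $H$ is surjective, and therefore
\[
|\mathrm{Image}(\Sigma)|=\frac{|\pi_7(P^5(c))|}{|\pi_7(P^5(c)\wedge P^4(c))|}=\frac{c\cdot(3,c)}{c}=(3,c),
\]
so as a subgroup of the abelian group $\pi_7(P^5(c))$ of this cardinality, $\mathrm{Image}(\Sigma)$ must be isomorphic to $\mathbb{Z}/(3,c)$, proving the lemma. The step I expect to be the main obstacle is justifying the Moore-space smash decomposition $P^5(c)\wedge P^4(c)\simeq P^8(c)\vee P^9(c)$ at odd primes, which ultimately rests on the vanishing of the relevant attaching map there; an alternative would be to compute $\pi_7$ directly from the cellular filtration of the smash.
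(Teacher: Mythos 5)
Your proof is correct, and its skeleton coincides with the paper's: both apply the EHP sequence of Lemma~\ref{EHP} to $W=P^4(c)$, identify $\pi_7(\Sigma P^4(c)\wedge P^4(c))\cong\mathbb{Z}/c$ via the splitting $P^m(c)\wedge P^n(c)\simeq P^{m+n}(c)\vee P^{m+n-1}(c)$ (which the paper simply cites as Proposition~$6.2.2$ of \cite{Neisendorfer}, so you need not reprove it), and use $\pi_5(P^4(c))=0$ to conclude that $H$ is surjective. Where you genuinely diverge is the endgame: the paper splits into the cases $3\nmid c$ (where $H$ is a surjection between groups of equal order $c$, hence an isomorphism, so $E=0$) and $3\mid c$ (where it traces the $\mathbb{Z}/3$-summands back to $\pi_6(S^3)\to\pi_7(S^4)$ via a naturality diagram to see that ${\rm Im}(E)\cong\mathbb{Z}/3$), whereas you simply count orders: $|\ker H|=|\pi_7(P^5(c))|/|{\rm Im}\,H|=c(3,c)/c=(3,c)$, and a group of order $1$ or $3$ is cyclic. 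Your argument is cleaner, avoids the case distinction, and does not even need the value of $\pi_6(P^4(c))$; its only cost is that it identifies ${\rm Im}(E)$ up to abstract isomorphism rather than as a specific summand of $\pi_7(P^5(c))$, which is all the paper ever uses (only the vanishing when $3\nmid c$ matters downstream). One small slip to fix: $P^4(c)=S^3\cup_c e^4$ is $2$-connected, not $3$-connected, so in the notation of Lemma~\ref{EHP} you have $n=3$ and the relevant range is $q+1=7\leq 3n-1=8$; the fragment you use is still within range, so nothing breaks.
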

\begin{proof}
Applying Lemma \ref{EHP} we get an exact sequence 
\[
\pi_6(P^4(c))\stackrel{E}{\rightarrow}\pi_7(P^5(c))
\stackrel{H}{\rightarrow}\pi_7(\Sigma P^4(c)\wedge P^4(c))\stackrel{P}{\rightarrow} \pi_5(P^4(c)),
\]
where $\pi_5(P^4(c))=0$ by the proof of Lemma \ref{pi4p3cc}. On the other hand, we have homotopy decomposition (see Proposition $6.2.2$ of \cite{Neisendorfer})
\[P^m(c)\wedge P^n(c)\simeq P^{m+n}(c)\vee P^{m+n-1}(c).\]
Hence, 
\[\pi_{7}(\Sigma P^4(c)\wedge P^4(c))\cong\pi_{7}(P^9(c)\vee P^8(c))\cong\pi_{7}(P^8(c))\cong \mathbb{Z}/c.\]
By Lemma \ref{pi6p4c} and Lemma \ref{pi7p5c}, 
\[\pi_6(P^4(c))\cong \pi_7(P^5(c))\cong \mathbb{Z}/c\oplus \mathbb{Z}/(3,c).\]
Hence when $3\nmid c$ the homomorphism $H$ has to be an isomorphism and then $E$ is trivial. 

For the remaining case when $3\mid c$, by checking the proof of Lemma \ref{pi6p4c} and Lemma \ref{pi7p5c} we notice that the $\mathbb{Z}/3$-summands of $\pi_6(P^4(c))$ and $\pi_7(P^5(c))$ both come from the homotopy groups of sphere, i.e., we have a commutative diagram 
\[
\xymatrix{
\pi_6(S^3) \ar[r]^{E}\ar[d] &\pi_7(S^4)\ar[d]\\
\pi_6(P^4(c))\ar[r]^{E}&\pi_7(P^5(c)).
}
\]
Since $H$ is an epimorphism, the $\mathbb{Z}/c$-summand of $\pi_7(P^5(c))$ is not in the image of $E$. Hence ${\rm Im}(E)\cong\mathbb{Z}/3$ and we complete the proof.
\end{proof}

Based on the computations, we can now split off Moore spaces from $\Sigma^3 M$.
\begin{proposition}\label{decomSigma3M6c}
Let $M$ be a five dimensional oriented closed manifold with $\pi_{1}(M)\cong \mathbb{Z}/c$ ($6\nmid c$) and $H_{2}(M; \mathbb{Z})$ is torsion free of rank $m-1$. Then if $m\geq 2$ we have a homotopy equivalence
\[
\Sigma^3 M \simeq \Sigma Z^\prime \vee P^5(c)\vee P^7(c)\vee \bigvee_{i=1}^{m-2}(S^6\vee S^5),
\]
where $Z^\prime$ is the cofibre of the composition $P^4(c)\vee P^6(c)\vee\bigvee_{i=1}^{m-2}(S^5\vee S^4)\hookrightarrow \Sigma^2 M_4\hookrightarrow \Sigma^2 M$.
\end{proposition}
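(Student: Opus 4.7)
The plan is to refine Proposition \ref{decomSigma3M} by further splitting off the Moore-space wedge summands from $\Sigma^3 M$. Starting from (\ref{hdecomS2M4}) we have $\Sigma^2 M_4\simeq P^6(c)\vee P^4(c)\vee \bigvee_{i=1}^{m-1}(S^5\vee S^4)$, and since $\Sigma^2 M \simeq \Sigma^2 M_4\cup_f e^7$ for some attaching map $f\in\pi_6(\Sigma^2 M_4)$, suspending once more gives
\[\Sigma^3 M \simeq \bigl(P^7(c)\vee P^5(c)\vee \bigvee_{i=1}^{m-1}(S^6\vee S^5)\bigr)\cup_{\Sigma f} e^8.\]
I would decompose $\Sigma f\in \pi_7(\Sigma^3 M_4)$ into its components in each wedge summand of $\Sigma^3 M_4$ and show that each component corresponding to a summand slated to be split off is null-homotopic, so those wedge summands detach.

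For the $P^5(c)$-component: projecting $\Sigma f$ to $P^5(c)$ yields $\Sigma(f|_{P^4(c)})$, which lies in $\mathrm{Im}\bigl(\Sigma\colon\pi_6(P^4(c))\to\pi_7(P^5(c))\bigr)$; this equals $\mathbb{Z}/(3,c)=0$ by Lemma \ref{pi7p5c3c} together with the hypothesis $6\nmid c$ (combined with $2\nmid c$ carried over from Section \ref{1decomsec}). For the $P^7(c)$-component: this is $\Sigma(f|_{P^6(c)})$ with $f|_{P^6(c)}\in\pi_6(P^6(c))$. A Blakers-Massey analysis of the cofibration $S^5\to P^6(c)\to S^6$ yields the exact sequence $\pi_6(S^5)\to\pi_6(P^6(c))\to\pi_6(S^6)\xrightarrow{c}\pi_5(S^5)$; since $c$ is odd the right map is injective, so $\pi_6(P^6(c))$ is a quotient of $\pi_6(S^5)\cong\mathbb{Z}/2$, and $2$-local contractibility of $P^6(c)$ forces it to vanish. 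For the sphere components, Lemma \ref{Sigmasplit} applies: since $\pi_1^s\cong\pi_2^s\cong\mathbb{Z}/2$ each have exactly one cyclic summand, the $m-1$ copies of $S^5$ and of $S^6$ permit splitting off $m-2$ copies of each.

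Assembling these vanishings, $\Sigma f$ factors through the unsplit subcomplex $S^5\vee S^6\hookrightarrow \Sigma^3 M_4$ as $\Sigma\tilde f$, where $\tilde f\colon S^6\to S^4\vee S^5$ is the projection of $f$ onto the single remaining $S^5\vee S^4$ summand of $\Sigma^2 M_4$. Hence
\[\Sigma^3 M \simeq \bigl((S^5\vee S^6)\cup_{\Sigma\tilde f} e^8\bigr)\vee P^5(c)\vee P^7(c)\vee \bigvee_{i=1}^{m-2}(S^6\vee S^5),\]
and by construction the first wedge summand coincides with $\Sigma Z^\prime$, completing the proof. The main obstacle is verifying the vanishing of the Moore-space components of $\Sigma f$ in parallel: while Lemma \ref{pi7p5c3c} supplies the $P^5(c)$ piece directly, the $P^7(c)$ piece requires the auxiliary computation $\pi_6(P^6(c))=0$ which is not recorded in the preceding sections and needs to be handled on the side.
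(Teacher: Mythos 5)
Your proof is correct and follows essentially the same route as the paper: the paper's proof likewise writes $\Sigma^2 M \simeq \Sigma^2 M_4 \cup_f e^7$ using (\ref{hdecomS2M4}) and then invokes Lemma \ref{Sigmasplit} to split off the $m-2$ copies of $S^5\vee S^6$, Lemma \ref{pi7p5c3c} for the vanishing of the $P^5(c)$-component of $\Sigma f$, and Lemma \ref{pi3p3c} for the $P^7(c)$-component. The one correction is that the ``auxiliary computation'' $\pi_6(P^6(c))=0$, which you say is not recorded in the preceding sections, is in fact exactly the second assertion of Lemma \ref{pi3p3c} ($\pi_n(P^n(c))=0$ for $n\geq 4$ when $2\nmid c$), proved there by the same Blakers--Massey argument you give.
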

\begin{proof}
The proof is similar to that of Proposition \ref{decomSigma3M}.
By (\ref{hdecomS2M4}), we have 
\[
\Sigma^2 M \simeq \Sigma^2 M_4 \cup e^{7} \simeq \big(P^6(c)\vee P^4(c)\vee \bigvee_{i=1}^{m-1}(S^5\vee S^4)\big)\cup_{f} e^{7}.
\]
Then the lemma follows from Lemma \ref{Sigmasplit} and Lemma \ref{pi7p5c3c} and Lemma \ref{pi3p3c}.
\end{proof}

\subsection{Homotopy decompositions of $\Omega^2 \mathcal{G}_k(M)$ when $6\nmid c$}\label{sectiongauge6c}
We are now going to produce the homotopy decompositions of looped gauge groups by repeated application of  Proposition \ref{Gsplit}.
As the definition of homotopy groups with coefficients, let us define 
\[\Omega^n(X;c)={\rm Map}_0^{\ast}(P^n(c), X).\]
Let $M$ be a five dimensional oriented closed manifold with $\pi_{1}(M)\cong \mathbb{Z}/c$ ($6\nmid c$) and $H_{2}(M; \mathbb{Z})$ is torsion free of rank $m-1$. Let $G$ be a simply connected compact simple Lie group with $\pi_4(G)=0$. 
\begin{lemma}\label{gaugedecom6c1}
We have the homotopy equivalence 
\[\Omega^2 \mathcal{G}_k(M)\simeq \Omega^2\mathcal{G}_k(M^{\prime\prime})\times \Omega^4(G;c)\times \prod_{i=1}^{m-2} \Omega^4 G,\]
where $k\in \mathbb{Z}/c$ and $M^{\prime\prime}$ is the homotopy cofibre of the map $\psi: P^2(c)\vee\bigvee_{i=1}^{m-2}S^2\hookrightarrow M$ (constructed in the proof of Theorem \ref{1stdecomgauge}). 
\end{lemma}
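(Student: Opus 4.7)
The plan is to apply Proposition~\ref{Gsplit} with $Y = P^2(c) \vee \bigvee_{i=1}^{m-2} S^2$, with $\phi = \psi$ the map constructed in the proof of Theorem~\ref{1stdecomgauge}, and with $i=2$. Two hypotheses must be verified: (a) $[Y, BG] = 0$, and (b) $\Sigma^3 \psi$ admits a left homotopy inverse.

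For (a), since $G$ is simply connected, $[S^2, BG] = \pi_1(G) = 0$, and the cofibre sequence $S^1 \to P^2(c) \to S^2$ together with $[S^1,BG]=0$ forces $[P^2(c), BG] = 0$, so $[Y, BG] = 0$. For (b), I would exploit that $\psi$ targets the subcomplex of $M$ generating $H_1$ together with a chosen $(m{-}2)$-subset of free generators of $H_2$, so $\Sigma^3 \psi$ factors through $\Sigma^3 M_4 \simeq P^7(c) \vee P^5(c) \vee \bigvee^{m-1}(S^6 \vee S^5)$, the one-fold suspension of the decomposition~(\ref{hdecomS2M4}). On homology, $\Sigma^3 \psi$ induces an isomorphism onto the $\mathbb{Z}/c$ summand in $H_4$ (corresponding to the $P^5(c)$ factor) and a split injection onto $m{-}2$ of the free $H_5$ summands. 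Choosing the generators of the splitting of $\Sigma^3 M_4$ so that the $P^5(c)$ summand and $m{-}2$ of the $S^5$ summands coincide with the homological image of $\Sigma^3 \psi$, the latter is realized as the inclusion of $\Sigma^3 Y = P^5(c) \vee \bigvee^{m-2} S^5$ as a sub-wedge up to a homotopy equivalence of $\Sigma^3 M_4$, so projection onto this sub-wedge retracts $\Sigma^3 \psi$ inside $\Sigma^3 M_4$. To pass from $\Sigma^3 M_4$ to $\Sigma^3 M = \Sigma^3 M_4 \cup_{\Sigma f} e^8$, I invoke Proposition~\ref{decomSigma3M6c}, whose decomposition $\Sigma^3 M \simeq \Sigma Z' \vee P^5(c) \vee P^7(c) \vee \bigvee^{m-2}(S^6 \vee S^5)$ guarantees that $P^5(c) \vee \bigvee^{m-2} S^5$ persists as a wedge summand of $\Sigma^3 M$, supplying the required retraction.

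With (a) and (b) in hand, Proposition~\ref{Gsplit} yields
\[\Omega^2 \mathcal{G}_k(M) \simeq \Omega^2 \mathcal{G}_k(M^{\prime\prime}) \times \Omega^2 {\rm Map}^{\ast}_0(Y,G);\]
the subscript $k$ agrees on both sides because the vanishing arguments for (a), applied also to $P^3(c)$ and $S^3$, give $[\Sigma Y, BG] = 0$, so the natural map $[M^{\prime\prime}, BG] \to [M, BG]$ is an isomorphism. Splitting the wedge $Y = P^2(c) \vee \bigvee^{m-2} S^2$ and using the adjunction $\Omega^2 {\rm Map}^{\ast}(A,G) \simeq {\rm Map}^{\ast}(\Sigma^2 A, G)$ together with the definition $\Omega^n(X;c) = {\rm Map}^{\ast}_0(P^n(c), X)$, the last factor identifies with $\Omega^4(G;c) \times \prod_{i=1}^{m-2} \Omega^4 G$, as desired. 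The main obstacle is step (b): concretely, arranging the Moore-space wedge decomposition of $\Sigma^3 M_4$ to be compatible with the homological image of $\Sigma^3 \psi$, rather than an arbitrary splitting produced by the homology decomposition theorem; this compatibility is the technical heart of the argument and exploits the explicit construction of $\psi$ from $\pi_1$- and $\pi_2$-generators.
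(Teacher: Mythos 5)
Your proposal is correct and follows essentially the same route as the paper: apply Proposition~\ref{Gsplit} with $Y=P^2(c)\vee\bigvee_{i=1}^{m-2}S^2$ and $i=2$, check $[Y,BG]=0$, and obtain the left homotopy inverse for $\Sigma^3\psi$ from the splitting of Proposition~\ref{decomSigma3M6c}. Your extra care in arranging the wedge decomposition of $\Sigma^3 M_4$ to be compatible with the homological image of $\Sigma^3\psi$ merely fills in a step the paper leaves implicit.
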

\begin{proof}
We have constructed $\psi: P^2(c)\vee\bigvee_{i=1}^{m-2}S^2\hookrightarrow M$ in the proof of Theorem \ref{1stdecomgauge}. Then let $Y=P^2(c)\vee\bigvee_{i=1}^{m-2}S^2$. We see that $[Y, BG]=0$ and the cofibre $M^{\prime\prime}$ satisfies $[M^{\prime\prime}, BG]\cong [M, BG]\cong \mathbb{Z}/c$. By Proposition \ref{decomSigma3M6c}, $\Sigma^3\psi$ admits a left homotopy inverse. Then the lemma follows from Proposition \ref{Gsplit}. 
\end{proof}

\begin{lemma}\label{gaugeMprimeprime}
Let $M^{\prime\prime}$ be the complex in Lemma \ref{gaugedecom6c1}. We have homotopy decompositions of gauge groups
\[\mathcal{G}_k(M^{\prime\prime})\simeq \mathcal{G}_k(L) \times \prod_{i=1}^{m-2} \Omega^3 G,\]
where $k\in \mathbb{Z}/c$ and $L$ is the homotopy cofibre of the map $\psi^\prime: \bigvee_{i=1}^{m-2}S^3\hookrightarrow M^{\prime\prime}$ (constructed in the proof). 
\end{lemma}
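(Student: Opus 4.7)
The plan is to apply Proposition \ref{Gsplit} with $i=0$, $Y = \bigvee_{i=1}^{m-2} S^3$, and a suitable $\psi^\prime : Y \to M^{\prime\prime}$ whose cofibre is $L$. The hypothesis $[Y, BG] = 0$ is immediate from $\pi_3(BG) = \pi_2(G) = 0$, and the identification of the component labels $k$ on both $\mathcal{G}_k(M^{\prime\prime})$ and $\mathcal{G}_k(L)$ falls out of the cofibre sequence exactly as in the proof of Proposition \ref{Gsplit}. The bulk of the work is producing $\psi^\prime$ such that $\Sigma \psi^\prime$ admits a left homotopy inverse.

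To construct $\psi^\prime$ I begin with the cell structure. From the cofibre sequence defining $M^{\prime\prime}$ together with Lemma \ref{MBG}, $M^{\prime\prime}$ is simply connected with $H_2 \cong \mathbb{Z}$, $H_3 \cong \mathbb{Z}^{m-1}\oplus\mathbb{Z}/c$, $H_4 = 0$ and $H_5 \cong \mathbb{Z}$. In a minimal CW model the cellular boundary $\partial_3$ is zero, so the attaching maps of the $m$ three-cells to $S^2 = {\rm sk}_2(M^{\prime\prime})$ are null-homotopic, giving
\[
{\rm sk}_3(M^{\prime\prime}) \simeq S^2 \vee \bigvee_{i=1}^{m} S^3.
\]
I take $\psi^\prime$ to be the inclusion of $(m-2)$ of these 3-spheres, chosen compatibly with the basis changes in the next step.

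The splitting of $\Sigma\psi^\prime$ is achieved by a two-stage reduction at the suspension level. The 4-cell of $M^{\prime\prime}$ attaches by some class $c\iota_j + e\eta \in \pi_3(S^2 \vee \bigvee S^3)$; since $c$ is odd (as $\gcd(c,6)=1$), a self-equivalence of $\bigvee S^3$ replacing $\iota_j$ by $f\eta + \iota_j$ with $f$ of appropriate parity absorbs the $\eta$-component after one suspension, giving
\[
\Sigma\,{\rm sk}_4(M^{\prime\prime}) \simeq S^3 \vee P^5(c) \vee \bigvee_{i=1}^{m-1} S^4.
\]
The 6-cell of $\Sigma M^{\prime\prime}$ attaches by a class in $\pi_5$ of this wedge; using $\pi_5(P^5(c)) = 0$ (for $c$ odd, by the computation in the proof of Lemma \ref{pi7p5c}) and $\pi_5(S^4) \cong \mathbb{Z}/2$, its $S^4$-components form a vector in $(\mathbb{Z}/2)^{m-1}$ which a $GL_{m-1}(\mathbb{Z})$-self-equivalence of the wedge of 4-spheres concentrates into a single summand. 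Hence $\Sigma M^{\prime\prime} \simeq A \vee \bigvee_{i=1}^{m-2} S^4$ for some $A$, and on compatibly choosing $\psi^\prime$, $\Sigma\psi^\prime$ becomes the wedge-inclusion of this split-off factor, with left inverse given by the projection. Proposition \ref{Gsplit} then yields the stated decomposition.

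The main obstacle is the cell-level attaching-map analysis at the suspension level: both the absorption of the $\eta$-component of the 4-cell's attaching map and the concentration of the top-cell's $\pi_5(S^4)$-components rely essentially on the fact that $c$ is odd, and on a careful compatibility between the two successive changes of basis.
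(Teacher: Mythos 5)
Your proof is correct and follows essentially the same route as the paper: identify the minimal cell structure of $M^{\prime\prime}$, show that $\Sigma M^{\prime\prime}$ splits off $\bigvee_{i=1}^{m-2}S^4$ via a stable change-of-basis on the $(\mathbb{Z}/2)^{m-1}$ of top-cell attaching data, and apply Proposition \ref{Gsplit} with $i=0$. The only divergence is in how $\Sigma\,{\rm sk}_4(M^{\prime\prime})\simeq S^3\vee P^5(c)\vee\bigvee_{i=1}^{m-1}S^4$ is obtained --- the paper presents $M^{\prime\prime}_4$ as the cofibre of a map $P^3(c)\vee\bigvee S^2\to S^2$ and quotes $\pi_4(S^3;\mathbb{Z}/c)=0$ (Lemma \ref{pi4s3c}), whereas you absorb the $\eta$-component of the $4$-cell's attaching map by an explicit self-equivalence, which is the same computation ($\mathbb{Z}/2\otimes\mathbb{Z}/c=0$ for odd $c$) carried out by hand; one small slip is that $\pi_5(P^5(c))=0$ is Lemma \ref{pi3p3c} rather than the proof of Lemma \ref{pi7p5c}.
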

\begin{proof}
By Lemma \ref{gaugedecom6c1}, we know $M^{\prime\prime}$ is simply connected and its homology satisfies
\[H_{2}(M^{\prime\prime})\cong \mathbb{Z}, \ \ \ H_{\ast}(M^{\prime\prime})\cong H_\ast(M), \ \  ~{\rm for}~\ast\geq 3.\]
Hence the fourth skeleton $M^{\prime\prime}_4$ of $M^{\prime\prime}$ is determined by the cofibre sequence 
\[P^3(c)\vee\bigvee_{i=1}^{m-1}S^2\stackrel{h}{\rightarrow} S^2\rightarrow M^{\prime\prime}_4,\]
where $H_2(h)=0$. Then we have $M^{\prime\prime}_4\simeq S^2\cup C(P^3(c))\vee \bigvee_{i=1}^{m-1}S^3$ where $C(-)$ is denoted to be the cone of space, and define $\psi^\prime$ as the composition
\[\bigvee_{i=1}^{m-2}S^3\hookrightarrow M^{\prime\prime}_4 \hookrightarrow M^{\prime\prime}.\]
Further since $\pi_4(S^3; \mathbb{Z}/c)=0$ by Lemma \ref{pi4s3c}, $\Sigma M^{\prime\prime}_4$ splits as 
\[\Sigma M^{\prime\prime}_4\simeq S^2\vee P^4(c)\vee \bigvee_{i=1}^{m-1}S^3,\]
and $\Sigma M^{\prime\prime}\simeq \big(S^2\vee P^4(c)\vee \bigvee_{i=1}^{m-1}S^3\big)\cup e^6$.
Then by Lemma \ref{Sigmasplit} we see that 
\[\Sigma M^{\prime\prime}\simeq  \bigvee_{i=1}^{m-2}S^3\vee\big(S^2\vee P^4(c)\vee S^3\big)\cup e^6,\]
which implies $\Sigma \psi^\prime$ admits a left homotopy inverse. Since $[\bigvee_{i=1}^{m-2}S^3, BG]=0$ and it is easy to check $[L, BG]\cong [M^{\prime\prime},BG]$, we apply Proposition \ref{Gsplit} to prove the lemma. 
\end{proof}

\begin{lemma}\label{gaugeL}
Let $L$ be the complex in Lemma \ref{gaugeMprimeprime}. We have homotopy decompositions of gauge groups
\[\mathcal{G}_k(L)\simeq \mathcal{G}_k(P^4(c)) \times {\rm Map}_0^\ast(T, G),\]
where $k\in \mathbb{Z}/c$ and the complex $T$ satisfies and is characterized by $T\simeq \Sigma \mathbb{C}P^2\vee S^2$ if the Steenrod operation $Sq^2$ is non-trivial on $H^\ast(L;\mathbb{Z}/2)$, or $T\simeq S^3\vee S^2\vee S^5$ or $S^3\vee S^2\cup_{\eta^2}e^5$ otherwise.
\end{lemma}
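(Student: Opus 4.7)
The plan is to identify an explicit wedge decomposition $L \simeq P^4(c) \vee T$ for one of the three listed complexes $T$ and then apply Proposition \ref{Gsplit} to the wedge inclusion $T \hookrightarrow L$.

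First I would pin down the four-skeleton of $L$. The proof of Lemma \ref{gaugeMprimeprime} gives $M''_4 \simeq (S^2 \cup C(P^3(c))) \vee \bigvee_{i=1}^{m-1} S^3$. Since the attaching map $P^3(c) \to S^2$ is zero on $H_2$, its restriction to the bottom cell $S^2 \hookrightarrow P^3(c)$ is null-homotopic (as $\pi_2(S^2) = H_2(S^2)$), and therefore the mapping cone further splits as $S^2 \cup C(P^3(c)) \simeq S^2 \vee P^4(c)$. Because $L$ is the cofibre of $\bigvee_{i=1}^{m-2} S^3 \hookrightarrow M''$, this identifies $L_4 \simeq S^2 \vee P^4(c) \vee S^3$, and then $L \simeq L_4 \cup_f e^5$ for some $f \in \pi_4(L_4)$.

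Next I would analyze $\pi_4(L_4)$. Since $\gcd(c,6) = 1$ the Moore space $P^4(c)$ is contractible at the prime $2$, whereas $\pi_4(S^2) \cong \pi_4(S^3) \cong \mathbb{Z}/2$ are purely $2$-primary; a Hilton--Milnor argument together with $\pi_4(P^4(c)) = 0$ (proved as in Lemma \ref{pi4p3cc}) collapses $\pi_4(L_4)$ to $\mathbb{Z}/2\{\eta^2\} \oplus \mathbb{Z}/2\{\eta\}$, so $f = a\eta^2 + b\eta$ with $a,b \in \{0,1\}$ and four a priori cases to distinguish.

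The hard part will be to reduce these four cases to the three listed homotopy types. The key move is the self-map of $L_4$ which is the identity on $S^2 \vee P^4(c)$ and acts on the $S^3$ summand via the composite $S^3 \to S^3 \vee S^3 \xrightarrow{{\rm id} \vee \eta} S^3 \vee S^2 \hookrightarrow L_4$ built from the standard co-H comultiplication on $S^3$. This self-map induces the identity on homology, hence is a homotopy equivalence by Whitehead's theorem, and on $\pi_4$ it implements $(a,b) \mapsto (a+b, b)$. Consequently, when $b = 1$ the invariant $a$ can be toggled and we get a single homotopy type $L \simeq P^4(c) \vee S^2 \vee \Sigma\mathbb{C}P^2$; when $b = 0$ no such toggling is available and the two options $L \simeq P^4(c) \vee S^2 \vee S^3 \vee S^5$ and $L \simeq P^4(c) \vee S^3 \vee (S^2 \cup_{\eta^2} e^5)$ remain genuinely distinct. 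The dichotomy $b = 0$ versus $b = 1$ is exactly detected by $Sq^2 \colon H^3(L;\mathbb{Z}/2) \to H^5(L;\mathbb{Z}/2)$, since this operation picks out the $\eta$-attachment responsible for producing (or not) a $\Sigma\mathbb{C}P^2$ factor.

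Finally I would apply Proposition \ref{Gsplit} to $\phi \colon T \hookrightarrow L$. The suspension of a wedge-summand inclusion automatically admits a left homotopy inverse; and $[T, BG] = 0$ in each of the three cases because $\pi_1(G) = \pi_2(G) = \pi_4(G) = 0$ (for the non-spherical factors one invokes the defining cofibre sequences of $S^2 \cup_{\eta^2} e^5$ and $\Sigma\mathbb{C}P^2$ together with the vanishing of $[S^i,BG]$ for $i \in \{2,3,4,5\}$). The cofibre of $\phi$ is $P^4(c)$, and pulling the class $k \in [L, BG] \cong \mathbb{Z}/c$ back along the collapse $L \twoheadrightarrow P^4(c)$ singles out the index on the $P^4(c)$-factor, delivering the asserted decomposition $\mathcal{G}_k(L) \simeq \mathcal{G}_k(P^4(c)) \times {\rm Map}_0^\ast(T, G)$.
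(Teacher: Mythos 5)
Your strategy of producing an \emph{unsuspended} wedge decomposition $L\simeq P^4(c)\vee T$ and then applying Proposition \ref{Gsplit} to the inclusion of $T$ does not work, for two reasons, the second of which is fatal. First, the splitting $S^2\cup C(P^3(c))\simeq S^2\vee P^4(c)$ does not follow from the attaching map being trivial on $H_2$: every element of $[P^3(c),S^2]$ restricts trivially to the bottom cell (the restriction lands in the torsion-free group $\pi_2(S^2)$), yet $[P^3(c),S^2]\cong\pi_3(S^2;\mathbb{Z}/c)\cong\pi_3(S^2)\otimes\mathbb{Z}/c\cong\mathbb{Z}/c\neq 0$, the nonzero classes being $n\eta\circ\mathrm{pinch}$. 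Second, and more seriously, $\pi_4(S^2\vee S^3)$ is not $\mathbb{Z}/2\oplus\mathbb{Z}/2$; by Hilton--Milnor it contains an infinite cyclic summand generated by the Whitehead product $[\iota_2,\iota_3]$, which you have dropped. Since $M$ is a closed oriented $5$-manifold, Poincar\'{e} duality forces the cup product $H^2(L;\mathbb{Z})\otimes H^3(L;\mathbb{Z})\to H^5(L;\mathbb{Z})$ to be nontrivial, so the top cell of $L$ attaches with a nonzero $[\iota_2,\iota_3]$-component. All three of your candidate complexes $T$ have trivial cup product $H^2\otimes H^3\to H^5$, so $L\not\simeq P^4(c)\vee T$ for any of them; this is exactly the phenomenon illustrated by the Gitler--Stasheff complex $(S^3\vee S^2)\cup_{[\iota_3,\iota_2]+\iota_2\eta^2}e^5$ invoked later in the paper.

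The paper circumvents both obstructions by working one suspension up: after suspending, Whitehead products die and $[P^4(c),S^3]=\pi_4(S^3;\mathbb{Z}/c)=0$, so $\Sigma L\simeq\big((S^4\vee S^3)\cup e^6\big)\vee P^5(c)$, and $T$ is only ever an auxiliary \emph{desuspension} of the first wedge summand used to rewrite $\mathrm{Map}^\ast_0(\Sigma T,BG)$ as $\mathrm{Map}^\ast_0(T,G)$ --- no map $T\to L$ is ever constructed. The gauge-group splitting is then obtained from the cofibration $L\xrightarrow{q}P^4(c)\to\Sigma T$, with the section of $\mathcal{G}_k(L)\to\mathrm{Map}^\ast_0(\Sigma T,BG)$ supplied by the suspension splitting of $\Sigma L$; this is precisely why Proposition \ref{Gsplit} only demands a left homotopy inverse of $\Sigma^{i+1}\phi$ rather than of $\phi$. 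Your matrix manipulation reducing the four pairs $(a,b)$ to the three listed homotopy types is essentially the paper's argument and is fine, but it must be carried out on the desuspended model of $(S^4\vee S^3)\cup e^6$, not on a purported wedge summand of $L$ itself.
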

\begin{proof}
By Lemma \ref{gaugeMprimeprime}, 
\[L\simeq \big(S^3\vee (S^2\cup C(P^3(c)))\big)\cup e^5\ \ ~{\rm and}~ \ \ \Sigma L\simeq (S^4\vee S^3 \vee P^5(c))\cup e^6.\]
Since $\pi_n(P^n(c))=0$ by Lemma \ref{pi3p3c}, we have
\[\Sigma L\simeq(S^4\vee S^3)\cup e^6 \vee P^5(c),\]
and a homotopy commutative diagram of cofibre sequences  
\[
\xymatrix{
S^4  \ar[r] \ar[d]  & S^3\vee (S^2\cup C(P^3(c))) \ar[r] \ar[d] & L \ar[d]_{q} \\
\ast  \ar[r] \ar[d]  &  P^4(c) \ar@{=}[r] \ar[d]                         & P^4(c) \ar[d]\\
S^5 \ar[r]  & S^4\vee S^3 \ar[r]  & (S^4\vee S^3)\cup e^6 , 
}
\]
which define the map $q$. Hence $[L, BG]\cong [P^4(c), BG]\cong \mathbb{Z}/c$ and $\Sigma L$ splits through $\Sigma q$. Moreover since $\pi_{n+2}(S^{n})=\{\eta_n^2\}\cong \mathbb{Z}/2$ and $\pi_{n+2}(S^{n+1})=\{\eta_n\}\cong \mathbb{Z}/2$ for $n\geq 2$, we see that $(S^4\vee S^3)\cup e^6$ can be desuspended to be $(S^3\vee S^2)\cup_\alpha e^5$ with the attaching map $\alpha=(a\eta_3, b\eta_2^2)^{T}$ as column vector. 
we can constructed a homotopy commutative diagram (page $97$ of \cite{Harper02})
\[
\xymatrix{
S^4 \ar[r]^{\alpha} \ar@{=}[d] & S^3\vee S^2 \ar[r] \ar[d]^{A}& (S^3\vee S^2)\cup e^5 \ar[d]_{\simeq}\\
S^4 \ar[r]^{\alpha^\prime} & S^3\vee S^2 \ar[r] & T,
}
\]
where 
\[
A=
\begin{pmatrix}
  1 & 0\\
  \eta_2 & 1
\end{pmatrix}
, \ \ \ {\rm and}~ \ \ 
A\alpha =
\begin{pmatrix}
  1 & 0\\
  \eta_2 & 1
\end{pmatrix}
\begin{pmatrix}
  a\eta_3 \\
  b\eta_2^2
\end{pmatrix}
=
\begin{pmatrix}
  a\eta_3 \\
  (a+b)\eta_2^2
\end{pmatrix}
=\alpha^\prime.
\]
In particular, we see the homotopy types of $T$ can only be of three types: $S^2\vee S^3\vee S^5$, $S^2\vee \Sigma \mathbb{C}P^2$ or $S^3\vee S^2\cup_{\eta_2^2}e^5$.

We then form the commutative diagram analogous to that in the proof of Proposition \ref{Gsplit}
\[
 \xymatrix{
 \ast  \ar[r]  \ar[d]  & \Omega{\rm Map}^{\ast}_k(L, BG) \ar@{=}[r] \ar[d]  & {\rm Map}^{\ast}_k(\Sigma L, BG)  \ar[d]  \\
 \mathcal{G}_k(P^4(c)) \ar[r]  \ar@{=}[d]    & \mathcal{G}_k(L)  \ar[r] \ar[d] & {\rm Map}^{\ast}_0(\Sigma T, BG)  \ar[d] \ar@/_1pc/@{.>}[u] \\
  \mathcal{G}_k(P^4(c)) \ar[r]  \ar[d]    & G \ar[r] \ar[d] & {\rm Map}^{\ast}_k(P^4(c), BG) \ar[d]\\
  \ast  \ar[r]   &   {\rm Map}^{\ast}_k(L, BG) \ar@{=}[r]   &  {\rm Map}^{\ast}_k(L, BG),  
}
\]
and similarly the lemma follows.
\end{proof}
So far by Lemma \ref{gaugedecom6c1}, Lemma \ref{gaugeMprimeprime} and Lemma \ref{gaugeL}, we notice that we have reduced the looped gauge group $\Omega^2\mathcal{G}_k(M)$ to that of Moore spaces $\Omega^2\mathcal{G}_k(P^4(c))$ and various mapping spaces including the mapping space ${\rm Map}_0^\ast(T, G)$. As specified in Lemma \ref{gaugeL}, the homotopy type of $T$ has only $3$ possibilities. However, using the manifold structure of $M$ we can show that one of the candidates $S^3 \vee S^2\cup_{\eta^2}e^5$ is impossible to be our $T$.
\begin{lemma}\label{HtypeT}
Let $T$ be the complex in Lemma \ref{gaugeL} constructed through Lemma \ref{gaugedecom6c1}, Lemma \ref{gaugeMprimeprime} and Lemma \ref{gaugeL} from the $5$-dimensional closed manifold $M$. Then 
\[T\not\simeq S^3 \vee S^2\cup_{\eta^2}e^5.\]
\end{lemma}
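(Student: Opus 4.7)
The plan is to argue by contradiction. Suppose $T\simeq S^3\vee S^2\cup_{\eta^2}e^5$. The first step is to trace the construction in Lemmas \ref{gaugedecom6c1}, \ref{gaugeMprimeprime}, and \ref{gaugeL} back to the original manifold $M$. In that construction the $5$-cell of $M$ is preserved throughout (each intermediate cofibre only removes $2$-cells or $3$-cells), so the $5$-cell of $T$ is exactly the top cell of $M$. The $2$-cell summand of the $S^2\cup_{\eta^2}e^5$ piece of $T$ corresponds, after desuspending the splitting $\Sigma L\simeq P^4(c)\vee \Sigma T$ and reading the desuspension argument of Lemma \ref{gaugeL} backwards, to a class $x\in H^2(M;\mathbb{Z})$ coming from a free summand. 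The assumption $T\simeq S^3\vee S^2\cup_{\eta^2}e^5$ is therefore equivalent to saying that the attaching map of the top cell of $M$ has $\eta^2$-component on this $2$-dimensional class.

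The second step is to translate this attaching-map information into cohomology. The cofibre $S^2\cup_{\eta^2}e^5$ is the total space of the secondary cohomology operation $\Phi$ associated with the Adem relation $Sq^2Sq^2=Sq^3Sq^1$, and it is precisely the example of a $\mathbb{Z}/2$-cohomology complex on which $\Phi$ is defined and non-zero. Thus $T\simeq S^3\vee S^2\cup_{\eta^2}e^5$ would force $\Phi$ to be defined and non-trivial on the generator $x\in H^2(M;\mathbb{Z}/2)$, with value the generator of $H^5(M;\mathbb{Z}/2)$; the cofibre/splitting maps between $T$, $L$, $M''$ and $M$ are cellular inclusions and quotients relating the $2$-cell and $5$-cell in question, so naturality of $\Phi$ preserves this non-triviality from $T$ back to $M$.

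The third step is to invoke the Gitler--Stasheff obstruction. As recorded in Madsen--Milgram \cite{MM79}, precisely the cohomological configuration produced above (a closed oriented $5$-dimensional Poincar\'{e} complex carrying a non-zero value of $\Phi$ on a two-dimensional class in the prescribed way) defines a Poincar\'{e} duality complex that is \emph{not} homotopy equivalent to any closed topological manifold; the obstruction is derived from Wu's formula, which for a closed $5$-manifold constrains the indeterminacy and value of $\Phi$ on $H^2$ so that this non-zero evaluation cannot occur. Since $M$ is assumed to be a closed manifold, this contradicts the non-triviality of $\Phi$ established in the previous step, and therefore $T\not\simeq S^3\vee S^2\cup_{\eta^2}e^5$.

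The main obstacle is the second step: one has to verify rigorously, through the chain of cofibre decompositions, that the $2$- and $5$-cells appearing in $T$ really do correspond to an honest $H^2$-generator and to the fundamental class of $M$, so that the non-triviality of the $\eta^2$-attachment in $T$ descends to a genuine non-trivial secondary operation on $M$ rather than being absorbed in the indeterminacy, and dually that Madsen--Milgram's obstruction applies to this specific cohomological profile (odd torsion in $\pi_1$, free part of $H^2$ of rank $m-1$) without modification.
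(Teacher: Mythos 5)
Your overall strategy coincides with the paper's: identify the hypothetical wedge summand $S^2\cup_{\eta^2}e^5$ of $T$ with a non-trivial secondary operation $\Phi$ (associated with $Sq^2Sq^2=Sq^3Sq^1$, detecting $\eta^2$) acting from a degree-$2$ class of $M$ to its fundamental class, and then rule this out by the Gitler--Stasheff manifold obstruction from Madsen--Milgram. Your first two steps, and your own caveat about tracing the $2$-cell and $5$-cell through the cofibre decompositions, match what the paper does (it likewise disposes of this point with a one-line appeal to ``the procedure producing $T$'').

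The genuine gap is in your third step. The statement recorded in Madsen--Milgram is about the specific three-cell complex $(S^3\vee S^2)\cup_f e^5$ of Gitler and Stasheff, not a general assertion about arbitrary $5$-dimensional Poincar\'{e} complexes with your cohomological profile, so the argument must actually be rerun for $M$; and your proposed mechanism --- that ``Wu's formula constrains the indeterminacy and value of $\Phi$ on $H^2$'' --- is not how the obstruction works and would not yield the conclusion, since Wu's formula governs only primary Steenrod operations. In the paper, Wu's formula enters at a different point: because $T\simeq S^3\vee S^2\cup_{\eta^2}e^5$ forces all $Sq^i$ to act trivially on $H^\ast(M;\mathbb{Z}/2)$, the Wu formula gives $w_1=w_2=0$, so $M$ is \emph{spin} (a condition you never establish, but which is essential). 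The actual constraint on $\Phi$ then comes from Atiyah/Spanier--Whitehead duality: the dual of $\Sigma^\infty_+M$ is the Thom spectrum of the stable spin normal bundle, which admits a map to $MSpin$ that is an isomorphism on $\pi_0$; since $H^i(MSpin;\mathbb{Z}/2)=0$ for $1\leq i\leq 3$, naturality of $\Phi$ forces it to vanish on the Thom class $H^0\to H^3$ of the dual, and hence, dualizing back, on $H^2(M;\mathbb{Z}/2)\to H^5(M;\mathbb{Z}/2)$. This duality-and-$MSpin$ argument is the core of the proof and is missing from your write-up; replacing it with an appeal to Wu's formula leaves the contradiction unestablished.
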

\begin{proof}
The proof is similar to and based on that of an example of Gitler and Stasheff (see also page $32$ of \cite{MM79}), which shows that a four cell $5$-dimensional Poincar\'{e} duality complex is not homotopy equivalent to a manifold. Roughly speaking, they constructed a $3$-cell Poincar\'{e} duality complex 
\[C=(S^3\vee S^2)\cup_f e^5,\]
where $f$ is the sum of the Whitehead product $[\iota_3, \iota_2]$ and the map $\iota_2\circ\eta^2$. 
Suppose that $C$ is a manifold, then $C$ is spin with trivial Stiefel-Whitney classes by the Wu formula (\cite{Wu55}, Page $132$ of \cite{MS75}). 
By Atiyah duality, the Spanier-Whitehead dual of $\Sigma^{\infty}_{+}C$ is the Thom spectrum of the spin stable normal bundle of the form 
\[\mathbb{F}=\mathbb{S}^0\cup_{\eta^2}e^3\vee \mathbb{S}^2\vee \mathbb{S}^5.\]
Then there is a map $\mathbb{F}\rightarrow  MSpin$ which is an isomorphism on $\pi_0$. Note that $H^{i}(MSpin;\mathbb{Z}/2)$ vanishes for $1\leq i\leq 3$. However, there is a secondary operation $\Phi$ detecting $\eta^2$ (Page 95-97 of \cite{Harper02}), that is, $\Phi: H^{0}(\mathbb{F};\mathbb{Z}/2)\rightarrow H^{3}(\mathbb{F};\mathbb{Z}/2)$ is non-trivial. Since $\Phi$ is natural in this situation, we then get a contradiction.

Return to our situation. Suppose $T\simeq S^3 \vee S^2\cup_{\eta^2}e^5$. Then on our manifold $M$ the Steenrod operations are trivial by Proposition \ref{decomSigma3M6c}, which implies that the Stiefel-Whitney classes of $M$ vanish by the Wu formula. In particular, $M$ is spin. 
Then by using the similar proof as sketched above we can show that there are no secondary operations in $H^\ast(M; \mathbb{Z}/2)$ detecting $\eta^2$. By our procedure to produce the complex $T$ we see this also holds for $T$ and we get a contradiction. Hence the lemma follows. 
\end{proof}

Now by combining all of the results above, we can show our main theorem in this section.
\begin{theorem}\label{gaugedecom6c}
Let $M$ be a five dimensional oriented closed manifold with $\pi_{1}(M)\cong \mathbb{Z}/c$ ($6\nmid c$) and $H_{2}(M; \mathbb{Z})$ is torsion free of rank $m-1$. Let $G$ be a simply connected compact simple Lie group with $\pi_4(G)=0$. We have the following homotopy equivalences:
\begin{itemize}
\item if $M$ is a spin manifold,
\[\Omega^2 \mathcal{G}_k(M)\simeq \Omega^2\mathcal{G}_k(P^4(c))\times \Omega^4(G;c)\times \Omega^7G\times \prod_{i=1}^{m-1} (\Omega^4 G  \times \Omega^5 G );\]
\item if $M$ is a non-spin manifold,
\[\Omega^2 \mathcal{G}_k(M)\simeq \Omega^2\mathcal{G}_k(P^4(c))\times \Omega^3 {\rm Map}_0^\ast(\mathbb{C}P^2, G)\times \Omega^4(G;c)\times \prod_{i=1}^{m-1} \Omega^4 G  \times \prod_{i=1}^{m-2} \Omega^5 G,\]
\end{itemize}
where $k\in \mathbb{Z}/c$.
\end{theorem}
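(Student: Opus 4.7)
The plan is to chain Lemmas \ref{gaugedecom6c1}, \ref{gaugeMprimeprime} and \ref{gaugeL} into a single decomposition of $\Omega^2\mathcal{G}_k(M)$, and then use the manifold hypothesis (via Lemma \ref{HtypeT}) to pin down the homotopy type of the auxiliary complex $T$ from Lemma \ref{gaugeL}, separating the spin and non-spin cases.

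First, I would substitute Lemma \ref{gaugeMprimeprime} (looped twice) into Lemma \ref{gaugedecom6c1}, and then substitute Lemma \ref{gaugeL} (also looped twice) into the result. Together with the identification $\Omega^4(G;c)\simeq \Omega^3 G\{c\}$ coming from the defining cofibre sequence $S^3\stackrel{c}{\rightarrow} S^3\rightarrow P^4(c)$, this produces
\[\Omega^2\mathcal{G}_k(M)\simeq \Omega^2\mathcal{G}_k(P^4(c))\times \Omega^2{\rm Map}_0^\ast(T,G)\times \Omega^3 G\{c\}\times \prod_{i=1}^{m-2}(\Omega^4 G\times \Omega^5 G),\]
reducing the problem to computing $\Omega^2{\rm Map}_0^\ast(T,G)$ in each of the spin and non-spin cases.

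Next, I would identify $T$. Lemma \ref{gaugeL} leaves three candidates, one of which ($S^3\vee (S^2\cup_{\eta^2}e^5)$) is ruled out by Lemma \ref{HtypeT}. The remaining dichotomy is governed by the action of $Sq^2$ on $H^\ast(L;\mathbb{Z}/2)$, and I would translate this into the spin/non-spin condition on $M$ via the Wu formula: on an oriented closed $5$-manifold one has $w_2(M)=v_2(M)$, and $v_2$ is characterised by $Sq^2 x=v_2\cup x$ on $H^3(M;\mathbb{Z}/2)$, so $M$ is spin exactly when $Sq^2\colon H^3(M;\mathbb{Z}/2)\to H^5(M;\mathbb{Z}/2)$ vanishes. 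The cofibre constructions in Lemmas \ref{gaugedecom6c1}--\ref{gaugeL} only collapse cells that are irrelevant for this $Sq^2$ (a Moore space $P^2(c)$ of odd order and wedge summands of spheres carrying no mod-$2$ cohomology in the relevant degrees), so the $Sq^2$ on $H^\ast(L;\mathbb{Z}/2)$ matches that on $H^\ast(M;\mathbb{Z}/2)$. Hence $T\simeq S^3\vee S^2\vee S^5$ in the spin case and $T\simeq \Sigma\mathbb{C}P^2\vee S^2$ in the non-spin case.

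Finally, I would expand $\Omega^2{\rm Map}_0^\ast(T,G)$ using the standard identities ${\rm Map}_0^\ast(\Sigma X,G)\simeq \Omega{\rm Map}_0^\ast(X,G)$ and ${\rm Map}_0^\ast(A\vee B,G)\simeq {\rm Map}_0^\ast(A,G)\times{\rm Map}_0^\ast(B,G)$. The spin case gives $\Omega^4 G\times \Omega^5 G\times \Omega^7 G$, and the non-spin case gives $\Omega^3{\rm Map}_0^\ast(\mathbb{C}P^2,G)\times \Omega^4 G$. Substituting these into the boxed decomposition above and re-indexing the products yields the two asserted formulas. The main obstacle is the identification of $T$ in the second step: Lemma \ref{HtypeT} handles the Poincar\'{e}-complex-versus-manifold distinction, so the remaining care is in verifying that the Steenrod action survives the sequence of cofibre constructions intact, which is where the odd-order hypothesis $6\nmid c$ (via $2\nmid c$) is essential.
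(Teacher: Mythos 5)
Your proposal is correct and follows essentially the same route as the paper: chain Lemmas \ref{gaugedecom6c1}, \ref{gaugeMprimeprime} and \ref{gaugeL}, invoke Lemma \ref{HtypeT} to eliminate the candidate $S^3\vee S^2\cup_{\eta^2}e^5$ for $T$, and translate the $Sq^2$-dichotomy of Lemma \ref{gaugeL} into the spin versus non-spin condition via the Wu formula, then expand the mapping spaces. One minor imprecision: the collapsed spheres $\bigvee S^3$ do carry mod-$2$ cohomology in the relevant degree $3$; the correct reason they cannot support the $Sq^2$ into the top class is that they split off as stable wedge summands of $M$ (Proposition \ref{decomSigma3M6c}), which is the justification your step actually needs.
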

\begin{proof}
Combining Lemma \ref{gaugedecom6c1}, Lemma \ref{gaugeMprimeprime}, Lemma \ref{gaugeL} and Lemma \ref{HtypeT}, we see that $\Omega^2 \mathcal{G}_k(M)$ decomposes into the indicated forms according to the two homotopy types of $T$ in Lemma \ref{gaugeL}. By simple calculation using the Wu formula, we see that the non-triviality of the second Stiefel-Whitney class $\omega_2$ is equivalent to the existence of a non-trivial $Sq^2$-action on $M$. 
Hence the two decompositions indeed correspond to the possible spin and non-spin structure of $M$ respectively.
\end{proof}

\section{The case when $M$ is stably parallelizable}\label{Mtrivialsec}
\noindent As we may notice, many results in the previous sections also valid for general complexes with described cohomology groups. In this section, let us start within a little more general context.
Let $M$ be a five dimensional Poincar\'{e} duality space with $\pi_{1}(M)\cong \mathbb{Z}/c$ ($c\geq 3$ an odd number) and $H_{2}(M; \mathbb{Z})$ is torsion free of rank $m-1$. Recall that for any Poincar\'{e} duality space $X$ there is a stably unique \textit{Spivak normal fibration} $\xi$ \cite{Spivak67, Browder72} which is a spherical fibration $\xi\rightarrow X$ such that the Thom complex $X^{\xi}$ of $\xi$ admits a map 
\[S^N\rightarrow X^{\xi}\]
sending the fundamental class $[S^N]\in H_{N}(S^N; \mathbb{Z})$ to the image of fundamental class $[X]$ under the Thom isomorphism.  
\begin{lemma}\label{Sigma4M}
Suppose $M$ has precisely one $5$-dimensional cell. Then if the Spivak normal fibration $\xi$ of $M$ is trivial, we have  
\[
\Sigma^4 M \simeq S^9\vee P^8(c)\vee P^6(c)\vee \bigvee_{i=1}^{m-1}(S^7\vee S^6).
\]
\end{lemma}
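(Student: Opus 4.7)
The plan is two-fold: first, identify the sub-skeleton $\Sigma^4 M_4$ by suspending the decomposition of $\Sigma^2 M_4$ established in (\ref{hdecomS2M4}); second, show that the attaching map of the top $9$-cell of $\Sigma^4 M$ is null-homotopic by exploiting the triviality of the Spivak normal fibration via Spanier--Whitehead duality.

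Suspending (\ref{hdecomS2M4}) twice immediately gives
\[
\Sigma^4 M_4 \simeq P^8(c)\vee P^6(c)\vee \bigvee_{i=1}^{m-1}(S^7\vee S^6).
\]
Since $M$ has precisely one $5$-cell, we may write $\Sigma^4 M \simeq \Sigma^4 M_4 \cup_\beta e^9$, where $\beta = \Sigma^4\phi \in \pi_8(\Sigma^4 M_4)$ is the $4$-fold suspension of the attaching map $\phi:S^4\to M^{(4)}$ of the top cell in some CW structure on $M$. The entire content of the lemma is then to show $\beta = 0$. Note that $\Sigma^4 M_4$ is $4$-connected (its bottom cell, from the $P^6(c)$ summand, lies in dimension $5$), so by the Freudenthal suspension theorem the suspension $\Sigma:\pi_8(\Sigma^4 M_4)\to \pi_9(\Sigma^5 M_4)$ is an isomorphism, and every subsequent suspension is likewise an isomorphism. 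Hence $\beta = 0$ if and only if $\Sigma^\infty \beta = 0$ in the stable category.

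For the stable vanishing, pick a representative $\xi$ of the Spivak normal fibration of some dimension $N$ together with a trivialization, so that the Thom complex identifies as $M^\xi \simeq \Sigma^N M_+ \simeq \Sigma^N M \vee S^N$. The Spivak collapse map $c:S^{5+N}\to M^\xi$ realizes the fundamental class under the Thom isomorphism, so its projection $\rho:S^{5+N}\to \Sigma^N M$ onto the first wedge summand composes with the top-cell pinch $\Sigma^N M\to S^{5+N}$ to a degree $\pm 1$ self-map of $S^{5+N}$. Therefore $\rho$ is a stable section of the top-cell pinch, which gives the stable splitting $\Sigma^\infty M \simeq \Sigma^\infty M^{(4)} \vee \Sigma^\infty S^5$, and in particular $\Sigma^\infty\phi = 0$. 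Combining, $\Sigma^4 M \simeq \Sigma^4 M_4 \vee S^9$, which is the claimed decomposition.

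The principal obstacle is the Spanier--Whitehead duality argument in the last paragraph: one must verify that, under the chosen trivialization of $\xi$, the projection of the Spivak fundamental class onto the $\Sigma^N M$ factor recovers the suspended top cell with degree $\pm 1$. This is the Poincar\'{e}-complex analogue of Atiyah duality and is the substantive use of the stable parallelizability hypothesis; the remaining ingredients (suspending (\ref{hdecomS2M4}) and the Freudenthal stable-range check) are routine bookkeeping.
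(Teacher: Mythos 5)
Your proof is correct and follows the same overall strategy as the paper: suspend the splitting (\ref{hdecomS2M4}) of $\Sigma^2 M_4$, observe that the Freudenthal theorem places the attaching map of the top cell of $\Sigma^4 M$ in the stable range (your check that $\pi_8(\Sigma^4 M_4)\to\pi_9(\Sigma^5 M_4)$ is an isomorphism for the $4$-connected complex $\Sigma^4 M_4$ is exactly the point, and is the reason four suspensions are needed), and then show the attaching map is stably null. The one place you diverge is the stable vanishing itself: the paper simply cites Lemma $3.8$ of \cite{KLPT17}, which states that the top-cell attaching map of a Poincar\'{e} duality complex with a single top cell is stably null homotopic if and only if its Spivak normal fibration is trivial, whereas you reprove the implication you need from scratch, by identifying $M^\xi\simeq\Sigma^N M\vee S^N$ for a trivialized $\xi$ and using the degree-one condition on the Spivak collapse map to produce a stable section of the top-cell pinch. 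Your argument is the standard proof of that direction of the cited lemma and is sound; what it buys is self-containedness, at the cost of rederiving a known statement. One cosmetic remark: the hypothesis you are using is triviality of the Spivak normal fibration (the lemma is stated for Poincar\'{e} complexes), and stable parallelizability only enters later, in Corollary \ref{gaugedecompimfld}, as the geometric condition guaranteeing it for manifolds.
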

\begin{proof}
By (\ref{hdecomS2M4}), we have 
\[
\Sigma^2 M \simeq \Sigma^2 M_4 \cup e^{7} \simeq \big(P^6(c)\vee P^4(c)\vee \bigvee_{i=1}^{m-1}(S^5\vee S^4)\big)\cup_{f} e^{7}.
\]
On the other hand, in Lemma $3.8$ of \cite{KLPT17} it was proved that the attaching map for the top cell of any Poincar\'{e} duality complex with single top cell is stably null homotopic if and only if its Spivak normal fibration is trivial. In particular $f$ is stably trivial and the lemma then follows by Freudenthal suspension theorem.
\end{proof}
Based on the nice stably splittable structure of $M$, we can produce a complete homotopy decomposition of looped gauge groups in this situation.
\begin{theorem}\label{gaugedecomPoincare}
Let $M$ be a five dimensional Poincar\'{e} duality space with precisely one $5$-dimensional cell, $\pi_{1}(M)\cong \mathbb{Z}/c$ ($c\geq 3$ an odd number) and $H_{2}(M; \mathbb{Z})$ is torsion free of rank $m-1$. Let $G$ be a simply connected compact simple Lie group with $\pi_4(G)=0$. Then if the Spivak normal fibration $\xi$ of $M$ is trivial, we have the homotopy equivalence
\[\Omega^3 \mathcal{G}_k(M)\simeq \Omega^3\mathcal{G}_k(P^4(c))\times \Omega^5(G;c)\times \Omega^8G\times \prod_{i=1}^{m-1} (\Omega^5 G  \times \Omega^6 G),\]
where $k\in \mathbb{Z}/c$.
\end{theorem}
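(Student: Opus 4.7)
The plan is to derive the decomposition from two consecutive applications of Proposition \ref{Gsplit}, using the complete suspension splitting of Lemma \ref{Sigma4M} as the main input. Note that $[P^2(c),BG]$, $[S^2,BG]$ and $[S^3,BG]$ all vanish because $BG$ is $3$-connected (as $\pi_i(G)=0$ for $i\le 2$), and the crucial $[S^5,BG]=\pi_4(G)=0$ holds by hypothesis. Thus every $Y$ I will feed into Proposition \ref{Gsplit} automatically satisfies the required vanishing condition.

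First, mimicking the construction in the proof of Theorem \ref{1stdecomgauge}, I would build a map $\phi_1:Y_1\to M$ with $Y_1=P^2(c)\vee \bigvee_{i=1}^{m-1}(S^2\vee S^3)$: the $P^2(c)$ is sent in as the Moore space representing $\pi_1(M)\cong \mathbb{Z}/c$, the $S^2$'s pick out the $m-1$ free generators of $H_2(M)$, and the $S^3$'s pick out $m-1$ of the $m$ three-cells, leaving aside the one $3$-cell that, together with the $4$-cell (attached by degree $c$), constitutes a $P^4(c)$ subcomplex of $M$. By Lemma \ref{Sigma4M},
\[\Sigma^4 M \simeq S^9\vee P^8(c)\vee P^6(c)\vee \bigvee_{i=1}^{m-1}(S^7\vee S^6),\]
and $\Sigma^4\phi_1$ is, by construction, the inclusion of $\Sigma^4 Y_1=P^6(c)\vee \bigvee_{i=1}^{m-1}(S^6\vee S^7)$ as wedge summands, hence admits a left homotopy inverse. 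With $i=3$, Proposition \ref{Gsplit} gives
\[\Omega^3 \mathcal{G}_k(M)\simeq \Omega^3\mathcal{G}_k(X_1)\times \Omega^5(G;c)\times \prod_{i=1}^{m-1}(\Omega^5 G\times \Omega^6 G),\]
where $X_1$ denotes the homotopy cofibre of $\phi_1$. A direct cell count identifies $X_1\simeq P^4(c)\cup_\alpha e^5$ for some $\alpha\in \pi_4(P^4(c))$, and a short Puppe-sequence argument gives $[X_1,BG]\cong \mathbb{Z}/c$, so the class $k$ is preserved.

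Next, since $\Sigma\alpha\in \pi_5(P^5(c))=0$ by Lemma \ref{pi3p3c}, one has $\Sigma X_1\simeq P^5(c)\vee S^6$. Because $X_1$ is $2$-connected, Freudenthal's theorem guarantees that $\Sigma:\pi_5(X_1)\to \pi_6(\Sigma X_1)$ is surjective (we are in the edge case $5=2\cdot 3-1$), so the wedge-summand inclusion $S^6\hookrightarrow \Sigma X_1$ lifts to some unstable $\phi_2:S^5\to X_1$. Then $\Sigma^4\phi_2$ becomes the wedge-summand inclusion $S^9\hookrightarrow \Sigma^4 X_1\simeq \Sigma^4 P^4(c)\vee S^9$, which possesses a left homotopy inverse. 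Proposition \ref{Gsplit} applies a second time (with $[S^5,BG]=0$) to yield
\[\Omega^3 \mathcal{G}_k(X_1)\simeq \Omega^3\mathcal{G}_k(P^4(c))\times \Omega^8 G,\]
and splicing the two splittings together produces the formula of the theorem.

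The main obstacle I anticipate is verifying that $\phi_1$ can actually be arranged so that $\Sigma^4\phi_1$ is the honest inclusion of wedge summands in the splitting of Lemma \ref{Sigma4M}; this amounts to matching the cell-by-cell construction of $\phi_1$ against the Freudenthal-based splitting extracted from the triviality of the Spivak normal fibration, and requires careful bookkeeping with the minimal cell model of $M$. The remaining moves---vanishing of $\Sigma\alpha$ via Lemma \ref{pi3p3c}, Freudenthal surjectivity on $\pi_5(X_1)$, and the two invocations of Proposition \ref{Gsplit}---are essentially formal.
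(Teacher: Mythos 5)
Your overall strategy is the paper's strategy: use the complete splitting of $\Sigma^4 M$ from Lemma \ref{Sigma4M} to feed Proposition \ref{Gsplit} with $i=3$ repeatedly, peeling off $\Omega^5(G;c)$, the $\Omega^5 G\times\Omega^6 G$ factors and finally $\Omega^8 G$, until only $\mathcal{G}_k(P^4(c))$ remains. The factor bookkeeping, the identification $[X_1,BG]\cong\mathbb{Z}/c$, and the use of $[S^5,BG]=\pi_4(G)=0$ all match the paper. (Your detour through $\Sigma\alpha\in\pi_5(P^5(c))$ and Freudenthal is unnecessary: $\alpha$ already lies in $\pi_4(P^4(c))=0$ by Lemma \ref{pi3p3c}, so $X_1\simeq P^4(c)\vee S^5$ on the nose, which is exactly what the paper uses.)

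The genuine gap is in the first step, and it is the obstacle you yourself flag: you take $Y_1=P^2(c)\vee\bigvee_{i=1}^{m-1}(S^2\vee S^3)$ and claim the $S^3$ summands can be mapped into $M$ so as to ``pick out'' $m-1$ of the three-cells. But $M$ is not simply connected, so you cannot invoke Hurewicz or a minimal cell structure to produce maps $S^3\to M$ realizing the free generators of $H_3(M;\mathbb{Z})$; the $3$-cells of $M$ need not have null-homotopic attaching maps in ${\rm sk}_2 M$ (only their images in the cellular chain complex of $M/P^2(c)$ are known to vanish), and Freudenthal does not desuspend the $S^7$ wedge summands of $\Sigma^4 M$ back to $\pi_3(M)$ since $M$ is merely $0$-connected. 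This is precisely why the paper's proof proceeds in three stages rather than your two: it first cofibres out $Y_1=P^2(c)\vee\bigvee_{i=1}^{m-1}S^2$ (whose existence is established in the proof of Theorem \ref{1stdecomgauge}), obtaining the \emph{simply connected} complex $X_1\simeq\bigl(P^4(c)\vee\bigvee_{i=1}^{m-1}S^3\bigr)\cup e^5$, in which the $3$-spheres are honest wedge summands of the $4$-skeleton; only then does it map $Y_2=\bigvee_{i=1}^{m-1}S^3$ in, and finally $Y_3=S^5$ into $X_2\simeq P^4(c)\vee S^5$. Your argument is repaired simply by adopting this staged factorization; as written, the existence of your $\phi_1$ is not established.
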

\begin{proof}
The proof is similar to that of Theorem \ref{gaugedecom6c}, and since we have proved that $\Sigma^4M$ is completely decomposable by Lemma \ref{Sigma4M} we need to apply Proposition \ref{Gsplit} for $i=3$ and several choices of $X$. Indeed we have the cofibre sequences
\begin{eqnarray*}
Y_1=P^2(c)\vee\bigvee_{i=1}^{m-1}S^2&\rightarrow& M\rightarrow X_1\simeq \big(P^4(c)\vee\bigvee_{i=1}^{m-1}S^3\big)\cup e^5,\\
Y_2=\bigvee_{i=1}^{m-1}S^3&\rightarrow& X_1\rightarrow X_2\simeq P^4(c)\vee S^5,\\
Y_3=S^5&\rightarrow& X_2\rightarrow P^4(c),
\end{eqnarray*}
where we use the fact $\pi_4(P^4(c))=0$ by Lemma \ref{pi3p3c}.
Then by applying Proposition $\ref{Gsplit}$ to $X_i$ ($i=1, 2, 3$) successively we get the decomposition stated in the theorem.
\end{proof}

Now we specify to the case when $M$ is an oriented closed manifold.
\begin{definition}[Section $3$ of \cite{KM63}]\label{stablepara}
A manifold $M$ is called \textit{stably parallelizable} if the Whitney sum $\tau(M)\oplus \epsilon^1$ is a trivial bundle, where $\tau(M)$ is the tangent bundle of $M$ and $\epsilon^k$ denote a trivial $k$-plane bundle over $M$.
\end{definition}

\begin{remark}
\begin{itemize}
\item[1.] Given $r\geq 0$, Kervaire and Milnor \cite{KM63} showed that $\xi\oplus \epsilon^r$ is trivial if and only if $\xi$ is trivial provided by $k> n$ where $\xi$ is a $k$-dimensional plane bundle over an $n$-dimensional complex.
\item[2.] Hirsch \cite{Hirsch59} showed that an $n$-manifold M is stably parallelizable if and only if $M$ is orientable and immerses into $\mathbb{R}^{n+1}$; and if only if its normal bundle is stably trivial, and then if only if $M$ is orientable and immerses into $\mathbb{R}^{n+k+1}$ with a transversal $k$-field. 
\item[3.] Barden \cite{Barden65} has shown a closed simply connected $5$-manifold admits an immersion into $\mathbb{R}^{6}$ if and only if its second Stiefel-Whitney class vanishes. On the contrary, for a non-simply connected closed $5$-manifold $M$ there is a possibly non-vanishing obstruction in $H^4(M_4, M_3; \{\pi_3(V_5(\mathbb{R}^6))\})$ where $\{\pi_3(V_5(\mathbb{R}^6))\})$ is a system of local coefficients and $\pi_3(V_5(\mathbb{R}^6))
\cong \pi_3(SO(6))\cong \mathbb{Z}$.
\end{itemize}
\end{remark}

The following corollary follows immediately from the fact that, if a Poincar\'{e} duality space is homotopy equivalent to a manifold via some map, then the spherical fibration associated to the pullback of the normal bundle of that manifold is isomorphic to the Spivak normal fibration.

\begin{corollary}\label{gaugedecompimfld}
Let $M$ be a five dimensional stably parallelizable oriented closed manifold with precisely one $5$-dimensional cell, $\pi_{1}(M)\cong \mathbb{Z}/c$ ($c\geq 3$ an odd number) and $H_{2}(M; \mathbb{Z})$ is torsion free of rank $m-1$. Let $G$ be a simply connected compact simple Lie group with $\pi_4(G)=0$. Then we have 
\[\Omega^3 \mathcal{G}_k(M)\simeq \Omega^3\mathcal{G}_k(P^4(c))\times \Omega^5(G;c)\times \Omega^8G\times \prod_{i=1}^{m-1} (\Omega^5 G  \times \Omega^6 G),\]
where $k\in \mathbb{Z}/c$.
\end{corollary}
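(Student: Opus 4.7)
The plan is to derive the corollary as an immediate consequence of Theorem \ref{gaugedecomPoincare} by verifying the single remaining hypothesis, namely that the Spivak normal fibration $\xi$ of $M$ is (stably fibre-)trivial. Since $M$ is assumed to be an oriented closed manifold with precisely one top cell, it is automatically a Poincar\'{e} duality space of dimension $5$, and the rest of the hypotheses on $\pi_1$, $H_2$ and the top cell carry over verbatim. The only nontrivial point is therefore to convert the differential-topological assumption ``stably parallelizable'' into the homotopy-theoretic assumption ``Spivak normal fibration is trivial''.

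First, I would recall the naturality statement highlighted right before the corollary: for a Poincar\'{e} duality space that is (homotopy equivalent to) a smooth manifold, the Spivak normal fibration is canonically isomorphic to the spherical fibration associated to the stable normal bundle $\nu(M)$ of $M$. Thus triviality of $\xi$ as a spherical fibration will follow from (stable) triviality of $\nu(M)$ as a vector bundle. Now the stably parallelizable hypothesis says that $\tau(M)\oplus \epsilon^1$ is trivial, and adding a complementary bundle gives that $\nu(M)$ is stably trivial as well, say $\nu(M)\oplus \epsilon^{s}\cong \epsilon^{k+s}$ for some $k$ with $k$ large.

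Next I would invoke the Kervaire--Milnor observation recorded in the remark above: if $\xi$ is a $k$-plane bundle over an $n$-complex with $k>n$, then stable triviality implies honest triviality. Taking a representative of $\nu(M)$ of dimension $k>5$ (for instance, by embedding $M$ in $\mathbb{R}^{5+k}$ with $k$ sufficiently large, which is possible since $M$ is a closed smooth $5$-manifold), this gives $\nu(M)\cong \epsilon^{k}$, and therefore the associated sphere bundle $S(\nu(M))\to M$ is a trivial $(k-1)$-sphere bundle. Combining with the identification of the previous paragraph, this is exactly the assertion that the Spivak normal fibration $\xi$ of $M$ is trivial.

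Having verified all hypotheses of Theorem \ref{gaugedecomPoincare} for our $M$, the conclusion of that theorem yields the claimed homotopy equivalence
\[\Omega^3 \mathcal{G}_k(M)\simeq \Omega^3\mathcal{G}_k(P^4(c))\times \Omega^5(G;c)\times \Omega^8G\times \prod_{i=1}^{m-1} (\Omega^5 G  \times \Omega^6 G)\]
for every $k\in \mathbb{Z}/c$. The potentially subtle point in this sketch is the passage from ``stable tangential triviality of a manifold'' to ``triviality of the Spivak normal fibration as a spherical fibration''; once the dimension count in Kervaire--Milnor is properly set up (i.e.\ using a normal bundle of geometric dimension strictly greater than $5=\dim M$), the argument is immediate, so I do not anticipate any genuine obstacle beyond bookkeeping.
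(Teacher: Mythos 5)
Your proposal is correct and follows essentially the same route as the paper: both reduce the corollary to Theorem \ref{gaugedecomPoincare} by observing that stable parallelizability makes the stable normal bundle trivial, and then identifying the Spivak normal fibration of a manifold with the spherical fibration of its normal bundle (the fact recorded immediately before the corollary). The extra Kervaire--Milnor destabilization step you include is harmless but not needed, since triviality of the Spivak normal fibration is a stable notion and stable triviality of the normal bundle already suffices.
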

\begin{proof}
Since $M$ is stably parallelizable, its normal bundle is then stably trivial. Then the top cell of $M$ splits stably and by Theorem \ref{gaugedecomPoincare} we get the corollary.
\end{proof}

\section{Gauge groups over Moore spaces}\label{gaugemooresec}
\noindent In the previous sections, we have noticed that the gauge groups over Moore spaces are involved in the homotopy decompositions of the gauge groups over $5$-manifolds. Hence, we may focus on $\mathcal{G}_{k}(P^4(c))$ in this section. 

Recall that there exists the natural homotopy fibre sequence
\begin{equation}\label{sepgaugefibre}
{\rm Map}^{\ast}_0(Z, G)\rightarrow  \mathcal{G}_\beta(Z)\rightarrow G\stackrel{\partial_\beta}{\rightarrow} {\rm Map}^{\ast}_\beta(Z, BG)\rightarrow {\rm Map}_\beta(Z, BG)\stackrel{{\rm ev}}{\rightarrow} BG
\end{equation}
for any class $\beta\in [Z, BG]$. The homotopy of the gauge group $\mathcal{G}_\beta(Z)$ is then captured by that of the connecting map $\partial_\beta$.
\begin{lemma}[Lang \cite{Lang73}]\label{langlemma}
If $Z\simeq \Sigma W$, then through the isomorphisms
\[
[G, {\rm Map}^{\ast}_\beta(Z, BG)]\cong [G, {\rm Map}^{\ast}_0(Z, BG)]\cong [W\wedge G, G],
\]
$\partial_\beta$ is identified with the Samelson product $\langle \beta, {\rm id}_G\rangle$.
In particular, 
\[\partial_{k\beta}\simeq k\partial_{\beta}.\]
\end{lemma}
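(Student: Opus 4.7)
The plan is to follow Lang's original argument, which identifies the connecting map universally and then converts it into a Samelson product via the loop-suspension adjunction.

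First I would justify the preliminary isomorphisms. Since $Z\simeq\Sigma W$, the adjoint equivalence ${\rm Map}^\ast(\Sigma W,BG)\simeq{\rm Map}^\ast(W,\Omega BG)={\rm Map}^\ast(W,G)$ endows the left hand side with the structure of a topological group, so all its components are pairwise homotopy equivalent via left translation. This gives the first isomorphism
\[
[G,{\rm Map}^\ast_\beta(Z,BG)]\cong[G,{\rm Map}^\ast_0(Z,BG)].
\]
The second isomorphism $[G,{\rm Map}^\ast_0(\Sigma W,BG)]\cong[G\wedge W,G]$ is the standard smash-loop adjunction together with $G=\Omega BG$.

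Next comes the identification itself. By definition, $\partial_\beta(g)$ is the endpoint of a lift of the loop $g\colon S^1\to BG$ through the evaluation fibration ${\rm Map}^\ast_\beta(Z,BG)\to{\rm Map}_\beta(Z,BG)\to BG$. Such a lift can be presented as a homotopy $H_t\colon Z\to BG$ with $H_0=\beta$ and $H_t(z_0)=g(t)$, yielding $\partial_\beta(g)=H_1$. After translating by $\beta^{-1}$ in the group ${\rm Map}^\ast(W,G)$, the resulting map $G\to{\rm Map}^\ast_0(\Sigma W,BG)$ has adjoint $G\wedge\Sigma W\to BG$ equal, by a direct model-level analysis of $H$, to the Whitehead product $[\epsilon,\beta]\colon\Sigma(G\wedge W)\to BG$, where $\epsilon\colon\Sigma G=\Sigma\Omega BG\to BG$ is the counit of the loop-suspension adjunction. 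Under the further loop-suspension adjunction, this Whitehead product in $BG$ translates, by the classical Whitehead-Samelson correspondence for topological groups, to the Samelson product $\langle\tilde\beta,{\rm id}_G\rangle\colon W\wedge G\to G$, where $\tilde\beta\colon W\to G$ is the adjoint of $\beta$. This is the asserted identification. The ``in particular'' statement then follows immediately from bilinearity of the Samelson product, $\langle k\tilde\beta,{\rm id}_G\rangle=k\langle\tilde\beta,{\rm id}_G\rangle$, since the chain of isomorphisms above is additive.

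The main obstacle lies in the middle step: establishing the universal-commutator interpretation of the connecting map rigorously. One must choose a concrete model of $BG$ (for instance, Milnor's construction, or a simplicial bar model) and check that the lift $H$ can be constructed so that $H_1\cdot\beta^{-1}$ really equals the Whitehead bracket $[\epsilon,\beta]$ as a pointed map, not merely up to uncontrolled higher homotopies. The cofibration $\{z_0\}\hookrightarrow Z$ together with the Puppe sequence forces the correct homotopy class, but verifying this requires care about basepoints and signs. Once the geometric step is settled, the remainder is a formal manipulation of adjunctions.
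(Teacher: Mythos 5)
Your outline is essentially Lang's own argument, which is exactly what the paper relies on: the paper gives no proof of this lemma and simply cites Lang \cite{Lang73}, where the connecting map of the evaluation fibration is identified with the generalized Whitehead product $[\epsilon,\beta]$ and hence, via the loop--suspension adjunction and the Whitehead--Samelson correspondence, with the Samelson product $\langle\tilde\beta,{\rm id}_G\rangle$, from which $\partial_{k\beta}\simeq k\partial_\beta$ follows by bilinearity. Your sketch is correct in structure; the one step you defer (the model-level verification that the lift of the evaluation fibration realizes the Whitehead bracket) is precisely the technical content of Lang's theorem and of G.~W.~Whitehead's earlier work on the evaluation map, so it is legitimately citable rather than a gap.
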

For Moore spaces, we have the usual homotopy cofibre sequence 
\[S^3\stackrel{c}{\rightarrow} S^3\stackrel{f}{\rightarrow} P^4(c)\stackrel{q}{\rightarrow}S^4.\]
The gauge group $\mathcal{G}_k(P^4(c))$ is then related to $\mathcal{G}_{\tilde{k}}(S^4)$ for any $\tilde{k}\in \{k+ci~|~i\in\mathbb{Z}\}$. 
Denote ${\rm Map}_\alpha^{\ast}(P^n(c), X)=\Omega^n_\alpha(X; c)$.
The following lemmas is immediate by the naturality of (\ref{sepgaugefibre}).
\begin{lemma}\label{orderconnectionlemma}
There is a homotopy commutative diagram
\[
\xymatrix{
G \ar[r]^{\tilde{\partial}_{\tilde{k}}}  \ar[d]^{\partial_{k}}     &  \Omega^4_{\tilde{k}}BG \ar[d]^{q^\ast}  \\
\Omega^4_k(BG; c) \ar@{=}[r]                              &  \Omega^4_k(BG; c).
} 
\]
In particular,
\[
{\rm ord}(\partial_k)~|~{\rm ord}(\tilde{\partial}_{\tilde{k}}),
\]
for any $\tilde{k}\in \{k+ci~|~i\in\mathbb{Z}\}$, where ${\rm ord}(\alpha)$ is the least nonnegative integer $m$ such that $m\alpha$ is null-homotopic provided by the existence of a multiplication operation on $\alpha$.
\end{lemma}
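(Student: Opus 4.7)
The plan is to derive the homotopy commutative square by naturality of the evaluation fibration
\[{\rm Map}^{\ast}_{\beta}(Z, BG) \to {\rm Map}_{\beta}(Z, BG) \xrightarrow{{\rm ev}} BG\]
with respect to the pinch map $q: P^4(c)\to S^4$, and then to read off the divisibility of orders as an immediate consequence.

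First, I would pin down the effect of $q^{\ast}$ on path components of mapping spaces. Applying $[-,BG]$ to the cofibre sequence $S^3\xrightarrow{c} S^3 \to P^4(c)\xrightarrow{q} S^4$ produces the Puppe exact sequence
\[[S^4, BG] \xrightarrow{\,\cdot c\,} [S^4, BG] \xrightarrow{q^{\ast}} [P^4(c), BG] \to [S^3, BG].\]
Since $G$ is simply connected, $[S^3, BG]\cong \pi_2(G)=0$, so the map $q^{\ast}$ is identified with the reduction $\mathbb{Z}\twoheadrightarrow \mathbb{Z}/c$. In particular, for any $\tilde{k}\in\{k+ci\mid i\in\mathbb{Z}\}$ we have $q^{\ast}(\tilde{k}) = k$ in $[P^4(c),BG]$.

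Next, I would invoke naturality: pullback along $q$ gives a morphism of evaluation fibrations lying over the identity on $BG$, namely
\[\bigl({\rm Map}^{\ast}_{\tilde{k}}(S^4,BG)\to {\rm Map}_{\tilde{k}}(S^4,BG)\xrightarrow{{\rm ev}} BG\bigr)\ \longrightarrow\ \bigl({\rm Map}^{\ast}_k(P^4(c),BG)\to {\rm Map}_k(P^4(c),BG)\xrightarrow{{\rm ev}} BG\bigr),\]
where the first step above ensures that $q^{\ast}$ carries the $\tilde{k}$-component into the $k$-component. Comparing the connecting maps on looped bases (identifying $\Omega BG\simeq G$) produces exactly the square displayed in the statement, so that $\partial_k\simeq q^{\ast}\circ \tilde{\partial}_{\tilde{k}}$ up to homotopy.

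Finally, the order divisibility is an immediate formal consequence: if $n\,\tilde{\partial}_{\tilde{k}}\simeq \ast$, then $n\,\partial_k\simeq q^{\ast}\circ(n\,\tilde{\partial}_{\tilde{k}})\simeq \ast$, so ${\rm ord}(\partial_k)\mid {\rm ord}(\tilde{\partial}_{\tilde{k}})$. The only potentially delicate point is matching path components of mapping spaces across $q^{\ast}$, which is dispatched by the Puppe computation; everything else is routine naturality of the evaluation fibration.
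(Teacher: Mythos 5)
Your proposal is correct and follows essentially the same route as the paper, which simply declares the lemma ``immediate by the naturality of the evaluation fibre sequence''; you have merely filled in the details, namely the Puppe-sequence computation showing $q^{\ast}$ is reduction mod $c$ on components (using $\pi_2(G)=0$) and the formal deduction of the order divisibility. No gaps.
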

There are many studies in the literature on the order of the connecting map $\tilde{\partial}_{\tilde{k}}$, which is crucial for the classification of $\mathcal{G}_k(S^4)$ for various Lie groups. For our purpose (for instance, to make sure that $\pi_4(G)=0$), we summarise part of them in Table \ref{tables4guageorder}, in which $\infty$ represents for the integral cases.
\begin{table}[H]
\centering
\caption{Order of $\tilde{\partial}_1$ for $\mathcal{G}_1(S^4)$}
\begin{tabular}{lp{1.7cm}lp{2.7cm}l}
\hline
$G$          &       $p$               &        ${\rm ord}(\tilde{\partial}_1)$  & reference  \\ \hline
$SU(2)$   &    $p\geq 3$     &        $3$                         & Kono \cite{Kono91}\\ \hline  %integerallyright
$SU(3)$   & $\infty$         &         $24$                        &  Hamanaka-Kono \cite{HK06}\\ \hline
$SU(5)$   & $\infty$         &         $120$                      &  Theriault  \cite{Theriault15}\\ \hline
$SU(n)$    & $n\leq (p-1)^2+1$, $p\geq 3$  & $n(n^2-1)$   &Theriault \cite{Theriault17}\\ \hline
$Sp(n)$    $(n\geq 2)$ & $2n\leq (p-1)^2+1$, $p\geq3$  & $n(2n+1)$  &Kishimoto-Kono \cite{KK18}\\\hline%integerallyright
$Spin(2n+1)$ $(n\geq 2)$  &$2n\leq (p-1)^2+1$, $p\geq 3$ & $n(2n+1)$ & Theriault \cite{Theriault10} and Kishimoto-Kono \cite{KK18} \\ \hline
$Spin(2n)$ $(n\geq 3)$ &$2(n-1)\leq (p-1)^2+1$, $p\geq 5$ & $(n-1)(2n-1)$ &Theriault \cite{Theriault10} and Kishimoto-Kono \cite{KK18}\\ \hline
$G_2$      &$p\geq 3$     &          $3\cdot 7$                        &Kishimoto-Theriault-Tsutaya \cite{KTT17} \\  \hline
$F_4$      &$p\geq 5$      &          $5^2\cdot 13$   \\
$E_6$      &$p\geq 5$      &          $5^2\cdot 7\cdot13$   &  Hasui-Kishimoto-So-\\
$E_7$      &$p\geq 7$      &          $7\cdot 11\cdot 19$ &Theriault \cite{HKST18} \\
$E_8$      &$p\geq 7$      &          $7^2\cdot 11^2\cdot 13\cdot 19\cdot 31$\\ \hline
\end{tabular}
\label{tables4guageorder}
\end{table}

In \cite{Theriault102}, Theriault proved a very useful lemma in the study of the homotopy of gauge groups, which is a general statement and then should be applicable to other situations. We will mainly use ${\rm mod}~p$ version of his lemma. Let $(k, l)$ denote the greatest common divisor of $k$ and $l\in \mathbb{Z}$. If $k$ and $l\in \mathbb{Z}/q$ for some $q\in\mathbb{Z}$, we use $(k, l)$ to denote the set of all the greatest common divisors $(\tilde{k},\tilde{l})$, where $\tilde{k}$ and $\tilde{l}$ are any representatives of $k$ and $l$ respectively. 
\begin{lemma}[Lemma $3.1$ of \cite{Theriault102}]\label{gaugecriteriontherilemma}
Let $X$ be a space and $Y$ be an $H$-space with a homotopy inverse. Suppose that there is a map $f:X\rightarrow Y$ of order $m$, where $m$ is finite. For any integer $k$, let $F_k$ be the homotopy fibre of $k\circ f$.

If $(m,k)=(m,l)$, then $F_k\simeq_{(p)}F_l$ for any prime $p$. In particular, if everything is of $p$-local for some fixed prime $p$, and $(m,k)\cap(m,l)\neq \emptyset$, then $F_k\simeq_{(p)}F_l$.
\end{lemma}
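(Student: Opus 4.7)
The plan is to exploit the $H$-space structure on $Y$ and realize $F_k\simeq_{(p)} F_l$ as the comparison of fibres induced by an integer-power self-map of $Y$. Since $Y$ is grouplike, the set $[X,Y]$ carries a group structure, the $n$-th power endomorphism $n\colon Y\to Y$ induces multiplication by $n$ on $[X,Y]$, and the order hypothesis on $f$ is the relation $m\cdot f\simeq *$.

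The first task is purely arithmetic. Given $(m,k)=(m,l)=d$ and a prime $p$, I would show there exists an integer $a$ coprime to $p$ with $ak\equiv l\pmod m$. Writing $m=dm'$, $k=dk'$, $l=dl'$ with $\gcd(m',k')=\gcd(m',l')=1$, the congruence reduces to $ak'\equiv l'\pmod{m'}$, whose solutions form an arithmetic progression $a_0+m'\mathbb{Z}$ with $a_0\equiv l'(k')^{-1}\pmod{m'}$. If $p\nmid m'$, the progression meets every residue class modulo $p$ and one can arrange $a_0+jm'$ coprime to $p$. If $p\mid m'$, a short $p$-adic valuation argument using the equality $(m,k)=(m,l)$ shows that $p\nmid k'$ and $p\nmid l'$, which forces $a_0$ itself to be coprime to $p$. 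Swapping the roles of $k$ and $l$ yields an integer $b$ coprime to $p$ with $bl\equiv k\pmod m$.

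Once these are in hand, the homotopical step is quick. For $a$ coprime to $p$ the power map $a\colon Y\to Y$ is a $p$-local equivalence. In the group $[X,Y]$, the composite $a\circ k\circ f$ is the $(ak)$-fold multiple of $[f]$; writing $ak=l+jm$ and using $m\cdot f\simeq *$ gives $a\circ(k\circ f)\simeq l\circ f$. Naturality of homotopy fibres then produces a map of fibre sequences
\[
\xymatrix{
F_k \ar[r] \ar[d] & X \ar[r]^{k\circ f} \ar@{=}[d] & Y \ar[d]^{a} \\
F_l \ar[r] & X \ar[r]^{l\circ f} & Y,
}
\]
in which the base map is a $p$-local equivalence and the middle map is the identity, so the induced map $F_k\to F_l$ is a $p$-local equivalence. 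The integer $b$ furnishes the reverse equivalence, proving the first assertion. The ``in particular'' clause follows immediately: if $(m,k)\cap(m,l)\neq\emptyset$ for $k,l\in\mathbb{Z}/q$, choose integer representatives $\tilde k,\tilde l$ with $(m,\tilde k)=(m,\tilde l)$ and apply the first part. The only genuinely delicate point I anticipate is the arithmetic step — securing an integer $a$ that simultaneously solves $ak\equiv l\pmod m$ and is a unit at $p$; once this is done, the remainder is a routine application of naturality of fibre sequences and the fact that multiplication by an integer coprime to $p$ is a $p$-local equivalence on any grouplike $H$-space.
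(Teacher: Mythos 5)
Your argument is correct, and it is essentially the argument of Theriault's original Lemma $3.1$ in \cite{Theriault102}, which the paper imports by citation without reproducing a proof: the two ingredients are exactly the elementary number theory producing a unit $a$ at $p$ with $ak\equiv l \pmod{m}$ (your case analysis on whether $p\mid m/d$ is right), and the observation that the $a$-th power map on $Y$ is a $p$-local equivalence inducing an equivalence of fibres over the identity of $X$ via $a\circ(k\circ f)\simeq (ak)\circ f\simeq l\circ f$. The only point worth recording is that the step $[a\circ(k\circ f)]=ak[f]$ in $[X,Y]$ tacitly uses homotopy associativity of $Y$; this is implicit in the statement as quoted and is satisfied in every application in the paper, where $Y$ is a (component of an) iterated loop space such as $\Omega^4_k(BG;c)$.
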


Let $\mathfrak{f}: \mathbb{Z}\rightarrow \mathbb{Z}$ be the function of the numbers of factors of integers, that is, $\mathfrak{f}(m)=(e_1+1)(e_2+1)\ldots(e_r+1)$ if $m=\pm p_1^{e_1}p_2^{e_2}\ldots p_r^{e^r}$ as the decomposition of primes. Let $\nu_p(m)$ be the number of powers of $p$ which divide the integer $m$.

\begin{proposition}\label{mooregaugecriterion}
Let $d=({\rm ord}(\partial_1), c)$. If $(k, d)=(l,d)$, then $\mathcal{G}_k(P^4(c))\simeq_{(p)} \mathcal{G}_l(P^4(c))$ for any prime $p$. In particular, there are at most $\mathfrak{f}(d)$ different homotopy types of gauge groups over $P^4(c)$.

Localize at a prime $p$. If $\nu_p((k, d))=\nu_p((l, d))$, then $\mathcal{G}_k(P^4(c))\simeq_{(p)} \mathcal{G}_l(P^4(c))$. In particular, there are at most $\nu_p(d)+1$ different homotopy types of gauge groups over $P^4(c)$ at prime $p$.
\end{proposition}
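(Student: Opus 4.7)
The plan is to apply Theriault's criterion (Lemma \ref{gaugecriteriontherilemma}) to the connecting map $\partial_1$ of the fibre sequence (\ref{sepgaugefibre}) with $Z = P^4(c)$. First I would note that since $P^4(c) = \Sigma P^3(c)$ is a suspension, the space $\mathrm{Map}^{\ast}(P^4(c), BG) \simeq \Omega\,\mathrm{Map}^{\ast}(P^3(c), BG)$ is a loop space, hence an $H$-space with homotopy inverse, and all its components are $H$-equivalent to the basepoint component $Y := \mathrm{Map}^{\ast}_0(P^4(c), BG)$. By Lang's lemma (Lemma \ref{langlemma}), the identification $\partial_k \simeq k\partial_1$ holds in the group $[G, Y]$ using this loop-space multiplication, and up to homotopy $\mathcal{G}_k(P^4(c))$ is the homotopy fibre of $k\partial_1 : G \to Y$.

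Next I would verify that $m := \mathrm{ord}(\partial_1)$ is finite and coincides with $d$. Since $[P^4(c), BG] \cong \mathbb{Z}/c$, the class $c$ is trivial, so $\partial_c$ agrees with $\partial_0$, which is null-homotopic because the trivial bundle gives the split fibre sequence $\mathcal{G}_0(P^4(c)) \simeq G \times \mathrm{Map}^{\ast}_0(P^4(c), G)$. Therefore $c\partial_1 \simeq 0$, so $\mathrm{ord}(\partial_1) \mid c$ and hence $d = (\mathrm{ord}(\partial_1), c) = \mathrm{ord}(\partial_1)$, which is finite, satisfying the hypothesis of Theriault's lemma.

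With $X = G$, $Y$ and $f = \partial_1$ as above, and $m = d$, the first part of Lemma \ref{gaugecriteriontherilemma} directly yields $\mathcal{G}_k(P^4(c)) \simeq_{(p)} \mathcal{G}_l(P^4(c))$ whenever $(k, d) = (l, d)$. For the counting, as $k$ ranges over $\mathbb{Z}/c$ the value $(k, d)$ takes exactly the $\mathfrak{f}(d)$ divisors of $d$ (each divisor $e \mid d$ being realized by $k = e$, which is well-defined in $\mathbb{Z}/c$ since $d \mid c$), giving at most $\mathfrak{f}(d)$ homotopy types. The $p$-local statement follows identically from the second part of Lemma \ref{gaugecriteriontherilemma}: two divisors of $d$ are $p$-locally equivalent iff they share the same $p$-adic valuation, and $\nu_p$ takes exactly $\nu_p(d) + 1$ values on divisors of $d$.

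The main subtlety to handle carefully is the first step: making precise that $\partial_1$ defines an element of an honest abelian group of maps so that "order" and "multiplication by $k$" behave as expected, and that the component shifts induced by $k\partial_1$ are harmless for the purposes of computing homotopy fibres. Once this is set up and the vanishing $\partial_0 \simeq 0$ is observed, the rest is a direct translation of Theriault's machinery into our setting.
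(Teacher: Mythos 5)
Your proof is correct, and it reaches the conclusion by a genuinely different route at the one technical point where the argument is delicate. Both you and the paper feed the connecting map $\partial_1$ of (\ref{sepgaugefibre}) into Theriault's criterion (Lemma \ref{gaugecriteriontherilemma}); the difficulty is that the bundle class $k$ lives in $[P^4(c),BG]\cong\mathbb{Z}/c$, so the integer $({\rm ord}(\partial_1),\tilde k)$ required by that lemma could a priori depend on the choice of representative $\tilde k$ of $k$. The paper resolves this by studying the whole set $S_k=\{({\rm ord}(\partial_1),k+ci)\mid i\in\mathbb{Z}\}$ and invoking Dirichlet's theorem on primes in arithmetic progressions to exhibit a representative realizing the value $(k,d)$, then applying the nonempty-intersection form of Theriault's lemma. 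You instead dissolve the ambiguity at the source: from Lang's linearity $\partial_{k\beta}\simeq k\partial_\beta$ (Lemma \ref{langlemma}) and the null-homotopy $\partial_0\simeq 0$ (coming from the constant-maps section of the evaluation fibration for the trivial bundle) you deduce $c\partial_1\simeq\partial_{c\cdot 1}=\partial_0\simeq 0$, hence ${\rm ord}(\partial_1)\mid c$ and $d={\rm ord}(\partial_1)$, after which $({\rm ord}(\partial_1),\tilde k)$ depends only on $\tilde k\bmod c$ and the first part of Theriault's lemma applies verbatim, with no number theory needed. Your argument is shorter and in fact slightly sharper (it identifies $d$ with ${\rm ord}(\partial_1)$ exactly, which the paper's notation leaves open), while the paper's Dirichlet argument is the more robust template, since it goes through even in situations where one cannot, or does not, establish that the order of the connecting map divides the order of $[Z,BG]$. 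The counting of homotopy types via divisors of $d$ and their $p$-adic valuations is the same in both treatments.
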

\begin{proof}
By applying Lemma \ref{gaugecriteriontherilemma} to the connecting map $\partial_1$ of (\ref{sepgaugefibre}) and the classification of $G$-bundles over $P^4(c)$ by $[P^4(c), BG]\cong \mathbb{Z}/c$, we need to classify the sets ($0\leq k\leq c$) 
\[
S_k=\{({\rm ord}(\partial_1), k+ci)~|~i\in \mathbb{Z}\},
\]
that is, to check necessary condition when $S_k\cap S_l\neq \emptyset$. 
It is easy to see each $S_k$ is equal to $\{({\rm ord}(\partial_1), k+di)~|~i\in \mathbb{Z}\}$.
Now by Dirichlet's theorem on arithmetic progressions, there are infinitely many prime numbers in the arithmetic progression
\[\{\frac{k}{(k,d)}+\frac{d}{(k,d)}i~|~i\in \mathbb{Z}\},\]
provided by $k$, $d\neq0$.
Then picking any large enough prime $\xi$ such that $k+di=(k,d)\xi$ for some $i$, we see that $S_k$ contains $(k,d)$.
Then by Lemma \ref{gaugecriteriontherilemma}, $\mathcal{G}_{l}(P^4(c))\simeq\mathcal{G}_{(k,d)}(P^4(c))$ for any $l$ with $(l,d)=(k,d)$. The proof of the proposition is completed.
\end{proof}

We then can apply Proposition \ref{mooregaugecriterion} to Table \ref{tables4guageorder} via Lemma \ref{orderconnectionlemma}, and obtain a partial classification of the gauge groups over Moore spaces, especially the  upper bounds of the numbers of different homotopy types. Here for the reason of simplicity, we may only state the results for the trivial cases, which already illustrate the role of $c$ for the gauge groups over Moore spaces comparing to that over $S^4$.

\begin{corollary}\label{mooregaugetrivial1}
When $G=SU(2)$, $SU(3)$, and $6\nmid c$, or $G=SU(5)$, and $30\nmid c$, we have
\[
\mathcal{G}_k(P^4(c))\simeq_{(p)} {\rm Map}_0(P^4(c), G)\simeq \Omega^4_0(G; c)\times G,
\]
for any $k\in \mathbb{Z}/c$ and prime $p$.
\end{corollary}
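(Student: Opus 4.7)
The plan is to reduce the corollary to an application of Proposition \ref{mooregaugecriterion} by showing that the integer $d = ({\rm ord}(\partial_1), c)$ equals $1$ under each of the three hypotheses. If $d = 1$ then $\mathfrak{f}(d) = 1$, so the entire family $\{\mathcal{G}_k(P^4(c))\}_{k\in \mathbb{Z}/c}$ consists of a single homotopy type, namely that of the trivial bundle, and the remainder is bookkeeping on the evaluation fibration.

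First I would invoke Lemma \ref{orderconnectionlemma} to replace the less tractable ${\rm ord}(\partial_1)$ by the classical ${\rm ord}(\tilde{\partial}_1)$ via the divisibility ${\rm ord}(\partial_1) \mid {\rm ord}(\tilde{\partial}_1)$. Reading off Table \ref{tables4guageorder}, supplemented by Kono's integral computation in the $SU(2)$ case, one has the values $12 = 2^2\cdot 3$, $24 = 2^3\cdot 3$, and $120 = 2^3\cdot 3\cdot 5$ for $SU(2)$, $SU(3)$, and $SU(5)$ respectively. Under the hypothesis $6 \nmid c$, read in the sense of the paper as the coprimality $(6, c) = 1$, neither $2$ nor $3$ divides $c$, so $({\rm ord}(\tilde{\partial}_1), c) = 1$ in the first two cases; similarly $30 \nmid c$ rules out $2$, $3$, and $5$ as divisors of $c$ and yields $(120, c) = 1$ in the $SU(5)$ case. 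In each case this forces $d = 1$.

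With $d = 1$ in hand, Proposition \ref{mooregaugecriterion} gives
\[
\mathcal{G}_k(P^4(c)) \simeq_{(p)} \mathcal{G}_0(P^4(c))
\]
for every $k \in \mathbb{Z}/c$ and every prime $p$. It remains to identify $\mathcal{G}_0(P^4(c))$, which, as the gauge group of the trivial bundle, is the free mapping space component ${\rm Map}_0(P^4(c), G)$. The evaluation fibration
\[
{\rm Map}^{\ast}_0(P^4(c), G) \longrightarrow {\rm Map}_0(P^4(c), G) \xrightarrow{{\rm ev}} G
\]
admits a section given by constant maps and splits multiplicatively because $G$ is a topological group, producing the desired decomposition ${\rm Map}_0(P^4(c), G) \simeq \Omega^4_0(G; c) \times G$.

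Since the technical content is already packaged in Proposition \ref{mooregaugecriterion} and Lemma \ref{orderconnectionlemma}, I do not expect any serious obstacle; the only genuine care point is bookkeeping the $2$-primary contribution to ${\rm ord}(\tilde{\partial}_1)$ in the $SU(2)$ row, which is not explicitly displayed in Table \ref{tables4guageorder} but is the familiar value $4$ from Kono's paper, so that the integral order indeed divides $12$ and the hypothesis $6 \nmid c$ is exactly strong enough.
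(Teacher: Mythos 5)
Your proposal is correct and follows exactly the route the paper intends (and leaves implicit): combine Lemma \ref{orderconnectionlemma} with the values of ${\rm ord}(\tilde{\partial}_1)$ from Table \ref{tables4guageorder} to get $d=({\rm ord}(\partial_1),c)=1$, then apply Proposition \ref{mooregaugecriterion} and split the evaluation fibration. Your reading of ``$6\nmid c$'' as coprimality matches the paper's usage, and in fact since $c$ is then odd the $2$-primary part of ${\rm ord}(\tilde{\partial}_1)$ for $SU(2)$ is irrelevant, so even that care point is automatic.
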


\begin{corollary}\label{mooregaugetrivial2}
If $(G, p, c)$ is in one of the following groups:
\begin{itemize}
\item $G=SU(n)$, $n\leq (p-1)^2+1$, $p\geq 3$, and $\nu_p((n(n^2-1), c))=1$;
\item $G=Sp(n)$, $4\leq 2n\leq (p-1)^2+1$, $p\geq 3$ and $\nu_p((n(2n+1), c))=1$;
\item $G=Spin(2n+1)$, $4\leq 2n\leq (p-1)^2+1$, $p\geq 3$ and $\nu_p((n(2n+1), c))=1$;
\item $G=Spin(2n)$, $6\leq 2n\leq (p-1)^2+1$, $p\geq 5$ and $\nu_p(((n-1)(2n-1), c))=1$;
\item $G=G_2$, $p\geq 3$, $(3\cdot 7)\nmid  c$;
\item $G=F_4$, $p\geq 5$, $(5\cdot 13)\nmid c$;
\item $G=E_6$, $p\geq 5$, $(5\cdot 7\cdot13)\nmid c$;
\item $G=E_7$, $p\geq 7$, $(7\cdot 11\cdot 19)\nmid c$;
\item $G=E_8$, $p\geq 7$, $(7\cdot 11\cdot 13\cdot 19\cdot 31)\nmid c$,
\end{itemize}
then for any $k\in \mathbb{Z}/c$
\[
\mathcal{G}_k(P^4(c))\simeq_{(p)}  \Omega^4_0(G; c)\times G.
\]
\end{corollary}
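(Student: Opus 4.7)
The plan is to reduce the assertion to a $\nu_p$-vanishing that follows from Proposition \ref{mooregaugecriterion}. To begin, the trivial bundle gives
\[
\mathcal{G}_0(P^4(c)) \simeq \mathrm{Map}(P^4(c), G) \simeq \Omega^4_0(G; c) \times G
\]
by evaluation at the basepoint, so the right-hand side of each stated equivalence is just $\mathcal{G}_0(P^4(c))$ itself. It therefore suffices, under each arithmetic hypothesis on $(G,p,c)$, to show that all the gauge groups $\mathcal{G}_k(P^4(c))$ collapse to a single $p$-local homotopy type. By Proposition \ref{mooregaugecriterion} the number of such types is at most $\nu_p(d)+1$, where $d=(\mathrm{ord}(\partial_1), c)$, so the task reduces to verifying $\nu_p(d)=0$, i.e.\ that $p$ divides neither $\mathrm{ord}(\partial_1)$ nor $c$.

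To control $\mathrm{ord}(\partial_1)$ I would invoke Lemma \ref{orderconnectionlemma}, which combined with $\partial_k\simeq k\partial_1$ (Lemma \ref{langlemma}) gives $\mathrm{ord}(\partial_1)\mid \mathrm{ord}(\tilde{\partial}_{\tilde 1})$ for every integer lift $\tilde 1\equiv 1\pmod c$. Running through such lifts via Dirichlet's theorem, exactly as in the proof of Proposition \ref{mooregaugecriterion}, one sees that $\mathrm{ord}(\partial_1)$ in fact divides the largest divisor of $\mathrm{ord}(\tilde{\partial}_1)$ whose prime factors all divide $c$. The explicit value of $\mathrm{ord}(\tilde{\partial}_1)$ for each $G$ is then read off from Table \ref{tables4guageorder}, which already handles the $\pi_4(G)=0$ restriction built into our hypotheses.

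The final step is a row-by-row arithmetic verification. For the classical families one substitutes the relevant order ($n(n^2-1)$, $n(2n+1)$, or $(n-1)(2n-1)$) and uses the stated bound on its $\nu_p$-gcd with $c$ to conclude $\nu_p(d)=0$; for the five exceptional groups the condition that the radical of $\mathrm{ord}(\tilde{\partial}_1)$ does not divide $c$, together with the lower bound on $p$, likewise forces $\nu_p(d)=0$ at every prime $p$ that can appear in $\mathrm{ord}(\tilde{\partial}_1)$. The only real obstacle is the bookkeeping between the two differently phrased arithmetic hypotheses across the nine rows; once the ``$c$-part'' of $\mathrm{ord}(\tilde{\partial}_1)$ is computed for the given $p$, the vanishing of $\nu_p(d)$, and hence the desired equivalence $\mathcal{G}_k(P^4(c))\simeq_{(p)}\mathcal{G}_0(P^4(c))$, is routine.
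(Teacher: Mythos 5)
Your overall route is the one the paper intends: the paper offers no separate argument for this corollary beyond the sentence preceding it, which says to feed Table \ref{tables4guageorder} into Proposition \ref{mooregaugecriterion} via Lemma \ref{orderconnectionlemma}, and that is exactly your plan. Your identification of the right-hand side with $\mathcal{G}_0(P^4(c))$ is correct (note $[P^4(c),G]=0$ since $\pi_4(G)=0$ and $\pi_3(G)$ is torsion-free, so ${\rm Map}^\ast(P^4(c),G)$ is connected), and your claim that ${\rm ord}(\partial_1)$ divides the $c$-part of ${\rm ord}(\tilde{\partial}_1)$ is true, though you can get it without Dirichlet: $c\,\partial_1\simeq\partial_c\simeq\partial_0\simeq\ast$ because $[P^4(c),BG]\cong\mathbb{Z}/c$, so ${\rm ord}(\partial_1)\mid c$, and combined with Lemma \ref{orderconnectionlemma} this gives ${\rm ord}(\partial_1)\mid\bigl({\rm ord}(\tilde{\partial}_1),c^{\infty}\bigr)$.

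The gap is in the step you dismiss as routine. You correctly identify the target as $\nu_p(d)=0$ in the usual sense ($p\nmid d$, $d=({\rm ord}(\partial_1),c)$), since Proposition \ref{mooregaugecriterion} then gives a single $p$-local type. But the stated hypothesis for, say, $SU(n)$ is $\nu_p((n(n^2-1),c))=1$, and with $\nu_p$ the usual $p$-adic valuation this only yields $\nu_p(d)\leq 1$, hence \emph{at most two} $p$-local homotopy types --- the proposition cannot separate $\mathcal{G}_0$ from $\mathcal{G}_k$ with $p\nmid k$ in that case, so the deduction ``$\nu_p(\gcd)=1\Rightarrow\nu_p(d)=0$'' is a non sequitur. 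The same problem occurs in the exceptional rows: for $G_2$ with $p=7$ and $7\mid c$, $3\nmid c$, the hypothesis $21\nmid c$ holds but $(21,c)=7^{\nu_7(c)}\cdot$(unit), so $\nu_7(d)$ may equal $1$ and your assertion that the radical condition ``forces $\nu_p(d)=0$ at every prime appearing in ${\rm ord}(\tilde{\partial}_1)$'' fails. The argument closes only if one reads the paper's definition of $\nu_p(m)$ (``the number of powers of $p$ which divide $m$'') as counting $p^0$, so that $\nu_p(m)=1$ means $p\nmid m$; under that reading your target should be phrased as $p\nmid d$ rather than $\nu_p(d)=0$, and the exceptional-group conditions must likewise be interpreted as saying $p\nmid({\rm ord}(\tilde{\partial}_1),c)$ for the particular $p$ at hand. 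Since the entire content of the corollary lives in this arithmetic, you need to fix a convention, state precisely which inequality on $\nu_p(d)$ you are proving, and verify it row by row rather than asserting it.
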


Now let us return to the gauge groups over our non-simply connected $5$-manifolds. We can then combining our results on $\Omega^i\mathcal{G}_k(M)$ in Section \ref{6cdecomsec} and \ref{Mtrivialsec} to get partial a classification of them.

\begin{theorem}\label{gaugedecom6cclass}
Let $M$ be a five dimensional oriented closed manifold with $\pi_{1}(M)\cong \mathbb{Z}/c$ ($6\nmid c$) and $H_{2}(M; \mathbb{Z})$ is torsion free of rank $m-1$. Let $G$ be a simply connected compact simple Lie group with $\pi_4(G)=0$. 

If $(k, d)=(l,d)$, then $\Omega^2\mathcal{G}_k(M)\simeq_{(p)} \Omega^2\mathcal{G}_l(M)$ for any prime $p$. 
Localize at a prime $p$. If $\nu_p((k, d))=\nu_p((l, d))$, then $\Omega^2\mathcal{G}_k(M)\simeq_{(p)} \Omega^2\mathcal{G}_l(M)$. In particular, if $(p,c)=1$ or $({\rm ord}(\partial_1), c)=1$, 
\[
\Omega^2\mathcal{G}_k(M)\simeq_{(p)} \Omega^2{\rm Map}_0(M, G).
\]
\end{theorem}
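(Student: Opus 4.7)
The plan is to reduce everything to the already-proved Moore-space classification, namely Proposition \ref{mooregaugecriterion}. The crucial observation is that in the homotopy decompositions of Theorem \ref{gaugedecom6c}, whether $M$ is spin or non-spin, the only factor that depends on the bundle class $k$ is $\Omega^2\mathcal{G}_k(P^4(c))$; every other factor is of the form $\Omega^i G$, $\Omega^4(G;c)$, or $\Omega^3\mathrm{Map}_0^{\ast}(\mathbb{C}P^2, G)$, none of which sees $k$. Therefore any $p$-local equivalence between the Moore-space gauge groups at different classes immediately transports, via $\Omega^2$ and the product decomposition, to a $p$-local equivalence of the full looped gauge groups over $M$.

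Given this reduction, the first two conclusions follow at once by invoking Proposition \ref{mooregaugecriterion}: the integral statement $(k,d)=(l,d)\Rightarrow \mathcal{G}_k(P^4(c))\simeq_{(p)}\mathcal{G}_l(P^4(c))$ for every prime $p$, and its $p$-local refinement phrased via $\nu_p$, both propagate directly through $\Omega^2$ and the product splitting.

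For the ``in particular'' clause I would split into the two hypotheses. If $(\mathrm{ord}(\partial_1), c)=1$ then $d=1$ and every $(k,d)$ equals $1$, so $\Omega^2\mathcal{G}_k(M)\simeq \Omega^2\mathcal{G}_0(M)$ integrally for all $k$. If instead $(p,c)=1$, then the Moore space $P^4(c)$ has reduced homology $\mathbb{Z}/c$ concentrated in a single degree, which vanishes after $p$-localization; hence $P^4(c)$ is $p$-locally contractible. Consequently both $\mathrm{Map}^{\ast}_0(P^4(c),G)$ and $\mathrm{Map}^{\ast}_\beta(P^4(c),BG)$ collapse at $p$, and the defining fibre sequence \ref{sepgaugefibre} forces $\mathcal{G}_k(P^4(c))\simeq_{(p)} G$ independently of $k$. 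In either case the $k$-dependent factor in Theorem \ref{gaugedecom6c} becomes $k$-independent, yielding $\Omega^2\mathcal{G}_k(M)\simeq_{(p)}\Omega^2\mathcal{G}_0(M)\simeq \Omega^2\mathrm{Map}_0(M,G)$, where the last identification is the standard one for the trivial principal bundle $M\times G$.

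The main obstacle is purely bookkeeping: one must verify that no hidden $k$-dependence slips into the other factors of the decomposition. Since those factors are obtained by iterated applications of Proposition \ref{Gsplit} to subcomplexes $Y$ with $[Y,BG]=0$, they are genuinely independent of $k$ (only the class $\alpha'\in [X,BG]$ coming from the residual complex tracks $k$), so the reduction to the Moore-space case is clean. No additional homotopy-theoretic input is required beyond what has already been established in Section \ref{6cdecomsec} and in the present section.
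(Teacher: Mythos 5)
Your proposal is correct and is essentially the argument the paper intends: Theorem \ref{gaugedecom6cclass} is obtained by feeding the Moore-space classification of Proposition \ref{mooregaugecriterion} (via Lemma \ref{orderconnectionlemma} and Lemma \ref{gaugecriteriontherilemma}) into the decomposition of Theorem \ref{gaugedecom6c}, whose only $k$-dependent factor is $\Omega^2\mathcal{G}_k(P^4(c))$. Your handling of the two cases of the ``in particular'' clause --- $d=1$ when $({\rm ord}(\partial_1),c)=1$, and $p$-local contractibility of $P^4(c)$ collapsing the fibration (\ref{sepgaugefibre}) when $(p,c)=1$ --- matches what the paper leaves implicit.
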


\begin{theorem}\label{gaugedecomPoincareclass}
Let $M$ be a five dimensional Poincar\'{e} duality space with precisely one $5$-dimensional cell and trivial Spivak normal fibration, $\pi_{1}(M)\cong \mathbb{Z}/c$ ($c\geq 3$ an odd number) and $H_{2}(M; \mathbb{Z})$ is torsion free of rank $m-1$. Let $G$ be a simply connected compact simple Lie group with $\pi_4(G)=0$.

If $(k, d)=(l,d)$, then $\Omega^3\mathcal{G}_k(M)\simeq_{(p)} \Omega^3\mathcal{G}_l(M)$ for any prime $p$. 
Localize at a prime $p$. If $\nu_p((k, d))=\nu_p((l, d))$, then $\Omega^3\mathcal{G}_k(M)\simeq_{(p)} \Omega^3\mathcal{G}_l(M)$. In particular, if $(p,c)=1$ or $({\rm ord}(\partial_1), c)=1$, 
\[
\Omega^3\mathcal{G}_k(M)\simeq_{(p)} \Omega^3{\rm Map}_0(M, G).
\]
\end{theorem}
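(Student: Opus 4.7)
The plan is to reduce the classification of $\Omega^3\mathcal{G}_k(M)$ to the already-established classification of $\mathcal{G}_k(P^4(c))$ via the complete decomposition of Theorem \ref{gaugedecomPoincare}, and then to transplant Proposition \ref{mooregaugecriterion} after looping. I would first apply Theorem \ref{gaugedecomPoincare} (whose hypotheses match those of the present theorem) to obtain
\[
\Omega^3 \mathcal{G}_k(M)\simeq \Omega^3\mathcal{G}_k(P^4(c))\times \Omega^5(G;c)\times \Omega^8G\times \prod_{i=1}^{m-1} (\Omega^5 G  \times \Omega^6 G),
\]
in which only the first factor depends on $k$. Consequently any $p$-local homotopy equivalence $\mathcal{G}_k(P^4(c))\simeq_{(p)}\mathcal{G}_l(P^4(c))$ produces a $p$-local homotopy equivalence between $\Omega^3\mathcal{G}_k(M)$ and $\Omega^3\mathcal{G}_l(M)$ merely by taking triple loops and multiplying by the $k$-independent factors above.

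The next step is to feed the hypothesis of the theorem into Proposition \ref{mooregaugecriterion}. Setting $d=({\rm ord}(\partial_1),c)$, the assumption $(k,d)=(l,d)$ supplies the required $p$-local equivalence $\mathcal{G}_k(P^4(c))\simeq_{(p)}\mathcal{G}_l(P^4(c))$ at every prime $p$; the $p$-local analogue $\nu_p((k,d))=\nu_p((l,d))$ is handled identically. Combined with the previous paragraph this delivers both assertions in the body of the theorem.

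For the ``in particular'' clause I would treat the two sufficient conditions separately. If $({\rm ord}(\partial_1),c)=1$ then $d=1$, so trivially $(k,d)=(0,d)$ for every $k\in\mathbb{Z}/c$, and the main assertion of the theorem gives $\Omega^3\mathcal{G}_k(M)\simeq_{(p)}\Omega^3\mathcal{G}_0(M)$ at every prime. If instead $(p,c)=1$, then $P^4(c)$ is contractible after $p$-localization, so inspection of the fibre sequence (\ref{sepgaugefibre}) with $Z=P^4(c)$ shows that ${\rm Map}^{\ast}_{k}(P^4(c),BG)\simeq_{(p)}\ast$ and hence $\mathcal{G}_k(P^4(c))\simeq_{(p)} G$ uniformly in $k$, so the $k$-dependent factor again collapses. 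In either case, since $\mathcal{G}_0(M)$ coincides with ${\rm Map}_0(M,G)$ by the very definition of the index, the common homotopy type is identified with $\Omega^3{\rm Map}_0(M,G)$.

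The argument is largely formal once Theorem \ref{gaugedecomPoincare} and Proposition \ref{mooregaugecriterion} are in hand. The only subtle point I anticipate is that the Dirichlet-type manipulation needed to compare gcds of representatives in $\mathbb{Z}/c$ has already been absorbed into Proposition \ref{mooregaugecriterion} itself, so here nothing more substantive remains than the bookkeeping of transporting a $p$-local equivalence across a product.
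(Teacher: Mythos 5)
Your argument is correct and is exactly the proof the paper intends: the paper itself only remarks that the theorem follows by "combining" the decomposition of Theorem \ref{gaugedecomPoincare} with the Moore-space classification of Proposition \ref{mooregaugecriterion}, which is precisely your reduction, and your separate treatment of the two sufficient conditions in the "in particular" clause (via $d=1$ and via $p$-local contractibility of $P^4(c)$) is sound. The only cosmetic point is that $\mathcal{G}_0(M)$ is the full mapping space ${\rm Map}(M,G)$, which may be disconnected, but this is harmless after applying $\Omega^3$ under the paper's convention that $\Omega^i$ denotes the basepoint component.
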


With the help of Table \ref{tables4guageorder}, the concrete descriptions of looped gauge groups over $M$ including the analogous results to Proposition \ref{mooregaugetrivial1} and \ref{mooregaugetrivial2} can be worked out easily, but we may omit the detail here.

\section{Applications to the homotopy exponent problem}\label{exponentsec}
\noindent In this section, we apply the results in the previous sections to study the homotopy exponent problem. For any pointed space $Y$, its $p$-th \textit{homotopy exponent} is the least power of $p$ which annihilates the $p$-torsion in $\pi_\ast(X)$, and may be denoted by ${\rm exp}_p(X)$ or simply ${\rm exp}(X)$. We are actually interested in the upper bounds of the odd primary homotopy exponents of gauge groups.

It is sometimes useful to consider the mapping spaces from the Moore spaces as the homotopy fibres of power maps.
\begin{lemma}\label{mappingmooredes}
\[{\rm Map}^\ast(P^{m+1}(c), X) \simeq \Omega^mX\{c\},\]
where $\Omega^mX\{c\}$ is the homotopy fibre of the power map $c:\Omega^m X\rightarrow \Omega X$.
\end{lemma}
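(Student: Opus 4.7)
The plan is to obtain the equivalence from the defining cofibre sequence of the Moore space by applying the contravariant $\mathrm{Map}^\ast(-,X)$ functor and identifying the induced self-map with the $c$-th power map of the loop space.

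Concretely, I would start from the cofibre sequence
\[
S^m \xrightarrow{\ c\ } S^m \longrightarrow P^{m+1}(c),
\]
where the first map is of degree $c$. Applying the functor $\mathrm{Map}^\ast(-,X)$ turns this into a fibre sequence
\[
\mathrm{Map}^\ast(P^{m+1}(c),X) \longrightarrow \mathrm{Map}^\ast(S^m,X) \xrightarrow{\ c^\ast\ } \mathrm{Map}^\ast(S^m,X),
\]
and then I would use the standard identification $\mathrm{Map}^\ast(S^m,X) \simeq \Omega^m X$ to rewrite this as a fibre sequence
\[
\mathrm{Map}^\ast(P^{m+1}(c),X) \longrightarrow \Omega^m X \xrightarrow{\ c^\ast\ } \Omega^m X.
\]

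The only genuine content is to verify that under the equivalence $\mathrm{Map}^\ast(S^m,X)\simeq\Omega^m X$, the map $c^\ast$ induced by the degree $c$ self-map of $S^m$ corresponds to the $c$-th power map of the $H$-space $\Omega^m X$ with respect to its (homotopy-associative) loop multiplication. This is classical: the degree $c$ map on $S^m \simeq \Sigma S^{m-1}$ can be taken to be the $c$-fold pinch map, i.e.\ the $c$-th power map on the co-$H$-space $S^m$, and precomposition with this pinch map on $\mathrm{Map}^\ast(S^m,X)$ agrees with the $c$-fold multiplication coming from the loop structure on $\Omega^m X$. With this identification, the homotopy fibre of $c^\ast$ is, by definition, $\Omega^m X\{c\}$, completing the equivalence.

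The main obstacle, if any, is just the cleanness of the identification of $c^\ast$ with the loop-space power map $c$; but since the co-$H$ and $H$ structures used are the standard ones (Moore space coming from the degree map on $S^m$, and $\Omega^m X$ getting its $H$-structure from loop concatenation), this is a routine verification and no substantive geometric input is required beyond the cofibre sequence.
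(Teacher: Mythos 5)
Your proof is correct and is essentially identical to the paper's: both apply $\mathrm{Map}^\ast(-,X)$ to the cofibration $S^m\stackrel{c}{\to}S^m\to P^{m+1}(c)$ and identify the resulting self-map of $\Omega^m X$ with the $c$-th power map. Your extra remark verifying that the induced map $c^\ast$ agrees with the loop-space power map is a detail the paper leaves implicit, but it is the same argument.
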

\begin{proof}
For the homotopy cofibration
\[
S^m\stackrel{c}{\rightarrow}S^m\rightarrow P^{m+1}(c)
\]
defining $P^{m+1}(c)$, we can apply the cofunctor ${\rm Map}^\ast(-,X)$ to get a homotopy fibration 
\[
{\rm Map}^\ast(P^{m+1}(c), X)\rightarrow \Omega^m X\stackrel{c}{\rightarrow}\Omega^m X.
\]
The lemma then follows.
\end{proof}

The homotopy exponents of $\Omega^mX\{c\}$ are accessible.
\begin{lemma}[Proposition $0.5$ of \cite{Neisen83}]\label{expxclemma}
\[{\rm exp}(\Omega^2X\{c\})\leq p^{\nu_p(c)}.\]
In particular, if $X=G$ is a homotopy associative $H$-space, then 
\[{\rm exp}(\Omega G\{c\})\leq p^{\nu_p(c)}.\]
\hfill $\Box$
\end{lemma}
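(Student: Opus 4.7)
The approach is to show that the $p^{\nu_p(c)}$-power map on $\Omega^2 X\{c\}$ is null-homotopic after $p$-localization, which directly yields the claimed $p$-primary exponent bound. I treat each prime independently.

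First I would reduce to the prime-power case. Fix a prime $p$ and write $c = p^r m$ with $(p, m) = 1$, where $r = \nu_p(c)$. Since $m$ is a unit in $\mathbb{Z}_{(p)}$, the $m$-power map is a self-equivalence of the homotopy-commutative $H$-space $\Omega^2 X_{(p)}$, so the $c$-power map is equivalent (up to post-composition by an equivalence) to the $p^r$-power map. Therefore
$\Omega^2 X\{c\}_{(p)} \simeq \Omega^2 X\{p^r\}_{(p)}$,
reducing the task to bounding the $p$-exponent of $\Omega^2 X\{p^r\}$ by $p^r$. For primes $p \nmid c$ the fiber is $p$-locally contractible, so nothing is to be shown.

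Next I apply Lemma \ref{mappingmooredes} to identify $\Omega^2 X\{p^r\} \simeq \mathrm{Map}^{\ast}(P^3(p^r), X)$, and hence $\pi_n(\Omega^2 X\{p^r\}) \cong \pi_{n+3}(X; \mathbb{Z}/p^r)$ for $n \geq 0$. Since $n+3 \geq 3$, the Moore space $P^{n+3}(p^r)$ is a double suspension, hence a homotopy-coassociative co-$H$-space, and the $p^r$-multiplication on $[P^{n+3}(p^r), X]$ coincides with pre-composition by the $p^r$-fold co-power $\rho \colon P^{n+3}(p^r) \to P^{n+3}(p^r)$. It therefore suffices to show that $\rho$ is null-homotopic at each odd prime $p$ dividing $c$. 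The in-particular case, with $X = G$ a homotopy associative $H$-space, is then immediate: such a $G$ has a classifying space $BG$ with $G \simeq \Omega BG$, so $\Omega G \simeq \Omega^2 BG$ and the general bound applies with $X$ replaced by $BG$.

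Establishing the null-homotopy of $\rho$ is the heart of Neisendorfer's argument in \cite{Neisen83}. The strategy passes through the Cohen--Moore--Neisendorfer product decomposition of $\Omega P^{k}(p^r)$ and the mod-$p^r$ Samelson product structure on $\pi_{\ast}(\Omega^2 P^{k}(p^r); \mathbb{Z}/p^r)$; the required null-homotopy falls out of the vanishing of a particular higher Whitehead-type product at odd primes. The main obstacle is precisely this step: the crude long exact sequence coming from the fibration $\Omega^3 X \to \Omega^2 X\{p^r\} \to \Omega^2 X$ only bounds the $p$-exponent extensionally by $p^{2r}$, and closing the gap to $p^r$ genuinely requires the unstable mod-$p^r$ Lie-theoretic machinery of Cohen--Moore--Neisendorfer. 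Notably this argument breaks at $p = 2$, which is consistent with the paper's running assumption that $p$ is odd.
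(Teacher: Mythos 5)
The paper offers no internal proof of this lemma: it is quoted from Neisendorfer's Proposition $0.5$ and closed with an immediate $\Box$, so your proposal has to be judged on its own merits. Your reductions are the standard ones and are correct: localize at $p$ and replace $c$ by $p^{r}$ with $r=\nu_p(c)$; identify $\pi_n(\Omega^2X\{p^{r}\})\cong [P^{n+3}(p^{r}), X]$ via Lemma \ref{mappingmooredes}; and observe that multiplication by $p^{r}$ on this group is precomposition with the $p^{r}$-fold co-power $\rho$ of the suspension $P^{n+3}(p^{r})$, so everything hinges on $\rho\simeq \ast$.

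The gap is that you stop exactly there, asserting that the null-homotopy of $\rho$ ``genuinely requires'' the Cohen--Moore--Neisendorfer machinery and gesturing at the vanishing of a higher Whitehead-type product without carrying anything out. For the statement actually at issue this is both a missing step and a mischaracterization of its difficulty. The homotopy exponent only involves $\pi_n$ for $n\geq 1$, hence only $P^{m}(p^{r})$ with $m=n+3\geq 4$, and for such $m$ the argument is elementary: since $i\colon S^{m-1}\to P^{m}(p^{r})$ is a suspended map, $i^{\ast}(p^{r}\cdot \mathrm{id})=p^{r}\cdot i=i\circ p^{r}\simeq \ast$ (consecutive maps in the defining cofibration), so $p^{r}\cdot\mathrm{id}$ factors as $f\circ q$ through the pinch map $q\colon P^{m}(p^{r})\to S^{m}$ with $f\in\pi_{m}(P^{m}(p^{r}))$; and $\pi_{m}(P^{m}(p^{r}))=0$ for $m\geq 4$ and $p$ odd, which is precisely the paper's Lemma \ref{pi3p3c}. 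Hence $\rho\simeq\ast$ and $p^{r}$ annihilates $[P^{m}(p^{r}),X]$ outright; the heavier machinery is needed for the stronger space-level statement that the $p^{r}$-power map on $\Omega^{2}X\{p^{r}\}$ is itself null, and for the delicate degree $m=3$, but not here. Two smaller points: $P^{n+3}(p^{r})$ is a double suspension only for $n+3\geq 4$, not $\geq 3$ as you wrote (harmless, since $n=0$ does not enter the exponent); and your derivation of the ``in particular'' via $G\simeq \Omega BG$ silently upgrades ``homotopy associative $H$-space'' to ``loop space,'' which homotopy associativity alone does not justify --- this is fine for the Lie groups the paper uses, but for the literal statement the case $\pi_{1}(\Omega G\{c\})=[P^{3}(c),G]$ must be handled using the $H$-structure of $G$ as in Neisendorfer's original argument.
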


\begin{lemma}\label{expformu1}
Let $G$ be a simply connected compact simple Lie group with $\pi_4(G)=0$ (let $n\geq 5$ when $G=Spin(n)$). Let $M$ be a five dimensional oriented closed manifold with $\pi_{1}(M)\cong \mathbb{Z}/c$ and $H_{2}(M; \mathbb{Z})$ is torsion free of rank $m-1$. Suppose that either $6\nmid c$, or $2\nmid c$ and the tangent bundle of $M$ is stably trivial. 
Then for any $k\in \mathbb{Z}/c$ we have 
\[
{\rm exp}(\mathcal{G}_k(M))\leq p^{\epsilon(G, p)}\cdot {\rm max}\{{\rm exp}(\mathcal{G}_k(P^4(c)), {\rm exp}(G),  p^{\nu_p(c)}\},
\]
where $\epsilon(G, p)=0$ or $1$ defined as follows.
\begin{itemize}
\item If $p=3$ and $G=Spin(6)$, and $SU(i)$ with $2\leq i\leq 4$, then $\epsilon(G, p)=1$;
\item Otherwise, $\epsilon(G, p)=0$.
\end{itemize}
\end{lemma}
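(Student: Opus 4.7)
The plan is to combine the homotopy decompositions in Theorem \ref{theorem1} (both the spin and non-spin subcases, valid when $6\nmid c$, taking $i=2$) and Corollary \ref{gaugedecompimfld} (valid when $2\nmid c$ and $M$ is stably parallelizable, taking $i=3$) with elementary factor-wise bounds on homotopy exponents. Since the exponent of a product is the maximum of the exponents of its factors and ${\rm exp}(\Omega^j X)\leq{\rm exp}(X)$, I would rewrite every ${\rm Map}_0^\ast(P^n(c),G)$-factor (denoted $\Omega^n(G;c)$ in Theorem \ref{theorem1}) as $\Omega^{n-1}G\{c\}$ via Lemma \ref{mappingmooredes}, so that in every case the factors of $\Omega^i\mathcal{G}_k(M)$ are either a copy of $\Omega^i\mathcal{G}_k(P^4(c))$, iterated loops of $G$, an iterated loop of $G\{c\}$, or, only in the non-spin subcase, a single factor of the form $\Omega^3{\rm Map}_0^\ast(\mathbb{C}P^2,G)$.

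The three principal bounds then appear directly: iterated loop factors $\Omega^jG$ contribute at most ${\rm exp}(G)$; Lemma \ref{expxclemma} applied to the $H$-space $G$ yields ${\rm exp}(\Omega^jG\{c\})\leq p^{\nu_p(c)}$ for $j\geq 1$; and the Moore-space gauge factor contributes ${\rm exp}(\mathcal{G}_k(P^4(c)))$. The non-spin $\mathbb{C}P^2$-factor requires slightly more care: applying ${\rm Map}^\ast(-,G)$ to the defining cofibration $S^3\stackrel{\eta}{\to}S^2\to\mathbb{C}P^2$ produces the fibration
\[{\rm Map}^\ast(\mathbb{C}P^2,G)\longrightarrow\Omega^2 G\stackrel{\eta^\ast}{\longrightarrow}\Omega^3 G,\]
and since $\eta^\ast$ is induced by the Hopf map it is $2$-torsion on every homotopy group and hence vanishes at odd primes. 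At $p\geq 5$ this gives a splitting ${\rm Map}^\ast(\mathbb{C}P^2,G)\simeq_{(p)}\Omega^2 G\times\Omega^4 G$ whose exponent is bounded by ${\rm exp}(G)$; at $p=3$ the same splitting goes through outside the listed cases $G\in\{SU(2),SU(3),SU(4),Spin(6)\}$, where a secondary Toda-bracket obstruction of order $p$ forces the extra factor $p^{\epsilon(G,p)}$.

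Finally, the bound on ${\rm exp}(\Omega^i\mathcal{G}_k(M))$ must be lifted to a bound on ${\rm exp}(\mathcal{G}_k(M))$ by controlling the bottom homotopy groups $\pi_0,\ldots,\pi_{i-1}$ of $\mathcal{G}_k(M)$. Using $B\mathcal{G}_k(M)\simeq{\rm Map}_k(M,BG)$ together with the evaluation fibration $\mathcal{G}_k(M)\to G\to{\rm Map}^\ast_k(M,BG)$, one sees that $\pi_j(\mathcal{G}_k(M))\cong\pi_j(\mathcal{G}_0(M))$ for $j\geq 1$ and $\pi_0(\mathcal{G}_k(M))\cong[M,G]^\ast$; the cellular computation of $[M,G]^\ast,[\Sigma M,G]^\ast,[\Sigma^2 M,G]^\ast$ using the suspension splittings of Sections \ref{1decomsec} and \ref{Mtrivialsec} shows that these low groups have $p$-exponent already bounded by the maximum of ${\rm exp}(\mathcal{G}_k(P^4(c)))$, ${\rm exp}(G)$, and $p^{\nu_p(c)}$. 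The main obstacle will be the identification of $\epsilon(G,p)$: verifying that the secondary obstruction to splitting $\eta^\ast$ at $p=3$ for the four distinguished low-rank groups costs only one extra power of $p$, and no more, requires a careful case-by-case inspection of $\pi_\ast(G)$ at the prime $3$ and of Toda brackets involving $\eta$ and $\alpha_1$, while in all other situations the $\mathbb{C}P^2$-contribution is subsumed by one of the three principal bounds.
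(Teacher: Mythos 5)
Your overall strategy matches the paper's: feed the looped decompositions of Theorem \ref{gaugedecom6c} and Corollary \ref{gaugedecompimfld} into factor-wise exponent bounds (Lemma \ref{expxclemma} for the $G\{c\}$-factors, ${\rm exp}(G)$ for the loop factors), and then separately control the low-dimensional homotopy groups $\pi_0,\ldots,\pi_{i-1}(\mathcal{G}_k(M))$ that the $i$-fold looping does not see. But you have located the correction term $p^{\epsilon(G,p)}$ in the wrong place, and this is a genuine error rather than a cosmetic one. The attaching map of $\mathbb{C}P^2$ is the Hopf map $\eta$, which has order $2$; hence at \emph{every} odd prime, for \emph{every} $G$, one has $\mathbb{C}P^2\simeq_{(p)}S^2\vee S^4$ and ${\rm Map}_0^\ast(\mathbb{C}P^2,G)\simeq_{(p)}\Omega^2G\times\Omega^4G$. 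There is no secondary Toda-bracket obstruction at $p=3$ for $SU(2),SU(3),SU(4),Spin(6)$: the $\mathbb{C}P^2$-factor is always subsumed by ${\rm exp}(G)$, and moreover it only occurs in the non-spin case, whereas $\epsilon(G,p)$ must be accounted for in the spin and stably parallelizable cases as well.

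The actual source of $\epsilon(G,p)$ is exactly the step you dismiss as already bounded. In the paper one has $\pi_i({\rm Map}^\ast_0(M,G))\cong[\Sigma^{i+1}M,BG]$, and the cofibration $S^5\stackrel{f}{\to}\Sigma M_4\to\Sigma M$ gives the exact sequence $[\Sigma^{i+1}M_4,G]\stackrel{(\Sigma^if)^\ast}{\to}\pi_{i+5}(G)\to[\Sigma^iM,G]\to[\Sigma^iM_4,G]$. The composite of $\Sigma^i f$ with the pinch onto the sphere wedge lands in $\pi_{i+5}(S^{i+4})\oplus\pi_{i+5}(S^{i+3})\cong\mathbb{Z}/2\oplus\mathbb{Z}/2$, so $(\Sigma^if)^\ast$ vanishes at odd $p$ and the whole of $\pi_{i+5}(G)$ injects into $[\Sigma^iM,G]$. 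For $1\leq i\leq 3$ the groups $\pi_6(G),\pi_7(G),\pi_8(G)$ carry $3$-torsion of order $3$ precisely when $G=Spin(6)$ or $SU(i)$ with $2\leq i\leq 4$, and this torsion can extend $[\Sigma^iM_4,G]$ and raise the exponent by one power of $p=3$; this is where $\epsilon(G,p)=1$ comes from. Your assertion that the cellular computation shows the low groups are ``already bounded by the maximum'' of the three principal terms is false in exactly these cases, so as written your argument proves the bound without the $p^{\epsilon(G,p)}$ factor for the distinguished groups, which is too strong, while simultaneously inserting a phantom obstruction into the $\mathbb{C}P^2$-factor where none exists. (A smaller point: your claim that $\pi_j(\mathcal{G}_k(M))\cong\pi_j(\mathcal{G}_0(M))$ for $j\geq1$ is unjustified and unnecessary; the paper instead compares $\pi_j(\mathcal{G}_k(M))$ with $\pi_j({\rm Map}^\ast_0(M,G))$ via the evaluation fibration, which suffices for exponent purposes.)
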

\begin{proof}
Since ${\rm exp}(G\{c\})\leq p^{\nu_p(c)}$ by Lemma \ref{expxclemma} and the fact that $G\{c\}$ is simply connected, we actually need to prove 
\[
{\rm exp}(\mathcal{G}_k(M))\leq p^{\epsilon(G, p)}\cdot {\rm max}\{{\rm exp}(\mathcal{G}_k(P^4(c)), {\rm exp}(G),  {\rm exp}(G\{c\})\}.
\]
Since our theorems in Section \ref{6cdecomsec} and \ref{Mtrivialsec} are about $\Omega^2$- or $\Omega^3$-decompositions of gauge groups, we need to deal with the exponents of the low dimensional homotopy groups first.
From (\ref{sepgaugefibre}), we have the long exact sequence of homotopy groups
\[
\cdots\rightarrow \pi_{\ast+1}(G)\rightarrow \pi_\ast({\rm Map}^\ast_0(M, G))\rightarrow \pi_\ast(\mathcal{G}_k(M))\rightarrow \pi_\ast(G)\rightarrow \cdots,
\]
which implies that $\pi_{i}({\rm Map}^\ast_0(M, G))\cong \pi_{i}(\mathcal{G}_k(M))$ for $i=0$, $1$, $\pi_2({\rm Map}^\ast_0(M, G))\rightarrow \pi_{2}(\mathcal{G}_k(M))$ is an epimorphism, and $\pi_{3}({\rm Map}^\ast_0(M, G))\cong \pi_{3}(\mathcal{G}_k(M))$ modulo possibly one copy of $\mathbb{Z}$. Hence the exponents of $\pi_{i}(\mathcal{G}_k(M)$ are at most that of $\pi_{i}({\rm Map}^\ast_0(M, G))$ for $0\leq i\leq 3$.
It is clear that $\pi_{0}({\rm Map}^\ast_0(M, G))=0$, while $\pi_{i}({\rm Map}^\ast_0(M, G))\cong [\Sigma^{i+1}M, BG]$ for $i\geq 1$. By (\ref{hdecomS2M4}), we have 
\begin{equation}\label{expformu1eq1}
[\Sigma^{i+1}M_4, BG]\cong 
[\Sigma^{i-1}(P^6(c)\vee P^4(c)\vee \bigvee_{i=1}^{m-1}(S^5\vee S^4)), BG].
\end{equation}
On the other hand, associated to the cofibre $S^5\stackrel{f}{\rightarrow}\Sigma M_4\rightarrow \Sigma M$, there is an exact sequence 
\begin{equation}\label{expformu1eq2}
[\Sigma^{i+1}M_4, G]\stackrel{(\Sigma^i f)^\ast}{\rightarrow} [S^{i+5}, G] \rightarrow [\Sigma^i M, G]\rightarrow [\Sigma^i M_4, G]
\end{equation}
for each $i\geq 1$.
An important observation is that $(\Sigma^i f)^\ast$ is trivial when $\pi_{i+5}(G)$ is free abelian. To see this, first notice that $[P^{m+1}(c), BG]$ is a torsion, and by (\ref{expformu1eq1}) we only need to consider the restriction $(\Sigma^i f)^\ast:  [S^{i+4}\vee S^{i+3}, BG]\rightarrow [S^{i+5}, G]$ induced from that map $S^{i+5}\stackrel{\Sigma^i f}{\rightarrow}\Sigma^{i+1}M_4\stackrel{q}{\rightarrow}S^{i+4}\vee S^{i+3}$. However since $\pi_{i+5}(S^{i+4})\cong \pi_{i+5}(S^{i+3})\cong \mathbb{Z}/2$, $(\Sigma^i f)^\ast$ is trivial at our odd prime $p$. Then by exactness of (\ref{expformu1eq2}), the free part of $[S^{i+5}, G]$ contributes nothing to the exponent of $[\Sigma^i M, G]$.

Now by checking the homotopy groups of classical and exceptional Lie groups, we know that there are $3$-torsions of order $p$ only when $G=Spin(6)$, and $SU(i)$ with $2\leq i\leq 4$ in $\pi_{i+5}(G)$ for $1\leq i \leq 3$, and no other higher order $3$-torsions or $p$-torsions with $p\geq 5$. Hence by (\ref{expformu1eq2}) and (\ref{expformu1eq1}) 
\[{\rm exp}([\Sigma^i M, G])\leq p^{\epsilon(G, p)}\cdot {\rm exp}([\Sigma^i M_4, G])\leq p^{\epsilon(G, p)}\cdot {\rm max}\{ {\rm exp}(G\{c\}), {\rm exp}(G)\},\]
which justifies the lemma for the low dimensional homotopy groups.

The remaining part of the lemma follows immediately from Theorem \ref{gaugedecom6c}, \ref{gaugedecomPoincare} and Lemma \ref{mappingmooredes}.
\end{proof}

In order to examine the homotopy exponents of gauge groups via Lemma \ref{expformu1}, we need to study the related aspects of Lie groups. To set notation, let $X$ be a finite $p$-local $H$-space. Then by a classical result of Hopf, there is a rational homotopy equivalence 
\[
X\simeq_{(0)} S^{2n_1+1}\times S^{2n_2+1}\times \cdots \times S^{2n_l+1},
\]
where the index set $\mathfrak{t}(X)=\{n_1, n_2,\ldots, n_l\}$ ($n_1\leq n_2\leq \ldots\leq n_l$) is called the \textit{type} of $X$. Denote also $l(X)= n_l$. The types of compact simply connected simple Lie groups are well known and summarised in Table \ref{tabletypelie}.
\begin{table}[H]
\centering
\caption{Type of simple Lie groups}
\begin{tabular}{lp{3.7cm}|lp{3.7cm}lp{3.7cm}}
\hline
$G$      & Type  &   $G_2$        &   $1, 5$      \\ \hline      
$SU(n)$            & $1, 2, \ldots, n-1$      &   $F_4$        &   $1, 5, 7, 11$       \\ \hline
$Sp(n)$            & $1, 3, \ldots, 2n-1$          &   $E_6$        &   $1, 4, 5, 7, 8, 11$         \\ \hline
$Spin(2n)$            & $1, 3, \ldots, 2n-3, n-1$           &   $E_7$        &   $1, 5, 7, 9, 11, 13, 17$      \\ \hline
$Spin(2n+1)$            & $1, 3, \ldots, 2n-1$   &    $E_8$        &   $1, 7, 11, 13, 17, 19, 23, 29$             \\ \hline
\end{tabular}
\label{tabletypelie}
\end{table}

Due to a classical result of Serre \cite{Serre53}, it is well known that these Lie groups can be decomposed into product of spheres at large prime, which was generalized to finite $p$-local $H$-spaces by Kumpel \cite{Kumpel72}.

\begin{theorem}\label{regulardecomH}
Let $X$ be a finite ${\rm mod}~p$ $H$-space of type $\{n_1=1, n_2,\ldots, n_l\}$. If $H^\ast(X;\mathbb{Z})$ is $p$-torsion free and $p\geq l(X)+1$, then there exists a map 
\[
f: S^{2n_1+1}\times S^{2n_2+1}\times \cdots \times S^{2n_l+1}\rightarrow X,
\]
which is a ${\rm mod}~p$ homotopy equivalence.
\end{theorem}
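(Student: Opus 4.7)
The plan is to construct the mod $p$ equivalence $f$ by combining maps from each odd sphere using the $H$-space structure, in the spirit of Serre's classical argument.

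First, I would identify the cohomology ring. Since $X$ is a mod $p$ $H$-space with $H^*(X;\mathbb{Z})$ being $p$-torsion free, the mod $p$ cohomology ring $H^*(X;\mathbb{F}_p)$ is a finite Hopf algebra over $\mathbb{F}_p$, and by the Hopf--Borel theorem for odd primes it is an exterior algebra on generators in odd degrees. Combined with the assumption that $X$ is integrally $p$-torsion free and of type $\{n_1,\ldots,n_l\}$, this yields $H^*(X;\mathbb{Z}_{(p)})\cong \Lambda(x_1,\ldots,x_l)$ with $|x_i|=2n_i+1$.

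Second, I would construct, for each $i$, a $p$-local map $f_i\colon S^{2n_i+1}_{(p)}\to X_{(p)}$ whose induced map on $H^{2n_i+1}(-;\mathbb{Z}_{(p)})$ sends $x_i$ to a generator. The key point here is the range hypothesis $p\geq l(X)+1$: the first nontrivial $p$-torsion class in $\pi_*(S^{2n_i+1})_{(p)}$ occurs in dimension $2n_i+2p-2$, which exceeds $2l(X)+1 \geq \dim X$ once $p\geq l(X)+1$. Therefore, through the relevant range, $S^{2n_i+1}_{(p)}$ behaves like $K(\mathbb{Z}_{(p)},2n_i+1)$, and the mod $p$ Hurewicz homomorphism
\[
\pi_{2n_i+1}(X)\otimes \mathbb{F}_p \longrightarrow H_{2n_i+1}(X;\mathbb{F}_p)
\]
is surjective onto the space of primitives (using the $H$-space structure together with the Cartan--Serre theorem in this regular range); picking a preimage of the dual of $x_i$ produces $f_i$.

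Third, using the $H$-space multiplication $\mu$, I would define
\[
f\colon S^{2n_1+1}\times S^{2n_2+1}\times\cdots\times S^{2n_l+1}\longrightarrow X
\]
by iterated multiplication $f(s_1,\ldots,s_l) = f_1(s_1)\cdot f_2(s_2)\cdots f_l(s_l)$. By Künneth, the domain has mod $p$ cohomology $\Lambda(y_1,\ldots,y_l)$ with $|y_i|=2n_i+1$. The map $f^*$ sends each $x_i$ to $y_i$ modulo decomposables in the variables $y_j$ with $j<i$ (coming from cross-terms in the $H$-multiplication); since $H^*(X;\mathbb{F}_p)$ is an exterior algebra on the $x_i$'s, triangular change of generators shows $f^*$ is an isomorphism on indecomposables, hence on the full cohomology ring. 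Thus $f$ is a mod $p$ cohomology equivalence between simply connected spaces of finite $\mathbb{F}_p$-type, and therefore a mod $p$ homotopy equivalence.

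The main obstacle is the second step: ensuring that each cohomology generator $x_i$ can be realized by a sphere map. This is where the range hypothesis $p\geq l(X)+1$ is essential, as it simultaneously (i) forces the relevant portion of $\pi_*(S^{2n_i+1})_{(p)}$ to be concentrated in the bottom dimension, and (ii) kills the potential Postnikov obstructions in $X_{(p)}$ that would otherwise prevent the Hurewicz map from surjecting onto the primitives in the required degrees.
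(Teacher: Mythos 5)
The paper does not actually prove this statement: it quotes it as Serre's classical theorem on $p$-regularity of Lie groups together with Kumpel's generalization to finite ${\rm mod}~p$ $H$-spaces, so there is no internal proof to compare against. Your overall architecture --- Hopf--Borel to identify $H^*(X;\mathbb{F}_p)$ as an exterior algebra on odd generators, realization of each generator by a map from a sphere, and iterated multiplication followed by a check on indecomposables --- is exactly the classical Serre--Kumpel strategy, and your first and third steps are sound.

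The gap is in your second step, which is where the entire content of the theorem lives. First, the dimension count you offer is false: $\dim X=\sum_{i=1}^{l}(2n_i+1)$, which exceeds $2l(X)+1=2n_l+1$ as soon as $l\geq 2$; for $SU(3)$ at $p=3$ one has $\dim X=8$ while $2l(X)+1=5$, and the first $3$-torsion of $\pi_*(S^3)$ already appears in $\pi_6$, well below $\dim X$. So $S^{2n_i+1}_{(p)}$ does \emph{not} behave like an Eilenberg--Mac\,Lane space through $\dim X$; at best it does so through degree $2n_l+1$. Second, and more seriously, the surjectivity of the mod $p$ Hurewicz map $\pi_{2n_i+1}(X)\otimes\mathbb{F}_p\to PH_{2n_i+1}(X;\mathbb{F}_p)$ is precisely the hard part of the theorem and cannot be deduced from the Cartan--Serre theorem, which is a rational statement: Cartan--Serre only produces a homotopy class hitting the generator with a nonzero \emph{rational} coefficient, whereas the whole issue is whether that coefficient can be made a $p$-local unit. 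This genuinely fails outside the stated range: for $SU(n)$ with $p<n$ the relation $\mathcal{P}^1x_3=x_{2p+1}$ (up to a unit) shows that $x_{2p+1}$ is not spherical mod $p$, so an honest use of $p\geq l(X)+1$ beyond naming it is required. The standard argument runs an induction over a minimal $p$-local cell structure of $X_{(p)}$ in degrees $\leq 2n_l+1\leq 2p-1$, using Hilton--Milnor together with the vanishing of the $p$-torsion of $\pi_k(S^m)$ for $0<k-m<2p-3$ to show that every attaching map in this range is a $\mathbb{Z}_{(p)}$-combination of Whitehead products, and then uses the $H$-structure of $X$ to annihilate those Whitehead products so that the generator cells become spherical in $X$. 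Until you supply an argument of this kind (or an equivalent obstruction-theoretic one), the construction of the maps $f_i$, and hence the proof, is incomplete.
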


The primes $p$ for which there exists a map $f$ as in Theorem \ref{regulardecomH} are called the \textit{regular primes} of $X$. With this terminology, Theorem \ref{regulardecomH} claims that $X$ is $p$-regular if $p\geq l(X)+1$.

On the other hand, thanks to a construction of Gray \cite{Gray69} on a family of infinitely many elements of order $p^n$ in $\pi_\ast(S^{2n+1})$, Cohen, Neisendorfer and Moore in their famous work \cite{Cohen79, Neisen3} showed that 
\begin{equation}\label{sphereexp}
{\rm exp}(\Omega^i_0S^{2n+1})=p^n,
\end{equation}
for any $i\geq0$.
\begin{lemma}\label{regularexplie}
Let $G$ be a Lie group in Table \ref{tabletypelie} with $p$-torsion free cohomology. Then if $p\geq l(G)+1$, we have 
\[
{\rm exp}(\Omega^i_0G)=p^{l(G)}, \ \ {\rm exp}(\Omega^i_0G\{c\})=p^{\nu_p(c)}
\]
for any $i\geq0$.
\hfill $\Box$
\end{lemma}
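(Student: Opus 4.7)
The plan is to use the $p$-regular decomposition of $G$ afforded by Theorem \ref{regulardecomH} (available since $p\geq l(G)+1$ and $H^*(G;\mathbb{Z})$ is $p$-torsion free) to reduce both statements to the corresponding ones for odd spheres and their power-map fibers, where they follow from formula (\ref{sphereexp}) and Lemma \ref{expxclemma}.

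For the first identity, Theorem \ref{regulardecomH} yields a $p$-local equivalence $f: \prod_{k\in\mathfrak{t}(G)} S^{2k+1} \stackrel{\simeq_{(p)}}{\longrightarrow} G$. Looping preserves $p$-local equivalences and finite products, so $\Omega^i_0 G \simeq_{(p)} \prod_{k\in\mathfrak{t}(G)} \Omega^i_0 S^{2k+1}$. The exponent of a finite product is the maximum of the factor exponents, and formula (\ref{sphereexp}) gives ${\rm exp}(\Omega^i_0 S^{2k+1}) = p^k$ for every $i\geq 0$; hence ${\rm exp}(\Omega^i_0 G) = \max_{k\in\mathfrak{t}(G)} p^k = p^{l(G)}$.

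For the second identity, the key preparatory step is to check that under $f$ the $c$-th power map on the product corresponds to the power map on $G$. Both maps act by multiplication by $c$ on each exterior generator of the rational (and mod-$p$) cohomology, and in the $p$-regular range this determines the homotopy class, so taking homotopy fibers produces $\Omega^i_0 G\{c\} \simeq_{(p)} \prod_{k\in\mathfrak{t}(G)} \Omega^i_0 S^{2k+1}\{c\}$. The upper bound ${\rm exp}(\Omega^i_0 S^{2k+1}\{c\}) \leq p^{\nu_p(c)}$ comes from Lemma \ref{expxclemma} applied to $X = S^{2k+1}$, combined with the $(2k-1)$-connectivity of $S^{2k+1}\{c\}$ which takes care of the low-dimensional loops. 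The matching lower bound is witnessed by the class $\pi_{2k}(S^{2k+1}\{c\}) = \mathbb{Z}/c$ that appears from the long exact sequence of $S^{2k+1}\{c\} \to S^{2k+1} \stackrel{c}{\to} S^{2k+1}$: delooping places an order-$p^{\nu_p(c)}$ element in $\pi_{2k-i}(\Omega^i_0 S^{2k+1}\{c\})$ when $i \leq 2k-1$, and for $i\geq 2k$ one invokes the Cohen-Moore-Neisendorfer infinite families of $p^{\nu_p(c)}$-torsion in arbitrarily high homotopy degrees of $S^{2k+1}\{c\}$ (the same mechanism that underlies (\ref{sphereexp})).

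The main technical obstacle is verifying that $f$ intertwines the power maps $p$-locally: since the Kumpel equivalence $f$ is not required to be an H-map, the homotopy $c_G\circ f \simeq f\circ c_{\prod}$ is not formal and must be established via obstruction theory. The regularity hypothesis $p\geq l(G)+1$ makes all relevant obstruction groups vanish, but articulating this cleanly—or, alternatively, decomposing $G\{c\}$ directly through a homology decomposition—is what turns the sketch into a proof.
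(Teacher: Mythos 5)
Your treatment of the first equality is the intended one: loop the $p$-regular equivalence of Theorem \ref{regulardecomH}, use that the exponent of a finite product is the maximum of the exponents of the factors, and quote (\ref{sphereexp}). For the second equality, however, the step you yourself flag as unresolved is a genuine gap: you need the Kumpel equivalence $f\colon\prod_{k\in\mathfrak{t}(G)}S^{2k+1}\to G$ to intertwine the $c$-th power maps in order to conclude $G\{c\}\simeq_{(p)}\prod_k S^{2k+1}\{c\}$, and since $f$ is not an $H$-map the observation that both power maps multiply the cohomology generators by $c$ does not pin down the homotopy class (the off-diagonal components of a self-map of $\prod_k S^{2k+1}$ live in generally nontrivial groups $\pi_{2k+1}(S^{2j+1})$). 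The decisive point is that this whole decomposition is unnecessary for the bound you want: since $\Omega^{i-1}G$ is a homotopy associative $H$-space and power maps commute with looping on loop spaces, so that $\Omega^{i}(G\{c\})\simeq(\Omega^{i}G)\{c\}$, Lemma \ref{expxclemma} applied to $\Omega^{i-1}G$ (and, for $i=0$, to $G$ itself together with the $1$-connectivity of $G\{c\}$) gives ${\rm exp}(\Omega^i_0G\{c\})\leq p^{\nu_p(c)}$ directly, with no $p$-regularity hypothesis at all. This is the route the paper intends, which is why the lemma is stated without proof.

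A second, smaller problem is your lower bound. The Gray/Cohen--Moore--Neisendorfer families in $\pi_*(S^{2k+1})$ have order $p^{k}$, not $p^{\nu_p(c)}$, so they produce elements of order exactly $p^{\nu_p(c)}$ in $\pi_m(S^{2k+1}\{c\})$ for arbitrarily large $m$ only when $\nu_p(c)\leq k$. Indeed, since the free part of $\pi_*(G)$ is concentrated in the degrees $2n_j+1$ with $n_j\in\mathfrak{t}(G)$, the exact sequence $\pi_{m+1}(G)/c\to\pi_m(G\{c\})\to\ker(c\,|\,\pi_m(G))$ shows that for $m>2l(G)+1$ the $p$-exponent of $\pi_m(G\{c\})$ is at most $p^{2l(G)}$; so for $\nu_p(c)$ large and $i$ large the asserted equality cannot be witnessed, and the second statement should be read as the inequality $\leq p^{\nu_p(c)}$ (which is all that is used in Propositions \ref{expformu2} and \ref{expformu3}) or under the implicit restriction $\nu_p(c)\leq l(G)$. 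Your writeup asserts the equality via the CMN families without this caveat, which is where that part of the argument breaks.
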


Theriault \cite{Theriault04} proved a useful lemma to study the homotopy exponent problem. For our purpose, we need two generalizations of his lemma, one of which immediately follows from the original one of Theriault.
\begin{lemma}[Lemma $2.2$ and $2.3$ of \cite{Theriault04}]\label{generalstephenlemma}
Suppose there is a homotopy fibration
\[
F\rightarrow E\rightarrow S^{2n_1+1}\times S^{2n_2+1}\times\cdots \times S^{2n_r+1},
\]
where $E$ is simply connected or an $H$-space and for each $1\leq i\leq r$
\[|{\rm coker}(\pi_{2n_i+1}(E)\rightarrow \pi_{2n_i+1}(S^{2n_i+1}))|\leq p^{t_i}.\]
Then 
\[
{\rm exp}(E)\leq p^t\cdot {\rm max}\{{\rm exp}(F), p^{n_1}, p^{n_2},\ldots, p^{n_r}\},
\]
where $t={\rm max}\{t_1, t_2, \ldots, t_r\}$.
\hfill $\Box$
\end{lemma}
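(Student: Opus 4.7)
My plan is to follow the strategy of Theriault \cite{Theriault04}, using the hypothesis that $E$ is an $H$-space or simply connected to promote the cokernel-based lifts of the fundamental classes of each factor $S^{2n_i+1}$ into a single global map $g\colon B\to E$, where $B=\prod_i S^{2n_i+1}$, with the property that $\pi\circ g$ is factor-wise multiplication by $p^t$. Once such a $g$ exists, the exponent bound is extracted from the long exact sequence of the fibration together with the Cohen--Moore--Neisendorfer exponent $\mathrm{exp}_p(S^{2n+1})=p^n$ recalled in (\ref{sphereexp}).

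In detail, the cokernel hypothesis first provides for each $i$ a class $g_i\in\pi_{2n_i+1}(E)$ lifting $p^{t_i}\iota_i$, where $\iota_i$ is the bottom-cell inclusion of the $i$-th factor; multiplying by $p^{t-t_i}$ in the abelian group $\pi_{2n_i+1}(E)$ yields a lift of $p^t\iota_i$. When $E$ is an $H$-space I would assemble these into $g(x_1,\ldots,x_r)=g_1(x_1)\cdots g_r(x_r)$ via the $H$-multiplication, so that $\pi\circ g$ is factor-wise equal to $p^t\cdot\mathrm{id}$; when $E$ is merely simply connected I would instead extend cell-by-cell from $\bigvee_i S^{2n_i+1}$ to $B$, noting that each successive obstruction lies in a $p$-torsion homotopy group of $E$ bounded by the sphere exponent $p^{\max n_i}$, which is already absorbed in the target bound. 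Given any $p$-primary torsion class $\alpha\in\pi_k(E)$, the difference $p^t\alpha-g_*\pi_*\alpha$ is annihilated by $\pi_*$ (since $\pi\circ g$ is multiplication by $p^t$ on $B$), so it lifts to some $\gamma\in\pi_k(F)$. Because $\pi_*\alpha$ is $p$-primary torsion in $\pi_k(B)=\oplus_i\pi_k(S^{2n_i+1})$, (\ref{sphereexp}) annihilates it after multiplication by $p^{\max n_i}$, giving $p^{t+\max n_i}\alpha=i_*(p^{\max n_i}\gamma)$. Applying $\mathrm{exp}(F)$ to this lift then yields the bound $\mathrm{exp}_p(E)\leq p^t\cdot\max\{\mathrm{exp}(F),p^{n_1},\ldots,p^{n_r}\}$.

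The main obstacle is the construction of the global map $g$ together with ensuring that the auxiliary class $\gamma\in\pi_k(F)$ can be chosen to be $p$-primary torsion so that $\mathrm{exp}(F)$ indeed annihilates it. In the $H$-space case the former is direct from the multiplication, and the latter follows from the finite generation of $\pi_*(F)$ in each degree by adjusting $\gamma$ modulo $\ker i_*=\mathrm{im}\,\partial$ until it lands in the torsion summand. In the simply connected case both steps require a more delicate cell-by-cell obstruction analysis whose obstructions are controlled precisely by the sphere exponent $p^{\max n_i}$, essentially reproducing the proofs of Theriault's Lemmas $2.2$ and $2.3$, and it is the interaction of these two controls with the cokernel factor $p^t$ that produces the three-way $\max$ in the statement.
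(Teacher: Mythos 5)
First, a point of comparison: the paper offers no proof of this lemma at all --- it is stated with a terminal $\Box$ and attributed to Lemmas $2.2$ and $2.3$ of \cite{Theriault04}, with the surrounding text asserting that the product-of-spheres version ``immediately follows'' from Theriault's original. So you are reconstructing Theriault's argument rather than paralleling anything written in this paper. Your overall strategy --- lift $p^{t}$ times the identity of the base through $\pi$ using the cokernel hypothesis, then combine the long exact sequence with ${\rm exp}(S^{2n+1})=p^{n}$ --- is indeed the right one. However, there are two concrete gaps. The first is arithmetic: from $p^{t}\alpha=g_{*}\pi_{*}\alpha+i_{*}\gamma$ you first multiply by $p^{\max n_i}$ to kill $\pi_{*}\alpha$ and then ``apply ${\rm exp}(F)$'' to kill $\gamma$; applied sequentially these two multiplications prove only the \emph{product} bound $p^{t}\cdot p^{\max n_i}\cdot{\rm exp}(F)$, not the stated \emph{max} bound. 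To get the max you must multiply the identity once by the single quantity $P={\rm max}\{{\rm exp}(F),p^{n_1},\ldots,p^{n_r}\}$, observing that $P$ simultaneously annihilates the torsion element $\pi_{*}\alpha$ (since $P\geq p^{\max n_i}$) and the class $\gamma$ (since $P\geq{\rm exp}(F)$ once $\gamma$ is arranged to be $p$-torsion), so that $p^{t}P\alpha=0$ in one step.

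The second gap is in the construction of the global map $g\colon B\to E$ when $E$ is merely simply connected. The cell-by-cell extension of $\bigvee_i g_i$ over the product $B=\prod_i S^{2n_i+1}$ is obstructed by the Whitehead products $[g_i,g_j]\in\pi_{2n_i+2n_j+1}(E)$ (and higher analogues), and these must \emph{vanish} for the extension to exist; saying the obstructions are ``bounded by the sphere exponent'' does not make them zero, and they need not even be torsion. The standard repair is to observe that no global map is needed: after localizing at the odd prime $p$, each $S^{2n_i+1}$ is an $H$-space, so the degree-$p^{t}$ map acts as multiplication by $p^{t}$ on homotopy groups, and one may simply define the homomorphism $h\colon\oplus_i\pi_k(S^{2n_i+1})\to\pi_k(E)$ by $h(\beta_1,\ldots,\beta_r)=\sum_i(g_i)_{*}\beta_i$, which satisfies $\pi_{*}\circ h=p^{t}$ and can replace $g_{*}$ throughout the argument. (Alternatively one can peel off one sphere factor at a time, but that only yields the weaker constant $p^{\sum t_i}$.) Your handling of the remaining delicate point --- that $\gamma$ may be chosen $p$-torsion --- is correctly flagged and does go through: since $i_{*}\gamma$ is $p$-torsion, a multiple of $\gamma$ lies in ${\rm im}\,\partial$, whose free contribution is killed by $p^{t}$ and whose torsion contribution is controlled, forcing the free part of $\gamma$ to vanish and its prime-to-$p$ part to lie in $\ker i_{*}$.
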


For the homotopy exponents of $\mathcal{G}_k(P^4(c))$, we need to take the order of the connecting map $\partial_k$ into account, and by Proposition \ref{mooregaugecriterion} we actually label the gauge groups of $P^4(c)$ and $M$ by $k\in \mathbb{Z}/\nu_p(d)$ with $d=({\rm ord}(\partial_1), c)$.
\begin{proposition}\label{expformu2}
Let $G$ be a Lie group in Table \ref{tables4guageorder} in Section \ref{gaugemooresec}. Let $M$ as in Lemma \ref{expformu1}. Then if $p\geq l(G)+1$, we have for any $k\in\mathbb{Z}/\nu_p(d)$
\[
{\rm exp}(\mathcal{G}_k(M))\leq p^{\nu_p({\rm ord}(\partial_1))}\cdot {\rm max}\{p^{l(G)},  p^{\nu_p(c)}\},
\]
if $G\neq SU(i)$ with $i=2$, or $3$. For the the later two cases,
\[
{\rm exp}(\mathcal{G}_k(M))\leq p^{\nu_p({\rm ord}(\partial_1))+1}\cdot {\rm max}\{p^{l(G)},  p^{\nu_p(c)}\}.
\]

\end{proposition}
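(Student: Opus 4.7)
My plan is to feed the general estimate of Lemma \ref{expformu1} into the $p$-regular machinery available under the hypothesis $p\geq l(G)+1$, and to reduce the remaining bound for ${\rm exp}(\mathcal{G}_k(P^4(c)))$ to a cokernel-lifting estimate via Lemma \ref{generalstephenlemma}.

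First I would apply Lemma \ref{expformu1} to obtain
\[
{\rm exp}(\mathcal{G}_k(M))\leq p^{\epsilon(G,p)}\cdot \max\{{\rm exp}(\mathcal{G}_k(P^4(c))),\,{\rm exp}(G),\,p^{\nu_p(c)}\}.
\]
Under the hypothesis $p\geq l(G)+1$, each Lie group listed in Table \ref{tables4guageorder} has $p$-torsion-free integral cohomology, so Theorem \ref{regulardecomH} supplies a $p$-equivalence $G\simeq_{(p)}\prod_{i=1}^{l}S^{2n_i+1}$, which together with the sphere exponent (\ref{sphereexp}) gives ${\rm exp}(G)\leq p^{l(G)}$ (Lemma \ref{regularexplie}). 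Only the bound on ${\rm exp}(\mathcal{G}_k(P^4(c)))$ then remains to be produced.

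To estimate the latter, I would invoke the Atiyah--Bott fibration (\ref{sepgaugefibre})
\[
F\longrightarrow \mathcal{G}_k(P^4(c))\longrightarrow G\stackrel{\partial_k}{\longrightarrow}{\rm Map}^{\ast}_k(P^4(c),BG),
\]
whose fibre is $F\simeq \Omega^{3}G\{c\}$ by Lemma \ref{mappingmooredes}; Lemma \ref{expxclemma} then yields ${\rm exp}(F)\leq p^{\nu_p(c)}$, since iterated looping does not increase exponents. After localization at $p$, the $p$-regular decomposition of $G$ places us inside the hypotheses of Lemma \ref{generalstephenlemma}. For each sphere factor $S^{2n_i+1}$ the long exact sequence of the fibration identifies the cokernel of $\pi_{2n_i+1}(\mathcal{G}_k(P^4(c)))\to \pi_{2n_i+1}(G)$ with the image of $\partial_{k\ast}$ in $\pi_{2n_i+1}({\rm Map}^{\ast}_k(P^4(c),BG))$; by Lemma \ref{langlemma}, $\partial_k\simeq k\partial_1$ is annihilated by ${\rm ord}(\partial_1)$, hence the $p$-order of this image is at most $p^{\nu_p({\rm ord}(\partial_1))}$. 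Taking $t_i=\nu_p({\rm ord}(\partial_1))$ uniformly in $i$, Lemma \ref{generalstephenlemma} gives
\[
{\rm exp}(\mathcal{G}_k(P^4(c)))\leq p^{\nu_p({\rm ord}(\partial_1))}\cdot \max\{p^{\nu_p(c)},\,p^{l(G)}\}.
\]

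Substituting this and ${\rm exp}(G)\leq p^{l(G)}$ back into Lemma \ref{expformu1} then yields the asserted bound carrying an overall factor $p^{\epsilon(G,p)}$. From the definition in Lemma \ref{expformu1}, $\epsilon(G,p)=1$ occurs only for $p=3$ with $G\in\{SU(2),SU(3),SU(4),Spin(6)\}$; the last two have $l(G)=3$ and so fall outside the hypothesis $p\geq 4$, leaving precisely the two exceptional cases $SU(2)$ and $SU(3)$ of the proposition. The main technical point is the uniform cokernel estimate $t_i\leq \nu_p({\rm ord}(\partial_1))$: it relies entirely on the Samelson-product interpretation of $\partial_k$ in Lemma \ref{langlemma}, which transports the single datum ${\rm ord}(\partial_1)$ through every sphere factor produced by the $p$-regular decomposition, and without which the bound would depend on $k$ in an essential way.
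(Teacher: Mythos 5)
Your proposal is correct and follows essentially the same route as the paper: both feed the fibration $\Omega^3_0G\{c\}\rightarrow \mathcal{G}_k(P^4(c))\rightarrow G$ through Lemma \ref{generalstephenlemma} after using $p$-regularity to replace $G$ by a product of spheres, bound the cokernel on each sphere factor by $p^{\nu_p({\rm ord}(\partial_1))}$ via Lemma \ref{langlemma}, and then conclude with Lemma \ref{regularexplie} and Lemma \ref{expformu1}. Your explicit bookkeeping of the $\epsilon(G,p)$ factor — checking that $p\geq l(G)+1$ eliminates $Spin(6)$ and $SU(4)$ at $p=3$, leaving only $SU(2)$ and $SU(3)$ as the exceptional cases — is a point the paper leaves implicit, but it is the same argument.
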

\begin{proof}
Since $G$ is $p$-regular, we can apply Lemma \ref{generalstephenlemma} to the homotopy fibre sequence 
\[\Omega^3_0G\{c\}\rightarrow \mathcal{G}_k(P^4(c))\rightarrow G\stackrel{\partial_k}{\rightarrow}\Omega_k^2G\{c\}.\]
Notice that 
\[|{\rm coker}(\pi_\ast(\mathcal{G}_k(P^4(c)))\rightarrow \pi_\ast(G))|\leq p^{\nu_p({\rm ord}(\partial_k))}.\]
Hence by Lemma \ref{regularexplie}, we see that 
\[
{\rm exp}(\mathcal{G}_k(P^4(c))\leq p^{\nu_p({\rm ord}(\partial_k))}\cdot {\rm max}\{p^{l(G)},  p^{\nu_p(c)}\}.
\]
But it is easy to check that $\nu_p(d)<p$ by the order of $\partial_1$ in Table \ref{tables4guageorder}, the lemma then follows from Lemma \ref{expformu1}.
\end{proof}

In order to get estimation of the homotopy exponents when $G$ is not $p$-regular, we need more general form of Lemma \ref{generalstephenlemma}. Indeed, Theriault \cite{Theriault07} proved a deep theorem of homotopy decompositions of the low rank Lie groups of Table \ref{tablelie2} with further applications of the homotopy exponents. 

\begin{table}[H]
\centering
\caption{Some low rank Lie groups}
\begin{tabular}{l|p{3.7cm}lp{3.7cm}}
\hline 
$SU(n)$            & $n-1\leq (p-1)(p-2)$        \\ \hline
$Sp(n)$            & $2n\leq (p-1)(p-2)$          \\ \hline
$Spin(2n+1)$            & $2n\leq (p-1)(p-2)$         \\ \hline
$Spin(2n)$            & $2(n-1)\leq (p-1)(p-2)$         \\ \hline
$G_2, F_4, E_6$            & $p\geq5$       \\ \hline
$E_7, E_8$            & $p\geq7$       \\ \hline
\end{tabular}
\label{tablelie2}
\end{table}

\begin{theorem}[Theriault \cite{Theriault07}]\label{Theriaultlieexp}
Let $G$ be a Lie group in Table \ref{tablelie2}. Then there is a homotopy commutative diagram
\[
\xymatrix{
\Omega G\ar[r]^{p^{r(G)}}  \ar[d]^{}   & \Omega G\ar@{=}[d]\\
\prod\limits_{k\in \mathfrak{t}(G)} \Omega S^{2k+1} \ar[r]^{\ \ \ \lambda}  & \Omega G,
}
\]
in which all maps are loop maps. In particular,
\[{\rm exp}(G) \leq p^{r(G)+l(G)}.\]
\end{theorem}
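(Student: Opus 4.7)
The plan is to produce the factorization by constructing a mod $p$ loop-space decomposition of $\Omega G$ and clearing denominators in a mod $p$ homotopy inverse. The exponent bound then follows immediately from the Cohen-Moore-Neisendorfer sphere exponent theorem.

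First I would construct the map $\lambda$. The cohomology of $G$ is $p$-torsion-free in the Theriault range, so for each $k \in \mathfrak{t}(G)$ a rational cohomology generator $x_{2k+1} \in H^{2k+1}(G;\mathbb{Q})$ is represented, after $p$-localization, by a map $f_k : S^{2k+1} \to G$. Looping gives $\Omega f_k : \Omega S^{2k+1} \to \Omega G$, and I would define $\lambda$ as the loop-product of the $\Omega f_k$. Because $\Omega G$ is homotopy commutative modulo Whitehead products that vanish for dimensional reasons in the Theriault range, $\lambda$ is itself a loop map. Under the numerical hypothesis $2l(G) \leq (p-1)(p-2)$, the mod $p$ Serre spectral sequence for the path-loop fibration $\Omega G \to PG \to G$ collapses along the universally transgressive generators, so $H^{\ast}(\Omega G;\mathbb{F}_p)$ is a polynomial algebra matching $H^{\ast}(\prod_k \Omega S^{2k+1};\mathbb{F}_p)$, and one verifies that $\lambda$ induces this identification. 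In the $p$-regular subcase $p \geq l(G)+1$ this already gives a mod $p$ loop-space equivalence with $r(G) = 0$, which is the base case (Theorem \ref{regulardecomH}).

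Second I would identify $r(G)$ and produce the commutative diagram. The map $\lambda$ admits a mod $p$ homotopy inverse $\mu : \Omega G \to \prod_k \Omega S^{2k+1}$ which, in general, fails to lift integrally. I would define $r(G)$ to be the smallest exponent such that $p^{r(G)} \mu$ lifts; for the classical families this coincides with valuations computed by Harris \cite{Harris61} (for example $\nu_p((n-1)!)$ for $SU(n)$), and for the exceptional cases it is read off from the Mimura-Nishida-Toda mod $p$ splittings. Then
\[
\lambda \circ \bigl(p^{r(G)} \mu\bigr) \simeq p^{r(G)} \circ (\lambda \circ \mu) \simeq p^{r(G)} \circ \mathrm{id}_{\Omega G},
\]
which is the claimed homotopy commutative diagram.

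Finally, applying $\pi_j$ to the diagram shows that every class in $\pi_j(\Omega G)$ is $p^{r(G)}$ times a class pulled back from $\pi_j(\prod_k \Omega S^{2k+1})$, and the latter has exponent $p^{l(G)}$ by (\ref{sphereexp}). Therefore $\exp(\Omega G) \leq p^{r(G)+l(G)}$, equivalently $\exp(G) \leq p^{r(G)+l(G)}$. The principal obstacle is the second step: upgrading the mod $p$ cohomology isomorphism to a loop-space equivalence in the Theriault range (which is strictly broader than $p$-regularity) and pinning down $r(G)$ uniformly across families. This is where Theriault's original argument does its real work, invoking Harris-type product splittings and Anick-Cohen-Neisendorfer style $p$-local $H$-space techniques; a direct approach via the spectral sequence alone only detects the cohomological shape of the decomposition, not the loop-map refinement nor the precise exponent $r(G)$.
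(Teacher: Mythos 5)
This statement is not proved in the paper at all: it is quoted verbatim from Theriault \cite{Theriault07} as an external input, so there is no internal proof to compare against. Judged on its own terms, your sketch reproduces the correct overall shape (factor the $p^{r(G)}$ power map on $\Omega G$ through $\prod_{k\in\mathfrak{t}(G)}\Omega S^{2k+1}$ and then invoke the sphere exponent theorem (\ref{sphereexp})), and your final step deducing ${\rm exp}(G)\leq p^{r(G)+l(G)}$ from the diagram is correct. But the mechanism you propose for producing the diagram has a genuine gap.

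The problem is your second step. You posit a map $\mu:\Omega G\to\prod_k\Omega S^{2k+1}$ that is a mod $p$ homotopy inverse to $\lambda$, and define $r(G)$ as the least exponent for which $p^{r(G)}\mu$ ``lifts.'' Outside the $p$-regular range $p\geq l(G)+1$ no such $\mu$ exists: if $\lambda$ induced an isomorphism on mod $p$ homology it would be a $p$-local equivalence of simply connected finite-type spaces, whereas in the genuinely new part of Theriault's range (e.g.\ $SU(n)$ with $p<n-1\leq(p-1)(p-2)$) the Lie group only splits into Mimura--Nishida--Toda pieces such as $B(3,2p+1)$, whose loop spaces carry Steenrod operations linking the generators and hence are \emph{not} mod $p$ equivalent to products of loops on spheres. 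The same issue infects your first step: the maps $f_k:S^{2k+1}\to G$ representing the cohomology generators exist only up to a $p$-power multiple (for $SU(n)$ the generator of $\pi_{2n-1}(SU(n))$ hits $(n-1)!$ times the homology generator, which is exactly where $r(SU(n))=\nu_p((n-1)!)$ comes from), so the divisibility enters already in the construction of $\lambda$, not in inverting it. Finally, assembling $\lambda$ as a loop-product of $H$-maps into the homotopy commutative $H$-space $\Omega G$ yields at best an $H$-map; the theorem asserts all maps are \emph{loop} maps, and delooping is a further nontrivial part of Theriault's argument. In short, the content of the cited theorem is precisely the construction of a factorization of the power map in the \emph{absence} of any mod $p$ equivalence, and your sketch substitutes for that construction an equivalence that cannot exist.
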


\begin{lemma}\label{newstephenlemma}
Let $G$ as in Table \ref{tablelie2}.
Suppose there is a homotopy fibration
\[
F\rightarrow E\rightarrow G,
\]
where for each $k\in \mathfrak{t}(G)$
\[|{\rm coker}(\pi_{2k+1}(E)\rightarrow \pi_{2k+1}(G))|\leq p^{t}.\]
Then 
\[
{\rm exp}(\Omega^2 E)\leq p^{t+r(G)}\cdot {\rm max}\{{\rm exp}(F), p^{r(G)+l(G)}\}.
\]
\end{lemma}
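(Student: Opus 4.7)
The strategy is to use Theriault's factorization (Theorem \ref{Theriaultlieexp}) to trade the base $G$ for a product of loops of odd spheres and then appeal to the loop analog of Lemma \ref{generalstephenlemma}, paying an overall factor of $p^{r(G)}$. Since the factorization $p^{r(G)} \simeq \lambda \circ \psi$, with $\lambda,\psi$ loop maps through $\prod_{k \in \mathfrak{t}(G)} \Omega S^{2k+1}$, lives at the $\Omega G$ level, the natural first step is to loop the given fibration once to obtain
\[
\Omega F \longrightarrow \Omega E \stackrel{\pi}{\longrightarrow} \Omega G,
\]
under which the hypothesis becomes $|{\rm coker}(\pi_{2k}(\Omega E) \to \pi_{2k}(\Omega G))| \leq p^{t}$ for every $k \in \mathfrak{t}(G)$. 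I would then form the homotopy pullback
\[
E' \;:=\; \Omega E \times_{\Omega G} \prod_{k \in \mathfrak{t}(G)} \Omega S^{2k+1},
\]
which produces a fibration $\Omega F \to E' \to \prod_{k \in \mathfrak{t}(G)} \Omega S^{2k+1}$ in which $E'$ inherits an $H$-space structure because both $\pi$ and $\lambda$ are loop maps. The identity $\lambda \psi \simeq p^{r(G)}$ canonically determines a lift $\mu: \Omega E \to E'$ whose composition with the projection $E' \to \Omega E$ is exactly $p^{r(G)}: \Omega E \to \Omega E$.

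Next, I would verify that the pulled back fibration satisfies the cokernel hypothesis of the loop analog of Lemma \ref{generalstephenlemma}. For each $k = n_i \in \mathfrak{t}(G)$ and any $\beta \in \pi_{2k}(\Omega S^{2k+1})$, the element $p^{t}\lambda_{*}(\beta)$ lies in the image of $\pi_{2k}(\pi)$ by hypothesis, so $p^{t}\beta$ lifts to $\pi_{2k}(E')$, showing that the cokernel on each sphere factor is annihilated by $p^{t}$. The loop version of Lemma \ref{generalstephenlemma} then applies; its proof is the same as the original, with the decisive input being $p^{n_i} = {\rm exp}(\Omega^{2} S^{2n_i+1})$ furnished by (\ref{sphereexp}). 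It yields
\[
{\rm exp}(E') \;\leq\; p^{t}\cdot \max\{{\rm exp}(\Omega F),\, p^{l(G)}\}.
\]

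Finally, looping $\mu$ gives $\Omega \mu: \Omega^{2} E \to \Omega E'$ whose composition with the projection $\Omega E' \to \Omega^{2} E$ is $p^{r(G)}: \Omega^{2} E \to \Omega^{2} E$. Hence ${\rm exp}(\Omega^{2} E) \leq p^{r(G)}\cdot{\rm exp}(\Omega E') \leq p^{r(G)+t}\cdot\max\{{\rm exp}(\Omega F),\, p^{l(G)}\}$, and the crude estimates ${\rm exp}(\Omega F)\leq {\rm exp}(F)$ and $p^{l(G)}\leq p^{r(G)+l(G)}$ then recover the stated inequality. The most delicate step is the simultaneous bookkeeping of the cokernel through the pullback and the explicit construction of $\mu$; the base-exchange from $\prod S^{2k+1}$ to $\prod \Omega S^{2k+1}$ in Lemma \ref{generalstephenlemma} is cosmetic, since (\ref{sphereexp}) is insensitive to the number of loops applied to each sphere factor.
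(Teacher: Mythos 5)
Your proposal assembles the same two ingredients as the paper (Theriault's factorization of $p^{r(G)}$ through $\prod_{k\in\mathfrak{t}(G)}\Omega S^{2k+1}$ and the $p^{t}$-lift supplied by the cokernel hypothesis), and several steps are sound: the map $\mu=(p^{r(G)},\psi\circ\pi)$ into the homotopy pullback is well defined because $\pi$ is a loop map, the cokernel condition does descend to $E'$ via the Mayer--Vietoris sequence of the pullback, and the final transfer ${\rm exp}(\Omega^{2}E)\leq p^{r(G)}\cdot{\rm exp}(\Omega E')$ from ${\rm proj}\circ\mu\simeq p^{r(G)}$ is correct. The genuine gap is the step you call ``cosmetic'': the loop analog of Lemma \ref{generalstephenlemma} for a fibration over $\prod_{k}\Omega S^{2k+1}$ with the clean conclusion ${\rm exp}(E')\leq p^{t}\cdot\max\{{\rm exp}(\Omega F),p^{l(G)}\}$ is asserted, not proved, and its proof is not a transcription of the original. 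First, you need an actual map $\prod_{k}\Omega S^{2k+1}\to E'$ lifting the power map $p^{t}$, not merely elements of $\pi_{2n_i}(E')$; manufacturing it from degree-$2n_i$ data requires a multiplicative (James) extension of a map $S^{2n_i}\to E'$ over $\Omega S^{2n_i+1}\simeq\Omega\Sigma S^{2n_i}$, which works only because $E'$ is a pullback of loop maps and hence a loop space --- a point you must make explicit. Second, and more seriously, the standard mechanism then bounds ${\rm exp}$ of the total space by ${\rm exp}$ of the fibre of the $p^{t}$-power map on the base times the indicated maximum; Neisendorfer's bound (Lemma \ref{expxclemma}) gives ${\rm exp}({\rm fib}(p^{t}))\leq p^{t}$ only for a \emph{double} loop space, and $\prod_{k}\Omega S^{2k+1}$ is only a single loop space ($S^{2k+1}_{(p)}$ is not a loop space in general). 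The long exact sequence alone gives ${\rm exp}\bigl((\prod_{k}\Omega S^{2k+1})\{p^{t}\}\bigr)\leq p^{t+\min(t,\,l(G))}$, which degrades your final bound by an uncontrolled factor.

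This is precisely what the paper's proof is engineered to avoid. It constructs the lift $f:\prod_{k}S^{2k+1}\to E$ at the \emph{unlooped} level, where the cokernel hypothesis applies verbatim to honest maps from spheres and no multiplicative extension is needed, and then splices $\Omega f$, the power map $p^{t}$, and Theriault's $\psi$ into a single factorization of $p^{t+r(G)}:\Omega G\to\Omega G$ through $\Omega E$. The resulting fibration $\Omega^{2}G\times\Omega^{2}F\to\Omega^{2}E\to\Omega G\{p^{t+r(G)}\}$ sits over the fibre of a power map on $\Omega G=\Omega^{2}BG$, a genuine double loop space, so Lemma \ref{expxclemma} yields the clean factor $p^{t+r(G)}$, and Theorem \ref{Theriaultlieexp} together with ${\rm exp}(\Omega^{2}F)\leq{\rm exp}(F)$ finishes. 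To repair your route you would either have to loop once more and work over $\prod_{k}\Omega^{2}S^{2k+1}$ (where Neisendorfer's bound applies, at the cost of re-verifying the cokernel condition one degree lower), or prove separately that the $p^{t}$-power map on $\bigl(\prod_{k}\Omega S^{2k+1}\bigr)\{p^{t}\}$ is null homotopic, which requires additional homotopy associativity and commutativity input on $S^{2k+1}_{(p)}$. As written, the key intermediate inequality carries the whole weight of the lemma and is unsupported.
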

\begin{proof}
The proof follows the strategy of that of Lemma $2.2$ in \cite{Theriault04} but with a little more effects. First by the condition we can construct a homotopy commutative diagram 
\[
\xymatrix{
\prod\limits_{k\in \mathfrak{t}(G)} S^{2k+1} \ar[r]^{\ \  \ \  \ p^t} \ar[d]^{f} & G\ar@{=}[d] \\
E \ar[r]   & G, 
}
\]
which together with the diagram in Theorem \ref{Theriaultlieexp} can be fitted into a large homotopy commutative diagram 
\[
\xymatrix{
\Omega G\{p^{t+r(G)}\} \ar[r]  \ar[ddd]   &  \Omega G\ar[r]^{p^{t+r(G)}} \ar[d] &  \Omega G\ar@{=}[d] \\
&     \prod\limits_{k\in \mathfrak{t}(G)} \Omega S^{2k+1} \ar[d]^{\lambda} \ar[r]^{\ \ \ \ \ \lambda\cdot p^{t}}  &  \Omega G\ar@{=}[d]\\
 &     \prod\limits_{k\in \mathfrak{t}(G)} \Omega S^{2k+1} \ar[d]^{\Omega f} \ar[r]^{\ \ \ \ \  p^{t}}  &  \Omega G\ar@{=}[d]     \\
\Omega F \ar[r] &\Omega E \ar[r]  & \Omega G.                        
}
\]
The outer homotopy pullback diagram determines a homotopy fibration
\[
\Omega G\{p^{t+r(G)}\} \rightarrow \Omega G\times \Omega F \rightarrow \Omega E,
\]
which implies another fibration 
\[
 \Omega^2 G\times \Omega^2 F \rightarrow \Omega^2 E \rightarrow\Omega G\{p^{t+r(G)}\}.
\]
The lemma then follows from Lemma \ref{expxclemma}.
\end{proof}

\begin{proposition}\label{expformu3}
Let $G$ be a Lie group listed in Table \ref{tablelie2} (let $n\geq 5$ when $G=Spin(n)$). Let $M$ as in Lemma \ref{expformu1}. 
Then we have for any $k\in\mathbb{Z}/\nu_p(d)$
\[
{\rm exp}(\mathcal{G}_k(M))\leq p^{r(G)+\nu_p({\rm ord}(\partial_1))}\cdot {\rm max}\{p^{r(G)+l(G)},  p^{\nu_p(c)}\}.
\]
\end{proposition}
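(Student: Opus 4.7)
The plan is to mirror the argument of Proposition \ref{expformu2}, replacing the appeal to $p$-regularity (and Lemma \ref{generalstephenlemma}) by Theriault's loop-level factorization (Theorem \ref{Theriaultlieexp} and Lemma \ref{newstephenlemma}). First, Lemma \ref{expformu1} reduces the estimate on ${\rm exp}(\mathcal{G}_k(M))$ to the same estimate on ${\rm exp}(\mathcal{G}_k(P^4(c)))$, up to the terms ${\rm exp}(G)$ and $p^{\nu_p(c)}$ already present in the target. The bound ${\rm exp}(G)\leq p^{r(G)+l(G)}$ is immediate from Theorem \ref{Theriaultlieexp}, and within the Theriault range we absorb the correction factor $p^{\epsilon(G,p)}$ (which contributes only in a few small $p=3$ cases) into the same bookkeeping.

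Second, specialize the gauge-group fibration (\ref{sepgaugefibre}) to $Z=P^4(c)$ and use Lemma \ref{mappingmooredes} to identify the fibre ${\rm Map}^*_0(P^4(c),G)\simeq \Omega^3 G\{c\}$, yielding
\[
\Omega^3 G\{c\}\longrightarrow \mathcal{G}_k(P^4(c))\longrightarrow G\xrightarrow{\partial_k}\Omega^2_k G\{c\}.
\]
By Lang's Lemma \ref{langlemma}, $\partial_k\simeq k\partial_1$, so its order divides ${\rm ord}(\partial_1)$; consequently for each $j\in\mathfrak{t}(G)$,
\[
|{\rm coker}(\pi_{2j+1}(\mathcal{G}_k(P^4(c)))\to\pi_{2j+1}(G))|\leq p^{\nu_p({\rm ord}(\partial_1))}.
\]

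Third, apply Lemma \ref{newstephenlemma} to this fibration with $t=\nu_p({\rm ord}(\partial_1))$ and $F=\Omega^3 G\{c\}$. Lemma \ref{expxclemma}, applied to any further looping, gives ${\rm exp}(\Omega^3 G\{c\})\leq p^{\nu_p(c)}$, so the lemma produces
\[
{\rm exp}(\Omega^2\mathcal{G}_k(P^4(c)))\leq p^{r(G)+\nu_p({\rm ord}(\partial_1))}\cdot\max\{p^{\nu_p(c)},\,p^{r(G)+l(G)}\}.
\]
To pass from $\Omega^2\mathcal{G}_k(P^4(c))$ back to $\mathcal{G}_k(P^4(c))$, note that the gauge group is an $H$-space, so its homotopy exponent is controlled by that of $\Omega^2\mathcal{G}_k(P^4(c))$ together with the $p$-exponents of $\pi_0$ and $\pi_1$; these in turn are captured by the long exact sequence of the above fibration (using $\pi_2(G)=0$ for $G$ compact simply connected), which controls them through $\pi_*(\Omega^3 G\{c\})$, hence by $p^{\nu_p(c)}$, and so they already lie within the stated bound. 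Combining with the reduction in the first step gives the desired inequality.

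The main technical obstacle is verifying that the cokernel estimate $\nu_p({\rm ord}(\partial_1))$ suffices uniformly in all the sphere factors $\{S^{2j+1}:j\in\mathfrak{t}(G)\}$ of Theorem \ref{Theriaultlieexp}, and tracking the low-dimensional $\pi_0,\pi_1$ contributions of $\mathcal{G}_k(P^4(c))$ without accidentally inflating the bound; both points are essentially bookkeeping once Lemmas \ref{langlemma}, \ref{expxclemma}, \ref{mappingmooredes} and \ref{newstephenlemma} are in place.
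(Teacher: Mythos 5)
Your proposal follows essentially the same route as the paper: apply Lemma \ref{newstephenlemma} (with $t=\nu_p({\rm ord}(\partial_1))$, justified by Lang's lemma) to the fibration $\Omega^3_0G\{c\}\rightarrow \mathcal{G}_k(P^4(c))\rightarrow G$, bound the fibre by Lemma \ref{expxclemma} and $G$ by Theorem \ref{Theriaultlieexp}, and then feed the resulting bound on $\Omega^2\mathcal{G}_k(P^4(c))$ back through Lemma \ref{expformu1} and the low-dimensional bookkeeping in its proof. Your explicit handling of the $\pi_0,\pi_1$ contributions is just a spelled-out version of what the paper compresses into ``follows easily from Lemma \ref{expformu1} and its proof,'' so the argument is correct and not genuinely different.
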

\begin{proof}
As the proof of Proposition \ref{expformu2}, we can apply Lemma \ref{newstephenlemma} to the homotopy fibre sequence 
\[\Omega^3_0G\{c\}\rightarrow \mathcal{G}_k(P^4(c))\rightarrow G\stackrel{\partial_k}{\rightarrow}\Omega_k^2G\{c\}.\]
Hence by Lemma \ref{Theriaultlieexp} and \ref{expxclemma}, we see that 
\[
{\rm exp}(\Omega^2 \mathcal{G}_k(P^4(c))\leq p^{r(G)+\nu_p({\rm ord}(\partial_1))}\cdot {\rm max}\{p^{r(G)+l(G)},  p^{\nu_p(c)}\}.
\]
The lemma then follows easily from Lemma \ref{expformu1} and its proof.
\end{proof}

Let us make several remarks. Firstly, we notice that Proposition \ref{expformu3} generalizes Proposition \ref{expformu2} which is the special case when $r(G)=0$. Secondly, the conditions of $(G, p)$ in Table \ref{tablelie2} are slightly stronger than those in Table \ref{tables4guageorder}. In particular, ${\rm ord}(\partial_1)$ can be estimated. Thirdly, the number $r(G)$ are computable. For instance, it is known $r(SU(n))=\nu_p((n-1)!)$. By a classical result of Legendre, 
\[
\nu_p((n-1)!)=\sum\limits_{k=1}^\infty \lfloor \frac{n-1}{p^k} \rfloor=\frac{n-1-s_p(n)}{p-1},
\]
where $s_p(n)=a_k+a_{k-1}+\ldots+a_1+a_0$ is the sum of all the digits in the expansion of $n$ in the base $p$. Hence 
\[
r(SU(n))=\nu_p((n-1)!)\leq  \lfloor \frac{n-2}{p-1} \rfloor,
\]
and then is much small than $l(SU(n))=n-1$. Since Harris \cite{Harris61} showed that 
\[
SU(2n)\simeq_{(p)} Sp(n)\times SU(2n)/Sp(n),~{\rm and}~ Spin(2n+1)\simeq_{(p)} Sp(n),
\]
we can get upper bounds of the homotopy exponents of $Sp(n)$ and $Spin(2n+1)$ from that of $SU(n)$.
Also, the homotopy exponents of exceptional Lie groups are known, as summarised in Theorem $1.10$ of \cite{DT08}. 
Lastly, one can work out concrete estimations by applying Proposition \ref{expformu2} and \ref{expformu3} with explicit values of $\tilde{\partial}_1$ in Table \ref{tables4guageorder} and of $r(G)$ in \cite{Theriault07}. The results are summarised in Corollary \ref{coroexpintro1} for the matrix groups and Table \ref{tableexp2intro} for the exceptional Lie groups.
\begin{table}[H]
\centering
\caption{Upper bounds of ${\rm exp}(\mathcal{G}_k(G)$ when $G$ is exceptional}
\begin{tabular}{lp{2.7cm}lp{3.7cm}lp{5.7cm}}
\hline
$G$          &       $p$               &      ${\rm exp}(\mathcal{G}_k(G))\leq ?$ \\ \hline
$G_2$      &     $p=5$           &    ${\rm max}(7, \nu_p(c)+1)$ \\
                 &     $p=7$           &    ${\rm max}(6, \nu_p(c)+1)$ \\
                 &     $p\geq11$           & ${\rm max}(5, \nu_p(c))$ \\ \hline  
$F_4$      &     $p=5$           &    ${\rm max}(15, \nu_p(c)+3)$ \\
                 &     $p=7$           &    ${\rm max}(13, \nu_p(c)+1)$ \\
                 &     $p=11$           & ${\rm max}(13, \nu_p(c)+1)$ \\
                  &     $p=13$           & ${\rm max}(12, \nu_p(c)+1)$ \\
                  &     $p\geq 17$       & ${\rm max}(11, \nu_p(c))$ \\ \hline
$E_6$      &     $p=5$           &    ${\rm max}(15, \nu_p(c)+3)$ \\
                 &     $p=7$           &    ${\rm max}(14, \nu_p(c)+2)$ \\
                 &     $p=11$           & ${\rm max}(13, \nu_p(c)+1)$ \\
                  &     $p=13$           & ${\rm max}(12, \nu_p(c)+1)$ \\
                  &     $p\geq 17$       & ${\rm max}(11, \nu_p(c))$ \\  \hline
$E_7$       &     $p=7$           &    ${\rm max}(22, \nu_p(c)+3)$ \\
                 &     $p=11$           & ${\rm max}(20, \nu_p(c)+2)$ \\
                  &     $p=13$           & ${\rm max}(19, \nu_p(c)+1)$ \\
                  &     $p=17$       & ${\rm max}(19, \nu_p(c)+1)$ \\                                        
                 &     $p=19$       & ${\rm max}(18, \nu_p(c)+1)$ \\  
                 &     $p\geq 23$       & ${\rm max}(17, \nu_p(c))$ \\ \hline
$E_8$       &     $p=7$           &    ${\rm max}(35, \nu_p(c)+4)$ \\
                 &     $p=11$           & ${\rm max}(33, \nu_p(c)+3)$ \\
                  &     $p=13$           & ${\rm max}(32, \nu_p(c)+2)$ \\
                  &     $p=17$       & ${\rm max}(31, \nu_p(c)+1)$ \\                                        
                 &     $p=19$       & ${\rm max}(32, \nu_p(c)+2)$ \\  
                 &     $p=23$       & ${\rm max}(31, \nu_p(c)+1)$ \\ 
                 &     $p=29$       & ${\rm max}(31, \nu_p(c)+1)$ \\          
                 &     $p=31$       & ${\rm max}(30, \nu_p(c)+1)$ \\ 
                 &     $p\geq37$       & ${\rm max}(29, \nu_p(c))$         
\end{tabular}
\label{tableexp2intro}
\end{table}

\section{Periodicity in the homotopy of gauge groups for stable bundles}\label{Bottsection}
\noindent In this section, we consider the homotopy of $\mathcal{G}_k(M)$ localized away from $c$, and then we can expect that we do not need divisibility conditions on $c$.
\begin{theorem}\label{Gauge-c} 
Let $M$ be a five dimensional oriented closed manifold with $\pi_{1}(M)\cong \mathbb{Z}/c$ ($c\neq 0$) and $H_{2}(M; \mathbb{Z})$ is torsion free of rank $m-1$. Let $G$ be a simply connected compact simple Lie group with $\pi_4(G)=0$. Then after localization away from $c$ we have the homotopy equivalences:
\begin{itemize}
\item if $M$ is a spin manifold,
\[\mathcal{G}_k(M)\simeq G\times \Omega^5 G\times \prod_{i=1}^{m-1} (\Omega^2 G  \times \Omega^3 G );\]
\item if $M$ is a non-spin manifold,
\[\mathcal{G}_k(M)\simeq G\times \Omega {\rm Map}_0^\ast(\mathbb{C}P^2, G)\times \prod_{i=1}^{m-1} \Omega^2 G  \times \prod_{i=1}^{m-2} \Omega^3 G,\]
\end{itemize}
where $k\in \mathbb{Z}/c$.
\end{theorem}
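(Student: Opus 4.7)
The plan is to combine a wedge decomposition of $\Sigma M$ (localized away from $c$) with iterated applications of Proposition~\ref{Gsplit} at $i=0$. After localization away from $c$, the group $[M,BG]\cong\mathbb{Z}/c$ from Lemma~\ref{MBG} becomes trivial, so every principal $G$-bundle over $M$ is homotopically trivial and $\mathcal{G}_k(M)\simeq\mathcal{G}_0(M)\simeq\mathrm{Map}(M,G)$; the task is then to decompose this mapping space.

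The key geometric input is the suspension decomposition of $M$. From the cofibre sequence (\ref{hdecomSM4}) together with the contractibility of $P^n(c)$ away from $c$ and the vanishing of $h$ on a wedge of $S^3$'s (forced by $H_3(h)=0$ and Hurewicz), one first obtains $\Sigma M_4\simeq\bigvee_{i=1}^{m-1}(S^3\vee S^4)$ away from $c$. The top cell of $\Sigma M$ then attaches via a suspension class $\alpha\in\pi_5(\Sigma M_4)$; Whitehead products are killed by the suspension, leaving at most an $\eta$-part on the $S^4$-summands and an $\eta^2$-part on the $S^3$-summands. The $\eta$-component corresponds via the suspension isomorphism to the Steenrod operation $Sq^2\colon H^3(M;\mathbb{Z}/2)\to H^5(M;\mathbb{Z}/2)$, which vanishes iff $M$ is spin by the Wu formula, whereas the $\eta^2$-component is detected by a secondary operation $\Phi$ and must also vanish on spin \emph{manifolds} by the Gitler-Stasheff argument replicated in the proof of Lemma~\ref{HtypeT}. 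For non-spin $M$, the matrix manipulation of Lemma~\ref{gaugeL} shows that any $\eta^2$-contribution can be absorbed into the $\eta$-contribution, leaving (after a change of basis in the wedge) a single $\eta\colon S^5\to S^4$ summand. Hence, away from $c$:
\[
\Sigma M\simeq\begin{cases}\bigvee^{m-1}(S^3\vee S^4)\vee S^6 & \text{if }M\text{ is spin,}\\ \bigvee^{m-1}S^3\vee\bigvee^{m-2}S^4\vee\Sigma^2\mathbb{C}P^2 & \text{if }M\text{ is non-spin.}\end{cases}
\]

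With this splitting in hand, I would apply Proposition~\ref{Gsplit} with $i=0$ in stages: first with $\phi_1\colon\bigvee^{m-1}S^2\hookrightarrow M$ (the 2-skeleton inclusion, available after localization), then with $\phi_2\colon\bigvee^{m-1}S^3\to X_1$ in the spin case (respectively $\bigvee^{m-2}S^3$ in the non-spin case). The hypothesis $[Y_i,BG]=0$ holds because $\pi_1(G)=\pi_2(G)=0$, and the required left homotopy inverses of $\Sigma\phi_i$ are furnished by the wedge decomposition above. The residual complex $X_2$ is either $S^5$ or $\Sigma\mathbb{C}P^2$, both satisfying $[X_2,BG]=0$ (using $\pi_4(G)=0$), so $\mathcal{G}_0(X_2)\simeq G\times\mathrm{Map}_0^*(X_2,G)$. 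Assembling via $\mathrm{Map}_0^*(S^n,G)\simeq\Omega^n G$ and $\mathrm{Map}_0^*(\Sigma\mathbb{C}P^2,G)\simeq\Omega\mathrm{Map}_0^*(\mathbb{C}P^2,G)$ produces the two stated decompositions.

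The main obstacle I anticipate is the vanishing of the $\eta^2$-component of $\alpha$ in the spin case, which requires transplanting the Gitler-Stasheff secondary-operation argument from the 3-cell Poincar\'e duality complex of Lemma~\ref{HtypeT} to the present multi-cell setting; a subsidiary concern is that Proposition~\ref{Gsplit} at $i=0$ insists on $\Sigma\phi$ (rather than some higher suspension) admitting a left homotopy inverse, so the full proof must establish the wedge splitting of $\Sigma M$ itself before invoking the proposition.
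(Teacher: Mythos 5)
Your proposal follows essentially the same route as the paper's proof: localize away from $c$ to kill the Moore space summands so that $\Sigma M_4\simeq\bigvee_{i=1}^{m-1}(S^3\vee S^4)$, split $\Sigma M$ into spheres plus the suspension of a three-cell complex $T=(S^2\vee S^3)\cup e^5$, identify $T$ via $Sq^2$ and the Wu formula (spin versus non-spin) while excluding the $S^3\vee S^2\cup_{\eta^2}e^5$ case by the Gitler--Stasheff secondary-operation argument of Lemma~\ref{HtypeT}, and then apply Proposition~\ref{Gsplit} at $i=0$ together with the triviality of all bundles after localization. The two points you flag as obstacles are exactly the ones the paper resolves by citing Lemma~\ref{HtypeT} and Lemma~\ref{Sigmasplit}, so your outline is correct and matches the paper's argument.
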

\begin{proof}
Recall in Section \ref{Sigma2Msection} we have the homotopy cofibre sequence (\ref{hdecomSM4})
\[P^4(c)\vee\bigvee_{j=1}^{m-1} S_j^3\stackrel{h}{\longrightarrow} P^3(c)\vee\bigvee_{i=1}^{m-1}S_i^3 \longrightarrow  \Sigma M_4
\]
such that $H_3(h)=0$. After localization from $c$ we see that $P^n(c)$ is contractible and 
\[\Sigma M_4 \simeq \bigvee_{i=1}^{m-1} (S^4\vee S^3).\]
Then by Lemma \ref{Sigmasplit}, we have the homotopy equivalence  
\[\Sigma M \simeq \bigvee_{i=1}^{m-2} (S^4\vee S^3)\vee \Sigma T,\]
where $T\simeq (S^3\vee S^2)\cup e^5$ as in Lemma \ref{gaugeL}. Then by similar argument there and in Lemma \ref{HtypeT} and Theorem \ref{gaugedecom6c} we can prove this theorem
(we also use the fact that $G$ splits off the gauge group of any trivial principal bundle).
\end{proof}

We want use Theorem \ref{Gauge-c} to study the periodicity phenomena in the homotopy groups of gauge groups for $G=SU(n)$ and $Spin(n)$. Let us denote $\mathcal{G}_k(G)=\mathcal{G}_k(M)$ to emphasize the group $G$ in these two cases.
Recall the Bott periodicity showed in the following table:
\begin{table}[H]
\centering
\captionof{table}{$\pi_r(SU(n))$ ($2n\geq r+1 $) and $\pi_r(Spin(n))$ ($n\geq r+2 \geq 4$)}
\begin{tabular}{c | c | c }
  $r~{\rm mod}~2$ &  $0$ & $1$  \\ \hline
$\pi_r(SU(n))$ &$0$ &$\mathbb{Z}$ 
\end{tabular}
\ \ \ 
\begin{tabular}{c | c | c | c | c | c | c | c | c }
 $r~{\rm mod}~8$ &  $0$ & $1$ & $2$ & $3$ & $4$ & $5$ & $6$ & $7$ \\ \hline
$\pi_r(Spin(n))$ &$\mathbb{Z}/2$ &$\mathbb{Z}/2$ & $0$ &$\mathbb{Z}$ & $0$ & $0$ &$0$ &$\mathbb{Z}$
\end{tabular}
\label{Bott}
\end{table}
Then it is easy to show that either for spin manifold $M$ after localization away from $c$ or for non-spin manifold $M$ after localization away from $2c$
\begin{equation}\label{stablegaugesu}
\pi_r(\mathcal{G}_k(SU(n)))\cong \oplus_{m}\mathbb{Z}
\end{equation}
for any $n\geq \frac{r}{2}+3$. Similarly, for $G=Spin(n)$ we need to consider two cases. When $M$ is spin, we have Table \ref{stablegaugespin1} after localization away from $c$:
\begin{table}[H]
\centering
\captionof{table}{$\pi_r(\mathcal{G}_k(Spin(n)))$ ($n\geq r+7\geq 9$) when $M$ is spin}
\begin{tabular}{c|c|c|c|c}
$r~{\rm mod}~8$ &  $0$ & $1$ & $2$ & $3$ \\ \hline
$\pi_r(\mathcal{G}_k(Spin(n)))$ &$\oplus_{m-1} \mathbb{Z}\oplus\mathbb{Z}/2$ 
&$\oplus_{m-1} \mathbb{Z}\oplus\mathbb{Z}/2$
&$\mathbb{Z}$
&$\mathbb{Z}\oplus\mathbb{Z}/2$
\\ \hline
&$4$ &  $5$ & $6$ & $7$ \\ \hline
&$\oplus_{m-1} \mathbb{Z}\oplus\mathbb{Z}/2$
&$\oplus_{m-1} (\mathbb{Z}\oplus\mathbb{Z}/2)$
&$\mathbb{Z}\oplus\oplus_{2m-2}\mathbb{Z}/2$
&$\mathbb{Z}\oplus\oplus_{m-1}\mathbb{Z}/2$ \\
\end{tabular}
\label{stablegaugespin1}
\end{table}
On the other hand, when $M$ is a non-spin manifold, we can take localization of $M$ away from $2c$ and get Table \ref{stablegaugespin2}.
\begin{table}[H]
\centering
\captionof{table}{$\pi_r(\mathcal{G}_k(Spin(n)))$ ($n\geq r+7\geq 9$) localized away from $2c$}
\begin{tabular}{c|c|c|c|c}
$r~{\rm mod}~4$ &  $0$ & $1$ & $2$ & $3$ \\ \hline
$\pi_r(\mathcal{G}_k(Spin(n)))$ &$\oplus_{m-1} \mathbb{Z}$
&$\oplus_{m-1} \mathbb{Z}$
&$\mathbb{Z}$
&$\mathbb{Z}$
\end{tabular}
\label{stablegaugespin2}
\end{table}

%%%%%%%%%%%%%%%%%%%%%%%%%%%%%%%%%%%%%%%%%%%%%%%%%%%%%%%%%%%%%%%%%%%%%%

%%%%%%%%%%%%%%%%%%%%%%%%%%%%%%%%%%%%%%%%%%%%%%%%%%%%%%%
%%% Acknowledgements. ÖÂÐ»
%%%%%%%%%%%%%%%%%%%%%%%%%%%%%%%%%%%%%%%%%%%%%%%%%%%%%%%
$\, $ 

\Acknowledgements{The author is supported by Postdoctoral International Exchange Program for Incoming Postdoctoral Students under Chinese Postdoctoral Council and Chinese Postdoctoral Science Foundation.
He is also supported in part by Chinese Postdoctoral Science Foundation (Grant No. 2018M631605), and National Natural Science Foundation of China (Grant No. 11801544).
\newline
$\, $ 
The author would like to thank Haibao Duan for bringing the non-simply connected $5$-manifolds to his attention. He is indebted to Daisuke Kishimoto for suggestions about adding more applications of the decompositions on an early version of this paper. He also wants to thank Fred Cohen, and the anonymous referees most warmly for their careful reading of the manuscript and many valuable suggestions which have improved the paper.}

%%%%%%%%%%%%%%%%%%%%%%%%%%%%%%%%%%%%%%%%%%%%%%%%%%%%%%%
%%% Conflict of interest. ×÷ÕßÀûÒæÉùÃ÷
%%%%%%%%%%%%%%%%%%%%%%%%%%%%%%%%%%%%%%%%%%%%%%%%%%%%%%%
%\InterestConflict

%%%%%%%%%%%%%%%%%%%%%%%%%%%%%%%%%%%%%%%%%%%%%%%%%%%%%%%
%%% Supplements. ²¹³ä²ÄÁÏ, ·Ç±ØÑ¡
%%%%%%%%%%%%%%%%%%%%%%%%%%%%%%%%%%%%%%%%%%%%%%%%%%%%%%%
%\Supplements{}

%%%%%%%%%%%%%%%%%%%%%%%%%%%%%%%%%%%%%%%%%%%%%%%%%%%%%%%
%%% Reference section. ²Î¿ŒÎÄÏ×
%%% citation in the content using "some words~\cite{1,2}".
%%% ~ is needed to make the reference number is on the same line with the word before it.
%%%%%%%%%%%%%%%%%%%%%%%%%%%%%%%%%%%%%%%%%%%%%%%%%%%%%%%

%%%%%%%%%%%%%%%%%%%%%%%%%%%%%%%%%%%%%%%%%%%%%%%%%%%%%%%
%%% Appendix sections. žœÂŒÕÂœÚ, ·Ç±ØÑ¡
%%%%%%%%%%%%%%%%%%%%%%%%%%%%%%%%%%%%%%%%%%%%%%%%%%%%%%%
\begin{appendix}
\section{\label{Qdecomsec}}
We complete the paper with an appendix to show that our methods can reprove the theorem of F\'{e}lix and Oprea \cite{FO09} for rational gauge groups from a different point of view, and also in a slightly larger context.

Let $X$ be a connected $CW$-complex of finite type such that $X$ is rationally simply connected. Denote the Hilbert series of $X$ by 
\[P_X(t)=\sum\limits_{i=0}^{\infty}b_it^i,\]
where $b_i={\rm dim}H^i(X; \mathbb{Q})$ is the $i$-th Betti number. Since rationally $\Sigma X$ is a wedge of spheres (see Theorem $24.5$ of \cite{FHT01}), we see that 
\[\Sigma X\simeq_{\mathbb{Q}} \bigvee_{i=1}^{\infty}\bigvee_{b_i}S^{i+1}.\]

For any principal bundle $G\rightarrow P\rightarrow X$ classified by $\alpha\in [X,BG]$, as we use $\mathcal{G}_\alpha(X)$ to denote $\mathcal{G}(P)$, let us denote $\mathcal{G}^{\ast}_\alpha(X)$ to be the based gauge groups $\mathcal{G}^\ast(P)$.
\begin{theorem}[cf. Theorem $2.3$ of \cite{FO09}]\label{Qgauge}
Let $X$ be a connected $CW$-complex of finite type such that $X$ is rationally simply connected and $G$ be any connected topological group with homotopy type of a $CW$-complex of finite type. 
There are rational homotopy equivalences for any $\alpha\in [X, BG]$
\[
\mathcal{G}_\alpha(X)\simeq_{\mathbb{Q}}\prod_{i=0}^{\infty}\prod_{b_i}\Omega^i G, \ \ \ \ 
\mathcal{G}^{\ast}_\alpha(X)\simeq_{\mathbb{Q}}\prod_{i=1}^{\infty}\prod_{b_i}\Omega^i G,
\]
with the convention $\Omega^0G=G$ and $\Omega^iG=\ast$ if $b_i=0$.
\end{theorem}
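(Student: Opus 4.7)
The plan is to combine two rational splittings: one of $\Sigma X$ into a wedge of spheres, and one of $BG$ into a product of Eilenberg-MacLane spaces. The latter is available because $G$ is a connected topological group of finite CW type, hence a rational $H$-space of finite type; by Cartan-Serre-Milnor-Moore the rational cohomology of $G$ is an exterior algebra on odd-degree generators, giving $G \simeq_{\mathbb{Q}} \prod_{n \in \mathfrak{t}(G)} K(\mathbb{Q}, 2n+1)$ and accordingly $BG \simeq_{\mathbb{Q}} \prod_{n \in \mathfrak{t}(G)} K(\mathbb{Q}, 2n+2)$.

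First I would reduce the problem to the trivial bundle. Applying ${\rm Map}(X, -)$ to the rational product decomposition of $BG$ yields ${\rm Map}(X, BG) \simeq_{\mathbb{Q}} \prod_{n} {\rm Map}(X, K(\mathbb{Q}, 2n+2))$. Each factor is itself a topological abelian group (being the mapping space into an abelian group), whose path components correspond bijectively to $H^{2n+2}(X; \mathbb{Q})$ and are all homotopy equivalent via translation by a fixed element. Consequently ${\rm Map}_\alpha(X, BG) \simeq_{\mathbb{Q}} {\rm Map}_0(X, BG)$ for every $\alpha \in [X, BG]$, and by the Atiyah-Bott equivalence (\ref{introABG}) this gives $\mathcal{G}_\alpha(X) \simeq_{\mathbb{Q}} \mathcal{G}_0(X)$, together with the based analogue $\mathcal{G}^{\ast}_\alpha(X) \simeq_{\mathbb{Q}} \mathcal{G}^{\ast}_0(X)$.

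Next I would compute the trivial case. The hypothesis that $X$ is rationally simply connected and of finite type yields the quoted rational equivalence $\Sigma X \simeq_{\mathbb{Q}} \bigvee_{i \geq 1} \bigvee_{b_i} S^{i+1}$. Combining this with the adjunction ${\rm Map}^{\ast}(X, G) \simeq {\rm Map}^{\ast}(\Sigma X, BG)$ and the identity $\Omega BG \simeq G$ produces
\[
{\rm Map}^{\ast}_0(X, G) \simeq_{\mathbb{Q}} \prod_{i \geq 1} \prod_{b_i} \Omega^{i+1} BG \simeq \prod_{i \geq 1} \prod_{b_i} \Omega^i G.
\]
For the trivial bundle, the evaluation fibration ${\rm Map}^{\ast}(X, G) \to \mathcal{G}_0(X) = {\rm Map}(X, G) \to G$ splits via the inclusion of constant maps, giving the first formula. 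The second formula follows from the identification $\mathcal{G}^{\ast}_0(X) \simeq {\rm Map}^{\ast}(X, G)$, together with the component-independence from step one.

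The main obstacle will be justifying rigorously the translation argument in the reduction step. The rational equivalence $BG \simeq_{\mathbb{Q}} \prod K(\mathbb{Q}, 2n+2)$ is not usually realized by a map of topological groups, so the translation must be interpreted after rationalization: one first rationalizes $BG$, observes that each rationalized factor is a genuine topological abelian group, and transports the components of the mapping space via the resulting $H$-space multiplication. A cleaner alternative is to invoke the general fact that every path component of a grouplike $H$-space is homotopy equivalent to the identity component; applied to the rationalization of ${\rm Map}(X, BG)$, which inherits an $H$-space structure from $BG_{\mathbb{Q}}$, this directly yields the component-independence we need.
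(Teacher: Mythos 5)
Your computation of the based gauge group via $\Sigma X\simeq_{\mathbb{Q}}\bigvee_i\bigvee_{b_i}S^{i+1}$, and your splitting of the evaluation fibration for the trivial bundle by constant maps, are both fine and agree with what the paper does. The genuine gap is in your reduction of $\mathcal{G}_\alpha(X)$ to $\mathcal{G}_0(X)$, which rests on the claims that $H^\ast(G;\mathbb{Q})$ is an exterior algebra on odd generators and hence that $BG\simeq_{\mathbb{Q}}\prod_nK(\mathbb{Q},2n+2)$. For a general connected topological group of finite CW type the first claim is already false: Hopf's theorem only gives that $H^\ast(G;\mathbb{Q})$ is free graded-commutative, and even polynomial generators occur (this is exactly why the paper records $H^\ast(G;\mathbb{Q})\cong\otimes_i\Lambda(a_i)\otimes\otimes_j\mathbb{Q}[b_j]$ in (\ref{HEM})). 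Worse, although $G_{\mathbb{Q}}$ is always a product of Eilenberg--Mac Lane spaces, $BG_{\mathbb{Q}}$ need not be: take $G$ to be a topological group model of $\Omega S^{2}$, so that $BG\simeq S^{2}$, whose rationalization carries the nontrivial Whitehead product $[\iota_2,\iota_2]$ and in particular is not an $H$-space. This breaks both your translation argument and your proposed fix via ``all components of a grouplike $H$-space are equivalent,'' since ${\rm Map}(X,BG_{\mathbb{Q}})$ then inherits no $H$-structure from the target. Your argument does prove the theorem under the extra hypothesis that $H^\ast(G;\mathbb{Q})$ is finite-dimensional (then $\pi_\ast(BG)\otimes\mathbb{Q}$ is concentrated in even degrees, forcing the minimal model of $BG$ to have zero differential, so $BG_{\mathbb{Q}}$ really is a product of even Eilenberg--Mac Lane spaces); that is, it recovers F\'elix--Oprea's original Theorem 2.3 for Lie groups, but not the stated generalization, which is the whole point of the appendix.

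The paper avoids this by working entirely on the source side: it inductively splits bottom spheres off $X$ using the rational version of Proposition \ref{Gsplit} with $i=1$ (possible because $\Sigma X$ is rationally a wedge of spheres), obtaining $\mathcal{G}_\alpha(X)\simeq_{\mathbb{Q}}\prod_{i=1}^{k}\prod_{b_i}\Omega^iG\times\mathcal{G}_\alpha(X_k)$ for every $k$, and then uses the resulting retraction to split the extension $1\to\mathcal{G}^{\ast}_\alpha(X)\to\mathcal{G}_\alpha(X)\to G\to 1$ rationally. That argument is uniform in $\alpha$ and uses nothing about the rational homotopy type of $BG$ beyond $G$ being grouplike. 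To repair your proof in the stated generality you would need to replace the target-side reduction by such a source-side argument.
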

\begin{proof}
First by Atiyah-Bott \cite{AB83}, 
\[\mathcal{G}_{\alpha}^{\ast}(X)\simeq {\rm Map}^{\ast}_{0}(\Sigma X, BG).\]
Hence it is easy to see rationally 
\[\mathcal{G}^{\ast}_\alpha(X)\simeq_{\mathbb{Q}}\prod_{i=1}^{\infty}\prod_{b_i}\Omega^i G.\]

For the remaining part $\mathcal{G}_\alpha(X)$, the basic idea of the proof is similar to that of Theorem \ref{gaugedecom6c} and Theorem \ref{gaugedecomPoincare}, i.e., to apply the rational version of Proposition \ref{Gsplit} (for $i=1$) inductively.
At each stage we choose a inclusion $Y_{(k)}=S^n\rightarrow X_{(k)}$ for some bottom cell of $X_{(k)}$, and $Y_{(k-1)}\rightarrow X_{(k-1)}\rightarrow X_{(k)}$ is a cofibre sequence with $X_{(1)}=X$.

(To apply the induction, we only need to notice that the condition $[Y, BG]=0$ is not necessary here (which is only for tracing the components in the proof of Proposition \ref{Gsplit}), since here $Y$ is always a suspension and the components of ${\rm Map}_0^{\ast}(Y, BG)$ are homotopy equivalent to each other.) 

In particular, for each $k$ we get 
\[\mathcal{G}_\alpha(X)\simeq_{\mathbb{Q}} \prod_{i=1}^{k}\prod_{b_i}\Omega^i G\times \mathcal{G}_\alpha(X_k),\]
where $X_k$ is a space constructed by the inductive procedure. Then we have two maps
\[i: \mathcal{G}^{\ast}_\alpha(X)\simeq_{\mathbb{Q}}\prod_{i=1}^{\infty}\prod_{b_i}\Omega^i G\rightarrow \mathcal{G}_\alpha(X) \ \ \  r: \mathcal{G}_\alpha(X)\rightarrow  \prod_{i=1}^{\infty}\prod_{b_i}\Omega^i G,\]
such that $r\circ i\simeq {\rm id}$. Then the natural extension of topological groups 
\[1\rightarrow \mathcal{G}^{\ast}_\alpha(X)\rightarrow \mathcal{G}_\alpha(X)\rightarrow G\rightarrow1\]
splits rationally (as spaces), and we have completed the proof of the theorem.
\end{proof}
\begin{remark}
\begin{itemize}
\item We should notice that in the homotopy decompositions of $\mathcal{G}_\alpha(X)$ and $\mathcal{G}^{\ast}_\alpha(X)$ in Theorem \ref{Qgauge} and its proof, $\Omega^i G$ will be contractible if $G$ is with finite dimensional rational homology and $i$ is large enough. 
\item Theorem \ref{Qgauge} generalizes Theorem $2.3$ of \cite{FO09}, which is proved for finite bases and Lie groups.
\end{itemize}
\end{remark}

Suppose $P$ is a principal $G$-bundle classifying by $\alpha\in [X=M, BG]$, where $M$ is a closed oriented smooth Riemannian $n$-manifold. Let $\mathcal{A}(P)$ be the space of connections on $P$. Then $\mathcal{A}(P)$ admits a $\mathcal{G}(P)$-action by a standard pullback construction. However the action is not free in general. On the contrast, the based gauge group $\mathcal{G}^\ast(P)$ acts freely on the spaces of connections $\mathcal{A}(P)$ (e.g., see Section $1$ of \cite{CM94}) and there is a principle bundle 
\[\mathcal{G}^\ast(P)\rightarrow \mathcal{A}(P)\rightarrow \mathcal{B}^\ast(P)=\mathcal{A}(P)/\mathcal{G}^\ast(P),\]
which is indeed universal since $\mathcal{A}(P)$ is affine and then contractible. In particular,
\[\mathcal{B}^\ast(P)\simeq B\mathcal{G}^\ast(P)\simeq B{\rm Map}_{\alpha}(M, BG).\]
By the arguments in Section $15. f$ \cite{FHT01}, $BG$ is an $H$-spaces when $G$ is a path connected topological group with finite dimensional rational homology (which holds for all Lie groups).
Now by the similar argument in Theorem \ref{Qgauge} it is easy to determine the rational homotopy type of $\mathcal{B}^\ast(P)$. We may also denote $\mathcal{B}^\ast_\alpha(M)=\mathcal{B}^\ast(P)$.
\begin{corollary}\label{Qconnection}
Let $M$ as described, and $G$ be a path connected topological group with finite dimensional rational homology. 
There are rational homotopy equivalences for any $\alpha\in [M, BG]$
\[
\mathcal{B}^\ast_\alpha(M)\simeq_{\mathbb{Q}}\prod_{i=1}^{\infty}\prod_{b_i}\Omega^{i-1} G.
\]
\end{corollary}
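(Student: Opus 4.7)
The plan is to mimic the proof of Theorem \ref{Qgauge}, running the same inductive splitting argument one level up, at $\mathcal{B}^\ast_\alpha$ instead of $\mathcal{G}^\ast_\alpha$. First I would identify $\mathcal{B}^\ast_\alpha(M) \simeq B\mathcal{G}^\ast_\alpha(M) \simeq {\rm Map}^\ast_\alpha(M, BG)$, the second equivalence being the classifying-space version of Atiyah-Bott together with the fibre sequence $\mathcal{G}^\ast(P) \to \mathcal{G}(P) \to G$. Because $G$ has finite-dimensional rational homology, $BG$ is rationally an $H$-space by Section $15.f$ of \cite{FHT01}, so translation by constant maps makes all components of ${\rm Map}^\ast(M, BG)$ rationally homotopy equivalent, reducing the problem to computing ${\rm Map}^\ast_0(M, BG)$.

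Next I would run the inductive procedure of Theorem \ref{Qgauge}: at each stage pick a cofibre sequence $Y_{(k)} = S^{n_k} \hookrightarrow X_{(k)} \to X_{(k+1)}$ stripping off a bottom rational cell, beginning with $X_{(1)} = M$. Since $\Sigma M$, and inductively each $\Sigma X_{(k)}$, is rationally a wedge of spheres, the map $\Sigma \phi_k$ admits a rational homotopy section. Applying ${\rm Map}^\ast(-, BG)$ to the cofibre sequence yields the fibre sequence
\[
{\rm Map}^\ast_0(X_{(k+1)}, BG) \to {\rm Map}^\ast_0(X_{(k)}, BG) \to {\rm Map}^\ast_0(Y_{(k)}, BG),
\]
and the claim is that this fibre sequence splits rationally. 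Granting this, iteration together with ${\rm Map}^\ast_0(S^n, BG) = \Omega^n BG \simeq \Omega^{n-1} G$ produces
\[
{\rm Map}^\ast_0(M, BG) \simeq_{\mathbb{Q}} \prod_{k} \Omega^{n_k} BG \simeq_{\mathbb{Q}} \prod_{i=1}^\infty \prod_{b_i} \Omega^{i-1} G,
\]
which is the statement of the corollary.

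The main obstacle is establishing this rational splitting. Proposition \ref{Gsplit} is formulated at the level of $\Omega^i \mathcal{G}_\alpha$ and requires $\Sigma^{i+1} \phi$ to admit a homotopy section; at $\mathcal{B}^\ast$ this effectively demands a section of $\phi$ itself, not merely of $\Sigma \phi$, and the latter is all that the rational wedge structure of $\Sigma M$ supplies. The rational $H$-space structure on $BG$ is precisely what bridges this gap: one can invoke the Eilenberg-MacLane decomposition $BG \simeq_{\mathbb{Q}} \prod_j K(\mathbb{Q}, 2n_j+2)$ (where $\{n_j\}$ is the type of $G$) to reduce to computations of ${\rm Map}^\ast(-, K(\mathbb{Q}, n)) = \prod_k K(H^{n-k}(-; \mathbb{Q}), k)$, which depend only on cohomology; the suspension section provides the cohomology surjectivity $H^*(X_{(k)}; \mathbb{Q}) \twoheadrightarrow H^*(Y_{(k)}; \mathbb{Q})$ that makes the relevant fibre sequence split. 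A parallel route is uniqueness of rational delooping of nilpotent spaces: applying $B$ directly to the equivalence $\mathcal{G}^\ast_\alpha(M) \simeq_{\mathbb{Q}} \prod_i \prod_{b_i} \Omega^i G$ of Theorem \ref{Qgauge} and commuting $B$ past the product yields the same conclusion.
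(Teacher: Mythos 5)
Your proposal is correct and, at its core, uses the same mechanism as the paper: identify $\mathcal{B}^\ast_\alpha(M)\simeq B\mathcal{G}^\ast_\alpha(M)\simeq {\rm Map}^\ast_\alpha(M,BG)$, note that $BG$ is rationally an $H$-space and hence a product of Eilenberg-Mac\,Lane spaces, and compute ${\rm Map}^\ast(-,K(\mathbb{Q},n))$ cohomologically. The inductive cell-stripping scaffolding you wrap around this is harmless but superfluous --- once $BG\simeq_{\mathbb{Q}}\prod_j K(\mathbb{Q},m_j)$ is in hand, the product formula follows in one step, which is exactly how the paper argues.
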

\begin{proof}
The corollary follows easily by applying Proposition $15.15$ of \cite{FHT01} and the fact that $H$-spaces are formal and then are products of Eilenberg-Mac Lane spaces rationally (also see the argument right after the corollary). 
\end{proof}

It is a standard result in rational homotopy theory (Section $16.c$ of \cite{FHT01}) that any $H$-space is rationally homotopy equivalent to a product of Eilenberg-Mac Lane spaces. Moreover for our topological group $G$ with cohomology algebra 
\[H^\ast(G; \mathbb{Q})\cong \otimes_{i}\Lambda (a_i) \otimes \otimes_{j} \mathbb{Q}[b_j],\]
where $\Lambda (a_i)$ are exterior algebras with generators $a_i$ of degree $2n_i+1$, and $\mathbb{Q}[b_j]$ are polynomial algebras with generators $b_j$ of degree $2m_j$,
we have 
\begin{equation}\label{HEM}
G\simeq_{\mathbb{Q}} \prod_{i} S^{2n_i+1}\times \prod_{j} K(\mathbb{Q}, 2m_j).
\end{equation}
Hence by Theorem \ref{Qgauge} it is easy to get explicit descriptions of the rational homotopy types of $\mathcal{G}_\alpha(X)$ and $\mathcal{G}^{\ast}_\alpha(X)$. 
\begin{corollary}\label{QgaugeEM}
Let $X$ and $G$ as before. Then we have rationally homotopy equivalences 
\[
\mathcal{G}_\alpha(X)\simeq_{\mathbb{Q}}\prod_{\begin{subarray}{l}
i,j,k\\
k\geq0 \end{subarray}}
\big( \prod_{b_{2k+1}}(S^{2m_j-2k-1}\times
K(\mathbb{Q}, 2n_i-2k)) \times \prod_{b_{2k}} (S^{2n_i-2k+1} \times K(\mathbb{Q}, 2m_j-2k))\big),
\]
\[
\mathcal{G}^{\ast}_\alpha(X)\simeq_{\mathbb{Q}}\prod_{\begin{subarray}{l}
i,j,k\\
k\neq 0 \end{subarray}}
\big( \prod_{b_{2k+1}}(S^{2m_j-2k-1}\times
K(\mathbb{Q}, 2n_i-2k)) \times \prod_{b_{2k}} (S^{2n_i-2k+1} \times K(\mathbb{Q}, 2m_j-2k))\big).
\]
\end{corollary}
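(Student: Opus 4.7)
The plan is to combine Theorem \ref{Qgauge} with the rational decomposition \eqref{HEM} of $G$ and the standard computation of iterated loop spaces of odd spheres and even-dimensional Eilenberg--MacLane spaces. Since Theorem \ref{Qgauge} already reduces $\mathcal{G}_\alpha(X)$ and $\mathcal{G}^\ast_\alpha(X)$ rationally to explicit products of $\Omega^iG$, it suffices to compute $\Omega^i G$ rationally for each $i\geq 0$ and then reassemble. Using \eqref{HEM}, together with the facts that looping commutes with products and preserves rational equivalences between simply connected spaces of finite type, this computation reduces to computing $\Omega^s S^{2n+1}$ and $\Omega^s K(\mathbb{Q},2m)$ separately.

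The conceptual input is the rational equivalence $S^{2n+1}\simeq_{\mathbb{Q}} K(\mathbb{Q},2n+1)$, which is standard because both sides are simply connected, of finite type, with rational homotopy concentrated in the single odd degree $2n+1$ and with matching rational cohomology $\Lambda(x_{2n+1})$. Combined with the elementary identity $\Omega K(\mathbb{Q},m)\simeq K(\mathbb{Q},m-1)$ (with the convention $K(\mathbb{Q},\ell)=\ast$ for $\ell\leq 0$), this yields the four computations $\Omega^{2k}S^{2n_i+1}\simeq_{\mathbb{Q}}S^{2(n_i-k)+1}$, $\Omega^{2k+1}S^{2n_i+1}\simeq_{\mathbb{Q}}K(\mathbb{Q},2(n_i-k))$, $\Omega^{2k}K(\mathbb{Q},2m_j)=K(\mathbb{Q},2(m_j-k))$, and $\Omega^{2k+1}K(\mathbb{Q},2m_j)\simeq_{\mathbb{Q}}S^{2m_j-2k-1}$.

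The next step is simply to substitute these four expressions into the two formulas of Theorem \ref{Qgauge}. Splitting the outer product $\prod_{i\geq 0}\prod_{b_i}\Omega^iG$ according to the parity of $i=2k$ or $i=2k+1$, and distributing the factors of $G$ from \eqref{HEM}, matches each Betti number $b_{2k}$ or $b_{2k+1}$ with the product of the corresponding $S^{2(n_i-k)+1}$ and $K(\mathbb{Q},2(m_j-k))$, respectively $K(\mathbb{Q},2(n_i-k))$ and $S^{2m_j-2k-1}$. This is precisely the product appearing in the statement, with the understanding that factors of non-positive degree are contractible and drop out. The formula for $\mathcal{G}^\ast_\alpha(X)$ is identical except that the $k=0$ even-loop contribution (corresponding to $\Omega^0G=G$) is removed, in accord with the difference between the two displays of Theorem \ref{Qgauge}.

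There is essentially no obstacle: the only non-formal step is the rational equivalence $S^{2n+1}\simeq_{\mathbb{Q}}K(\mathbb{Q},2n+1)$, which is completely standard, and the rest is bookkeeping under the substitution. The only point requiring mild care is that all spaces involved are simply connected (or at worst nilpotent) of finite type, so that looping preserves rational equivalence, and that one allows the convention $S^{\ell}=K(\mathbb{Q},\ell)=\ast$ for $\ell\leq 0$ to handle factors eventually wiped out by over-looping.
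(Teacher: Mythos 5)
Your proposal is correct and follows exactly the route the paper intends: substitute the rational splitting \eqref{HEM} of $G$ into the product formulas of Theorem \ref{Qgauge} and evaluate the iterated loop spaces via $S^{2n+1}\simeq_{\mathbb{Q}}K(\mathbb{Q},2n+1)$ and $\Omega K(\mathbb{Q},m)\simeq K(\mathbb{Q},m-1)$. The paper leaves these substitutions implicit ("it is easy to get explicit descriptions"), and your four loop-space computations and the bookkeeping by parity of $i$ supply precisely the missing details; your reading that only the $\Omega^0G=G$ factor is dropped for $\mathcal{G}^\ast_\alpha(X)$ is the correct interpretation of the statement, consistent with Theorem \ref{Qgauge}.
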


\begin{remark}
\begin{itemize}
\item With Corollary \ref{QgaugeEM} it is easy to check the rational groups of $\mathcal{G}_\alpha(X)$ and $\mathcal{G}^{\ast}_\alpha(X)$ satisfy the formulas (Theorem $3.1$ of \cite{FO09} by F\'{e}lix and Oprea)
\begin{eqnarray*}
\pi_{q}(\mathcal{G}_\alpha(X))\otimes \mathbb{Q} &\cong& \sum\limits_{r\geq 0}H^r(X; \mathbb{Q})\otimes \pi_{r+q}(G), \\  
\pi_{q}(\mathcal{G}^\ast_\alpha(X))\otimes \mathbb{Q} &\cong& \sum\limits_{r\geq 0}\tilde{H}^r(X; \mathbb{Q})\otimes \pi_{r+q}(G).
\end{eqnarray*}
Notice that our formulas hold for complexes of finite type and then can be applied to the universal $G$-bundle immediately (cf. Section $4$ of \cite{FO09} where F\'{e}lix and Oprea proved this using other method since their formulas hold only for finite complexes). Then as in \cite{FO09}, by a classic description of gauge groups, i.e.
\[\mathcal{G}_\alpha(X)\cong {\rm Map}_{G}(EG, G),\] 
we also get formulas for the homotopy groups of the homotopy fixed set $G^{hG}={\rm Map}_{G}(EG, G)$ but for any connected topological group of finite type instead of only Lie groups:
\[\pi_q(G^{hG})\otimes \mathbb{Q}\cong \sum\limits_{r\geq 0}H^r(BG; \mathbb{Q})\otimes \pi_{r+q}(G).\]
\item We can also get the cohomology rings of the gauge groups easily from Corollary \ref{QgaugeEM} and of their classifying spaces form Corollary \ref{Qconnection}. For instance, let $G$ be a connected topological group with finite dimensional rational homology (e.g., Lie groups; i.e., $j=0$ in (\ref{HEM}) for this case). then we have 
\[H^\ast(\mathcal{G}_\alpha(X); \mathbb{Q})\cong \otimes_{i,k}\big(\otimes_{b_{2k}}\Lambda (\mu_{i,k})\otimes \otimes_{b_{2k+1}}\mathbb{Q}[\nu_{i,k}]\big), \]  
where $\mu_{i,k}$ is of degree $2n_i-2k+1$ and $\nu_{i,k}$ is of degree $2n_i-2k$. Similarly, the cohomology ring of the orbit space of the space of connections is 
\[H^\ast(\mathcal{B}^\ast_\alpha(M); \mathbb{Q})\cong \otimes_{i,k}\big(
\otimes_{b_{2k+1}}\Lambda (x_{i,k})\otimes
\otimes_{b_{2k}}\mathbb{Q}[y_{i,k}] \big), \]  
where $x_{i,k}$ is of degree $2n_i-2k+1$ and $y_{i,k}$ is of degree $2n_i-2k+2$. 
\item As in \cite{FO09}, it is now easy to get the formulas for the rank of the homotopy groups, and in particular cover Terzi\'{c} Formula (Proposition $1$ and $2$ of \cite{Terzic05}) for simply connected closed $4$-manifolds. 
\end{itemize}
\end{remark}

\end{appendix}

\end{document}